\documentclass[11pt]{amsart}

\usepackage{amsmath}
\usepackage{xcolor}
\usepackage{amsxtra}
\usepackage{amscd}
\usepackage{amsthm}
\usepackage{amsfonts}
\usepackage{amssymb}
\usepackage{eucal}
\usepackage[matrix,arrow,curve]{xy}
\usepackage[dvips]{graphicx}
\usepackage[dvips]{graphics}
\usepackage[T2A]{fontenc}
\usepackage[cp866]{inputenc}
\usepackage{mathtools}
\usepackage{mathrsfs}

\usepackage{hyperref}

\usepackage[margin=1in]{geometry}

\sloppy \pagestyle{plain}\binoppenalty=10000 \relpenalty=10000



\newtheorem*{claim}{Claim}

\newcommand{\Aut}{\mathop{\sf Aut}\nolimits}

\newcommand{\End}{\mathop{\sf End}\nolimits}
\newcommand{\Sym}{\mathop{\textrm{Sym}}\nolimits}
\newcommand{\Hom}{\mathop{\sf Hom}\nolimits}

\newcommand{\im}{\mathop{\sf Im}\nolimits}

\def \Z {{\mathbb Z}}

\def \id {{\rm id}}


\theoremstyle{plain}
\newtheorem{Thm}[subsection]{Theorem}
\newtheorem{Cor}[subsection]{Corollary}
\newtheorem{Lem}[subsection]{Lemma}
\newtheorem{Prop}[subsection]{Proposition}
\newtheorem{Conj}[subsection]{Conjecture}
\newtheorem{Ex}[subsection]{Example}

\theoremstyle{definition}
\newtheorem{Def}[subsection]{Definition}

\theoremstyle{remark}

\newtheorem{Rem}[subsection]{Remark}



\errorcontextlines=0
\numberwithin{equation}{section}

\newif\ifShowLabels
\ShowLabelstrue
\newdimen\theight
\def\TeXref#1{%
    \leavevmode\vadjust{\setbox0=\hbox{{\tt
        \quad\quad  {\small \rm #1}}}%
    \theight=\ht0
    \advance\theight by \lineskip
    \kern -\theight \vbox to
    \theight{\rightline{\rlap{\box0}}%
    \vss}%
    }}%

\ShowLabelsfalse

\renewcommand{\sec}[2]{\section{#2}\label{S:#1}%
    \ifShowLabels \TeXref{{S:#1}} \fi}
\newcommand{\ssec}[2]{\subsection{#2}\label{SS:#1}%
    \ifShowLabels \TeXref{{SS:#1}} \fi}

\newcommand{\refs}[1]{Section ~\ref{S:#1}}
\newcommand{\refss}[1]{Section ~\ref{SS:#1}}

\newcommand{\reft}[1]{Theorem ~\ref{T:#1}}
\newcommand{\refl}[1]{Lemma ~\ref{L:#1}}
\newcommand{\refp}[1]{Proposition ~\ref{P:#1}}
\newcommand{\refc}[1]{Corollary ~\ref{C:#1}}
\newcommand{\refd}[1]{Definition ~\ref{D:#1}}
\newcommand{\refr}[1]{Remark ~\ref{R:#1}}
\newcommand{\refe}[1]{\eqref{E:#1}}

\newenvironment{thm}[1]%
    { \begin{Thm} \label{T:#1}  \ifShowLabels \TeXref{T:#1} \fi }%
    { \end{Thm} }

\renewcommand{\th}[1]{\begin{thm}{#1} \sl }
\renewcommand{\eth}{\end{thm} }

\newenvironment{lemma}[1]%
    { \begin{Lem} \label{L:#1}  \ifShowLabels \TeXref{L:#1} \fi }%
    { \end{Lem} }
\newcommand{\lem}[1]{\begin{lemma}{#1} \sl}
\newcommand{\elem}{\end{lemma}}

\newenvironment{propos}[1]%
    { \begin{Prop} \label{P:#1}  \ifShowLabels \TeXref{P:#1} \fi }%
    { \end{Prop} }
\newcommand{\prop}[1]{\begin{propos}{#1}\sl }
\newcommand{\eprop}{\end{propos}}

\newenvironment{corol}[1]%
    { \begin{Cor} \label{C:#1}  \ifShowLabels \TeXref{C:#1} \fi }%
    { \end{Cor} }
\newcommand{\cor}[1]{\begin{corol}{#1} \sl }
\newcommand{\ecor}{\end{corol}}

\newenvironment{defeni}[1]%
    { \begin{Def} \label{D:#1}  \ifShowLabels \TeXref{D:#1} \fi }%
    { \end{Def} }
\newcommand{\defe}[1]{\begin{defeni}{#1} \sl }
\newcommand{\edefe}{\end{defeni}}

\newenvironment{remark}[1]%
    { \begin{Rem} \label{R:#1}  \ifShowLabels \TeXref{R:#1} \fi }%
    { \end{Rem} }
\newcommand{\rem}[1]{\begin{remark}{#1}}
\newcommand{\erem}{\end{remark}}

\newenvironment{conjec}[1]%
    { \begin{Conj} \label{Co:#1}  \ifShowLabels \TeXref{Co:#1} \fi }%
    { \end{Conj} }
\renewcommand{\conj}[1]{\begin{conjec}{#1} \sl }
\newcommand{\econj}{\end{conjec}}

\newenvironment{example}[1]%
    { \begin{Ex} \label{Exx:#1}  \ifShowLabels \TeXref{Exx:#1} \fi }%
    { \end{Ex} }
\newcommand{\ex}[1]{\begin{example}{#1} \sl }
\newcommand{\eex}{\end{example}}

\newcommand{\eq}[1]%
    { \ifShowLabels \TeXref{E:#1} \fi
       \begin{equation} \label{E:#1} }
\newcommand{\eeq}{ \end{equation} }

\newcommand{\prf}{ \begin{proof} }
\newcommand{\epr}{ \end{proof} }


\newcommand\alp{\alpha}     

     \newcommand\Gam{\Gamma}
\newcommand\del{\delta}     \newcommand\Del{\Delta}

\newcommand\tet{\theta}

\newcommand\lam{\lambda}        \newcommand\Lam{\Lambda}
\newcommand\sig{\sigma}     

\newcommand\ome{\omega}     


\newcommand\calC{{\mathcal{C}}}

\newcommand\calW{{\mathcal{W}}}


\newcommand\bfk{{\mathbf k}}

        \newcommand\bfT{{\mathbf T}}

\newcommand\QQ{\mathbb{Q}}

\newcommand\RR{\mathbb{R}}

\newcommand\UU{\mathbb{U}}

\newcommand\PP{\mathbb{P}}
\renewcommand\AA{\mathbb{A}}

\newcommand\GG{\mathbb{G}}
\newcommand\HH{\mathbb{H}}

\newcommand\LL{\mathbb{L}}
\newcommand\ZZ{\mathbb{Z}}
\newcommand\XX{\mathbb{X}}
\newcommand\CC{\mathbb{C}}
\newcommand\VV{\mathbb{V}}
\newcommand\BB{\mathbb{B}}

 \newcommand\gra{{\mathfrak{a}}}
\newcommand\grB{{\mathfrak{B}}} \newcommand\grb{{\mathfrak{b}}}

 \newcommand\grg{{\mathfrak{g}}}

 \newcommand\grk{{\mathfrak{k}}}
 \newcommand\grl{{\mathfrak{l}}}
 
 \newcommand\grn{{\mathfrak{n}}}
 
 \newcommand\grp{{\mathfrak{p}}}

\newcommand\grS{{\mathfrak{S}}} 
 \newcommand\grt{{\mathfrak{t}}}

\newcommand\sdp{\times \hskip -0.3em {\raise 0.3ex
\hbox{$\scriptscriptstyle |$}}} 


\newcommand\ad{\operatorname{ad}}

\newcommand\Ad{\operatorname{Ad}}

\newcommand\Coker{\operatorname{Coker}}
\newcommand\codim{\operatorname{codim}}

\newcommand\Ext{\operatorname{Ext}}

\newcommand\Gr{\operatorname{Gr}}

\newcommand\RHom{\operatorname {RHom}}

\newcommand\Int{\operatorname{Int}}

\newcommand\PGL{{\rm PGL}}

\newcommand\Proj{\operatorname{Proj}}

\newcommand\rk{\operatorname{rk}}

\newcommand\Perv{\operatorname{Perv}}

\newcommand\pr{\operatorname{pr}}

\newcommand\sgn{\operatorname{sgn}}
\newcommand\SL{{\rm SL}}

\newcommand\SO{{\rm SO}}

\newcommand\Spec{\operatorname{Spec}}

\newcommand\supp{\operatorname{supp}}
\newcommand\Supp{\operatorname{Supp}}





\newcommand\op{{\overline{p}}}

\newcommand\ot{{\overline{t}}}








\newcommand\tilt{{\widetilde{t}}}


\newcommand\tilDel{{\widetilde{\Delta}}}

\newcommand\tilna{{\widetilde{\Na}}}

\newcommand\x{\times}
\newcommand\ten{\otimes}

\newcommand\wt{\widetilde}

\newcommand\ch{\text{ch}}

\renewcommand\Spec{\operatorname{Spec}}

\newcommand\nc{\newcommand}

\newcommand{\IC}{{\operatorname{IC}}}

\newcommand{\iso}{{\stackrel{\sim}{\longrightarrow}}}

\nc\aff{\operatorname{aff}}
\nc\oGr{\overline{\Gr}}
\nc\Bun{\operatorname{Bun}}
\nc\hgrg{\widehat{\grg}}
\renewcommand\Int{\operatorname{Int}}
\nc\bInt{\overline{\Int}}
\nc\hatLam{\widehat{\Lam}}
\nc\bmu{\overline{\mu}}
\nc\bnu{\overline{\nu}}
\nc\blambda{\overline{\lam}}
\renewcommand\SL{\operatorname{SL}}
\nc\ocalW{\overline{\calW}}
\nc\pos{\operatorname{pos}}
\nc\IH{\operatorname{IH}}
\nc\Rep{\operatorname{Rep}}
\nc\Gal{\operatorname{Gal}}
\nc{\tilGr}{\widetilde{\Gr}}

\nc\Pic{\operatorname{Pic}}
\nc\Prym{\operatorname{Prym}}
\nc\pa{\partial}
\nc\Na{\nabla}

\emergencystretch=2cm

\nc{\HC}{{\mathcal{HC}}}
\nc{\on}{\operatorname}
\nc{\BA}{{\mathbb{A}}}
\nc{\BC}{{\mathbb{C}}}
\nc{\BG}{{\mathbb{G}}}
\nc{\BM}{{\mathbb{M}}}
\nc{\BN}{{\mathbb{N}}}
\nc{\BQ}{{\mathbb{Q}}}
\nc{\BP}{{\mathbb{P}}}
\nc{\BR}{{\mathbb{R}}}
\nc{\BZ}{{\mathbb{Z}}}
\nc{\BS}{{\mathbb{S}}}

\nc{\CA}{{\mathcal{A}}}
\nc{\CB}{{\mathcal{B}}}
\nc{\CalC}{{\mathcal C}}
\nc{\CalD}{{\mathcal D}}
\nc{\CE}{{\mathcal{E}}}
\nc{\CF}{{\mathcal{F}}}
\nc{\CG}{{\mathcal{G}}}
\nc{\CH}{{\mathcal{H}}}
\nc{\CK}{{\mathcal{K}}}
\nc{\CL}{{\mathcal{L}}}
\nc{\CM}{{\mathcal{M}}}
\nc{\CMM}{{\mathcal{M}^{\operatorname{gen}}_\hbar(-\rho)}}
\nc{\CN}{{\mathcal{N}}}
\nc{\CO}{{\mathcal{O}}}
\nc{\CP}{{\mathcal{P}}}
\nc{\CQ}{{\mathcal{Q}}}
\nc{\CR}{{\mathcal{R}}}
\nc{\CS}{{\mathcal{S}}}
\nc{\CT}{{\mathcal{T}}}
\nc{\CU}{{\mathcal{U}}}
\nc{\CV}{{\mathcal{V}}}
\nc{\CW}{{\mathcal{W}}}
\nc{\CX}{{\mathcal{X}}}
\nc{\CY}{{\mathcal{Y}}}
\nc{\CZ}{{\mathcal{Z}}}

\nc{\gen}{{\operatorname{gen}}}
\nc{\cM}{{\check{\mathcal M}}{}}
\nc{\csM}{{\check{\mathcal A}}{}}
\nc{\obM}{{\overset{\circ}{\mathbf M}}{}}
\nc{\oCA}{{\overset{\circ}{\mathcal A}}{}}
\nc{\obA}{{\overset{\circ}{\mathbf A}}{}}
\nc{\ooM}{{\overset{\circ}{M}}{}}
\nc{\osM}{{\overset{\circ}{\mathsf M}}{}}
\nc{\vM}{{\overset{\bullet}{\mathcal M}}{}}
\nc{\nM}{{\underset{\bullet}{\mathcal M}}{}}
\nc{\obD}{{\overset{\circ}{\mathbf D}}{}}
\nc{\cp}{{\overset{\circ}{\mathbf p}}{}}
\nc{\ofZ}{{\overset{\circ}{\mathfrak Z}}{}}

\nc{\fa}{{\mathfrak{a}}}
\nc{\fb}{{\mathfrak{b}}}
\nc{\fg}{{\mathfrak{g}}}
\nc{\fgl}{{\mathfrak{gl}}}
\nc{\fh}{{\mathfrak{h}}}
\nc{\fj}{{\mathfrak{j}}}
\nc{\fm}{{\mathfrak{m}}}
\nc{\fn}{{\mathfrak{n}}}
\nc{\fu}{{\mathfrak{u}}}
\nc{\fp}{{\mathfrak{p}}}
\nc{\frr}{{\mathfrak{r}}}
\nc{\fs}{{\mathfrak{s}}}
\nc{\ft}{{\mathfrak{t}}}
\nc{\fT}{{\mathfrak{T}}}
\nc{\ofT}{{\overline{\mathfrak T}}}
\nc{\ofS}{{\overline{\mathfrak S}}}
\nc{\fsl}{{\mathfrak{sl}}}
\nc{\hsl}{{\widehat{\mathfrak{sl}}}}
\nc{\hgl}{{\widehat{\mathfrak{gl}}}}
\nc{\hg}{{\widehat{\mathfrak{g}}}}
\nc{\chg}{{\widehat{\mathfrak{g}}}{}^\vee}
\nc{\hn}{{\widehat{\mathfrak{n}}}}
\nc{\chn}{{\widehat{\mathfrak{n}}}{}^\vee}

\nc{\fA}{{\mathfrak{A}}}
\nc{\fB}{{\mathfrak{B}}}
\nc{\fD}{{\mathfrak{D}}}
\nc{\fE}{{\mathfrak{E}}}
\nc{\fF}{{\mathfrak{F}}}
\nc{\fG}{{\mathfrak{G}}}
\nc{\fI}{{\mathfrak{I}}}
\nc{\fJ}{{\mathfrak{J}}}
\nc{\fK}{{\mathfrak{K}}}
\nc{\fL}{{\mathfrak{L}}}
\nc{\fM}{{\mathfrak{M}}}
\nc{\fN}{{\mathfrak{N}}}
\nc{\frP}{{\mathfrak{P}}}
\nc{\fS}{{\mathfrak S}}
\nc{\fU}{{\mathfrak{U}}}
\nc{\fZ}{{\mathfrak{Z}}}

\nc{\bb}{{\mathbf{b}}}
\nc{\bc}{{\mathbf{c}}}
\nc{\be}{{\mathbf{e}}}
\nc{\bj}{{\mathbf{j}}}
\nc{\bn}{{\mathbf{n}}}
\nc{\bp}{{\mathbf{p}}}
\nc{\bq}{{\mathbf{q}}}
\nc{\bv}{{\mathbf{v}}}
\nc{\bx}{{\mathbf{x}}}
\nc{\by}{{\mathbf{y}}}
\nc{\bw}{{\mathbf{w}}}
\nc{\bA}{{\mathbf{A}}}
\nc{\bB}{{\mathbf{B}}}
\nc{\bC}{{\mathbf{C}}}
\nc{\bK}{{\mathbf{K}}}
\nc{\bL}{{\mathbf{L}}}
\nc{\bD}{{\mathbf{D}}}
\nc{\bH}{{\mathbf{H}}}
\nc{\bM}{{\mathbf{M}}}
\nc{\bN}{{\mathbf{N}}}
\nc{\bS}{{\mathbf{S}}}
\nc{\bT}{{\mathbf{T}}}
\nc{\bV}{{\mathbf{V}}}
\nc{\bW}{{\mathbf{W}}}
\nc{\bX}{{\mathbf{X}}}
\nc{\bP}{{\mathbf{P}}}
\nc{\bZ}{{\mathbf{Z}}}

\nc{\sA}{{\mathsf{A}}}
\nc{\sB}{{\mathsf{B}}}
\nc{\sC}{{\mathsf{C}}}
\nc{\sD}{{\mathsf{D}}}
\nc{\sF}{{\mathsf{F}}}
\nc{\sK}{{\mathsf{K}}}
\nc{\sM}{{\mathsf{M}}}
\nc{\sO}{{\mathsf{O}}}
\nc{\sQ}{{\mathsf{Q}}}
\nc{\sP}{{\mathsf{P}}}
\nc{\sV}{{\mathsf{V}}}
\nc{\sW}{{\mathsf{W}}}
\nc{\sZ}{{\mathsf{Z}}}
\nc{\sfp}{{\mathsf{p}}}
\nc{\sr}{{\mathsf{r}}}
\nc{\sfb}{{\mathsf{b}}}
\nc{\sfc}{{\mathsf{c}}}
\nc{\sd}{{\mathsf{d}}}
\nc{\sg}{{\mathsf{g}}}
\nc{\sfl}{{\mathsf{l}}}

\nc{\BK}{{\bar{K}}}

\nc{\tA}{{\widetilde{\mathbf{A}}}}
\nc{\tB}{{\widetilde{\mathcal{B}}}}
\nc{\tg}{{\widetilde{\mathfrak{g}}}}
\nc{\tG}{{\widetilde{G}}}
\nc{\TM}{{\widetilde{\mathbb{M}}}{}}
\nc{\tO}{{\widetilde{\mathsf{O}}}{}}
\nc{\tU}{{\widetilde{\mathfrak{U}}}{}}
\nc{\TZ}{{\tilde{Z}}}
\nc{\tZ}{\widetilde{Z}{}}
\nc{\tx}{{\tilde{x}}}
\nc{\tbv}{{\tilde{\bv}}}
\nc{\tfP}{{\widetilde{\mathfrak{P}}}{}}
\nc{\tz}{{\tilde{\zeta}}}
\nc{\tmu}{{\tilde{\mu}}}

\nc{\td}{\ddot{\underline{d}}{}}
\nc{\tzeta}{\widetilde{\zeta}{}}
\nc{\hd}{{\widehat{\underline{d}}}}
\nc{\hG}{{\widehat{G}}}
\nc{\hBP}{\widehat{\mathbb P}{}}
\nc{\hQ}{{\widehat{Q}}}
\nc{\hsM}{\widehat{\mathsf M}{}}
\nc{\hfM}{\widehat{\mathfrak M}{}}
\nc{\hCP}{\widehat{\mathcal P}{}}
\nc{\hCR}{\widehat{\mathcal R}{}}
\nc{\hCS}{{\widehat{\mathcal S}}}
\nc{\hfZ}{\widehat{\mathfrak Z}{}}

\nc{\urho}{\underline{\rho}}
\nc{\uB}{\underline{B}}
\nc{\uC}{{\underline{\mathbb{C}}}}
\nc{\uk}{{\underline{\bfk}}}
\nc{\ui}{\underline{i}}
\nc{\ofP}{{\overline{\mathfrak{P}}}}

\nc{\hrho}{{\hat{\rho}}}

\nc{\unl}{\underline}
\nc{\ol}{\overline}
\nc{\one}{{\mathbf{1}}}
\nc{\two}{{\mathbf{t}}}

\nc{\Tot}{{\mathop{\operatorname{\rm Tot}}}}
\nc{\Hilb}{{\mathop{\operatorname{\rm Hilb}}}}
\nc{\CHom}{{\mathop{\operatorname{{\mathcal{H}}\it om}}}}
\nc{\defi}{{\mathop{\operatorname{\rm def}}}}
\nc{\length}{{\mathop{\operatorname{\rm length}}}}

\nc{\Cliff}{{\mathsf{Cliff}}}
\nc{\Fl}{{\mathsf{Fl}}}
\nc{\Fib}{{\mathsf{Fib}}}
\nc{\Coh}{{\mathsf{Coh}}}
\nc{\FCoh}{{\mathsf{FCoh}}}

\nc{\reg}{{\text{\rm reg}}}

\nc{\cplus}{{\mathbf{C}_+}}
\nc{\cminus}{{\mathbf{C}_-}}
\nc{\cthree}{{\mathbf{C}_*}}
\nc{\Qbar}{{\bar{Q}}}
    
\nc{\bh}{{\bar{h}}}
\nc{\bOmega}{{\overline{\Omega}}}
\nc\tGr{\widetilde{\Gr}}

\nc{\seq}[1]{\stackrel{#1}{\sim}}
\nc\ogu{\overline{G/U}}
\nc\chlam{\check{\lam}}

\nc\St{\operatorname{St}}

\nc\uS{\underline{S}}
\nc\QM{\mathcal{QM}}
\nc\FT{\mathsf{FT}}
\nc{\Ca}{\underline{C_a}}
\nc{\SCa}{\underline{S^mC_a}}
\nc{\sCa}{\Ca\x_\CA\underline{S^{m-1}C_a}}

\nc\Shv{\operatorname{Shv}}
\nc\cmod{\textup{-comod}}
\nc\lmod{\textup{-mod}}
\nc\rmod{\textup{mod-}}
\nc\bimod{\textup{-mod-}}
\nc\grrmod{\textup{grmod-}}
\nc\grlmod{\textup{-grmod}}
\nc\mon{\textup{-mon}}
\nc\pt{\textup{pt}}
\nc\nil{\textup{nil}}
\nc\RG{\textup{R}\Gamma}
\nc\RGc{\textup{R}\Gamma_{c}}
\nc\fmo{free-monodromic }

\newcommand{\scT}{\mathscr{T}}

\nc\fra{\mathfrak{a}}
\nc\frg{\mathfrak{g}}
\nc\frb{\mathfrak{b}}
\nc\frn{\mathfrak{n}}
\nc\frh{\mathfrak{h}}
\nc\frp{\mathfrak{p}}
\nc\frk{\mathfrak{k}}

\newcommand{\incl}{\hookrightarrow}
\newcommand{\isom}{\stackrel{\sim}{\to}}
\newcommand{\bij}{\leftrightarrow}
\newcommand{\surj}{\twoheadrightarrow}

\nc\Lot{\stackrel{\LL}{\otimes}}

\renewcommand{\j}[1]{\langle{#1}\rangle}
\newcommand{\wh}[1]{\widehat{#1}}
\newcommand\quash[1]{}
\newcommand{\bu}{\bullet}
\newcommand{\bs}{\backslash}

\newcommand\xr{\xrightarrow}
\renewcommand\op{\oplus}
\renewcommand\ot{\otimes}
\renewcommand\c\circ



\renewcommand\a\alpha
\renewcommand\b\beta
\newcommand\g\gamma
\newcommand\G\Gamma
\renewcommand\d\delta
\newcommand\D\Delta
\newcommand{\e}{\epsilon}

\nc\io{\iota}
\nc\ka{\kappa}
\newcommand{\ph}{\varphi}
\renewcommand\r\rho
\newcommand{\s}{\sigma}
\renewcommand{\t}{\tau}
\newcommand{\y}{\eta}

\newcommand{\ep}{\epsilon}

\renewcommand{\l}{\lambda}
\renewcommand{\L}{\Lambda}


\newcommand\hs{\heartsuit}
\newcommand\na{\natural}
\newcommand\sh{\sharp}
\newcommand\da{\dagger}

\newcommand\cA{\mathcal{A}}
\newcommand\cB{\mathcal{B}}
\newcommand\cC{\mathcal{C}}
\newcommand\cD{\mathcal{D}}
\newcommand\cE{\mathcal{E}}
\newcommand\cF{\mathcal{F}}
\newcommand\cG{\mathcal{G}}
\newcommand\cH{\mathcal{H}}

\newcommand\cK{\mathcal{K}}
\newcommand\cL{\mathcal{L}}
\newcommand\cN{\mathcal{N}}
\newcommand\cO{\mathcal{O}}
\newcommand\cP{\mathcal{P}}
\newcommand\cQ{\mathcal{Q}}
\newcommand\cR{\mathcal{R}}
\newcommand\cS{\mathcal{S}}
\newcommand\cT{\mathcal{T}}

\nc\coker{\textup{coker}}
\nc\ev{\textup{ev}}
\nc\pro{\textup{pro}}
\nc\perf{\textup{perf}}
\nc\Perf{\textup{Perf}}
\nc\Tilt{\textup{Tilt}}
\nc\triv{\textup{triv}}
\nc\Lie{\textup{Lie}}
\nc\bt{\boxtimes}
\nc\SU{\textup{SU}}
\nc\PU{\textup{PU}}
\nc\SBim{\textup{SBim}}
\nc\TGR{\Tilt(\CM_{G_{\BR}})}
\nc\THG{\Tilt(\CH_{G})}
\nc\Frac{\textup{Frac}}
\nc\red{\textup{red}}
\nc\Spf{\textup{Spf}\ }
\nc\Inv{\textup{Inv}}
\nc\Stab{\textup{Stab}}
\newcommand{\sslash}{\mathbin{/\mkern-6mu/}}
\nc\bstar{\overline{\star}}
\nc\LS{\textup{LS}}
\nc\act{\textup{act}}
\nc\av{\textup{av}}
\nc\Gm{\GG_{m}}
\nc\dG{\check{G}}
\nc\dB{\check{B}}
\nc\dT{\check{T}}
\nc\dK{\check{K}}
\nc\dX{\check{X}}
\nc\dua{\check{a}}
\nc\dL{\check{L}}
\nc\dcD{\check{\cD}}
\nc\dcH{\check{\cH}}
\nc\dE{\check{E}}
\nc\dph{\check{\ph}}
\nc\dl{\check{\lambda}}
\nc\dcB{\check{\cB}}
\nc\dBB{\check{\BB}}
\nc\dgS{\check{\mathfrak{S}}}
\nc\dcK{\check{\cK}}
\nc\dI{\check{I}}
\nc\dell{\check{\ell}}
\nc\dth{\check{\theta}}
\nc\dbT{\check{\mathbf{T}}}
\nc\dmu{\check\mu}

\nc\SSC{\textup{SSC}}
\nc\HOM{\textup{HOM}}

\nc\Yun[1]{{\color{red}  {#1}}}

\setcounter{tocdepth}{1}

\title{Tilting sheaves for real groups and Koszul duality}

\author{Andrei Ionov}

\address{
Boston College, Department of Mathematics, Maloney Hall, Fifth
Floor, Chestnut Hill, MA 02467-3806, United States}
\email{ionov@bc.edu}

\author{Zhiwei Yun}

\address{Department of Mathematics, Massachusetts Institute of Technology, 77 Massachusetts Ave, Cambridge, MA 02139}
\email{zyun@mit.edu}
\keywords{}

\begin{document}

\maketitle

\begin{abstract}
For a certain class of real analytic varieties with Lie group actions we develop a theory of (free-monodromic) tilting sheaves, and apply it to flag varieties stratified by real group orbits.
For quasi-split real groups, we construct a fully faithful embedding of the category of tilting sheaves to a real analog of the category of Soergel bimodules, establishing real group analogs of Soergel's Structure Theorem and Endomorphism Theorem.
We apply these results to give a purely geometric proof of the main result of \cite{BV} which proves Soergel's conjecture \cite{S2} for quasi-split groups.
\end{abstract}

\tableofcontents

\sec{}{Introduction}

\ssec{}{Koszul duality and tilting sheaves for complex groups}
The formalism of Koszul duality in representation theory was introduced in \cite{S} and further developed in \cite{BGS} and subsequent works. 

For a complex reductive group $G$, it is proved in \cite{BGS} that its category $\CO$ is equivalent to modules over a quadratic Koszul algebra. Moreover, this Koszul algebra is self-dual, or more canonically Koszul dual to its counterpart for the Langlands dual $\dG$. The Koszul duality in this case can be stated as an equivalence between the graded version of the derived category $D^{b}(\CO)$ for $G$ and the graded version of $D^{b}(\check{\CO})$ for $\dG$. It sends irreducible objects in $\check{\CO}$ to indecomposable projective objects in $\CO$. It is well-known that the category $\CO$ is equivalent to the category of Harish-Chandra bimodules for $G$, as well as to the category of perverse sheaves on the flag variety of $G$ constructible with respect to the Schubert stratification.

Another important duality, namely the Ringel duality, was introduced in \cite{R}. In the context of the highest weight categories this is an equivalence of derived categories that sends standard objects to costandard objects and projective objects to tilting objects. 
The composition of Koszul duality and Ringel duality was first studied in \cite{BG}. It sends irreducible objects in $\check{\CO}$ to indecomposable tilting objects in $\CO$. Compared to the original Koszul duality, the version composed with Ringel duality has the advantage that it preserves the monoidal structure on modified versions of both sides (using tensor product of Harish-Chandra bimodules). 

In the geometric setting the theory of tilting perverse sheaves was developed in \cite{BBM}. In particular, it is proved that the long intertwining functor provides a Ringel self-duality of the category of the perverse sheaves on the flag variety of $G$. The monoidal version of the Koszul duality (really the composition of Koszul duality with Ringel duality) was proved in \cite{BY} for arbitrary Kac-Moody groups. It sends irreducible perverse sheaves on $\dB\bs \dG/\dB$ to indecomposable {\em free-monodromic} tilting sheaves on $U\bs G/U$ (where $\dB\subset \dG$ is a Borel subgroup; $B\subset G$ is a Borel subgroup with unipotent radical $U$).

In the above versions of Koszul duality, the authors prove the equivalences of categories by showing that both categories are equivalent to graded modules over isomorphic algebras. To construct these algebras, the key ingredients are to construct {\em Soergel functors}  to the appropriate category of Soergel bimodules, and to prove that the Soergel functors are fully faithful on certain classes of objects (irreducible, projective or tilting objects).  For example, the Soergel functor maps category $\CO$ to coherent sheaves on the {\em complex block variety} $\grt^{*}\times_{\grt^{*}\sslash W}\grt^{*}$, where $\grt$ is the abstract Cartan algebra of $\grg$.





\ssec{}{Koszul duality for real groups}

In \cite{S2} W.\,Soergel formulated the Koszul duality conjecture for real groups as a categorification of Vogan's character duality \cite{V2}. Let $G_\BR$ be a real form of a semisimple group $G$ and let $\CM$ be a block of its representations with regular integral infinitesimal character. Vogan's duality associates to $\CM$ a block $\check{\CM}$ for a real form $\dG_{\RR}$ of the Langlands dual group $\dG$, which can be geometrically realized as a full subcategory $\dcD$ in the $\dK$-equivariant derived category (with $\CC$-coefficients) of the flag variety $\dX$ of $\dG$ (where $\dK\subset\dG$ is the fixed points of the Cartan involution defined using $\dG_{\RR}$). 

A breakthrough in this direction appears in  \cite{BV} where the authors prove Soergel's conjecture for quasi-split groups.  

\th{BV}(\cite[Theorem 5.1]{BV}) Let $G_{\RR}$ be a quasi-split semisimple real group.
Then there is an equivalence between graded versions of triangulated categories $D^b(\CM)$ and $\dcD$. It maps indecomposable projective objects in $\CM$ to irreducible perverse sheaves in $\dcD$.
\eth

Once again the analog of the Soergel functors played an important role in the proof of the above theorem. For the category $\CM$, the authors of \cite{BV} constructed a Soergel functor from $\CM$ to coherent sheaves on the {\em real block variety} $\gra^*\sslash W'_\CM\x_{\grt^*\sslash W}\grt^*$ where $\gra$ is the complexification of the Lie algebra of the maximal split torus of $G_\BR$, and $W'_{\CM}$ is a certain subgroup of the real Weyl group. The construction of the Soergel functor in \cite{BV} is algebraic: it is given by the translation functor to the most singular infinitesimal character.


\reft{BV} can be stated geometrically: the category $D^{b}(\CM)$ can be identified with the $K$-equivariant derived category of sheaves on the enhanced flag variety $\wt X=G/U$.  The goal of this paper is to give a geometric proof of this geometric reformulation. Two new ingredients leading to the proof are the theory of tilting sheaves on spaces stratified by real group orbits, and an analog of Soergel's Structure Theorem for quasi-split real groups. We give an overview of these ingredients below.

\ssec{}{Main results: Tilting sheaves for real group orbits}

We work with sheaves in $\bfk$-vector spaces, where $\bfk$ is an algebraically closed field of characteristic zero \footnote{We believe that with a more careful study, the results should hold for mod $p$ coefficients when $p$ is not too small.}.

The Matsuki correspondence is a bijection between $K$-orbits and $G(\RR)$-orbits on the flag variety $X$ of $G$. In \cite{MUV} the authors upgraded this set-theoretic bijection to an equivalence of categories between $K$-equivariant and $G(\RR)$-equivariant derived categories of $X$. When $G_{\RR}$ is a complex group, this equivalence can be identified with the long intertwining endo-functor of the category $\cO$. This suggests that the Matsuki equivalence in \cite{MUV} might be a Ringel duality. 

To justify this expectation, we first need to define tilting sheaves on $X$ (or rather free-monodromic tilting sheaves on $\wt X=G/U$ \footnote{In the main body of the paper we work with the compact manifold $(G/U)/\bT^{>0}$ homotopy equivalent to $G/U$.}) with respect to the $G(\RR)$-orbits. In this paper we develop such a theory of (free-monodromic) tilting sheaves in the general setting of a real analytic variety $\wt X$ stratified by Lie group orbits satisfying certain conditions. The development here follows that of \cite{BBM} and \cite{BY}. One point worth mentioning is that the definition of tilting sheaves requires working with a new $t$-structure on $\wt X$ coming from a specific perversity function (see \refss{t}). The perversity function takes into account not only the dimension of the stratum but also the size of its equivariant fundamental group.

Applying the general theory to the case of $G(\RR)$ orbits on $\wt X=G/U$, we show:
\th{intro Mat}(\reft{Mat})
The Matsuki correspondence in \cite{MUV} is a Ringel duality in the sense that it sends indecomposable projective pro-objects in $\Perv_{K}(\wt X)$ to indecomposable free-monodromic tilting sheaves in $\CM_{G_\BR}:=\wh D^{b}_{G(\RR)}(\wt X)_{\bT\mon}$.
\eth
Here, $\wh D^{b}_{G(\RR)}(\wt X)_{\bT\mon}$ is a completed version of the derived category of $G(\RR)$-equivariant complexes on $\wt X$ that are monodromic along the right $\bT$-action (where $\bT=B/U$ is the abstract Cartan) with unipotent monodromy. More details of the completion procedure in the real analytic setting is developed in \refs{compmoncat}.



It is worth noting that the common approach to the representation theory of $G(\BR)$ passes through the Harish-Chandra $(\grg,K)$-modules, which are then studied by algebraic methods. Similarly, in geometry more attention has been put on $K$-equivariant sheaves on $X$ rather than $G(\RR)$-equivariant sheaves, perhaps to stay within the context of algebraic geometry.  On the other hand, the category $\CM_{G_\BR}$ is directly related to the category of $G(\RR)$-representations by globalization functors of \cite{KSc} (see also \cite{Vi}). We wonder whether the $t$-structure we introduced on $\CM_{G_\BR}$ and tilting sheaves therein have any significance for $G(\RR)$-representations.



\ssec{}{Main results: Soergel's Structure Theorem for real groups}

Now we assume that $G_\BR$ is quasi-split. Let $\cR$ be the completion of the $\bfk$-group ring of $\pi_{1}(T)$ at the augmentation ideal. In this case we define a real Soergel functor $\VV_\BR: \CM_{G_{\RR}}\to \rmod\cR$ as a generic vanishing cycles functor to the closed $G(\RR)$-orbit\footnote{In the complex case the Soergel functor was identified with the generic vanishing cycles functor to the closed orbit in \cite[Section 11]{B}, which motivates our definition.}. 

We prove a generalization of Soergel's Struktursatz and Endomorphismensatz of \cite{S} to quasi-split groups. To state it, we need some notations. Let $T_{\RR}$ be a maximally split real Cartan subgroup of $G_{\RR}$ and $W(\RR)=W(G(\RR), T(\RR))$ be the real Weyl group. Let $\cS$ be the completion of the $\bfk$-group ring of $\pi_{1}(T(\CC)/T(\RR)^{\c})$. Let $\cB_{0}=(\prod_{\chi}\cS)^{W(\RR)}$ where the product is over characters $\chi: \pi_{0}(T(\RR))\to \bfk^{\times}$ and $W(\RR)$ permutes the factors under Vogan's {\em cross action}. Finally let $\CB$ be the commutative $\bfk$-algebra $\cB_{0}\ot_{\CR^{\bW}}\cR$, where $\bW$ is the abstract Weyl group acting on $\bT$, hence on $\CR$. The algebra $\cB$ is the real analog of $\cR\ot_{\cR^{\bW}}\cR$ that appear in the theory of Soergel bimodules, and it can be identified with formal functions on a disjoint of union of real block varieties of \cite{BV}.


\th{main0}(\refc{maincor} and \reft{bigtilting}) Assume the quasi-split group $G_{\RR}$ is nice (see \refd{Nice quasi-split groups}, in particular, $G_{\RR}$ is nice if $G$ is adjoint).  Let $\TGR$ be the full subcategory of $\CM_{G_{\RR}}$ consisting of \fmo tilting sheaves.
\begin{enumerate}
\item The real Soergel functor $\VV_{\RR}$ on \fmo tilting sheaves can be lifted to a functor
\begin{equation*}
\VV^{\sh}_\BR\colon\TGR\to\rmod\CB
\end{equation*}
and is compatible with the Hecke action on $\TGR$ and the Soergel bimodule action on $\rmod\cB$ (via the classical Soergel functor $\VV$ for the monodromic Hecke category).
\item (Struktursatz) The functor $\VV^{\sh}_\BR$ is fully-faithful.  
\item (Endomorphismensatz) The functor $\VV^{\sh}_\BR$  is corepresented by a ``big tilting'' object $\op_{a}\cT^{a}\in \TGR$ (where the direct sum is over blocks of $\CM_{G_{\RR}}$), and $\End(\op_{a}\cT^{a})\cong \cB$.
\end{enumerate}
\eth

Here is our proof strategy for \reft{main0}. Tautologically, we can lift $\VV_{\RR}|_{\Tilt}$ to be valued in $\rmod\cA$, where $\cA$ is the ring opposite to the endomorphism ring of the functor $\VV_{\RR}|_{\Tilt}$. By the localization technique developed in \refs{loc}, which is analogous to  the equivariant localization, we show that $\cA$ and $\cB$ are isomorphic at the generic points. Moreover, by localizing along codimension one loci of $\Spec\cA$, we deduce that $\VV_{\RR}^{\sh}: \TGR\to \rmod\cA$ is fully faithful. This relies on a case-by-case check for quasi-split groups of semisimple split rank one. Finally, we prove $\cA$ and $\cB$ are canonically isomorphic.

When $G_{\RR}$ is quasi-split but not nice, we prove an analogue of the Struktursatz in \reft{mainnonadj} by reducing the situation a nice group isogenous to $G_{\RR}$.

When $G_{\RR}$ is not quasi-split, the theory of tilting sheaves is still valid, while the Soergel functor should be replaced by an appropriate microlocalization functor. We hope to return to this problem in future work with the goal of proving Koszul duality for real groups in general. 

\ssec{}{Application to Koszul duality for quasi-split groups}

\reft{main0} then allows us to prove a variant of \reft{BV} geometrically. We give a brief statement below and refer to \refs{KD} for more details.

\th{intro KD} (\reft{KD derived}) Let $G_{\RR}$ be semisimple and quasi-split. Let $\CM^{a}_{G_{\RR}}$ be a block of $\CM_{G_{\RR}}$ and $\dcD^{\dua}_{\dK}\subset D^{b}_{\dK}(\dX)$ be the dual (principal) block in the sense of Vogan. There is a triangulated functor
\begin{equation*}
\wt\ka: \dcD^{\dua,gr}_{\dK}:=K^{b}(\SSC(\dcD^{\dua}_{\dK}))\to  \CM^{a}_{G_{\RR}}
\end{equation*}
(where $\SSC(\dcD^{\dua}_{\dK})$ stands for the full subcategory of semisimple complexes in $\dcD^{\dua}_{\dK}$), satisfying the following properties:
\begin{enumerate}
\item $\wt\ka$ is invariant under the internal shift $\{1\}$ on $\SSC(\dcD^{\dua}_{\dK})$.
\item For $\cF_{1},\cF_{2}\in \dcD^{\dua,gr}_{\dK}$, natural maps induce a graded $\cR$-linear  isomorphism
\begin{eqnarray*}
(\op_{m\in\Z}\Hom(\cF_{1},\cF_{2}\{m\})\ot_{\cR^{\bu}}\cR\isom \Hom(\wt\ka(\cF_{1}), \wt\ka(\cF_{2})).
\end{eqnarray*}

\item The functor $\wt\ka$ intertwines the actions of the Hecke categories $\dcH^{gr}_{\dG}$ and $\CH_{G}$.


\item The functor $\wt\ka$ sends a simple perverse sheaf in $\dcD^{\dua}_{\dK}$ and to an indecomposable \fmo tilting sheaf in $\CM^{a}_{G_{\RR}}$, and induces a bijection between the sets of isomorphisms classes of such objects.  

\item The functor $\wt\ka$ sends the graded lift of the standard objects (resp. the costandard objects) to the \fmo standard objects (resp. \fmo costandard objects).
\end{enumerate}
\eth

The last property is a new feature of the tilting version of Koszul duality compared to the version in \reft{BV}. It roughly says that the functor $\wt\ka$ preserves the relative size of support.

As a consequence of Koszul duality, we deduce that the ranks of stalks of the \fmo tilting sheaves $\cT_{\l,\chi}$ are given by the Lusztig-Vogan polynomials of the dual symmetric pair $(\dG,\dK)$, see \refc{LV poly}.

\ssec{}{Organization of the paper}
In \refs{compmoncat} and \refs{tilt} we develop a general theory of free-monodromic tilting sheaves on real analytic varieties stratified by group orbits such that each orbit is equivariantly homotopy equivalent to a torus. It requires an extension of the construction of completed monodromic categories (see \cite[Appendix A]{BY} and \cite{BR}) and a new $t$-structure where these tilting sheaves live.

\refs{realgp} to \refs{loc} are devoted to the study of free-monodromic tilting sheaves on the flag varieties stratified by real group orbits. The discussions in these sections are valid for all real groups. After reviewing some background on real groups in \refs{realgp}, we apply the theory of tilting sheaves developed in \refs{tilt} to the flag variety stratified by real group orbits in \refs{Matsuki}.  We prove that the Matsuki equivalence of \cite{MUV} is a Ringel duality (see \reft{Mat}). In \refs{Hk} we investigate the behavior of tilting sheaves under the Hecke action. In \refs{loc} we develop the technique of localization for completed monodromic categories, which will be used in the proof of the structure theorem later. 

Starting from \refs{realV} we restrict to quasi-split groups.  In \refs{realV}, we construct the real Soergel functor $\VV_{\RR}$ for quasi-split groups. In \refs{str nice} we prove our Soergel's structure theorem for quasi-split groups satisfying a technical condition called {\em nice}, which includes adjoint quasi-split groups.  In \refs{nonadj} we extend the structure theorem to all quasi-split groups.

Finally, in \refs{KD} we prove \reft{intro KD} by constructing a dg-model for the category $\CM_{G_\BR}$ using the structure theorem we proved. We also derive the numerical result on stalks of \fmo tilting sheaves, and prove that the real Soergel functor is copresentated by a ``big tilting'' object. 

\ssec{}{Acknowledgements} We learned the idea of considering $G_{\RR}$-equivariant tilting sheaves on the flag variety from Roman Bezrukavnikov. We thank Kari Vilonen and David Vogan for teaching us about real groups and answering our questions. We thank Simon Riche for pointing out a mistake.

A.I. was funded by RFBR, project number 19-31-90078. Z.Y. is supported partially by the Simon Foundation and the Packard Foundation.

\sec{compmoncat}{Completed monodromic category}

Completed monodromic categories were defined in \cite[Appendix A]{BY} in order to make sense of free-monodromic local systems and tilting sheaves on enhanced flag varieties $G/U$. In \cite{BR}, this construction has been adapted to the topological context allowing an arbitrary coefficient field. The rough idea in both cases is to take certain pro-objects in $D^{b}(\wt X, \bfk)$ that include local systems with unipotent monodromy with infinite Jordan block along the fibers of $\pi$. In these works, each stratum has the homotopy type of a torus of the same dimension. 

For the purpose of this paper we need to extend the construction of completed monodromic categories to the case where different strata are (equivariantly) homotopy equivalent to tori of different dimensions. The ``size'' of the free-monodromy will then vary on different strata. We will show that the completion construction still gives a well-behaved category of sheaves.


Throughout the paper, let $\bfk$ be an algebraically closed field of characteristic $0$. All sheaves will be assumed to be the sheaves of $\bfk$-vector spaces. All (co)homology groups are taken with $\bfk$ coefficients unless otherwise stated.

\ssec{mon}{The setup}
Let $X$ be a real analytic variety (for example the real points of a scheme of finite type over $\RR$). Let $T^c$ be a {\em compact} torus. Let $\pi:\wt X\to X$ be a principal right $T^c$-bundle. 

Let $H$ be a Lie group with an  analytic action on $\wt X$ from the left commuting with $T^c$. Then there is an induced $H$-action on $X$ such that $\pi$ is $H$-equivariant.



\rem{flags}
In application, we consider $X=G/B$ to be the flag manifold of a complex reductive group $G$. Let $Y=G/U$ (where $U$ is the unipotent radical of $B$), then $Y\to X$ is a $T$-torsor, where $T=B/U$ is the abstract Cartan of $G$.  The canonical decomposition $\CC^{\times}=\RR^{>0}\times S^{1}$ gives a canonical decomposition $T=T^{>0}\times T^c$ where $T^{>0}=\RR^{>0}\ot_{\ZZ}\XX_{*}(T)$ and $T^c=S^{1}\ot_{\ZZ}\XX_{*}(T)$, a compact torus. We then let $\wt X=Y/T^{>0}$. Since $T^{>0}$ is contractible, the pullback functor $D^{b}(\wt X)\to D^{b}(Y)$ is fully faithful, so if we are interested only in sheaves on $Y$ monodromic  under the right $T$-action, we may equivalently consider sheaves on $\wt X$ monodromic under the right $T$-action.
\erem

\ssec{}{Completion} Consider the adjoint functors
\begin{equation*}
\xymatrix{D^{b}_{H}(\wt X)\ar@<.5ex>[r]^{\pi_{!}} & D^{b}_{H}(X) \ar@<.5ex>[l]^{\pi^{!}}}.
\end{equation*}
Let $D^{b}_{H}(\wt X)_{T^c\mon}\subset D^{b}_{H}(\wt X)$ be the full subcategory generated by the image of $\pi^{!}$.  

Let $\cR$ be the completion of the group algebra $\bfk[\pi_{1}(T^c)]$ at the augmentation ideal. Equip $\cR$ with the adic topology coming from the augmentation ideal. Then $\cR$ is a complete regular local ring with residue field $\bfk$ and its cotangent space canonically isomorphic to $H_{1}(T^c,\bfk)=\pi_{1}(T^c)\ot_{\ZZ} \bfk$. The monodromy action along the fibers of $\pi$ gives an $\cR$-linear structure on $D^{b}_{H}(\wt X)_{T^c\mon}$, namely $\cR$ acts on the identity functor of $D^{b}_{H}(\wt X)_{T^c\mon}$.

Following \cite[A.3]{BY}, let $\wh D^{b}_{H}(\wt X)_{T^c\mon}$ be the category of pro-objects in $D^{b}_{H}(\wt X)_{T^c\mon}$ indexed by positive integers. Let $\wt D^{b}_{H}(\wt X)_{T^c\mon}\subset \pro D^{b}_{H}(\wt X)_{T^c\mon}$ be the full subcategory consisting of  pro-objects $(\cF_{n})_{n\ge0}$ that satisfy two conditions:
\begin{enumerate}
\item ($\pi$-constancy) The pro-object $(\pi_{!}\cF_{n})_{n}\in \pro D^{b}_{H}(X)$ lies in the essential image of the natural functor $D^{b}_{H}(X)\to \pro D^{b}_{H}(X)$ consisting of constant pro-objects.
\item (uniform boundedness) $(\cF_{n})_{n}$ is isomorphic to a pro-object $(\cF'_{n})_{n}$ in $\pro D^{b}_{H}(\wt X)_{T^c\mon}$ where each $\cF'_{n}$ has perverse degrees in $[-N,N]$ for some $N>0$ independent of $n$.
\end{enumerate}
It is proved in \cite[Theorem A.3.2]{BY} that $\wh D^{b}_{H}(\wt X)_{T^c\mon}$ is an $\cR$-linear triangulated category. By the $\pi$-constancy of objects in $\wh D^{b}_{H}(\wt X)_{T^c\mon}$,  we have an adjunction induced from the adjunction $(\pi_{!},\pi^{!})$
\begin{equation*}
\xymatrix{\wh D^{b}_{H}(\wt X)_{T^c\mon}\ar@<.5ex>[r]^-{\wh\pi_{!}} & D^{b}_{H}(X) \ar@<.5ex>[l]^-{\wh\pi^{!}}}
\end{equation*}
It will be convenient to introduce adjunctions $(\pi_{\da}, \pi^{\da})$ between the same categories:
\begin{equation}\label{pi da}
\pi_{\da}:=\wh \pi_{!}[\dim T^{c}], \quad \pi^{\da}:=\wh \pi_{!}[-\dim T^{c}]\cong \wh\pi^{*}.
\end{equation}

Moreover, given an $H$-equivariant map $f:X\to Y$ that lifts to an $H$-equivariant map $\wt f: \wt X\to \wt Y$ of $T^c$-torsors over $X$ and $Y$, under the assumption that $H$ has finitely many orbits on $X$ and $Y$, the functors $\wt f^{*}, \wt f_{*}, \wt f_{!},\wt f^{!}$ and their adjunctions induce functors $\wh f^{*}, \wh f_{*}, \wh f_{!},\wh f^{!}$ between the completed categories $\wh D_{H}(\wt X)_{T^c\mon}$ and $\wh D_{H}(\wt Y)_{T^c\mon}$.

\ssec{Apt}{Situation over a point} Consider the special case where $X=\pt$, $\wt X=T^c$, and $H=T^c_{1}$ is a closed subgroup of $T^c$ acting on $\wt X$ by left translation.  We shall give an algebraic description of the completed category $\wh D^{b}_{T^c_{1}}(T^c)_{T^c\mon}$.

Let $(T^c_{1})^{\c}$ be the neutral component of $T^c_{1}$, and let $\ol T^c=T^c/(T^c_{1})^{\c}$, the quotient torus.  Then $\pi_{0}(T^c_{1})=T^c_{1}/(T^c_{1})^{\c}$ is a finite subgroup of $\ol T^c$.

For each character $\chi: \pi_{0}(T^c_{1})\to \bfk^{\times}$, let $\bfk_{\chi}$ be the one-dimensional $\bfk$-vector space with an $T^c_{1}$-equivariant structure via $\chi$. Let $D^{b}_{T^c_{1}}(\pt)_{\chi}$ be the full subcategory of $D^{b}_{T^c_{1}}(\pt)$ consisting of objects $\cF$ such that the action of $T^c_{1}$ on $H^{i}\cF$ (an $\bfk$-vector space) is via $\chi$ (pulled back to $T^c_{1}$).  Then we have a decomposition
\begin{equation*}
D^{b}_{T^c_{1}}(\pt)=\bigoplus_{\chi: \pi_{0}(T^c_{1})\to \bfk^{\times}} D^{b}_{T^c_{1}}(\pt)_{\chi}.
\end{equation*}
Tensoring with the rank $1$ local system corresponding to $\chi$ gives an equivalence $D^{b}_{T^c_{1}}(\pt)_{1}\isom D^{b}_{T^c_{1}}(\pt)_{\chi}$.

Similarly, let $D^{b}_{T^c_{1}}(T^c)_{T^c\mon,\chi}$ be the full triangulated subcategory generated by the image of $D_{T^c_{1}}(\pt)_{\chi}$ under the pullback $\pi^{!}: D_{T^c_{1}}(\pt)\to D^{b}_{T^c_{1}}(T^c)_{T^c\mon}$. Again we have a decomposition
\begin{equation*}
D^{b}_{T^c_{1}}(T^c)_{T^c\mon}\cong \bigoplus_{\chi: \pi_{0}(T^c_{1})\to \bfk^{\times}} D^{b}_{T^c_{1}}(T^c)_{T^c\mon,\chi}.
\end{equation*}
Let $\uk_{\chi}$ be the rank one $T^c_{1}$-equivariant local system on $T^c$ with monodromy action given by $\chi$ (whose underlying local system is trivial).

\lem{monA}
\begin{enumerate}
\item The forgetful functor $D^{b}_{T^c_{1}}(T^c)_{T^c\mon}\to D^{b}_{(T^c)_{1}^{\c}}(T^c)_{T^c\mon}$ induces an equivalence
\begin{equation*}
D^{b}_{T^c_{1}}(T^c)_{T^c\mon,\chi}\isom D^{b}_{(T^c)_{1}^{\c}}(T^c)_{T^c\mon}.
\end{equation*}

\item Let $\s: T^c\to \ol T^c$ be the projection. Then $\s^{*}$ induces an equivalence of categories
\begin{equation*}
\s^{*}: D^{b}_{(T^c_{1})^{\c}}(T^c)_{T^c\mon}\cong D^{b}(\ol T^c)_{\ol T^c\mon}.
\end{equation*}
\item Let $\ol \cR$ be the completion of $\bfk[\pi_{1}(\ol T^{c})]$ at the augmentation ideal. We have an equivalence
\begin{equation*}
D^{b}(\ol\cR\lmod_{\nil}) \cong D^{b}(\ol T^c)_{\ol T^c\mon}.
\end{equation*}
Here $\ol\cR\lmod_{\nil}$ is the category of $\ol\cR$-modules of finite dimension over $\bfk$. It sends $M\in \ol\cR\lmod_{\nil}$ to the local system $\cL_{M}$ on $\ol T^c$ whose stalk at $1\in \ol T^c$  is $M$ and the monodromy representation of $\pi_{1}(\ol T^c)$ is given by the $\ol\cR$-module structure on $M$. 

\item Combining (1)(2)(3) we get an equivalence
\begin{equation}\label{DA}
D^{b}_{T^c_{1}}(T^c)_{T^c\mon}\cong \bigoplus_{\chi: \pi_{0}(T^c_{1})\to \bfk^{\times}} D^{b}(\ol\cR\lmod_{\nil}).
\end{equation}
\end{enumerate}
For a collection of finite-dimensional $\ol\cR$-modules $(M_{\chi})_{\chi}$ indexed by characters $\chi: \pi_{0}(T^c_{1})\to \bfk^{\times}$, the equivalence \eqref{DA} sends $\op M_{\chi}$ on the right side to $\op (\s^{*}\cL_{M_{\chi}}\ot \uk_{\chi})\in D^{b}_{T^c_{1}}(T^c)_{T^c\mon}$.
\elem
\begin{proof}
For (3) see \cite[Corollary A.4.7(1)]{BY} or \cite[Lemma 4.1]{BR}. The rest of the lemma is clear.
\end{proof}

We also have a description of $D^{b}_{T^c_{1}}(\pt)$ following \cite{GKM} as modules over the homology algebra of the torus $(T^{c}_{1})^{\c}$.  Let
\begin{equation*}
\L_{\bu}=H_{*}((T^c_{1})^{\c}, \bfk)
\end{equation*}
as a graded algebra in degrees $0,-1,\cdots, -\dim T^{c}_{1}$.

\lem{GKM}
\begin{enumerate}
\item The forgetful functor $D^{b}_{T^c_{1}}(\pt)\to D^{b}_{(T^c)_{1}^{\c}}(\pt)$ restricts to an equivalence for each $\chi$:
\begin{equation*}
D^{b}_{T^c_{1}}(\pt)_{\chi}\isom D^{b}_{(T^c_{1})^{\c}}(\pt).
\end{equation*}

\item Taking the stalk induces an equivalence 
\begin{equation*}
\RG(\pt, -): D^{b}_{(T^c_{1})^{\c}}(\pt)\cong D^{f}_{+}(\L_{\bu}\lmod).
\end{equation*}
Here $D^{f}_{+}(\L_{\bu}\lmod)$ denotes the full subcategory of the bounded below derived category of dg $\L_{\bu}$-modules with cohomology finitely generated over $\L_{\bu}$.

\item  Combining (1)(2), there is an equivalence
\begin{equation}\label{eq pt}
D^{b}_{T^c_{1}}(\pt)\cong \bigoplus_{\chi: \pi_{0}(T^c_{1})\to \bfk^{\times}} D^{b}(\L_{\bu}\lmod).
\end{equation}
\end{enumerate}
\elem
\prf
(2) follows from \cite[Theorem 11.2]{GKM}. The rest of the statements are clear.
\epr

\lem{comp DA}
\begin{enumerate}
\item The equivalence \eqref{DA} extends to a canonical equivalence
\begin{equation}\label{comp DA}
\wh D^{b}_{T^c_{1}}(T^c)_{T^c\mon}\cong \bigoplus_{\chi: \pi_{0}(T^c_{1})\to \bfk^{\times}}  D^{f}(\ol\cR\lmod)
\end{equation}
where $D^{f}(\ol\cR\lmod)$ is the bounded derived category of finitely generated $\ol\cR$-modules. 

\item Under the equivalences \eqref{comp DA} and \eqref{eq pt}, the adjunction $(\pi_{\da}, \pi^{\da})$ (see \eqref{pi da}) can be identified with the composition of adjunctions
\begin{equation*}
\xymatrix{\bigoplus_{\chi}  D^{f}(\ol\cR\lmod)\ar@<.5ex>[r]^-{\bfk\Lot_{\ol\cR}(-)} & \bigoplus_{\chi} D^{f}(\bfk\lmod)\ar@<.5ex>[r]^-{\L_{\bu}\ot_{\bfk}(-)}\ar@<.5ex>[l]^-{\textup{forg}}
& \bigoplus_{\chi} D^{f}_{+}(\L_{\bu}\lmod) \ar@<.5ex>[l]^-{\textup{forg}}}
\end{equation*}
Here both right adjoints are the forgetful functors for the ring homomorphisms $\ol\cR\to \bfk$ (augmentation) and $\bfk\to \L_{\bu}$.

\end{enumerate}

%
\elem
\begin{proof} 
(1) Let $\s: T^c\to \ol T^c$ be the projection.  Combining the equivalences in \refl{monA}(1)(2) we have an equivalence
\begin{equation*}
\Phi: \bigoplus_{\chi} D^{b}(\ol T^c)_{\ol T^c\mon}\isom D^{b}_{T^c_{1}}(T^c)_{T^c\mon}
\end{equation*}
given by sending $(\cF_{\chi})_{\chi}$ to $\op\s^{*}\cF_{\chi}\ot \uk_{\chi}$. Passing to pro-objects we get an equivalence $\pro(\Phi)$. We claim that $\pro(\Phi)$ restricts to an equivalence of full subcategories
\begin{equation*}
\wh\Phi: \bigoplus_{\chi}\wh D^{b}(\ol T^c)_{\ol T^c\mon}\isom \wh D^{b}_{T^c_{1}}(T^c)_{T^c\mon}.
\end{equation*}
Note here the completions on the two sides are with respect to different torus actions. Let $\cF_{\chi}=(\cF_{\chi,n})_{n\ge0}\in \pro D^{b}(\ol T^c)_{\ol T^c\mon}$. We need to show that each $\cF_{\chi}$ satisfies the two conditions  defining $\wh D^{b}(\ol T^c)_{\ol T^c\mon}$ if and only if the pro-object $\op_{\chi}\uk_{\chi}\ot\s^{*}\cF_{\chi,n}$ satisfies the two conditions  defining $\wh D^{b}_{T^c_{1}}(T^c)_{T^c\mon}$. This easily reduces to check the same statement for $\chi=1$: i.e., $(\cF_{n})_{n}$ satisfies $\ol\pi$-constancy (where $\ol \pi: \ol T^c\to \pt$) and  if and only if $(\s^*\cF_{n})_{n}$ satisfies $\pi$-constancy and uniform boundedness. Since $\s^*$ is $t$-exact up to a shift, the equivalence of uniform boundedness is clear. Now
\begin{equation}\label{pibar}
\pi_{!}\s^*\cF_{n}=\ol\pi_{!}\s_{!}\s^*\cF_{n}\cong (\ol \pi_{!}\cF_{n})\ot H^{*}_{c}((T^c_{1})^{\c},\bfk).
\end{equation}
Therefore, $(\pi_{!}\s^{*}\cF_{n})_{n}$ is isomorphic to a constant object in $\pro D^{b}_{T^c_{1}}(\pt)$ if and only if $(\ol\pi_{!}\cF_{n})_{n}$ is isomorphic to a constant object in $\pro D^{b}(\pt)$.

By \cite[Corollary A.4.7(2)]{BY} or \cite[Corollary 4.6]{BR}, we have $D^{b}(\ol T^c)_{\ol T^c\mon}\cong D^{f}(\ol\cR\lmod)$. Combining with $\wh\Phi$ it gives the equivalence \eqref{comp DA}.

(2) By tensoring with $\uk_{\chi}$ the case for general $\chi$ reduced to the case of trivial character $\chi$. In this case we need to describe the functor
\begin{equation*}
\pi_{\da}=\wh\pi_{!}[\dim T^{c}]: \wh D^{b}_{(T^{c}_{1})^{\c}}(T^{c})\to \wh D^{b}_{(T^{c}_{1})^{\c}}(\pt).
\end{equation*}
By \refl{monA}, for $M\in D^{f}(\ol\cR\lmod)$, the corresponding object in $\wh D^{b}_{(T^{c}_{1})^{\c}}(T^{c})$ is $\s^{*}\cL_{M}$. Then \eqref{pibar} implies
\begin{eqnarray*}
\pi_{\da}\s^{*}\cL_{M}&\cong& \wh\pi_{!}\s^{*}\cL_{M}[\dim T^{c}]\\
&=&(\wh{\ol \pi}_{!}\cL_{M})[\dim \ol T^{c}]\ot H^{*}_{c}((T^c_{1})^{\c},\bfk)[\dim T_{1}^{c}]\\
&\cong & \ol\pi_{\da}\cL_{M}\ot \L_{\bu}.
\end{eqnarray*}
Here $\ol\pi_{\da}\cL_{M}\in D^{b}(\pt)\cong D^{f}(\bfk\lmod)$.  It is well-known that $ \ol\pi_{\da}\cL_{M}\cong \bfk\Lot_{\ol \cR}M$. Therefore
\begin{equation*}
\pi_{\da}\s^{*}\cL_{M}\cong (\bfk\Lot_{\ol \cR}M)\ot \L_{\bu}.
\end{equation*}
\end{proof}

 

\sec{tilt}{Real tilting exercises}
 
In this section we develop the theory of (free-monodromic) tilting sheaves on spaces stratified by group orbits in the style of \cite{BBM}. A key assumption that ensures the existence of tilting sheaves is a certain cohomological bound for the links between strata.

\ssec{set}{The setting} 
We are back to the setup of \refss{mon}. Namely,  let $X$ be a real analytic variety. Let $T^c$ be a {\em compact} torus. Let $\pi:\wt X\to X$ be a principal right $T^c$-bundle. Let $H$ be a Lie group with an  analytic action on $\wt X$ from the left commuting with $T^c$. We assume additionally that the action of $H$ on $X$ has finitely many orbits $\{X_\l\}_{\l\in I}$, which then gives a Whitney stratification of $X$. We put $\wt X_\l=\pi^{-1}(X_\l)$. Let $i_\l\colon X_\l\to X$ and $\wt{i}_\l\colon \wt X_\l\to \wt X$ be the inclusions. We say $\l\le\mu$ if $X_\l\subset\overline{X}_\mu$. Put $d_\l:=\dim X_\l$.

Let $X_{\l}\subset X$ be an $H$-orbit, and $\wt X_{\l}=\pi^{-1}(X_{\l})$. We choose a point $x_{\l}\in X_{\l}$. Let $H_{x_{\l}}$ be its stabilizer in $H$. Then $H$ acts on the fiber $\pi^{-1}(x_{\l})$ commuting with the right $T^c$-action. This defines a homomorphism $\ph_{x_{\l}}: H_{x_{\l}}\to T^c$ such that the action of $h\in H$ on $\pi^{-1}(x_{\l})$ is by right translation by $\ph_{x_{\l}}(h)^{-1}$. Let $T^c_{x_{\l}}\subset T^{c}$ be the image of $\ph_{x_{\l}}$. Changing the choice of $x_{\l}$ changes $\ph_{x_{\l}}$ by $H$-conjugation. Since $T^c$ is abelian, $T^c_{x_{\l}}$ stays the same. Therefore $T^c_{x_{\l}}$ is independent of the choice of $x_{\l}$, and we denote it by $T^c_{\l}$. 

For $\l\in I$, let $\LS_{\l}$ denote the set of isomorphism classes of irreducible $H$-equivariant local systems on $X_{\l}$ with $\bfk$-coefficients. Then we have a canonical bijection
\begin{equation}\label{LS Hom}
\LS_{\l}\cong \Hom(\pi_0(T^c_\l),\bfk^{\times}).
\end{equation}
Denote
$$
\wt I=\{(\l,\chi)| \l\in I, \chi\colon\pi_0(T^c_\l)\to\bfk^\x \mbox{ is a character}\}.
$$
For $(\l,\chi)\in \wt I$, let $\uk_{\l,\chi}\in \LS_{\l}$ be the corresponding rank one local system on $X_{\l}$.


\ssec{assump}{Assumptions}
We assume:
\begin{equation}\label{stabilizer}
\parbox{10cm}{The subgroup $T^c_{\l}\subset T^c$ is closed, and  $\ker(H_{x_\l}\to T^c_\l)$ is contractible. }
\end{equation}
In particular, the identity component $(T^c_{\l})^{\c}$ of $T^c_{\l}$ is a compact torus.  For $\l\in I$, we put
$$\ol T^{c}_{\l}=T^{c}_{\l}/(T^c_{\l})^{\c}, \quad d_{\l}=\dim X_{\l}, \quad n_\l=\dim T^c- \dim T^c_\l=\dim \ol T^{c}_{\l}. $$
We impose the following parity condition: 
\begin{equation}\label{parity}
\parbox{10cm}{The parity of the numbers $d_\l+n_\l$ is the same for all $\l\in I$.}
\end{equation}

For $\l< \mu\in I$ and $x_{\l}\in X_{\l}$, let $L^{\mu}_{x_{\l}}$ be the link of $X_{\l}$ in $X_{\mu}$ at $x_{\l}$. More precisely, let $D_{x_{\l}}$ be a sufficiently small transversal slice to $X_{\l}$ at $x_{\l}$, and take $L_{x_{\l}}^{\mu}=D_{x_{\l}}\cap X_{\mu}$. Then $L^{\mu}_{x_{\l}}$ is a smooth manifold of dimension $d_{\mu}-d_{\l}$, well-defined up to diffeomorphism. 


We assume that for any $(\mu,\chi)\in \wt I$ and any $\l<\mu$, we have
\begin{equation}\label{upper van} 
H^i(L_{x_\l}^\mu, \uk_{\mu,\chi})=0 \mbox{  for } i>\frac{1}{2}(d_\mu+n_\mu-d_\l-n_\l).
\end{equation}
Here we use $\uk_{\mu,\chi}$ to denote the restriction of $\uk_{\mu,\chi}$ to $L^{\mu}_{x_{\l}}$.  

Since $L_{x_\l}^\mu$ is a smooth manifold of dimension $d_{\mu}-d_{\l}$, by Poincar\'e duality \eqref{upper van} is equivalent to the following bound for all $\chi: \pi_{0}(T^{c}_{\mu})\to\bfk^{\x}$
\begin{equation}\label{lower van} 
H^i_c(L_{x_\l}^\mu,  \uk_{\mu,\chi})=0 \mbox{ for } i<\frac{1}{2}(d_\mu-n_\mu-d_\l+n_\l).
\end{equation}

\rem{} If $n_{\l}=n_{\mu}$, a typical situation where the bounds \eqref{upper van} and \eqref{lower van} hold is when $L^{\mu}_{x_{\l}}$ is diffeomorphic to a Stein manifold (e.g. smooth affine complex algebraic variety) of complex dimension $\frac{1}{2}(d_\mu-d_\l)$.  If $n_{\mu}>n_{\l}$, then there is a possible overlap of length $n_{\mu}-n_{\l}$ for the nonvanishing degrees of $H^*_{c}(L_{x_\l}^\mu, \uk_{\mu,\chi})$ and $H^*(L_{x_\l}^\mu, \uk_{\mu,\chi})$, which can happen  if $L^{\mu}_{x_{\l}}$ fibers over a Stein manifold of complex dimension $\frac{1}{2}(d_\mu-n_\mu-d_\l+n_\l)$ with fibers compact manifolds of real dimension  $n_{\mu}-n_{\l}$ (e.g. compact torus fibration). On the other hand, if $n_{\mu}<n_{\l}$, then there is a gap of length at least $n_{\l}-n_{\mu}$ between the lowest nonvanishing degree of $H^*_{c}(L_{x_\l}^\mu, \uk_{\mu,\chi})$ and the highest nonvanishing degree of $H^*(L_{x_\l}^\mu, \uk_{\mu,\chi})$, which can happen if $L^{\mu}_{x_{\l}}$ admits a fiber bundle whose total space is diffeomorphic to a Stein manifold of complex dimension $\frac{1}{2}(d_{\mu}-n_{\mu}-d_{\l}+n_{\l})$ and whose fibers are compact manifolds of real dimension $n_{\l}-n_{\mu}$. In our applications, the cohomological bounds hold essentially for these reasons.
\erem

\ssec{t}{A new $t$-structure}
We define a perversity function $p:I\to \ZZ$ by
\begin{equation}\label{def p}
p_{\l}=\lfloor\frac{1}{2}(d_\l + n_\l)\rfloor.
\end{equation}
As in \cite[Section 2.1]{BBD}, it defines a $t$-structure on $D^{b}_{H}(X)$, whose heart we denote by ${}^{p}P_H(X)$.

For $(\l,\chi)\in \wt I$, let $\D_{\l,\chi}$ and  $\Na_{\l,\chi}\in D_{H}^{b}(X)$ be the $!$- and $*$-extensions of the local system $\uk_{\l,\chi}[p_{\l}]$ on $X_{\l}$.

\lem{stand} For $(\mu,\chi)\in \wt I$ and $ \l\le \mu$,  $i^{!}_{\l}\D_{\mu,\chi}$ lies in degrees $\ge -p_{\l}+n_{\l}-n_{\mu}$, and $i^{*}_{\l}\Na_{\mu,\chi}$ lies in degrees $\le -p_{\l}$. In particular $\Na_{\mu,\chi}$ lies in the heart of the $t$-structure ${}^{p}P_{H}(X)$.
\elem
\prf
We first show the statement about $\Na_{\mu,\chi}$. The stalk of $i^{*}_{\l}\Na_{\mu,\chi}$ at $x_{\l}\in X_{\l}$ is $H^{*}(L^{\mu}_{x_{\l}}, \uk_{\mu,\chi})[p_{\mu}]$.  By the cohomological bound \eqref{upper van}, it is concentrated in degrees $\le-p_{\mu}+p_{\mu}-p_{\l}=-p_{\l}$. Since $\Na_{\mu,\chi}$ has vanishing costalks, it lies in the  heart of the $t$-structure ${}^{p}P_{H}(X)$.

For  $i^{!}_{\l}\D_{\mu,\chi}$, we note that $\D_{\mu,\chi}$ is Verdier dual to $\Na_{\mu,\chi'}[d_{\mu}-2p_{\mu}]$ for some $\chi'$. Therefore $i^{!}_{\l}\D_{\mu,\chi}$ is Verdier dual to $i^{*}_{\l}\Na_{\mu,\chi'}[d_{\mu}-2p_{\mu}]$. Since  $i^{*}_{\l}\Na_{\mu,\chi'}[d_{\mu}-2p_{\mu}]$ lies in degrees $\le -p_{\l}-d_{\mu}+2p_{\mu}$, $i^{!}_{\l}\D_{\mu,\chi}$ lies in degrees $\ge -d_{\l}+p_{\l}+d_{\mu}-2p_{\mu}=-p_{\l}+n_{\l}-n_{\mu}$.

\epr

\ssec{singleorbit}{Free-monodromic local systems on an orbit}
We apply results from \refss{Apt} to the situation of a single orbit $H\bs X_{\l}$. We have adjoint functors
\begin{equation*}
\xymatrix{\wh D^{b}_{H}(\wt X_{\l})_{T^c\mon}\ar@<.5ex>[r]^-{\pi_{\l\da}} & D^{b}_{H}(X_{\l}) \ar@<.5ex>[l]^-{\pi^{\da}_{\l}}}
\end{equation*}

Let $\ol\cR_{\l}$ be the completion of the group algebra $\bfk[\pi_{1}(\ol T^c_{\l})]$ at the augmentation ideal. Let $\L_{\l,\bu}=H_{*}((T_{\l}^{c})^{\c}, \bfk)$ be the homology of the torus $(T_{\l}^{c})^{\c}$, viewed as a graded algebra in degrees $0,-1,\cdots, -\dim T_{\l}^{c}$.

\cor{comp orbit} For $\l\in I$,  we have  canonical equivalences
\begin{eqnarray}\label{comp orbit}
\wt \Phi_{\l}: \wh D^{b}_{H}(\wt X_{\l})_{T^c\mon}\cong \bigoplus_{\chi: \pi_{0}(T^c_{\l})\to \bfk^{\times}} D^{f}(\ol\cR_{\l}\lmod),\\
\Phi_{\l}: D^{b}_{H}(X_{\l})\cong \bigoplus_{\chi: \pi_{0}(T^c_{\l})\to \bfk^{\times}} D^{f}_{+}(\L_{\l,\bu}\lmod).
\end{eqnarray}
Under these equivalences, the adjunction $(\pi_{\l\da}, \pi_{\l}^{\da})$ takes the form
\begin{equation*}
\xymatrix{\bigoplus_{\chi}  D^{f}(\ol\cR_{\l}\lmod)\ar@<.5ex>[r]^-{\bfk\Lot_{\ol\cR_{\l}}(-)} & \bigoplus_{\chi} D^{f}(\bfk\lmod)\ar@<.5ex>[r]^-{\L_{\l,\bu}\ot_{\bfk}(-)}\ar@<.5ex>[l]^-{\textup{forg}}
& \bigoplus_{\chi} D^{f}_{+}(\L_{\l,\bu}\lmod) \ar@<.5ex>[l]^-{\textup{forg}}}
\end{equation*}
\ecor
\begin{proof}
Let $\wt x_{\l}\in \wt X_{\l}$ with image $x_{\l}\in X_{\l}$. Then $\pi^{-1}(x_{\l})\cong T^c$ given by the base point $\wt x_{\l}$. Restricting to $\pi^{-1}(x_{\l})$ gives an equivalence $i^{*}: D^{b}_{H}(X_{\l})\isom D^{b}_{H_{x_{\l}}}(T^c)_{T^c\mon}$, which extends to the completions $\wh i^{*}: \wh D^{b}_{H}(X_{\a})\isom \wh D^{b}_{H_{x_{\l}}}(T^c)_{T^c\mon}$. Since $\ker(\ph_{x_{\l}})$ is contractible, we have $D^{b}_{H_{x_{\l}}}(T^c)_{T^c\mon}\cong D^{b}_{T^c_{\l}}(T^c)_{T^c\mon}$ which also extends to completions. Similarly, $D^{b}_{H}(X_{\l})\cong D^{b}_{T^{c}_{\l}}(T^{c})$. It remains to apply \refl{comp DA}. It is easy to check that the equivalence thus defined is independent of the choice of $\wt x_{\l}$.
\end{proof}

In the situation of \refc{comp orbit}, for each  $(\l,\chi)\in \wt I$, we have a {\em free-monodromic} local system $\cL_{\l,\chi}\in \wh D^{b}_{H}(\wt X_{\l})_{T^c\mon}$  that corresponds to the free rank one $\ol\cR_{\l}$-module $\ol\cR_{\l}$ placed in the $\chi$-summand on the right side of \eqref{comp orbit}.



 \ssec{}{Free-monodromic sheaves}
For the triangulated category of free-monodromic sheaves we put $\CM_{H,X}:=\wh D^{b}_{H}(\wt X)_{T^c\mon}$ for short, whenever it does not cause ambiguity. Define a $t$-structure on $\CM_{H,X}$ using the same perversity function $p$ as in \eqref{def p}, whose  heart we denote by $\CP_{H,X}$.


For $\l\in I$ let $\CM_{\l}:=\wh D_{H}(\wt X_{\l})_{T^c\mon}$. We have adjunctions 
\begin{eqnarray*}
\xymatrix{\CM_{\l} \ar@<.5ex>[r]^{\wt i_{\l!}}  & \CM_{H,X} \ar@<.5ex>[l]^{\wt i_{\l}^{!}} } 
\quad 
\xymatrix{\CM_{\l} \ar@<-.5ex>[r]_{\wt i_{\l*}}  & \CM_{H,X} \ar@<-.5ex>[l]_{\wt i_{\l}^{*}} }
\end{eqnarray*}

For $(\l,\chi)\in \wt I$, we have the free-monodromic local system $\cL_{\l,\chi}\in \CM_{\l}$ as defined in  \refss{singleorbit}. We define standard and costandard objects $\tilDel_{\l,\chi}$ and $\tilna_{\l, \chi}$ of $\CM_{H,X}$ as, respectively, the $!$- and $*$-extensions under $\wt i_\l$ of $\CL_{\l,\chi}[p_{\l}]$.

\lem{push std} For any $(\l,\chi)\in \wt I$, we have
\begin{equation*}
\pi_{\da}\wt\D_{\l,\chi}\cong \L_{\l,\bu} \ot \D_{\l,\chi}, \quad \pi_{\da}\wt\Na_{\l,\chi}\cong \L_{\l,\bu} \ot \Na_{\l,\chi}.
\end{equation*}
\elem{}

\begin{proof}
By \refc{comp orbit} we have
\begin{equation*}
\pi_{\l\da}\cL_{\l,\chi}\cong \L_{\l,\bu}\ot \uk_{\l,\chi}.
\end{equation*}
Shifting by $p_{\l}$ and applying $i_{\l!}$ to the above we get
\begin{equation*}
\pi_{\da}\wt\D_{\l,\chi}=i_{\l!}\pi_{\l\da}\cL_{\l,\chi}[p_{\l}]\cong i_{\l!}(\L_{\l,\bu}\ot \uk_{\l,\chi} [p_{\l}])=\L_{\l,\bu} \ot \D_{\l,\chi}.
\end{equation*}

The argument for the costandard sheaf is the same, using that $\pi_{\da}\wt i_{\l*}\cong i_{\l*}\pi_{\l\da}$ because $\wh\pi_{!}=\wh\pi_{*}$ (since $\pi$ is proper).
\end{proof}

\lem{r} Let $(\mu,\chi)\in \wt I$ and $\l<\mu$.
\begin{enumerate}
\item The restriction $\wt i^*_\l\tilna_{\mu,\chi}$ lies in degrees $\le -p_{\l}$.
\item Under the equivalence $\Phi_{\l}$, the corestriction $\wt i^!_\l\tilDel_{\mu,\chi}$ corresponds to a collection $M_{\chi'}\in D^{f}(\ol \cR_{\l}\lmod)$ (where $\chi': \pi_{0}(T_{\l}^{c})\to \bfk^{\times}$) where each $M_{\chi'}$  can be represented by a complex of free $\ol \cR_{\l}$ in degrees  $\ge -p_{\l}$.
\item The standard and costandard objects $\tilDel_{\mu,\chi}$ and $\tilna_{\mu,\chi}$ lie in the heart of the $t$-structure $\CP_{H,X}$.
\end{enumerate}
\elem
\prf
(1)  By \refl{push std}, $\pi_{\l\da}\wt i^*_\l\tilna_{\mu,\chi}\cong  i^{*}_{\l}\pi_{\da}\tilna_{\mu,\chi}\cong \L_{\mu,\bu}\ot  i^*_\l\Na_{\mu,\chi}$.  By \refl{stand}, $i^*_\l\Na_{\mu,\chi}$ lies in degrees $\le -p_{\l}$, hence $\pi_{\l\da}\wt i^*_\l\tilna_\mu$ lies in degrees $\le -p_{\l}$ (note $\L_{\mu,\bu}$ is in non-positive degrees). From  the description of $\pi_{\l\da}$ given in \refc{comp orbit} we see that $\wt i^*_\l\tilna_\mu$ is in degrees $\le -p_{\l}$.


(2) Note the statement is stronger than saying that $M_{\chi'}$ lies in cohomological degrees $\ge -p_{\l}$, but saying that it admits a free resolution (as $\ol\cR_{\l}$-modules) in degrees $\ge-p_{\l}$. 

By \refl{push std}, we have $\pi_{\l\da}\wt i_{\l}^{!}\wt\D_{\mu,\chi}\cong i_{\l}^{!}\pi_{\da}\wt\D_{\mu,\chi}\cong \L_{\mu,\bu}\ot i_{\l}^{!}\D_{\mu,\chi}$. By \refl{stand}, $i_{\l}^{!}\D_{\mu,\chi}$ lies in degrees $\ge-p_{\l}+n_{\l}-n_{\mu}$. Therefore $\pi_{\l\da}\wt i_{\l}^{!}\wt\D_{\mu,\chi}\cong \L_{\mu,\bu}\ot i_{\l}^{!}\D_{\mu,\chi}$ lies in degrees $\ge-p_{\l}+n_{\l}-n_{\mu}-\dim T_{\mu}^{c}=-p_{\l}+n_{\l}-\dim T^{c}=-p_{\l}-\dim  T_{\l}^{c}$.  By \refc{comp orbit}, $\pi_{\l\da}\wt i_{\l}^{!}\wt\D_{\mu,\chi}$ corresponds to $\op_{\chi'}\L_{\l,\bu}\ot (\bfk\Lot_{\ol\cR_{\l}}M_{\chi'})$, hence $\bfk\Lot_{\ol\cR_{\l}}M_{\chi'}$ lies in degrees $-p_{\l}-\dim  T_{\l}^{c}+\dim T_{\l}^{c}=-p_{\l}$ (the lowest degree of $\L_{\l,\bu}$ is $-\dim T_{\l}^{c}$). This implies that $M_{\chi'}$ admits a free resolution (as $\ol\cR_{\l}$-modules) in degrees $\ge-p_{\l}$. 

(3) The statement follows from (1)(2) and the observation that $\wt i^*_\l\tilDel_{\mu,\chi}=0$ and $\wt i^!_\l\tilna_{\mu,\chi}=0$.

\epr

\ssec{tilt}{Tilting sheaves} 

We are in the situation of \refss{set}-\refss{assump}.

\defe{tilt}
An object $\CT$ of $\CM_{H,X}$ is called a {\em free-monodromic tilting sheaf}, if for each $\l\in I$, both complexes $\wt i_\l^*\CT$ and $\wt i_\l^!\CT$ are free-monodromic local systems in degree $-p_{\l}$. 
\edefe
From the definition, we see that an object $\CT$ of $\CM_{H,X}$ is a free-monodromic tilting sheaf if and only if $\CT\in \CP_{H,X}$ and $\CT$ has a $\tilDel$-flag and $\tilna$-flag, i.e. it is both a successive extension of $\tilDel_{\l,\chi}$'s and  a successive extension of $\tilna_{\l,\chi}$'s in $\CP_{H,X}$.

We will denote by $\Tilt(\CM_{H,X})\subset\CM_{H,X}$ the full additive subcategory free-monodromic tilting sheaves. 

\lem{mapTT} Let $\cT,\cT'\in \Tilt(\CM_{H,X})$.
\begin{enumerate}
\item The complex $\RHom_{\CM_{H,X}}(\cT,\cT')$ is concentrated in degree $0$, and $\Hom_{\CM_{H,X}}(\cT,\cT')$ has an increasing filtration indexed by the poset $I$ with associated graded $\Hom_{\CM_{H,X_{\l}}}(\wt i_{\l}^{*}\cT, \wt i_{\l}^{!}\cT')$, which is finite free over $\ol\cR_{\l}$.
\item In particular, if $\wt X_{\l}$ is open in the intersections of the supports of both $\cT$ and $\cT'$, then there is a natural surjection $\Hom_{\CM_{H,X}}(\cT,\cT')\surj \Hom_{\CM_{H,X_{\l}}}(\wt i_{\l}^{*}\cT,\wt i_{\l}^{*}\cT')$.
\end{enumerate} 
\elem
\prf (1) Write $\cT$ as a $\wt\D$-flag and $\cT'$ as a $\wt\nabla$-flag. The spectral sequence calculating $\RHom_{\CM_{H,X}}(\cT,\cT')$ using these filtrations gives a filtration on $\RHom_{\CM_{H,X}}(\cT,\CT')$ with associated graded $\RHom_{\CM_{H,X_{\l}}}(\wt i_{\l}^{*}\cT,\wt i_{\l}^{!}\cT')$, indexed by the poset $\l\in I_{\cT,\cT'}\subset I$, where $I_{\cT,\cT'}$ consists of $\l\in I$ such that $\wt X_{\l}$ is in the support of both $\cT$ and $\cT'$. By \refc{comp orbit}, $\RHom_{\CM_{H,X_{\l}}}(\wt i_{\l}^{*}\cT,\wt i_{\l}^{!}\cT')\cong \op_{\chi:\pi_{0}(T^{c}_{\l})\to \bfk^{\x}}\RHom_{\ol\cR_{\l}}(M_{\chi},M'_{\chi})$ where $(M_{\chi})$ and $(M'_{\chi})$ are free $\ol \cR_{\l}$-modules corresponding to $\wt i_{\l}^{*}\cT$ and $\wt i_{\l}^{!}\cT'$. In particular, $\RHom_{\CM_{H,X_{\l}}}(\wt i_{\l}^{*}\cT,\wt i_{\l}^{!}\cT')$ is concentrated in degree $0$ and is finite free over $\ol\cR_{\l}$. 

(2) Under the assumption  $\l$ is a maximal element in $I_{\cT,\cT'}$. The filtration in (1) can be arranged to have the last quotient $\Hom_{\CM_{H,X_{\l}}}(\wt i_{\l}^{*}\cT,\wt i_{\l}^{!}\cT')$.
\epr

\prop{tilt}
\begin{enumerate}
\item For each $(\l,\chi)\in \wt I$ there exists an indecomposable free-monodromic tilting sheaf $\CT_{\l,\chi}$ whose restriction to $\wt X_{\l}$ is $\CL_{\l,\chi}[p_{\l}]$ and whose support is the closure of $\wt X_\l$. Moreover, such $\CT_{\l,\chi}$ is unique up to isomorphism.
\item The map $(\l,\chi)\mapsto \cT_{\l,\chi}$ is a bijection between $\wt I$ and the set of isomorphism classes of indecomposable objects in $\mathrm{Tilt}(\CM_{H,X})$.
\item Every object $\CT\in\Tilt(\CM_{H,X})$ is isomorphic to a finite direct sum of $\cT_{\l,\chi}$, and the multiplicity of each $\cT_{\l,\chi}$ is an invariant of $\cT$.
\end{enumerate}
\eprop
\prf
(1) We first show the existence of $\cT_{\l,\chi}$. Proceeding by the descending induction on strata we may assume that $Z=X_{\mu}$ is a minimal stratum of $X$ and on the preimage $\wt U=\pi^{-1}(U)$ of its complement $U=X-Z$ there is an indecomposable free-monodromic tilting sheaf $\CT_U$ satisfying the required conditions. Let $\wt j\colon \wt U\to \wt X$ and $\wt i\colon \wt Z=\pi^{-1}(Z) \to \wt X$ be the inclusions. 

Let $\cC=\wt i^{*}\wt j_{*}\CT_{U}$ and $(M_{\chi'})_{\chi'}=\Phi_{\mu}(\cC)\in D^{f}(\ol\cR_{\mu}\lmod)$. Since $\CT_U$ has a $\tilna$-flag,  by \refl{r}, $H^{i}M_{\chi'}=0$ for $i> -p_{\mu}$. Since $\CT_U$ has a $\wt\D$-flag and $\cC[-1]\cong \wt i^{!}\wt j_{!}\CT_{U}$, then by \refl{r}, each $M_{\chi'}[-1]$ can be represented by a bounded complex of free $\ol\cR_{\mu}$-modules in degrees $\ge -p_{\mu}$, therefore $M_{\chi'}$ can be represented by a perfect complex of $\ol\cR_{\mu}$-modules in degrees $\ge -p_{\mu}-1$. Combining these, we see that $M_{\chi'}$ is quasi-isomorphic to a two-step complex of free $\ol\cR_{\mu}$-modules in degrees $-p_{\mu}-1$ and $-p_{\mu}$. Correspondingly, $\cC$ is quasi-isomorphic to a two-step complex $[\CA\xr{\ph} \CB]$ of \fmo local systems on $Z$ in degrees $-p_{\mu}-1$ and $-p_{\mu}$ respectively. We have a map $\cC\to \cA[p_{\mu}+1]$.


As in \cite{BBM} we now put $\CT\in \CP_{H,X}$ to be the extension
\begin{equation}\label{T*}
0\to \wt i_*\CA[p_{\mu}]\to\CT\to \wt j_*\CT_U\to 0
\end{equation}
defined by the map $\wt j_{*}\CT_{U}\to \wt i_{*}\cC\to \wt i_{*}\CA[p_{\mu}+1]$. From this we see that
\begin{equation}\label{TA}
\wt i^!\CT\cong \CA[p_{\mu}].
\end{equation}
Applying $\wt i^{*}$ to the exact sequence \eqref{T*} we see that $\cA[p_{\mu}]\to \wt i^{*}\CT\to \cC$ is a distinguished triangle hence
\begin{equation}\label{TB}
\wt i^{*}\CT\cong \CB[p_{\mu}].
\end{equation}
Therefore we also have an exact sequence
\begin{equation*}
0\to \wt j_!\CT_U\to \CT\to \wt i_*\CB\to 0
\end{equation*}
in $\CP_{H,X}$. From \eqref{TA} and \eqref{TB} and the fact that $\CT_{U}$ is \fmo tilting on $U$ we conclude that $\CT$ is a \fmo tilting sheaf on $X$.

To ensure $\cT$ is indecomposable, we may represent $\cC$ by a two-step complex $[\cA\xr{\ph}\cB]$ of \fmo local systems such that $\rk_{\ol\cR_{\mu}}\cA+\rk_{\ol\cR_{\mu}}\cB$ is minimal. This is equivalent to requiring that $\ol\ph: \cA/\fm\to \cB/\fm$ be zero, where $\fm\subset \ol\cR_{\mu}$ is the maximal ideal. Now if $\cT$ was decomposable as $\cT_{1}\op \cT_{2}$ where $\cT_{1}|_{\wt U}\ne0$ and $\cT_{2}\ne0$, then since $\cT_{U}$ is indecomposable, we must have $\cT_{2}|_{\wt U}=0$, hence $\cT_{2}$ must be supported on $\wt Z$. This would imply that the complex $[\cT_{2}\xr{\id}\cT_{2}]$ is a direct summand of $[\cA\xr{\ph}\cB]$, which contradicts the minimality of $\rk_{\ol\cR_{\mu}}\cA+\rk_{\ol\cR_{\mu}}\cB$.

Proceeding as above by induction on strata, we have constructed an indecomposable $\cT_{\l,\chi}\in \Tilt(\CM_{H,X})$ satisfying the desired properties. 

We show that:
\begin{equation}\label{T isom ext}
\parbox{10cm}{If $\a: \cT_{\l,\chi}\to \cT_{\l,\chi}$ is an isomorphism when restricted to $\wt X_{\l}$, then it is an isomorphism.}
\end{equation}
We show this by induction on strata. Using notation as above, we need to show that if $\a: \cT\to \cT$ is such that $\a_{U}=\a|_{\wt U}: \cT_{U}\to \cT_{U}$ is an isomorphism, then so is $\a$. Indeed, $\a$ induces an endomorphism of complexes $(a,b): [\cA\xr{\ph}\cB]\to  [\cA\xr{\ph}\cB]$, where $a\in \End_{\ol\cR_{\mu}}(\cA)$ and $b\in \End_{\ol\cR_{\mu}}(\cB)$. Since $\a_{U}$ is an isomorphism, it induce an automorphism on $\cC$. Therefore $(a,b)$ is a quasi-isomorphism. After reduction mod $\fm$, $\ph$ becomes zero, and $(a,b)$ mod $\fm$ induces a quasi-isomorphism of $[\cA/\fm\xr{0}\cB/\fm]$.Therefore $a$ and $b$ are both isomorphisms mod $\fm$, and hence $a$ and $b$ are isomorphisms by Nakayama's lemma. This shows in particular that $\a$ restricts to an automorphism of $\wt i^{*}\cT\cong \cB[p_{\mu}]$. Since $\a_{U}$ is an isomorphism as well, $\a$ is then an isomorphism.

The uniqueness of $\cT_{\l,\chi}$ up to isomorphism will be shown together with part (2). 

To prove (2) and (3), we first prove: 
\begin{equation}\label{summand}
\parbox{10cm}{Let $\cT\in\Tilt(\CM_{H,X})$. Let $\wt X_{\l}$ be open in the support of $\cT$, and suppose $\cL_{\l,\chi}[p_{\l}]$ is a direct summand of $\wt i^{*}_{\l}\cT$.  Then  $\cT_{\l,\chi}$ is isomorphic to a direct summand of $\cT$.}
\end{equation}
By assumption, we have maps $\a: \cL_{\l,\chi}[p_{\l}]\to \wt i^{*}_{\l}\cT$ and $\b:\wt i^{*}_{\l}\cT\to \cL_{\l,\chi}[p_{\l}]$ such that $\b\c\a=\id$. By \refl{mapTT}, $\a$ and $\b$ extend to $\wt\a: \cT_{\l,\chi}\to \cT$ and  $\wt\b: \cT\to\cT_{\l,\chi}$. Since $(\wt\b\c\wt\a)|_{\wt X_{\l}}$ is an isomorphism, by \eqref{T isom ext}, $\wt\b\c\wt\a$ is also an isomorphism. Therefore, $\cT_{\l,\chi}$ is isomorphic to a direct summand of $\cT$. 

(2) Clearly $\cT_{\l,\chi}\cong \cT_{\l',\chi'}$ if $(\l,\chi)\ne(\l',\chi')$. Now let $\cT\in\Tilt(\CM_{H,X})$ be indecomposable and let $\wt X_{\l}, \cL_{\l,\chi}$ be  as in \eqref{summand}. By \eqref{summand}, we must have $\cT\cong \cT_{\l,\chi}$. This shows that $\{\cT_{\l,\chi}\}_{(\l,\chi)\in I}$ exhaust all indecomposable objects in $\Tilt(\CM_{H,X})$.

(3) Extend the partial order on $I$ to a total order, and we prove this statement by induction on the strata. For $\cT\in \Tilt(\CM_{H,X})$, let $\l_{\cT}$ be the maximal element such that $\wt i_{\l}^{*}\cT\ne0$. When $\l_{\cT}$ is the minimal element in $I$, $\cT$ is supported on a closed stratum and the statement is clear. If $\l=\l_{\cT}$ is not minimal, let $\cL_{\l,\chi}[p_{\l}]$ is a direct summand of $\wt i^{*}_{\l}\cT$. By \eqref{summand}, $\cT\cong \cT_{\l,\chi}\op\cT_{1}$ for some $\cT_{1}\in \Tilt(\CM_{H,X})$. Then $\wt i^{*}_{\l}\cT_{1}$ has smaller rank than $\cT_{1}$. Repeating this process, some $\cT_{n}$ will have zero restriction to $\wt X_{\l}$. Therefore $\l_{\cT_{n}}<\l_{\cT}$ and we apply inductive hypothesis to $\cT_{n}$.

\epr

We next prove the functoriality of \fmo tilting sheaves under proper pushforward. The result is not used in this paper.


\prop{prop} 
Let $\wt X\to X$ and $\wt Y\to Y$ be $H$-equivariant $T^{c}$-torsors satisfying the conditions of \refss{assump} \footnote{In fact we don't need to assume the cohomological bounds for links in $X$ and $Y$.}. Let $\wt f\colon \wt X\to \wt Y$ be an $H\times T^{c}$-equivariant proper map.  
Then for any free-monodromic tilting sheaf  $\CT\in\CM_{H,X}$,  $\wt f_*\CT\in \CM_{H,Y}$ is also a free-monodromic tilting sheaf.
\eprop
\prf
Since $\wt f$ is proper, $\wt f_{*}$ commutes with restriction and corestriction to $H\times T^{c}$-orbits, it suffices to assume that both $X$ and $Y$ has a single stratum, so that $X=H/H_x, Y=H/H_y$ and $y=f(x)$, $H_{x}\subset H_{y}$. Let $T^c_x\subset T^c_y\subset T^{c}$ be the images of $H_{x}$ and $H_{y}$.

Let $n_{X}=\dim T^{c}/T^{c}_{x}$, $n_{Y}=\dim T^{c}/T^{c}_{y}$, $d_{X}=\dim X$, $d_{Y}=\dim Y$, $p_{X}=\lfloor\frac{d_{X}+n_{X}}{2}\rfloor$ and $p_{Y}=\lfloor\frac{d_{Y}+n_{Y}}{2}\rfloor$. 

We claim that 
\begin{equation}\label{pXY}
p_{X}-n_{X}=p_{Y}-n_{Y}.
\end{equation}
Indeed, let $U_{x}=\ker(H_{x}\to T^{c}_{x})$ and $U_{y}=\ker(H_{x}\to T^{c}_{x})$, which are contractible Lie groups by assumption. Since $f:X\to Y$ is proper, $H_{y}/H_{x}$ is compact. On the other hand, $H_{x}/H_{y}$ is a fibration over $T^{c}_{y}/T^{c}_{x}$ with contractible fiber $U_{y}/U_{x}$. This implies $U_{x}=U_{y}$, and $H_{y}/H_{x}\cong T^{c}_{y}/T^{c}_{x}$. Hence $d_{X}-d_{Y}=\dim H_{y}-\dim H_{x}=\dim T^{c}_{y}-\dim T^{c}_{x}=n_{X}-n_{Y}$. Therefore $d_{X}-n_{X}=d_{Y}-n_{Y}$, which implies \eqref{pXY}.

Restricting to the fibers over $x$ and $y$ respectively we have a commutative diagram
\begin{equation*}
\xymatrix{ \CM_{H,X}\ar[d]^{\wt f_{*}} \ar[r]^-{ \wt i_{x}^{*}}_-{\sim} & \wh D^{b}_{T_{x}^{c}}(T^{c})_{T^{c}\mon}\ar[d]^{\ph_{*}}\\
\CM_{H,Y}\ar[r]^-{ \wt i_{y}^{*}}_-{\sim} &\wh D^{b}_{T_{y}^{c}}(T^{c})_{T^{c}\mon}
}
\end{equation*}
where $\ph_{*}$ is the induction functor for the inclusion $T^{c}_{x}\subset T^{c}_{y}$. In this situation, we may assume $\wt i_{x}^{*}\cT\in D^{b}_{T_{x}^{c}}(T^{c})_{T^{c}\mon}$ is the shifted \fmo  local system $\cL_{\chi}[p_{X}]$ for some $\chi: \pi_{0}(T^{c}_{x})\to \bfk^{\x}$. The fiber of the quotient map $\ol\ph: T_{x}^{c}\bs T^{c}\to T_{y}^{c}\bs T^{c}$ is isomorphic to $T^{c}_{x}\bs T^{c}_{y}$, a compact Lie group whose neutral component is a torus of dimension $n_{X}-n_{Y}$. Therefore, $\ph_{*}\cL_{\chi}$ is a direct sum of \fmo local systems in degree $n_{X}-n_{Y}$. This implies that $\wt f_{*}\cT$ is a direct sum of \fmo local systems in degree $-p_{X}+n_{X}-n_{Y}$. By \eqref{pXY},  $-p_{X}+n_{X}-n_{Y}=-p_{Y}$, therefore $\wt f_{*}\cT$ is a \fmo tilting sheaf on $\wt Y$.
%
\epr

\sec{realgp}{Preliminaries on real groups}

In this section we collect some facts about real algebraic groups and their orbits on the flag variety. 

\ssec{}{Abstract Cartan and abstract Weyl group} Let $G$ be a connected reductive complex Lie group.  Let $X$ be the flag variety of $G$. 
Let $\bW$ be the {\em abstract Weyl group} of $G$. As a set, it is defined as the set of $G$-orbits on $X\times X$. For $w\in \bW$ let $X^{2}_{w}\subset X\times X$ be the corresponding $G$-orbit. Simple reflections in $\bW$ are those $s\in \bW$ such that $\dim X^{2}_{w}=\dim X+1$. When $(B,B')\in X^{2}_{w}$, we write $pos(B,B')=w$.

Consider the space $Y$ of pairs $T\subset B$ where $T$ is a maximal torus of $G$ and $B$ is a Borel subgroup containing it. Let $\scT$ be the space of maximal tori in $G$. If we choose a maximal torus $T\subset G$, we may identify $Y$ with $G/T$ and $\scT$ with $G/N_{G}(T)$. We get the following diagram where the maps are forgetting $T$ or $B$:
\begin{equation*}
\xymatrix{ X &  Y\ar[l]_-{\b}\ar[r]^-{\g} & \scT.
}
\end{equation*}
Both maps $\b,\g$ are $G$-equivariant. For each $(T\subset B)\in Y$ and $w\in \bW$, there is a unique $(T\subset B^{w})\in Y$ such that $pos(B,B^{w})=w$. This defines a group structure on $\bW$ so that $\g$ becomes a $G$-equivariant $\bW$-torsor.

For different choices of Borel subgroups $B$ and $B'$ of $G$, their reductive quotients are canonically identified, which we call the {\em abstract Cartan} $\bT$ of $G$. There is a canonical right action of $\bW$ characterized as follows: for any $(T\subset B)\in Y$, and $w\in \bW$, the following diagram is commutative
\begin{equation}\label{TT}
\xymatrix{ T\ar@{=}[d]\ar@{^{(}->}[r] & B\ar[r]^{can} & \bT\ar[d]^{(-)w}\\
T \ar@{^{(}->}[r] & B^{w}\ar[r]^{can}  & \bT}
\end{equation}
where the maps ``can'' are the canonical quotients.

For each $(T\subset B)\in Y$ we have the based root system $\Phi(G,B,T)$ where positive roots are those appearing in $B$. For different choices of $(T\subset B)\in Y$ these based root systems are canonically identified with one another. We denote the resulting canonical based root system by $\Phi$. It can be viewed as a based root system for the abstract Cartan $\bT$ with Weyl group $\bW$. Let $\underline\Phi$ be the underlying root system of $\Phi$ (i.e., without the basis).

\ssec{}{Real form}
Let $\sigma\colon G\to G$ be an anti-holomorphic involution on $G$ compatible with the group structure. We put $G_{\RR}=G^\sig$ for the corresponding real form, viewed as an algebraic group over $\RR$. We use  $G(\RR)$ to denote the real points of $G_{\RR}$, as a Lie group. Put $\grg=\mathrm{Lie}(G)$ and $\grg_\BR=\mathrm{Lie}(G_\BR)=\grg^\sig$. 

\lem{fxdpt}
\begin{enumerate}
\item Each Borel subgroup $B\subset G$ contains a $\sigma$-stable maximal torus $T$. 
\item Any two $\sigma$-stable maximal tori in $B$ are conjugate under $G(\RR)\cap B$.

\end{enumerate}

\elem
\prf
(1) The subgroup $H:=B\cap\sig(B)$ of $G$ is stable under $\s$, hence is the complexification of a real group $H_{\RR}\subset G_{\RR}$.  Note that $H$ is solvable and contains a maximal torus of $G$. By \cite[Proposition 7.10]{BS}, $H$ contains a maximal torus $T$ defined over $\RR$, then $T\subset H\subset B$ is $\s$-stable and is a maximal torus of $G$. 

(2) If $T,T'$ are two $\s$-stable maximal tori in $B$, then they are both in $H$, hence they are conjugate by some $u\in H^{u}$ (the unipotent radical of $H$). This implies $u^{-1}\s(u)\in N_{H}(T)\cap H^{u}=\{1\}$ hence $u\in H^{u}(\RR)\subset G(\RR)\cap B$.


\epr


\ssec{}{Real orbits on the flag variety} 

Let  $I$ be the set of $G(\RR)$-orbits on $X$ by left translation. For $\l\in I$, we denote the corresponding orbit by $O^{\RR}_{\l}$, so that
\begin{equation*}
X=\bigcup_{\l\in I}O^{\RR}_{\l}.
\end{equation*}

Let $\scT^{\s}\subset \scT$  be the set of $\s$-stable maximal tori in $G$.  Let $Y_{\s}\subset Y=\g^{-1}(\scT^{\s})$ whose points are pairs $(T\subset B)\in Y$ where $T$ is $\s$-stable. Both $\scT^{\s}$ and $Y_{\s}$ carry left actions of $G(\RR)$ by conjugation. We have the following diagram where the maps are forgetting $T$ or $B$:
\begin{equation*}
\xymatrix{ X &  Y_{\s}\ar[l]_-{\b_{\s}}\ar[r]^-{\g_{\s}} & \scT^{\s}
}
\end{equation*}
Both maps $\b_{\s},\g_{\s}$ are $G(\RR)$-equivariant.  The map $\g_{\s}$ is a $G(\RR)$-equivariant $\bW$-torsor.

\lem{real orb} 
\begin{enumerate}
\item The map $\b_{\s}: Y_{\s}\to X$ is surjective and it induces a bijection on $G(\RR)$-orbits $\underline{\b}_{\s}: G(\RR)\bs Y_{\s}\bij G(\RR)\bs X$.
\item The map $\g_{\s}: Y_{\s}\to \scT^{\s}$ is a $\bW$-torsor. It induces a surjective map $\underline{\g_{\s}}: G(\RR)\bs Y_{\s}\bij G(\RR)\bs \scT^{\s}$ whose fibers are $\bW$-orbits.
\item The right $\bW$ action on $G(\RR)\bs Y_{\s}$ defines a right $\bW$-action on $I=G(\RR)\bs X$ via the bijection $\underline{\b_{\s}}$, and the composition $\underline{\g_{\s}}\c\underline{\b_{\s}}^{-1}: I\to G(\RR)\bs \scT^{\s}$ is the quotient map by $\bW$. We denote the right $\bW$-action on $I$ by $\l\mapsto \l\cdot w$ ($\l\in I$, $w\in W$).
\end{enumerate}
\elem
\prf (1) follows from \refl{fxdpt}. (2) and (3) are clear.
\epr


\lem{TRT} Let $\l\in I, B\in O^{\RR}_{\l}$ and $T\subset B$ be a $\s$-stable maximal torus. Consider the isomorphism of tori
\begin{equation*}
\io_{B}: T\subset B\surj \bT.
\end{equation*}
We have:
\begin{enumerate}
\item The real structure $\s|_{T}$ induces via $\io_{B}$ a real structure $\s_{T\subset B}$ (anti-holomorphic involution) on $\bT$. Then $\s_{T\subset B}$ depends only on the orbit $\l$. We denote it by $\s_{\l}$. 
\item The real points $\bT^{\s_{\l}}$ under $\s_{\l}$ is the image of the canonical projection $G(\RR)\cap B\subset B\surj \bT$ (which is then independent of $B\in O^{\RR}_{\l}$). 

\end{enumerate}
\elem
\prf 
(1) By \refl{real orb}(1), any two such $(T\subset B)$ (with $B\in O^{\RR}_{\l}$) are $G(\RR)$-conjugate. For $g\in G(\RR)$, we have a commutative diagram
\begin{equation*}
\xymatrix{ T\ar[d]^{\Ad(g)} \ar[rr]^-{\io_{B}} &&  \bT\ar@{=}[d]^{\id}\\
\Ad(g)T\ar[rr]^-{\io_{\Ad(g)B}} &&  \bT}
\end{equation*}   
From this we conclude that $\s_{T\subset B}$ is the same as $\s_{\Ad(g)T\subset \Ad(g)B}$.

(2) We use the notation $H$ from the proof of part (1) of \refl{fxdpt}. We have $G(\RR)\cap B=H(\RR)$. Moreover, $T(\RR)$ is a maximal torus in the solvable real group $H(\RR)$. Since the kernel of the projection $H(\RR)\to \bT$ is unipotent, its image is the reductive quotient of $H(\RR)$. Therefore $T(\RR)$ maps isomorphically to the image of $H(\RR)\to \bT$. By definition, $T(\RR)$ also maps isomorphically via $\io_{B}$ to $\bT^{\s_{\l}}$. The statement follows.
\epr

\defe{att}
Let $T$ be a $\s$-stable maximal torus of $G$ with real points $T(\RR)$. We say that an orbit $O^{\RR}_{\l}$ is {\em attached to} $T$ if $\underline{\g_{\s}}\c\underline{\b_{\s}}$ maps $\l$ to the $G(\RR)$-orbit of $T$. In other words,  $O^{\RR}_{\l}$ is attached to $T$ if there exists a $T$-fixed point in $O^{\RR}_{\l}$.
\edefe

\ssec{}{Roots}
Fix a $\sig$-stable maximal torus $T\subset G$. Let $\Phi(G,T)$ be the set of roots of $G$ with respect to $T$.  

Note that $\sigma$ acts on the set of roots $\Phi(G,T)$: if $\a\in \Phi(G,T)$ viewed as a homomorphism $T\to \GG_{m}$ over $\CC$, then $\s\a: T\to \GG_{m}$ is defined as $t\mapsto \ol{\a(\s t)}$.

Let $\j{-,-}: \frg\times\frg\to \CC$ be the Killing form. Note that $\j{\s(x),\s(y)}=\overline{\j{x,y}}$ for any $x,y\in \frg$. In particular, $\j{x,\s(x)}\in\RR$.

\defe{roots}
A root $\alp\in \Phi(G,T)$ is called 
\begin{enumerate}
\item {\itshape complex} if $\sig\alpha\ne\pm\alpha$;
\item {\itshape real} if $\sig\alpha=\alpha$;
\item {\itshape compact imaginary} if $\sig\alpha=-\alpha$ and for nonzero $x\in \grg_\alp$,  $\j{x, \sig(x)}<0$. 
\item {\itshape noncompact imaginary} if $\sig\alpha=-\alpha$ and for nonzero $x\in \grg_\alp$, $\j{x, \sig(x)}>0$. 
\end{enumerate}
\edefe

\rem{}
This definition is compatible with the definition for a root system invariant under the corresponding Cartan involution (see, for example, \cite[Section 2]{V2}.
\erem

\ssec{}{Based root system attached to a real orbit}
Recall we have the abstract based root system $\Phi$ on $\bT$ with Weyl group $\bW$.  To each point $(T\subset B)\in Y_{\s}$ we have a canonical isomorphism of based root systems $\Phi(G,B,T)\cong \Phi$, under the isomorphism $T\subset B\surj \bT$. Since $T$ is $\s$-stable, $\s$ acts on the root system $\Phi(G,T)$ without necessarily preserving the positive roots. In particular, we get an involution $\s$ on the underlying root system $\underline\Phi$ of $\Phi$. The assignment $Y_{\s}\ni (T\subset B)\mapsto \Inv(\underline\Phi)$ (the set of involutions on $\underline\Phi$) is $G(\RR)$-invariant, hence it induces a map   $G(\RR)\bs Y_{\s}\to \Inv(\underline\Phi)$. Using the bijection $\underline\b_{\s}$ in \refl{real orb}, we get a map $G(\RR)\bs X=I\to \Inv(\underline\Phi)$. For $\l\in I$, we denote by $\Phi_{\l}$ the based root system $\Phi$ equipped with the involution constructed above on $\underline\Phi$.

For $\a\in \Phi_{\l}$, we can talk about whether it is real, complex or imaginary according to \refd{roots}.


For a simple root $\a\in \Phi$, let $X_{\a}$ be the partial flag variety parametrizing parabolic subgroups conjugate to $P_{\a}$ (generated by a Borel $B$ and root subgroup of $-\a$). Let  $\pi_\alpha\colon X\to X_\alpha$ be the projection which is a $\BP^1$-fibration.



There is a partial order on $I$: $\mu\le\l$ if and only if $O^{\RR}_{\mu}\subset \ol{O^{\RR}_{\l}}$.
The following statement is analogous to the results of \cite[Lemma 5.1]{V} and \cite[Sections 2.2, 2.3]{RS}.

\lem{afbr} Let $\l\in I$. Let $\a\in \Phi_{\l}$ be a simple root and $s_{\a}\in \bW$ be the corresponding simple reflection.
\begin{enumerate}
\item If $\a$ is a complex root, then $\l\cdot s_{\a}\ne \l$,  and $\pi_{\a}^{-1}\pi_{\a}(O_{\l}^{\RR})=O_{\l}^{\RR}\cup O_{\l\cdot s_{\a}}^{\RR}$. 
\begin{itemize}
\item If $\s\a>0$, then $\l<\l\cdot s_{\a}$ and $\pi_{\a}|O_{\l}^{\RR}$ is an isomorphism onto its image. 
\item If $\s\a<0$, then $\l\cdot s_{\a}<\l$ and $\pi_{\a}|O_{\l}^{\RR}$ is an $\AA^{1}$-fibration over its image.
\end{itemize}

\item If $\a$ is a real root, then $\l\cdot s_{\a}=\l$. Moreover, one of the following happens:
\begin{itemize}
\item Type I: there are two orbits $\mu^{+}, \mu^{-}>\l$ such that $\mu^{-}=\mu^{+}\cdot s_{\a}$, and $\pi_{\a}^{-1}\pi_{\a}(O_{\l}^{\RR})=O_{\l}^{\RR}\cup O_{\mu^{+}}^{\RR} \cup O_{\mu^{-}}^{\RR}$. Moreover, $\pi_{\a}|O_{\l}^{\RR}$ is an $S^{1}$-fibration, and $\pi_{\a}|O_{\mu^{+}}^{\RR}$ and $\pi_{\a}|O_{\mu^{-}}^{\RR}$ are $D^{2}$-fibrations ($D^{2}$ is an open real $2$-dimensional disk).
\item Type II:  there is $\mu>\l$ such that  $\mu\cdot s_{\a}=\mu$ and $\pi_{\a}^{-1}\pi_{\a}(O_{\l}^{\RR})=O_{\l}^{\RR}\cup O_{\mu}^{\RR}$, and $\pi_{\a}|O_{\l}^{\RR}$ is an $S^{1}$-fibration over its image.
\end{itemize}
 
\item If $\a$ is compact imaginary, then $\l\cdot s_{\a}=\l$ and $\pi_{\a}^{-1}\pi_{\a}(O_{\l}^{\RR})=O_{\l}^{\RR}$.
\item If $\a$ is noncompact imaginary, then there is a unique $\mu<\l$ with $\mu\cdot s_{\a}=\mu$ such that one of the following happens:
\begin{itemize}
\item Type I: $\l\cdot s_{\a}\ne \l$, $\mu<\l\cdot s_{\a}$, and $\pi_{\a}^{-1}\pi_{\a}(O_{\l}^{\RR})=O_{\l}^{\RR}\cup O_{\l\cdot s_{\a}}^{\RR} \cup O_{\mu}^{\RR}$. Moreover, $\pi_{\a}|O_{\mu}^{\RR}$ is an $S^{1}$-fibration, and $\pi_{\a}|O_{\l}^{\RR}$ and $\pi_{\a}|O_{\l\cdot s_{\a}}^{\RR}$ are $D^{2}$-fibrations ($D^{2}$ is an open real $2$-dimensional disk).
\item Type II:   $\l\cdot s_{\a}=\l$, $\pi_{\a}^{-1}\pi_{\a}(O_{\l}^{\RR})=O_{\l}^{\RR}\cup O_{\mu}^{\RR}$, and $\pi_{\a}|O_{\mu}^{\RR}$ is an $S^{1}$-fibration over its image.
\end{itemize}
 
\end{enumerate}
\elem

%
%
%
%
%
%

\prf See {\itshape loc.cit.}
\epr

\ssec{}{Real Weyl groups}
Let $T$ be a $\s$-stable maximal torus. Let $T_{\RR}=T^{\s}$ be the corresponding real form of $T$. We denote the Weyl group of $T$ by $W$, which carries an action of $\s$.  Let $W(\RR)=W^{\s}\subset W$ be the fixed point subgroup of $\sig$. Indeed, $W$ carries the structure of a finite \'etale group scheme over $\RR$, and $W(\RR)$ thus defined is its group of $\RR$-points. On the other hand we have the Weyl group $W(G(\RR), T(\RR))=N_{G(\RR)}(T(\RR))/T(\RR)$.  Clearly we have $W(G(\RR), T(\RR))\subset W(\RR)$.

\lem{realWeyl}
\begin{enumerate}
\item We have $W(\RR)=\mathrm{Stab}_W(T(\RR))$.
\item Suppose $T$ is a $\s$-stable maximal torus that is maximally split, then the inclusion $W(G(\RR), T(\RR))\subset W(\RR)$ is an equality. 
\end{enumerate}
\elem
\prf
(1) Let $\grt_{\RR}$ (resp. $\grt$) be the Lie algebra of $T(\RR)$ (resp. $T$). Then both $W(\RR)=W^{\s}$ and $\mathrm{Stab}_W(T(\RR))$ consist of $w\in W$ that preserve the decomposition $\grt=\grt_{\RR}\op i\grt_{\RR}$. 


(2) follows from \cite[Propositions 3.12 and 4.16]{V2} as there are no noncompact imaginary roots for maximally split torus.
\epr

If $T\in\scT^{\s}$ and $B$ is a Borel subgroup containing $T$, we get a canonical identification $T\cong \bT$ and $W\cong \bW$ compatible with the actions.

\lem{StabWlam} In the above situation, suppose $B\in O^{\RR}_{\l}$. Then under the isomorphism $W\cong \bW$ (induced by $B$), $W(G(\RR),T(\RR))$ is identified with the stabilizer $\bW_{\l}$ of $\l$ under the right action of $\bW$ on $I$. 
\elem
\prf Let $\io: W\isom \bW$ be the canonical isomorphism. If $\dot{w}\in N_{G(\RR)}(T(\RR))$ has image $w\in W$, then $B$ and $\Ad(\dot w)B$ are both in $O^{\RR}_{\l}$ and both contain $T$. By definition $\io(w)=pos(B,\Ad(\dot w)B)$.  By the definition of the $\bW$-action, we see that $\l\cdot \io(w)=\l$. This proves $\io(W(G(\RR), T(\RR)))\subset\bW_{\l}$. 

Conversely, suppose $v\in \bW$ is such that $\l\cdot v=\l$, then $B^{v}$ (the unique Borel containing $T$ such that $pos(B,B^{v})=v$) lies in $O^{\RR}_{\l}$. Therefore there exists $g\in G(\RR)$ such that $B^{v}=\Ad(g)B$. Now $T$ and $\Ad(g)T$ are both $\s$-stable maximal tori in $B^{v}$, by \refl{fxdpt}, there exists $h\in G(\RR)\cap B^{v}$ such that $\Ad(hg)T=T$, i.e., $hg\in G(\RR)\cap N_{G}(T)=N_{G(\RR)}(T(\RR))$. Since $\Ad(hg)B=\Ad(h)(B^{v})=B^{v}$, we see that the image $w$ of $hg$ in $W$ satisfies $\io(w)=v$. Therefore $v\in \io(W(G(\RR), T(\RR)))$. This finishes the proof. 
\epr

\ssec{cross}{The cross action of $\bW$ on $\wt I$}
Recall from \refl{TRT}(1) the real form $\s_{\l}$ on $\bT$ for $\l\in I$.  Let $\bT^{c}_{\l}\subset \bT^{c}$ be the image of the real points $\bT^{\s_{\l}}$ under the projection $\bT\to\bT^{c}$. By \refl{TRT}(2), $\bT^{c}_{\l}$ is the image of $G(\RR)\cap B\to \bT\surj \bT^{c}$ for any $B\in O^{\RR}_{\l}$, therefore this notation is consistent with that of \refss{set}. 

If $T$ is a $\s$-stable maximal torus and $O^{\RR}_{\l}$ is attached to $T$, then by \refl{TRT},  via a choice of $B\in (O^{\RR}_{\l})^{T}$, $\bT_{\l}^{c}$ can be identified with the compact part of $T(\RR)$.  In particular, we have an isomorphism
\eq{pi0}
\io_{B}: \pi_{0}(T(\RR))\isom \pi_{0}(\bT^{c}_{\l}).
\eeq

Recall the right action of $\bW$ on $\bT$, which induces a right action on $\bT^{c}$.  From the commutative diagram \eqref{TT} we see that 
\begin{equation*}
\bT^{c}_{\l}\cdot w=\bT^{c}_{\l\cdot w}, \quad \forall\l\in I, w\in \bW
\end{equation*}
as subgroups of $\bT^{c}$. In particular, the action of $\bW$ on $\bT$ restricts to an action of $\bW_{\l}$ on $\bT_{\l}^{c}$.

Recall the notations $\LS_{\l}$, the bijection \eqref{LS Hom} and $\wt I=\{(\l,\chi)|\chi:\pi_{0}(\bT^{c}_{\l})\to \bfk^{\x}\}$ from \refss{set}. 
In \cite[Definition 4.1]{V2}, Vogan defines a {\em cross action} of $\bW$ on $\wt I$ that lifts the action of $\bW$ on $I$ from \refl{real orb}.  We will turn the cross action $w\times(-)$ into a right action and denote it by
\begin{equation*}
(\l,\chi)\cdot w:=w\times (\l,\chi), \quad \forall w\in \bW, (\l,\chi)\in \wt I.
\end{equation*}
By \cite[Definition 6.3]{V}, for a simple reflection $s\in \bW$, its action on $(\l,\chi)\in \wt I$ is as follows:
\begin{enumerate}
\item If $\a_{s}$ is a complex root for $O^{\RR}_{\l}$, then there is a canonical isomorphism $\pi_{0}(\bT^{c}_{\l})\cong \pi_{0}(\bT^{c}_{\l\cdot s})$ (both are identified with the $G(\RR)$-equivariant fundamental group of the image of $O_{\l}^{\RR}$ in the partial flag variety $X_{s}$. 
Under this isomorphism, we have $(\l,\chi)\cdot s=(\l\cdot s,\chi)$. 

\item If $\a_{s}$ is type I noncompact imaginary, then there is a canonical isomorphism $\pi_{0}(\bT^{c}_{\l})\cong \pi_{0}(\bT^{c}_{\l\cdot s})$ for the same reason as above. Under this isomorphism, we have $(\l,\chi)\cdot s=(\l\cdot s,\chi)$. 

\item If $\a_{s}$ is type II real, and the local system $\bfk_{\chi}$ on  $O^{\RR}_{\l}$ extends to $\pi_{s}^{-1}\pi_{s}(O^{\RR}_{\l})$. Let $\mu>\l$ be as in \refl{afbr}. Then $\bT^{c}_{\mu}\cap \bT^{c}_{\l}\subset \bT^{c}_{\l}$ has index $2$, which induces a sign character
\begin{equation}\label{signs}
\sgn_{s}: \pi_{0}(\bT^{c}_{\l})\surj \bT^{c}_{\l}/\bT^{c}_{\l}\cap \bT^{c}_{\mu}\cong\{\pm1\}\subset \bfk^{\times}.
\end{equation}
Then $(\l,\chi)\cdot s=(\l, \chi\ot\sgn_{s})$. 

\item In all other cases, $(\l,\chi)\cdot s=(\l,\chi)$.
\end{enumerate}


\sec{Matsuki}{Matsuki correspondence as Ringel duality} 

In this section, we apply the theory of tilting sheaves developed in \refs{tilt} to real group orbits on the enhanced flag variety.

\ssec{Matsuki setup}{Setup}

Let $G$ be a connected semisimple complex Lie group together with the anti-holomorphic involution $\sig$. We put $G_\BR=G^\sig\subset G$ to  be the corresponding real form.  Let $K_\BR\subset G_\BR$ be a maximal compact subgroup of $G_\BR$ and $K\subset G$ be the complexification of $K_\BR$.

Choose a maximal torus $T_{0}$ and a Borel subgroup $B_{0}\subset G$, let $U_{0}\subset B_{0}$ be the unipotent radical. Via $B_{0}$, $T_{0}$ is identified with the abstract Cartan $\bT$. Let $\bT=\bT^{>0}\times \bT^{c}$ be the decomposition of the $\CC$-points of the abstract Cartan $\bT$ into the neutral component  $\bT^{>0}$ and the maximal compact subgroup $\bT^{c}$.

Consider the flag variety $X=G/B_{0}$ of $G$. Consider the $\bT^{c}$-torsor $\pi\colon \wt X=(G/U)/\bT^{>0}\to X$.  When $G$ is adjoint, we can define $\wt X$ as the space of $(B, \{x_{\a}\})$ where $B$ is a Borel subgroup and for each simple root $\a\in \Phi$, $x_{\a}$ is basis of the $\a$-weight space of $U/[U,U]$ under the $\bT$-action. For general $G$,  we need to choose a base point $B_{0}\in X$ in order to define $\wt X$. We will consider the left action of $H=G(\BR)$ on $\wt X$.

\ssec{}{Matsuki correspondence} Let us recall some of the results and constructions of \cite{MUV}.

The Matsuki correspondence is a canonical order-reversing bijection between the $G(\RR)$-orbits and $K$-orbits on $X$. This bijection is realized by a $K(\RR)$-invariant flow $\Phi_t\colon X\to X \, (t\in\BR)$, such that 


\begin{enumerate}
\item The fixed point set of $\Phi_t$ is a finite union of $K(\RR)$-orbits $\{C_\l\}_{\l\in I}$ indexed a finite set $I$.

\item For any $\l\in I$ set $O_{\l}^{\BR}$ (resp. $O^{K}_{\l}$) to be the $G(\RR)$-orbit (resp. $K$-orbit) of $X$ containing $C_{\l}$.  Then we have $O_\l^\BR=\{x\in X| \lim_{t\to +\infty} \Phi_t(x)\in C_\l\}$ and $O_\l^K=\{x\in X| \lim_{t\to -\infty} \Phi_t(x)\in C_\l\}$. The bijection $O_{\l}^{\RR}\bij O^{K}_{\l}$  gives an order reversing bijection between the  $G(\RR)$-orbits and $K$-orbits on $X$ which is called the {\itshape Matsuki correspondence for orbits}.

\item The orbits $\{O^\BR_\l\}$ and $\{O^K_\l\}$ intersect pairwise transversally. The natural projections $O^\BR_\l\to C_\l$ and $O^K_\l\to C_\l$ given by the limits of the flow $\Phi_{t}$ are fibrations with contractible fibers.

\end{enumerate}

For $\l\in I$, let $\wt O^{\RR}_{\l}$ be the preimage of $O^{\RR}_{\l}$ in $\wt X$. For $\l\in I$, recall the subgroup $\bT^{c}_{\l}\subset \bT^{c}$ defined in \refss{cross}, which by \refl{TRT}(2) coincides with the subgroup $T^{c}_{\l}$ defined in \refss{set} for $T^{c}=\bT^{c}$.

\lem{Kstab}
Let $x\in C_{\l}$ and $K(\RR)_{x}$ be the stabilizer of $x$ under $C_{\l}$. Then the projection $K(\RR)_{x}\to \bT^{c}$ is injective and its image is $\bT^{c}_{\l}$. Moreover, the $G(\RR)$-action on $X$ satisfies the condition \eqref{stabilizer}. 
\elem
\prf

Since $K(\RR)_{x}$ is compact and solvable (as an algebraic group over $\RR$), its neutral component is a compact torus.  The projection $K(\RR)_{x}\to \bT^{c}$ is injective with closed image. 

By \refl{TRT}(2), $\bT_{\l}^{c}$ is the image of the projection $\g_{x}: G(\RR)_{x}\to \bT\to \bT^{c}$. The square of the projection $\bT\to \bT^{c}$ is real algebraic, hence $\g^{2}_{x}: G(\RR)_{x}\to \bT^{c}$ is real algebraic,  and its image is therefore a real algebraic subgroup of $\bT^{c}$, hence closed with finitely many components.  This implies that the image $\bT^{c}_{\l}$ is a closed subgroup of $\bT^{c}$ with finitely many components. The kernel $\ker(\g_{x})$ is an extension of a closed subgroup of $\bT^{>0}$ and the unipotent real algebraic group $\ker(\wt\g_{x})$, hence $\ker(\g_{x})$ is contractible, and $G(\RR)_{x}\to \bT^{c}_{\l}$ is a homotopy equivalence.

Since $x$ lies in the critical $K(\RR)$-orbit $C_{\l}$, which is homotopy equivalent to $O_{\l}^\BR$, the inclusion $K(\RR)_{x}\incl G(\RR)_{x}$ is a homotopy equivalence. Therefore $K(\RR)_{x}\incl \bT^{c}_{\l}$ is also a homotopy equivalence. Now $\bT^{c}_{\l}/K(\RR)_{x}$ is both a compact manifold and contractible, hence it must be a point. We conclude that $K(\RR)_{x}$ maps isomorphically to $\bT^c_\l\subset \bT^{c}$.
\epr

\ssec{}{Tilting sheaves on real orbits}

We will now observe that 

\prop{} The $\bT^{c}$-torsor $\pi\colon\wt X\to X$ with the action of $H=G(\RR)$ satisfies the conditions of \refss{assump}. 
\eprop

\prf
The condition \eqref{stabilizer} is already checked in \refl{Kstab}.

We check the parity condition \eqref{parity}. From the transversality of $O^{\RR}_{\l}$ and $O^{K}_{\l}$ we get
\begin{equation*}
d_{\l}+2\dim_{\CC}O^{K}_{\l}=2\dim_{\CC} X+\dim C_{\l}.
\end{equation*}
By \refl{Kstab}, 
\begin{equation}\label{dim C lam}
\dim C_{\l}=\dim K(\RR)-\dim \bT^{c}_{\l}=\dim K(\RR)-\dim \bT^{c}+n_{\l}.
\end{equation}
These imply
\begin{equation}\label{d-n}
d_{\l}-n_{\l}=2\codim_{\CC}O^{K}_{\l}+\dim K(\RR)-\dim \bT^{c}.
\end{equation}
Since $\dim K(\RR)-\dim \bT^{c}$ is independent of $\l$, \eqref{parity} holds.

We check the cohomological bound of links \eqref{upper van}. 
Suppose $\l<\mu$ and consider now the intersection $O^\BR_\mu\cap O^K_\l$. The limit maps $\lim_{t\to\pm\infty}\Phi_t(p)$ provide a diagram
$$
\xymatrix{
& Y:=O^\BR_\mu\cap O^K_\l \ar[dl]_{h_{\l}} \ar[dr]^{h_{\mu}} & \\
C_\l & & C_\mu }
$$
with the maps being the fibrations. For $x_{\l}\in C_{\l}$, the fiber of $O^{K}_{\l}\to C_{\l}$ over $x_{\l}$ is a transversal slice to $O^{\RR}_{\l}$, therefore the fiber $h_{\l}^{-1}(x_{\l})$ is diffeomorphic to the link $L^{\mu}_{x_{\l}}$, which we shall denote by ${}^{\RR}L^{\mu}_{x_{\l}}$ to emphasize it is the link for $G(\RR)$-orbits. Similarly, for $x_{\mu}\in C_{\mu}$, $h_{\mu}^{-1}(x_{\mu})$ is diffeomorphic to the link ${}^{K}L^{\l}_{x_{\mu}}$ for the $K$-orbit $O^{K}_{\mu}$ in $O^{K}_{\l}$. Let
\begin{equation*}
\cF_{\chi}=Rh_{\l*}h_{\mu}^{*}\uk_{\mu,\chi}, \quad \cF^{i}_{\chi}:=R^{i}h_{\l*}h_{\mu}^{*}\uk_{\mu,\chi}.
\end{equation*}
Since $h_{\l}$ is $K(\RR)$-equivariant, $\cF^{i}_{\chi}$ is a $K(\RR)$-equivariant local system on $C_{\l}$. We need to show that
\begin{equation*}
\cF^{i}_{\chi}=0, \quad i>\D:=\frac{1}{2}(d_{\mu}+n_{\mu}-d_{\l}-n_{\l}).
\end{equation*}
As a $K(\RR)$-equivariant local system on $C_{\l}$, $\cF^{i}_{\chi}$  is determined by its stalk at $x_{\l}$ and the monodromy action of $\pi_{0}(K(\RR)_{x_{\l}})=\pi_{0}(\bT^{c}_{\l})$ (by \refl{Kstab}) on $\cF^{i}_{\chi}|_{ x_{\l}}$.  Then $\cF^{i}_{\chi}=0$ if and only if $H^{\dim C_{\l}}(C_{\l}, \cF^{i}_{\chi}\ot \uk_{\l,\theta})=0$ for any character $\theta: \pi_{0}(\bT^{c}_{\l})\to \bfk^{\x}$.

Now we introduce $K(\RR)$-equivariant complexes and local systems on $C_{\mu}$ for any character $\theta: \pi_{0}(\bT^{c}_{\l})\to \bfk^{\x}$
\begin{equation*}
\cG_{\theta}=Rh_{\mu*}h_{\l}^{*}\uk_{\l,\theta}, \quad \cG^{i}_{\theta}=R^{i}h_{\mu*}h_{\l}^{*}\uk_{\l,\theta}.
\end{equation*}
Note that
\begin{equation}\label{HY}
H^{*}(C_{\l}, \cF_{\chi}\ot \uk_{\l,\theta})\cong H^{*}(Y, {}_{\theta}\uk_{\chi})\cong H^{*}(C_{\mu}, \cG_{\theta}\ot \uk_{\mu,\chi}).
\end{equation}
Here ${}_{\theta}\uk_{\chi}$ is the local system $h_{\l}^{*}\uk_{\l,\theta}\ot h_{\mu}^{*}\uk_{\mu,\chi}$ on $Y$. 

Let $N$ be the largest number such that $\cF_{\chi}^{N}\ne0$. We need to show $N\le \D$.  By the first isomorphism in \eqref{HY} and the Leray spectral sequence, $H^{\dim C_{\l}+N}(Y, {}_{\theta}\uk_{\chi}))\cong H^{\dim C_{\l}}(C_{\l}, \cF^{N}_{\chi}\ot \uk_{\l,\theta})$. Therefore, it suffices to show that
\begin{equation}\label{Hi van Y}
H^{\dim C_{\l}+N}(Y, {}_{\theta}\uk_{\chi})),  \quad \mbox{ for } i>\dim C_{\l}+\D, \forall (\theta,\chi).
\end{equation}
Reversing the argument using the second equality in \eqref{HY}, we see that \eqref{Hi van Y} holds if and only if 
\begin{equation}\label{van Gtheta}
\cG^{i}_{\theta}=0, \quad i>\dim C_{\l}+\D-\dim C_{\mu}, \forall \theta.
\end{equation}
Since $\cG^{i}_{\theta}$ is a local system whose stalks calculate link cohomology for $K$-orbits, \eqref{van Gtheta} holds if and only if
\begin{equation}\label{Klink van}
H^{i}({}^{K}L^{\l}_{x_{\mu}},\uk_{\l,\theta})=0, \quad i>\dim C_{\l}+\D-\dim C_{\mu}, \forall \theta.
\end{equation}

By \eqref{dim C lam} and \eqref{d-n}, we have
\begin{equation*}
\dim C_{\l}-\frac{1}{2}(d_{\l}+n_{\l})=\frac{1}{2}(\dim K(\RR)-\dim \bT^{c})-\codim_{\CC}O^{K}_{\l}.
\end{equation*}
Therefore,
\begin{eqnarray}\label{dim Delta}
&&\dim C_{\l}+\D-\dim C_{\mu} \\
\notag&=& \left(\dim C_{\l}-\frac{1}{2}(d_{\l}+n_{\l})\right)-\left(\dim C_{\mu}-\frac{1}{2}(d_{\mu}+n_{\mu})\right)\\
\notag&=&\codim_{\CC}O^{K}_{\mu}-\codim_{\CC}O^{K}_{\l}=\dim_{\CC} O^{K}_{\l}-\dim_{\CC} O^{K}_{\mu}.
\end{eqnarray}

Let $i^{K}_{\l}: O^{K}_{\l}\incl X$ be the inclusion of the $K$-orbits. Then $H^{*}({}^{K}L^{\l}_{x_{\mu}},\uk_{\l,\theta})$ is the stalk of $i^{K}_{\l*}\uk_{\l,\theta}$ at $x_{\mu}$. By \cite[Proposition 4.1]{HMSW} (a result due to Beilinson and Bernstein), $i^{K}_{\l}$ is an affine map. Therefore, $i^{K}_{\l*}\uk_{\l,\theta}[\dim_{\CC}O^{K}_{\l}]$ is perverse, and the stalk of $i^{K}_{\l*}\uk_{\l,\theta}$ vanishes in degrees $>\dim_{\CC}O^{K}_{\l}-\dim_{\CC}O^{K}_{\mu}=\dim C_{\l}+\D-\dim C_{\mu}$ (by \eqref{dim Delta}). This proves \eqref{Klink van} and confirms \eqref{upper van}.
\epr

In this present setting, we further abbreviate the notation by putting $\CM_{G_\BR}:=\CM_{G_\BR, X}$. We denote the $t$-structure on $\CM_{G_{\BR}}$ given by the perversity function $p$ defined in \eqref{def p} by $(\CM^{\le 0}_{G_{\BR}}, \CM^{\ge0}_{G_{\BR}})$, and denote its heart  by $\CP_{G_\BR}$.

The general result in \refp{tilt} then implies:

\cor{tilt GR} Let $\TGR$ be the additive subcategory of $\CP_{G_\BR}$ consisting of free-monodromic tilting sheaves. Then:
\begin{enumerate}

\item For each $(\l,\chi)\in \wt I$,  there is up to isomorphism a unique indecomposable free-monodromic tilting sheaves $\cT_{\l,\chi}$ supported on the closure of $\wt{O}^{\RR}_{\l}$ with $\wt i^{*}_{\l}\cT_{\l,\chi}\cong \cL_{\l,\chi}[p_{\l}]$. 

\item The map $(\l,\chi)\mapsto \cT_{\l,\chi}$ is a bijection between $\wt I$ and the set of isomorphism classes of indecomposable objects in $\TGR$.

\item Every object $\cT\in\TGR$ is isomorphic to a finite direct sum of $\cT_{\l,\chi}$, and the multiplicity of each $\cT_{\l,\chi}$ is an invariant of $\cT$.

\end{enumerate}
\ecor

The Matsuki equivalence functors of \cite[Theorem 6.6 (2)]{MUV} are compatible with passing to the completion. By the general argument of 2.3 in \cite{BBM} the following result is now also a formal consequence of \cite[Theorems 5.7 and 6.6]{MUV}.

\th{Mat} The Matsuki functors in \cite{MUV} induces an equivalence
\begin{equation*}
\wh D^{b}_{K}(G/U)_{\bT\mon}\cong \wh D^{b}_{G(\RR)}(\wt X)_{\bT^{c}\mon}=\CM_{G_{\RR}}
\end{equation*}
It is a Ringel duality in the sense that it sends standard objects to costandard objects, and sends the projective cover of $\IC(O^{K}_{\l}, \bfk_{\l,\chi})$ to the indecomposable free-monodromic tilting sheaf $\cT_{\l,\chi}$, for all $(\l,\chi)\in \wt I$.
\eth





\ssec{rank1}{Quasi-split case with split rank $1$} We describe real group orbits and \fmo tilting sheaves for semisimple quasi-split real groups whose maximally split torus has rank $1$. The following is the list of such groups:
\begin{equation*}
R_{\CC/\RR}\SL_{2}, \quad  R_{\CC/\RR}\PGL_{2}, \quad \SL_{2,\RR}, \quad \PGL_{2,\RR}, \quad \SU(2,1) \mbox{ and }\PU(2,1).
\end{equation*}
In the first two cases, $\CM_{G_{\RR}}$ is identified with the completed version of the monodromic Hecke category (see \refss{Hecke}), for which \fmo tilting sheaves are described in \cite[Appendix C]{BY}. Below we consider the rest of the cases.


1) Let $G=\SL_2$ and $G_\BR=\SL_{2,\BR}$. Respectively, we have $K=\SO_2$ and $K_\BR=\SO_{2,\RR}$. The flag variety is $X=\BP^1(\BC)\simeq S^2$ and $\wt X=(\BC^2-0)/\RR^{>0}\simeq S^{3}$. 

There are three $G(\RR)$-orbits on $\BP^1$: upper and lower half planes $\HH_+, \HH_-$ and the real flag variety $\BP^1(\BR)=S^1$. We will label them by $I=\{+, -, 0\}$. The stabilizer of a point in $\HH_+$ or $\HH_-$ is conjugated to $\SO_2(\BR)$ and $n_+=n_-=0$. The constant local systems are the only $G(\RR)$-equivariant free-monodromic local systems on $\pi^{-1}(\HH_+)$ and $\pi^{-1}(\HH_-)$. The stabilizer of a point in $\BP^1(\BR)$ is conjugated to the group of real points of the Borel subgroup $B(\BR):=\{\begin{pmatrix}
           a & b \\
           0 & a^{-1}  
        \end{pmatrix}|a\in \BR^\x, b\in \BR \}.$ Since $\pi_0(B(\BR))=\BZ/2\BZ$ and $\pi_1(B(\BR))=0$, we have $n_0=1$ and there are two indecomposable $G(\RR)$-equivariant free-monodromic local systems $\cL_{0,\triv}$ and $\cL_{0,\sgn}$ on the closed orbit, corresponding to the trivial and sign characters of $\pi_0(B(\BR))=\BZ/2\BZ$. 
        
        We see that $p_{+}=p_{-}=p_{0}=1$ and the abelian category $\CP_{G_\BR}$ is (up to a shift) the category of constructible free-monodromic sheaves in the orbit stratification. 
        
        The category $\CM_{G_\BR}$ is the direct sum of two blocks: $\CM_{G_\BR}^{\circ}\op\CM_{G_\BR}^{\sgn}$. The block $\CM_{G_\BR}^{\sgn}$ is generated by the standard object $\wt\D_{0,\sgn}$ (extension by zero of $\cL_{0,\sgn}$), and the other standard objects $\wt \D_{+}, \wt\D_{-}$ and $\wt \D_{0,\triv}$ generate the other block $\CM_{G_\BR}^{\circ}$.   Correspondingly $\cP_{G_\BR}=\CP_{G_\BR}^{\circ}\op \CP^{\sgn}_{G_\BR}$.



Below we focus on the block $\CP^{\circ}_{G_\BR}$. We represent an object in $\cP^{\circ}_{G_\BR}$  by the following diagram
$$
\xymatrix{
V_+ & V_0 \ar[r]^{s_-}\ar[l]_{s_+}\ar@(ul,ur)^m  & V_-},
$$
where $V_+, V_{0}$ and $V_{-}$ are the vector spaces of stalks on $\pi^{-1}(\HH_{+}), \pi^{-1}(\PP^{1}(\RR))$ and $\pi^{-1}(\HH_{+})$, $m\colon V_0\to V_0$ is the pro-unipotent monodromy operator along the fibers of $\pi$, and $s_\pm\colon \Coker(m-1)\to V_\pm$ are the cospecialization maps. In these terms we have
$$
\tilDel_+=\left(\xymatrix{
\bfk & 0 \ar[r]\ar[l]\ar@(ul,ur)  & 0}\right), \quad
\tilDel_-=\left(\xymatrix{
0 & 0 \ar[r]\ar[l]\ar@(ul,ur)  & \bfk}\right),
$$
$$
\tilna_+=\left(\xymatrix{
\bfk & \bfk \ar[r]\ar[l]_{\id}\ar@(ul,ur)^\id  & 0}\right), \quad
\tilDel_+=\left(\xymatrix{
0 & \bfk \ar[r]^{\id}\ar[l]\ar@(ul,ur)^\id  & \bfk}\right),
$$
$$
\tilDel_0=\tilna_0=\CT_0=\left(\xymatrix{
0 & \bfk[[x]] \ar[r]\ar[l]\ar@(ul,ur)^{(1+x)\cdot}  & 0}\right).
$$
We see that the following are the tilting extensions of the local systems on the open orbits
$$
\CT_+=\left(\xymatrix{
\bfk & \bfk[[x]] \ar[r]\ar[l]_{\id}\ar@(ul,ur)^{(1+x)\cdot}  & 0}\right),  \quad 
\CT_-=\left(\xymatrix{
0 & \bfk[[x]]  \ar[r]^{\id}\ar[l]\ar@(ul,ur)^{(1+x)\cdot}  & \bfk}\right).
$$


2) Let $G=\PGL_2$ and $G_\BR=\PGL_{2,\BR}$. Respectively, we have $K=\mathrm{O}_2(\BC)/\{\pm I_{2}\}$ and $K_\BR=\mathrm{O}_{2,\BR}/\{\pm I_{2}\}$. The flag variety is $X=\BP^1(\BC)\simeq S^2$ and $\wt X=(\BC^2-0)/\BR^{\x}\simeq\BP^3(\BR)$. 

There are two $G(\RR)$-orbits on $\BP^1$: the real flag variety $\BP^1(\BR)$ and its complement $\HH_{\pm}=\HH_{+}\coprod \HH_{-}$. 
We label them by $I=\{0, h\}$. The stabilizer of a point in $\BP^1(\BR)$ is conjugated to the group of real points of the Borel subgroup $B(\BR):=\{\begin{pmatrix}
           a & b \\
           0 & c  
        \end{pmatrix}|a, c\in \BR^\x, b\in \BR \}/\{\begin{pmatrix}
           d & 0 \\
           0 & d  
        \end{pmatrix}|d\in \BR^\x\}.$ Since $\pi_0(B(\BR))=\BZ/2\BZ$ and $\pi_1(B(\BR))=0$, we have $n_0=1$, there are two indecomposable $G(\RR)$-equivariant free-monodromic local systems $\cL_{0,\triv}$ and $\cL_{0,\sgn}$ on the closed orbit corresponding to the trivial and sign characters of $\pi_0(B(\BR))$. The stabilizer of a point in $\HH$ is conjugated to $\SO_2(\BR)/\{\pm1\}$ and $n_h=0$.
        
We see that $p_{h}=p_{0}=1$ and the abelian category $\CP_{G_\BR}$ is (up to a shift) the category of constructible free-monodromic sheaves in the orbit stratification. We represent an object of this category by the following diagram
$$
\xymatrix{
V_{\triv} \ar@(ul,ur)^{m_{\triv}}\ar[r]^{s_{\triv}} & V_{h} & V_{\sgn}\ar[l]_{s_{\sgn}}\ar@(ul,ur)^{m_{\sgn}}},
$$
where $V_{h}$ is the stalk on $\pi^{-1}(\HH_{\pm})$, $V_{\triv}$ and $V_{\sgn}$ are eigenspaces of the stalk at a point of $\pi^{-1}(\PP^{1}(\BR))$ corresponding, respectively, to the trivial and sign characters of $\pi_0(B(\BR))=\BZ/2\BZ$. The maps $m_{\triv}: V_{\triv}\to V_{\triv}$ and $m_{\sgn}: V_{\sgn}\to V_{\sgn}$ are the pro-unipotent monodromy automorphisms along the fibers of $\pi$, and $s_{\triv}\colon \Coker(m_{\triv}-1)\to V_{h}$, $s_{\sgn}\colon \Coker(m_{\sgn}-1)\to V_{h} $ are the cospecialization maps. In these terms we have
$$
\tilDel_{0,\triv}=\tilna_{0,\triv}=\CT_{0,\triv}=\left(\xymatrix{
\bfk[[x]] \ar@(ul,ur)^{(1+x)\cdot}\ar[r] & 0 & 0\ar[l]\ar@(ul,ur)}\right),
$$   
$$
\tilDel_{0,\sgn}=\tilna_{0,\sgn}=\CT_{0,\sgn}=\left(\xymatrix{
0 \ar@(ul,ur)\ar[r] & 0 & \bfk[[x]]\ar[l]\ar@(ul,ur)^{(1+x)\cdot}}\right),
$$
$$
\tilDel_h=\left(\xymatrix{
0 \ar@(ul,ur)\ar[r] & \bfk & 0\ar[l]\ar@(ul,ur)}\right),
\tilna_h=\left(\xymatrix{
\bfk \ar@(ul,ur)^{\id}\ar[r]^{\id} & \bfk & \bfk\ar[l]_{\id}\ar@(ul,ur)^{\id}}\right).
$$ 
We see that the following is the tilting extensions of the local system on the open orbit
$$
\CT_h=\left(\xymatrix{
\bfk[[x]] \ar@(ul,ur)^{(1+x)\cdot}\ar[r]^{\id} & \bfk & \bfk[[x]]\ar[l]_\id\ar@(ul,ur)^{(1+x)\cdot}}\right).
$$

3) 
Let $G=\SL_{3}$ and $G_\BR=\SU(2,1)$ or $G=\PGL_3$ and $G_\BR=\PU(2,1)$. These cases have an identical geometry and, so we will discuss only the first one. 

The six orbits are defined by the signature of the hermitian form on each vector space in the flag. Namely, we put $I=\{0|+0,0|+-,+|+0,-|+-, +|+-, +|++\}$, where to the left of $|$ we have a signature of the hermitian form restricted to $V_1$ and to the right restricted on $V_2$ for a given flag $V_1\subset V_2$. The poset of the orbits is 
\begin{equation}\label{SU21 orbits}
\xymatrix{
& & 0|+0 \ar[dl] \ar[dr] & & \\
& 0|+- \ar[dl] \ar[dr] & & +|+0\ar[dl] \ar[dr] & \\
-|+- & & +|+- & & +|++.
}
\end{equation}
We have $$n_{0|+0}=n_{0|+-}=n_{+|+0}=1, n_{-|+-}=n_{+|+-}=n_{+|++}=0$$ and
$$d_{0|+0}=3, d_{0|+-}=d_{+|+0}=5, d_{-|+-}=d_{+|+-}=d_{+|++}=6.$$ The stabilizer of a point in each orbit is connected. 

The local geometry of the orbits in a transversal slice to a point in the closed orbit can be modeled as follows. Consider the space $\RR^{3}$ together with two smooth surfaces $S_{0|+-}$ and $S_{+|+0}$ that are tangent to each other at their only intersection point $S_{0|+0}$ in $\RR^{3}$. Then the complement of $S_{0|+-}\cup S_{+|+0}$ has 3 connected components $S_{-|+-}, S_{+|+-}$ and $S_{+|++}$, such that the closure relation of these strata is given by \eqref{SU21 orbits}.

From the local model, we see that the closures $\overline{O}^\BR_{0|+-}$ and $\overline{O}^\BR_{+|+0}$ are smooth and are tangent to each other along the closed orbit $O^\BR_{0|+0}$. We conclude that $\nabla_{-|+-}$ and $\nabla_{+|++}$ are shifted (in degree $-3$) constant sheaves on the closures $\overline{O}^\BR_{-|+-}$ and $\overline{O}^\BR_{+|++}$. 
For $\nabla_{+|+-}$, if we restrict to the complement of the closed orbit $\overline{O}^\BR_{+|+-}\bs O^{\RR}_{0|+0}$, it is the constant sheaf in degree $-3$. Along the closed orbit $O^{\RR}_{0|+0}$ its stalks are one dimensional at degree $-3$ and one dimensional at degree $-2$. 

We have $\tilDel_{0|+0}=\tilna_{0|+0}=\CT_{0|+0}$. Since the stratifications of $S_{0|+-}$ and $S_{+|+0}$ are by a smooth point $S_{0|+0}$ and its complement we conclude that the free-monodromic tilting objects $\CT_{0|+-}$ and $\CT_{+|+0}$ behave fiberwise over the closed orbit as the tilting sheaves in the complex group case (see \cite[Appendix C]{BY}).

For $\l\in I$ put $\calC_\l:=\ker(\CT_\l\to \wt i_{0|+0,*}\wt i_{0|+0}^*\CT_\l)\in \cP_{G_{\RR}}$. It follows that we have distinguished triangles:
\eq{su(2,1)-1}\tilDel_{-|+-}\to\calC_{-|+-}\to\tilDel_{0|+-}\to,\eeq
\eq{su(2,1)-2}\tilDel_{+|++}\to\calC_{+|++}\to\tilDel_{+|+0}\to,\eeq
\eq{su(2,1)-3}\tilDel_{+|+-}\to\calC_{+|+-}\to\tilDel_{0|+-}\oplus\tilDel_{+|+0}\to.\eeq
These calculations are local near a point of the orbits $\wt O^{\RR}_{0|+-}$ and $\wt O^{\RR}_{+|+0}$, and are identical to the case of $\SL_{2,\RR}$.  

\sec{Hk}{Hecke action}

In this section we study the action of the Hecke category on $\CM_{G_{\RR}}$, and the behavior of tilting sheaves under the action.

\ssec{Hecke}{Hecke category}
If we view the complex group $G$ as a real group, i.e., $R_{\CC/\RR}G$, its complexification $(R_{\CC/\RR}G)_{\CC}$ is $G\x G'$, where $G'$ is $G$ equipped with the opposite complex structure. Similarly, the flag variety of $(R_{\CC/\RR}G)_{\CC}$ is $X\times X'$ (where $X'$ is $X$ equipped with the opposite complex structure).  We can identify $\CM_{R_{\CC/\RR}G}$ with the  (free-monodromic) {\em Hecke category}
\begin{equation*}
\CH_{G}=\wh D^{b}_{G}(\wt X\times \wt X)_{\bT^{c}\times \bT^{c}\mon}.
\end{equation*}
It is equivalent to the triangulated category $\wh D^{b}_{U}(\wt X)_{\bT^c\mon}$, which was studied in \cite{BY} and \cite{BR}.



Note that in this case we have $\wt I\isom I=W$, where $W$ is the Weyl group of $G$, which we consider equipped with the Bruhat order. For $w\in W$ let $X^{2}_{w}\subset X^{2}$ be the corresponding $G$-orbit,  and $\wt X^{2}_{w}$ be its preimage in $\wt X^{2}$. In particular, for $w=e$ (identity in $W$),  $\wt X^{2}_{e}$ is the preimage of the diagonal $\D(X)\subset X^{2}$ in $\wt X^{2}$.  

For any $w\in W$, we have that $\bT^{c}_{w}\subset \bT^{c}\times \bT^{c}$ is the graph of the $w$-action on $\bT^c$. In particular, $n_{w}=r=\dim \bT^{c}$. Also $\dim_{\RR}X^{2}_{w}=2(\dim_{\CC}X+\ell(w))$. The perversity function is 
$p_{w}=\dim_{\CC}X+\ell(w)+\lfloor r/2\rfloor$.
We will use a slightly different perversity function $p':W\to \Z$
\begin{equation*}
p'_{w}=\ell(w)+r
\end{equation*}
to define  a $t$-structure on $\CH_{G}$. Denote its heart by $\CH_{G}^{\hs}$.

We have the free-monodromic standard objects and costandard objects $\wt \D_{w}, \wt\Na_{w}\in \CH^{\hs}_{G}$ being the $!$- and $*$- extensions of the \fmo $G$-equivariant local system on $\wt X^{2}_{w}$ placed in degree $-p'_{w}$. In particular, we denote $\tilDel_e=\tilna_e$ by $\wt\d$. This is a \fmo local system on the closed stratum $\wt X^{2}_{e}$ placed in degree $-r$. If we identify $\wt X^{2}_{e}$ with $\wt X\times \bT^{c}$ such that $(\wt x, t)\in \wt X\times \bT^{c}$ corresponds to $(\wt x, \wt xt)$, then $\wt\d$ is the extension by zero of $\bfk\bt\cL[r]$ on  $\wt X\times \bT^{c}$.

We define a monoidal structure on $\CH_{G}$ by convolution. Let $\pr_{ij}: \wt X^{3}\to \wt X^{2}$ be the projection to the $(i,j)$-factors. The convolution product on $\CH_{G}$ is defined using
\begin{equation*}
\cK_{1}\star\cK_{2}:=\pr_{13*}(\pr_{12}^{*}\cK_{1}\ot \pr_{23}^{*}\cK_{2})
\end{equation*}
for $\cK_{1},\cK_{2}\in \CH_{G}$. This equips $\CH_{G}$ with a monoidal structure with monoidal unit $\wt\d$. If we identify $\CH_{G}$ with $\wh D^{b}_{U}(\wt X)_{\bT^{c}\mon}$, this monoidal structure is the same as the one defined in  \cite[Section 4.3]{BY} and \cite[Section 7]{BR}.


\ssec{}{Hecke action}
We define a right action of $\CH_{G}$ on $\CM_{G_\BR}$ as follows.

For a $G(\RR)$-equivariant, $\bT^c$-monodromic sheaf $\CF_1$ on $\wt X$ and an $U$-equivariant $\bT^c$-monodromic sheaf $\CF_2$ on $\wt X$ we have a $G(\RR)$-equivariant, $\bT^c$-monodromic sheaf $\CF_1\boxtimes \CF_2$ on the fibered product $G\times^{\bT^{>0}U} \wt X$. We push it forward with respect to the action map $G\times^{\bT^{>0}U} \wt X\to \wt X$ to define a $G(\RR)$-equivariant, $\bT^c$-monodromic sheaf $\CF_1\star\CF_2$ on $\wt X$. As in Lemma 4.3.1 in \cite{BY} or Lemma 7.4 in \cite{BR} the operation is compatible with passing to the limit and we have an action of the monoidal category $\CH_G$ on $\CM_{G_\BR}$:
$$\star\colon\CM_{G_\BR}\x\CH_G\to\CM_{G_\BR}.$$

Let $\cF\in \CM_{G_\BR}$ and $\cK\in \CH_{G}$.  Consider the two projections
\begin{equation*}
\pr_{1}, \pr_{2}: \wt X\times \wt X\to \wt X.
\end{equation*}
Define
\begin{equation*}
\cF\star \cK=\pr_{2*}(\pr_{1}^{*}\cF\ot \cK).
\end{equation*}
More precisely, we start with the above definition for the uncompleted monodromic categories and pass to the limit as in  \cite[Lemma 4.3.1]{BY} or  \cite[Lemma 7.4]{BR}. This defines a right action of the Hecke category $\CH_{G}$ on $\CM_{G_\BR}$:
$$\star\colon\CM_{G_\BR}\x\CH_G\to\CM_{G_\BR}.$$

We now compute the action of some (co)standard objects in $\CH_G$ on some (co)standard objects of $\CM_{G_\BR}$. The calculation is parallel to \cite[Lemma 3.5]{LV} (which is for $K$-orbits on $X$). 


\lem{stdconv} Let  $(\l,\chi)\in \wt I$  and $s\in \bW$ be a simple reflection.  Let $\a_{s}$ be the corresponding simple root in the based root system $\Phi_{\l}$. We will use the notations from \refl{afbr} and \refss{cross}. 

Suppose $O_{\l}^{\RR}$ is closed inside $\pi_{s}^{-1}\pi_{s}(O^{\RR}_{\l})$. Then we have the following cases:


\begin{enumerate}
\item If $\alp_{s}$ is a complex root and $\sig\alp>0$. Let $\mu=\l\cdot s>\l$ be such that $O^{\RR}_{\mu}=\pi_{s}^{-1}\pi_s(O_\lambda^\BR)-O_{\l}^{\BR}$. Then there is a canonical isomorphism $\pi_0(\bT^c_\l)\cong \pi_0(\bT^c_{\mu})$, through which we may view $\chi$ as a character of $\pi_0(\bT^c_{\mu})$. 
We then have 
$$\tilDel_{\l,\chi}\star\tilDel_{s}\cong\tilDel_{\mu,\chi}, \quad \tilna_{\l,\chi}\star\tilna_{s}\cong\tilna_{\mu,\chi},$$
$$\tilDel_{\mu,\chi}\star\tilna_{s}\cong\tilDel_{\l,\chi}, 
\quad \tilna_{\mu,\chi}\star\tilDel_{s}\cong\tilna_{\l,\chi}.$$

\item If $\alp_{s}$ is a compact imaginary root. Then 
$$\tilDel_{\l,\chi}\star\tilDel_{s}\cong \tilDel_{\l,\chi}[-1], \quad \tilDel_{\l,\chi}\star\tilna_{s}\cong\tilDel_{\l,\chi}[1],$$
$$\tilna_{\l,\chi}\star\tilDel_{s}\cong \tilna_{\l,\chi}[-1], \quad \tilna_{\l,\chi}\star\tilna_{s}\cong\tilna_{\l,\chi}[1].$$

\item If $\alp_{s}$ is a real root. 
Then exactly one of the following options holds:
\begin{itemize}
\item The local system $\uk_{\l,\chi}$ on $O^{\RR}_{\l}$ does not extend to $\pi_{s}^{-1}\pi_{s}(O^{\RR}_{\l})$. \footnote{This happens if and only if the composition $\{\pm1\}=\pi_{0}(\RR^{\times})\xr{\a^{\vee}_{s}}\pi_{0}(T(\RR))\isom\pi_{0}(\bT^{c}_{\l})\xr{\chi} \bfk^{\times}$ is nontrivial, where the $\s$-stable maximal torus $T$ is such that $O^{\RR}_{\l}$ is attached to $T$.} Then $$\tilDel_{\l,\chi}\star\tilDel_{s}\cong\tilDel_{\l,\chi}\star\tilna_{s} \cong\tilDel_{\l,\chi},$$ 
$$\tilna_{\l,\chi}\star\tilna_{s}\cong\tilna_{\l,\chi}\star\tilDel_{s} \cong\tilna_{\l,\chi}.$$
\item The local system $\uk_{\l,\chi}$ on $O^{\RR}_{\l}$ extends  (uniquely) to a $G(\RR)$-equivariant local system $\wt{\uk_{\l,\chi}}$ on $\pi_{s}^{-1}\pi_{s}(O^{\RR}_{\l})$, and $\a_{s}$ is type II real. 

Let $\mu>\l$ be such that $O_\mu^\BR=\pi_{s}^{-1}\pi_s(O_\lambda^\BR)-O_\l^\BR$. Let $\uk_{\mu,\psi}$ be the restrictions of $\wt{\uk_{\l,\chi}}$ to $O^{\RR}_{\mu}$. There are distinguished  triangles
\begin{eqnarray}
\label{real II DlamDs}\tilDel_{\mu,\psi}\to\tilDel_{\l,\chi}\star\tilDel_{s}\to\tilDel_{(\l,\chi)\cdot s}\to,\\
\label{real II NlamNs} \tilna_{(\l,\chi)\cdot s}\to\tilna_{\l,\chi}\star\tilna_{s}\to\tilna_{\mu,\psi}\to,\\
\label{real II DmuNs}\tilDel_{\mu,\psi}[-1]\to \tilDel_{\mu,\psi}\star\tilna_{s}\to \tilDel_{\l,\chi}\op\tilDel_{(\l,\chi)\cdot s}\to,\\
\label{real II NmuDs} \tilna_{\l,\chi}\op\tilna_{(\l,\chi)\cdot s}\to\tilna_{\mu,\psi}\star\tilDel_{s}\to  \tilna_{\mu,\psi}[1]\to.
\end{eqnarray}


\item The local system $\uk_{\l,\chi}$ on $O^{\RR}_{\l}$ extends (uniquely) to a $G(\RR)$-equivariant local system $\wt{\uk_{\l,\chi}}$ on $\pi_{s}^{-1}\pi_{s}(O^{\RR}_{\l})$, and $\a_{s}$ is type I real. 

Let $\mu^{+},\mu^{-}>\l$ be such that $O_{\mu^{+}}^\BR\coprod O_{\mu^{-}}^\BR=\pi_{s}^{-1}\pi_s(O_\l^\BR)-O_{\l}^{\RR}$. Let $\uk_{\mu^{+},\psi^{+}}$ and $\uk_{\mu^{-},\psi^{-}}$ be the restrictions of $\wt{\uk_{\l,\chi}}$ to $O^{\RR}_{\mu^{+}}$ and $O^{\RR}_{\mu^{-}}$. There are distinguished triangles
\begin{eqnarray}
\label{real I DlamDs} \tilDel_{\mu^{+},\psi^{+}}\oplus\tilDel_{\mu^{-}\psi^{-}}\to\tilDel_{\l,\chi}\star\tilDel_{s}\to\tilDel_{\l,\chi}\to,\\
\label{real I DmuNs} \wt\D_{\mu^{\mp},\psi^{\mp}}[-1]\to \wt\D_{\mu^{\pm},\psi^{\pm}}\star\wt\Na_{s}\to \wt\D_{\l,\chi}\to,\\
\label{real I NlamNs}
\tilna_{\l,\chi}\to\tilna_{\l,\chi}\star\tilna_{s}\to\tilna_{\mu^{+},\psi^{+}}\oplus\tilna_{\mu^{-},\psi^{-}}\to,\\
\label{real I NmuDs}
\wt\Na_{\l,\chi}\to \wt\Na_{\mu^{\pm},\psi^{\pm}}\star\wt\D_{s}\to\wt\Na_{\mu^{\mp},\psi^{\mp}}[1]\to 
\end{eqnarray}


\end{itemize}
\end{enumerate}
\elem

\prf In the proof we will compute the non-monodromic versions $\D_{\l,\chi}\bstar\D_{s}$. Here $\D_{w}\in D_{G}(X\times X)$ is the $!$-extension of $\uk[\ell(w)]$ on the $G$-orbit $X^{2}_{w}$, and for $\cF\in D_{G(\RR)}(X)$, $\cK\in D_{G}(X\times X) $, $\cF\bstar\cK:=\pr_{2*}(\pr_{1}^{*}\cF\ot \cK)$.   Note the degree shift for $\D_{w}$ differs from the one for $\wt\D_{w}$ by $\r=\dim T^{c}$.  Using \refl{push std} we have 
\begin{equation}\label{conv push}
\pi_{\da}(\wt\D_{\l,\chi}\star \wt\D_{s})\cong \pi_{\da}(\wt\D_{\l,\chi})\bstar\D_{s} \cong \L_{\l,\bu}\ot (\D_{\l,\chi}\bstar\D_{s}), \quad \forall (\l,\chi)\in \wt I.
\end{equation}
In order to recover stalks of $\wt\D_{\l,\chi}\star \wt\D_{s}$ from that of $\D_{\l,\chi}\bstar\D_{s}$,  we will use the following simple observation which follows from  \refc{comp orbit}. Suppose $(\mu,\psi)\in \wt I$, $\cF\in \CM_{G_{\RR}}$:
\begin{equation}\label{recover stalk}
\parbox{10cm}{If $i_{\mu}^{*}\pi_{\dagger}\cF\cong \L_{\mu,\bu}\ot \uk_{\mu,\psi}\ot V$ as a free $\L_{\mu,\bu}$-module (where $V$ is a complex of $\bfk$-vector spaces), then $\wt i_{\mu}^{*}\cF\cong \cL_{\mu,\psi}\ot V$. }
\end{equation}


Choose a point $x\in O_{\l}^{\RR}$ corresponding to a Borel $B_{x}$, and let $\PP^{1}_{s}=\pi_{s}^{-1}\pi_{s}(x)$. Let $P_{x}$ be the parabolic containing $B_{x}$ with negative root $-\a_{s}$.  Note that $G(\RR)\cap P_{x}$ acts on $\PP^{1}_{s}$. The stalk of $\D_{\l,\chi}\bstar\D_{s}$ at $y\in \PP^{1}_{s}$ is
\begin{equation}\label{sty}
i_{y}^{*}(\D_{\l,\chi}\bstar\D_{s})\cong \begin{cases}H_{c}^{*}( O^{\RR}_{\l}\cap \PP^{1}_{s}-\{y\}, \uk_{\l,\chi})[p_{\l}+1] & y\in O^{\RR}_{\l}; \\
H^{*}( O^{\RR}_{\l}\cap \PP^{1}_{s}, \uk_{\l,\chi}) [p_{\l}+1] & y\notin O^{\RR}_{\l}.
\end{cases}
\end{equation}
Moreover, in the case $y\notin O^{\RR}_{\l}$ (corresponding to a Borel $B_{y}$), the inclusion $B_{y}\incl P_{x}$ induces an isomorphism $\pi_{0}(G(\RR)\cap B_{y})\isom \pi_{0}(G(\RR)\cap P_{x})$, and the isomorphism above is equivariant under the actions of $\pi_{0}(G(\RR)\cap B_{y})$ on the left and $\pi_{0}(G(\RR)\cap P_{x})$ on the right via this isomorphism.

Moreover, we have the triangle $\d\to \D_{s}\to \IC_{s}\to $ in $D_{G}(X\times X)$ gives a distinguished triangle
\begin{equation}\label{DDC}
\D_{\l,\chi}\to \D_{\l,\chi}\bstar\D_{s}\to \pi_{s}^{*}\pi_{s*}\D_{\l,\chi}[1]\to 
\end{equation}



(1) In this case, $O_{\l}^{\RR}\cap\PP^{1}_{s}=\{x\}$. By \eqref{sty}, $\D_{\l,\chi}\bstar\D_{s}$ has nonzero ($1$-dimensional) stalk only at $y\in \PP^{1}_{s}-\{x\}$, and $\pi_{0}(G(\RR)\cap B_{y})$ acts via $\chi$ via the natural isomorphisms $\pi_{0}(G(\RR)\cap B_{y})\cong \pi_{0}(G(\RR)\cap P_{x})\cong \pi_{0}(G(\RR)\cap B_{x})= \pi_{0}(\bT^{c}_{\l})$. We conclude that 
\begin{equation}\label{DchiDs}
\D_{\l,\chi}\star\D_{s}\cong \D_{\mu, \chi}
\end{equation}
By \eqref{recover stalk}, this implies $\wt\D_{\l,\chi}\star\wt\D_{s}\cong \wt\D_{\mu, \chi}$. The formula $\tilna_{\l,\chi}\star\tilna_{s}\cong \tilna_{\mu, \chi}$ follows from the same argument from $\Na_{\l,\chi}\star\Na_{s}\cong \Na_{\mu, \chi}$, which follows from \eqref{DchiDs} by Verdier duality. The third and the forth isomorphisms now follow from the fact that 
\begin{equation}\label{invert s Hk}
\wt\D_{s}\star\tilna_{s}\cong\tilna_{s}\star\wt\D_{s}\cong \wt\d\in \CH_{G}
\end{equation}
(see  \cite[Lemma 7.7 (1)]{BR}).


(2) In this case $\PP^{1}_{s}\subset O^{\RR}_{\l}$. From \eqref{sty} we see that $\D_{\l,\chi}\bstar\D_{s}$ has stalk $H^{*}_{c}(\PP^{1}_{s}\bs\{y\}, \bfk)[p_{\l}+1]\cong \bfk[p_{\l}-1]$ at every point $y\in \PP^{1}_{s}$. Moreover, the first map in \eqref{DDC} induces an isomorphism on degree $1-p_{\l}$ stalks, and the monodromy on $\pi_{s}^{*}\pi_{s*}\D_{\l,\chi}$ is given by the same character $\chi$, we conclude that the same is true for the monodromy of $\D_{\l,\chi}\bstar\D_{s}$, i.e., $\D_{\l,\chi}\bstar\D_{s}\cong \D_{\l,\chi}[-1]$. By \eqref{recover stalk}, this implies $\wt\D_{\l,\chi}\star\wt\D_{s}\cong \wt\D_{\mu, \chi}[-1]$. The other formulae are proved similarly.

%

(3) Fix a $\s$-stable maximal torus $T\subset B_{x}$.  Let $L$ be the Levi subgroup of $G$ generated by $T$ and root spaces $\pm\a_{s}$. Then $L$ is $\s$-stable with real form $L_{\RR}=L^{\s}$. We have an $L$-equivariant isomorphism $\wt X_{L}\cong \pi^{-1}(\PP^{1}_{s})$. Therefore we may assume $G=L$. Note that the derived group of $L_{\RR}$ is either $\SL_{2,\RR}$ or $\PGL_{2,\RR}$. The flag variety $X_{L}\cong \PP^{1}$ is defined over $\RR$ with real points the real projective line $\PP^{1}(\RR)=O^{\RR}_{\l}$.



In the first option, $\uk_{\l,\chi}|_{\PP^1(\RR)}$ has nontrivial monodromy, hence $\pi_{s*}\D_{\l,\chi}=0$. By  \eqref{DDC}  we have an isomorphism $\D_{\l,\chi}\isom \D_{\l,\chi}\bstar\D_{s}$. By \eqref{conv push}, we conclude that $\tilDel_{\l,\chi}\star\tilDel_{s}\cong\tilDel_{\l,\chi}$. The other isomorphisms follow from this one by Verdier duality and \eqref{invert s Hk}.



In the second option, we have $O^{\RR}_{\mu}=\PP^{1}_{s}- \PP^1(\RR)$. We first compute  $\wt i_{\l}^{*}(\wt\D_{\l,\chi}\star\wt\D_{s})$. Using \eqref{sty}, the stalk of $\Del_{\l,\chi}\bstar\Del_{s}$ at a point $y\in \PP^1(\RR)$ is $H^*_c(\PP^1(\RR)-\{y\},\uk_{\l,\chi})[p_{\l}+1]\cong \bfk[p_{\l}]$. Moreover, the action of $\pi_0(G(\RR)\cap B_{y})$ on $H^*_c(\PP^1(\RR)-\{y\},\uk_{\l,\chi})$ is $\chi$ twisted by the sign character $\sgn_{s}$ (see \eqref{signs}) through which $\pi_0(G(\RR)\cap B_{y})$ acts on $H^{1}_{c}(\PP^1(\RR)-\{y\})\cong H^{1}(\PP^1(\RR))$.  We conclude that 
\begin{equation}\label{pi DlamDs}
i^{*}_{\l}(\D_{\l,\chi}\bstar\D_{s})\cong \uk_{\l,\chi\ot\sgn_{s}}[p_{\l}]
\end{equation}
By \eqref{recover stalk} we conclude that
\begin{equation}\label{stalk lam}
\wt i_{\l}^{*}(\wt\Del_{\l,\chi}\star\wt\Del_{s})\cong \cL_{\l,\chi\ot\sgn_{s}}[p_{\l}]=\cL_{(\l,\chi)\cdot s}[p_{\l}].
\end{equation}

Next we show that
\begin{equation}\label{stalk mu}
\wt i_{\mu}^{*}(\wt\D_{\l,\chi}\star\wt\D_{s})\cong \cL_{\mu,\psi}[p_{\mu}].
\end{equation}
For this, computing $i^{*}_{\mu}(\Del_{\l,\chi}\bstar\Del_{s})$ as a local system is not enough, since we need to keep track of the $\L_{\mu,\bu}$-action in order to recover $\wt i_{\mu}^{*}(\wt\D_{\l,\chi}\star\wt\D_{s})$. Nevertheless we first use \eqref{sty} to see the stalk of $\Del_{\l,\chi}\bstar\Del_{s}$ at a point  $y\in \PP^{1}_{s}- \PP^1(\RR)$ is equal to $H^*(\PP^1(\RR), \uk_{\l,\chi})[p_{\l}+1]$. Using the long exact sequence attached to the triangle $j_{!}\uk_{\mu,\psi}\to \wt\uk_{\l,\chi}\to i_{!}\uk_{\l,\chi}\to $ on $\pi^{-1}_{s}\pi_{s}(O^{\RR}_{\l})$, we see that the action of $\pi_{0}(G(\RR)\cap B_{y})\cong \pi_{0}(\bT^{c}_{\mu})$ on the stalk $i_{y}^{*}(\Del_{\l,\chi}\bstar\Del_{s})$ is via $\psi$. To show \eqref{stalk mu}, it remains to show that 
\begin{equation}\label{mu fm}
\mbox{$\wt i_{\mu}^{*}(\wt\D_{\l,\chi}\star\wt\D_{s})$ is free-monodromic of rank 1 in degree $-p_{\mu}=-p_{\l}$. }
\end{equation}
Let $A=(ZG)^{\c}$, then we have an isogeny $A\times \SL_{2}\to G$ defined over $\RR$ with kernel either trivial or of order two.  It is then sufficient to prove \eqref{mu fm} for $A\times \SL_{2}$ (by pullback from $\wt X$), and eventually for $G_{\RR}=\SL_{2,\RR}$ (just for the statement \eqref{mu fm}). In the rest of this paragraph we assume $G=\SL_{2}$.  In this case, the statement \eqref{mu fm} becomes
\begin{equation}\label{mu fm SL2}
\mbox{For $\wt y\in \pi^{-1}(\PP^{1}-\PP^{1}(\RR))$, $i_{\wt y}^{*}(\wt\D_{\l,\triv}\star\wt\D_{s})\cong\bfk[1]$.}
\end{equation}
We identify $G/U\cong \AA^{2}\bs\{0\}$ hence $G/UT^{>0}\cong S^{3}$ (unit sphere in $\CC^{2}$). The preimage $\wt O_{\l}^{\RR}\subset S^{3}$ consists of $(z_{1},z_{2})\in \CC^{2}$ with $|z_{1}|^{2}+|z_{2}|^{2}=1$ and $z_{1}/z_{2}\in \RR\cup \{\infty\}$. It is easy to see that $\wt O_{\l}^{\RR}$ is a $2$-dimensional torus. Let $\wt y=(y_{1},y_{2})\in S^{3}$. Then 
\begin{equation}\label{DchiDs SL2}
i_{\wt y}^{*}(\wt\D_{\l,\chi}\star\wt\D_{s})\cong H^{*}(\wt O^{\RR}_{\l}, \cL_{\l,\chi}[1]\ot \ep^{*}\cL[2]).
\end{equation}
Here $\cL$ is the  rank one \fmo local system on $S^{1}$, and $\ep: \wt O_{\l}^{\RR}\to S^{1}$ sends $(z_{1},z_{2})$ to $(z_{1}y_{2}-z_{2}y_{1})/|z_{1}y_{2}-z_{2}y_{1}|$, and   $\ep^{*}\cL[2]$ is the contribution of $\wt\D_{s}$ to the the fiber of the convolution.  Consider the $G(\RR)$-equivariant embedding $\tet: S^{1}\to \wt O^{\RR}_{\l}$ sending $u+iv\mapsto (u,v)$. Then $\tet^{*}\cL_{\l,\chi}$ is trivial since $G(\RR)$ acts transitively on $S^{1}$.   However, calculation shows that $\ep\c\tet: S^{1}\to S^{1}$ is a homeomorphism. These facts combined imply that  $\cL_{\l,\chi}\ot \ep^{*}\cL$ is a rank one \fmo local system on the $2$-dimensional torus $\wt O^{\RR}_{\l}$. Using \eqref{DchiDs SL2} we see that \eqref{mu fm SL2} holds. Now \eqref{stalk mu} is proved.




Combining \eqref{stalk lam} and \eqref{stalk mu} we get the distinguished triangle \eqref{real II DlamDs}.

The same argument for showing \eqref{stalk lam} shows 
\begin{equation}\label{stalk cost lam}
\wt i^{*}_{\l}(\wt\D_{\l,\chi}\star\wt\Na_{s})\cong \cL_{\l,\chi}[p_{\l}+1].
\end{equation}
Here we are using that for $y\in \PP^{1}(\RR)$, $i_{y}^{*}(\D_{\l,\chi}\star\Na_{s})\cong H^{*}(\PP^{1}(\RR)-\{y\}, \uk_{\l,\chi})[p_{\l}+1]\cong \bfk[p_{\l}+1]$. On the other hand, we have
\begin{equation*}
\wt i_{\mu}^{*}(\tilDel_{\l,\chi}\star\tilDel_{s})\cong \wt i_{\mu}^{*}(\tilDel_{\l,\chi}\star\tilna_{s}),
\end{equation*}
which is isomorphic to $\cL_{\mu,\psi}[p_{\mu}]$ by \eqref{stalk mu}. This together with \eqref{stalk cost lam} imply a distinguished triangle 
\begin{equation*}
\wt\D_{\mu,\psi}\to\tilDel_{\l,\chi}\star\tilna_{s}\to\tilDel_{\l,\chi}[1]\to
\end{equation*}
Replacing $(\l,\chi)$ by $(\l,\chi)\cdot s$, and shift by $[-1]$, we get
\begin{equation}\label{Nh0sgn}
\wt\D_{\mu,\psi}[-1]\to\tilDel_{(\l,\chi)\cdot s}\star\tilna_{s}[-1]\to\tilDel_{(\l,\chi)\cdot s}\to
\end{equation}
Now we convolve \eqref{real II DlamDs} with $\tilna_{s}$, rotating it and using \eqref{invert s Hk} 
we obtain a distinguished triangle 
$$\tilDel_{(\l,\chi)\cdot s}\star\tilna_{s}[-1]\to \tilDel_{\mu,\psi}\star\tilna_{s}\to\tilDel_{(\l,\chi)\cdot s}\to.$$ 
Combined with \eqref{Nh0sgn}, observing that there is nontrivial nonzero extension between $\wt\D_{\l,\chi}$ and $\wt \D_{(\l,\chi)\cdot s}$,  we get the asserted distinguished triangle \eqref{real II DmuNs}.

To prove \eqref{real II NlamNs}, we take the Verdier dual of \eqref{pi DlamDs} to get $i_{\l}^{!}(\Na_{\l,\chi}\bstar\Na_{s})\cong \uk_{(\l,\chi)\cdot s}[p_{\l}]$. Using \eqref{recover stalk} we get $\wt i_{\l}^{!}(\wt\Na_{\l,\chi}\star\wt\Na_{s})\cong \cL_{(\l,\chi)\cdot s}[p_{\l}]$. The calculation of $\wt i_{\mu}^{!}(\wt\Na_{\l,\chi}\star\wt\Na_{s})$ boils down to the same thing as $\wt i_{\mu}^{*}(\wt\D_{\l,\chi}\star\wt\D_{s})$, and we have $\wt i_{\mu}^{!}(\wt\Na_{\l,\chi}\star\wt\Na_{s})\cong \cL_{\mu,\psi}[p_{\mu}]$. Combining these facts we get the distinguished triangle \eqref{real II NlamNs}. Then the same deduction from \eqref{real II DlamDs} to \eqref{real II DmuNs} allows us to deduce \eqref{real II NmuDs} from \eqref{real II NlamNs}.

Finally consider the third option. In this case  $O^{\RR}_{\mu^{+}}$ and $O^{\RR}_{\mu^{+}}$ are the two hemispheres of $\PP^{1}-\PP^{1}(\RR)$, which we denote by $\HH^{+}$ and $\HH^{-}$.   The calculations made for \eqref{real II DlamDs} shows \eqref{real I DlamDs}; the argument for \eqref{real II DmuNs} gives a distinguished triangle
\begin{equation*}
\wt\D_{\mu^{+},\psi^{+}}[-1]\op\wt\D_{\mu^{-},\psi^{-}}[-1]\to \wt\D_{\mu^{+},\psi^{+}}\star\wt\Na_{s}\op \wt\D_{\mu^{-},\psi^{-}}\star\wt\Na_{s}\to \wt\D_{\l,\chi}^{\op 2}\to .
\end{equation*}
To deduce \eqref{real I DmuNs}, it remains to show that $\wt i^{*}_{\mu^{+}}(\wt\D_{\mu^{+},\psi^{+}}\star\wt\Na_{s})=0$, or $i_{\mu^{+}}^{*}(\D_{\mu^{+},\psi^{+}}\bstar\Na_{s})=0$. For $y\in \HH^{+}$, $i^{*}_{y}(\D_{\mu^{+},\psi^{+}}\bstar\Na_{s})\cong H^{*}_{c}(\HH^{+}, j_{*}\uk)[2]$ where $j: \HH^{+}-\{y\}\incl \HH^{+}$ is the inclusion. The vanishing of $H^{*}_{c}(\HH^{+}, j_{*}\uk)$ is clear. 

The proofs of \eqref{real I NlamNs} and \eqref{real I NmuDs} are similar to those of \eqref{real II NlamNs} and \eqref{real II NmuDs}. We omit details.
\epr

Let $\mathrm{Tilt}(\CH_G)\subset\CH_G$ be the additive subcategory of \fmo tilting objects. The category $\Tilt(\CH_G)$ is closed under convolution. See \cite[Proposition 4.3.4]{BY} and \cite[Lemma 7.8, Remark 7.9]{BR}. Hence $\Tilt(\CH_G)$ (as an additive category) inherits a monoidal structure from $\CH_{G}$.



\prop{tiltconv} For $\CT_1$ in $\mathrm{Tilt}(\CM_{G_\BR})$ and $\CT_2$ in $\mathrm{Tilt}(\CH_G)$ the convolution product $\CT_1\star\CT_2$ is in $\mathrm{Tilt}(\CM_{G_\BR})$.  In other words, $\Tilt(\CM_{G_\BR})$ is a module category for $\Tilt(\CH_G)$ under convolution.
\eprop

\prf 
Every object $\CT\in \Tilt(\CH_G)$ is a direct summand of  successive convolutions of $\CT_{s}$ for simple reflections $s\in \bW$ (see \cite[Corollary 5.2.3]{BY} and \cite[Remark 7.9]{BR}). Hence, it is sufficient to assume that $\CT_2=\CT_{s}$. We start by checking that for any $(\l,\chi)\in \wt I$, the convolution product $\tilDel_{\l,\chi}\star\CT_{s}$ is a successive extension of the objects of the form $\tilDel_{\mu,\psi}[-n]$ with $(\mu,\psi)\in \wt I$ and $n\ge0$. 

If $\l$ and $s$ are in position of one of the options of \refl{stdconv} we use the distinguished triangle $\tilde{\del}\to\CT_{s}\to\tilDel_{s_\alp}\to$ and the calculation of $\tilDel_{\l,\chi}\star\wt\D_{s}$ in \refl{stdconv} 
to obtain needed filtration. Otherwise, we consider the distinguished triangle $\tilna_{s}\to\CT_{s}\to\tilde{\del}\to$ and use the calculation of $\tilDel_{\l,\chi}\star \tilna_{s}$ in \refl{stdconv} to conclude.

It follows that $\tilDel_{\l,\chi}\star\CT_{s}$ is a successive extension of the objects of the form $\tilDel_{\mu,\psi}[-n]$, $n\ge0$. By considering a standard filtration on $\CT_1$, we see that  the same is true for $\CT_1\star\CT_{s}$. In particular, 
\begin{equation}\label{stalk T1Ts}
\parbox{10cm}{For each $\mu\in I$, the restriction $\wt i^*_\mu(\CT_1\star\CT_{s})$ is a bounded complex of free-monodromic local systems concentrated in degrees $\ge-p_\mu$. }
\end{equation}

Similarly, using the calculations in \refl{stdconv}, the convolution product $\tilna_{\l,\chi}\star\CT_{s}$ is a successive extension of the objects of the form $\tilna_{\mu,\psi}[\ge0]$ and $(\mu,\psi)\in \wt I$. By considering the costandard filtration on $\CT_1$, we see that  the same is true for $\CT_1\star\CT_{s}$.  In particular, 
\begin{equation}\label{costalk T1Ts}
\parbox{10cm}{For each $\mu\in I$, $\wt i_{\mu}^{!}(\CT_1\star\CT_{s})$ is in degrees $\le -p_{\mu}$.}
\end{equation}

By \refl{r}(1),  $\wt i^{*}_{\mu}\tilna_{\mu',\psi}$ lies in degrees $\le -p_{\mu}$. Since $\CT_1\star\CT_{s}$ is a successive extension of $\tilna_{\mu',\psi}[\ge0]$,  $\wt i^{*}_{\mu}(\CT_1\star\CT_{s})$ also lies in degrees $\le -p_{\mu}$. Combined with \eqref{stalk T1Ts}, we conclude that $\wt i^{*}_{\mu}(\CT_1\star\CT_{s})$ is concentrated in degree $-p_{\mu}$ and is free-monodromic. 

By \refl{r}(2), $\wt i^{!}_{\mu}\wt\D_{\mu',\psi}$ is a complex of free-monodromic local systems in degrees $\ge -p_{\mu}$. Since  $\CT_1\star\CT_{s}$ is a successive extension of $\wt\D_{\mu',\psi}[\le0]$, $\wt i^{!}_{\mu}(\CT_1\star\CT_{s})$ is also a complex of free-monodromic local systems in degrees $\ge -p_{\mu}$.  Combined with \eqref{costalk T1Ts}, we conclude that $\wt i^{!}_{\mu}(\CT_1\star\CT_{s})$ is concentrated in degree $-p_{\mu}$ and is free-monodromic. 

Combining the last two paragraphs, we conclude that $\CT_1\star\CT_{s}$ is a \fmo tilting sheaf.


\epr


\lem{tiltconv2}
Recall the assumptions and notations of \refl{stdconv}. Then
\begin{enumerate}
\item If $\a_{s}$ is complex and $\s\a_{s}>0$, then $\CT_{\l,\chi}\star\CT_{s}$ contains $\CT_{\l\cdot s, \chi}$ as a direct summand with multiplicity one. 
\item If $\a_{s}$ is real and $\uk_{\l,\chi}$ extends to a $G(\RR)$-equivariant local system on $\pi_{s}^{-1}\pi_{s}(O^{\RR}_{\l})$. Then
\begin{itemize}
\item If $\a_{s}$ is type II real,  then $\CT_{\l,\chi}\star\CT_{s}$ contains $\CT_{\mu, \psi}$ as a direct summand with multiplicity one. 
\item If $\a_{s}$ is type I real, then  $\CT_{\l,\chi}\star\CT_{s}$ contains $\CT_{\mu^{+}, \psi^{+}}\oplus\CT_{\mu^{-}, \psi^{-}}$ as a direct summand with multiplicity one. 
\end{itemize}
\end{enumerate}
\elem
\prf Let $O^{\RR}_{\mu}=\pi_{s}^{-1}\pi_{s}(O^{\RR}_{\l})-O^{\RR}_{\l}$ (which is a union of two orbits in type I real case), and let $\wt O^{\RR}_{\mu}$ be its preimage under  $\pi$. Let $\wt i_{\mu}: \wt O^{\RR}_{\mu}\incl \wt X$ be the inclusion. Then $\wt O^{\RR}_{\mu}$ is the open stratum in the support of $\CT_{\l,\chi}\star\CT_{s}$. By support considerations, we have
\begin{equation*}
\wt i^{*}_{\mu}(\CT_{\l,\chi}\star\CT_{s})\cong \wt i^{*}_{\mu}(\D_{\l,\chi}\star \wt\D_{s}).
\end{equation*}
By \refl{stdconv}, the above is $\cL_{\mu,\chi}[p_{\mu}]$ in case (1), $\cL_{\mu,\psi}[p_{\mu}]$ in case (2) type II, and $\cL_{\mu^{+},\psi^{+}}[p_{\mu^{+}}]\op\cL_{\mu^{-},\psi^{-}}[p_{\mu^{-}}]$ in case (2) type I. The conclusion follows.  
\epr

The Hecke action on $\Tilt(\CM_{G_\BR})$ enjoys the following self-adjunction property.

\prop{adj}
Let $s\in W$ be a simple reflection and $\CT_{s}$ the corresponding tilting object of $\CH_G$. Then the convolution endo-functor $-\star\CT_{s}$ on $\mathrm{Tilt}(\CM_{G_\BR})$ is self-adjoint. Namely for $\CT_1, \CT_2$ in $\mathrm{Tilt}(\CM_{G_\BR})$ we have a canonical isomorphism functorial in $\cT_{1}$ and $\cT_{2}$
$$\RHom_{\CM_{G_\BR}}(\CT_1\star\CT_{s},\CT_2)\cong\RHom_{\CM_{G_\BR}}(\CT_1,\CT_2\star\CT_{s}).$$
\eprop

\prf
To construct a unit and counit of the adjunction it suffices to construct maps $u\colon \tilde{\del}\to \CT_{s}\star\CT_{s}$ and $c\colon\CT_{s}\star\CT_{s}\to \tilde{\del}$ in $\CH_G$, such that  both compositions
\begin{equation}\label{cu id}
\xymatrix{\CT_{s}\ar[r]^-{u\star \id} & \CT_{s}\star\CT_{s} \star\CT_{s} \ar[r]^-{\id\star c} & \CT_{s}\\
\CT_{s}\ar[r]^-{\id\star u} & \CT_{s}\star\CT_{s} \star\CT_{s} \ar[r]^-{c\star \id} & \CT_{s}}
\end{equation}
are equality to the identity of $\CT_{s}$.

It is known that the Soergel's functor $\VV$ (recalled in \refss{V Hk}) provides a fully-faithful embedding of $\mathrm{Tilt}(\CH_{G})$ into the category of Soergel bimodules (i.e. \cite[Proposition 11.2]{BR} in the present setting). It is thus sufficient to construct $u$ and $c$ in the category of Soergel bimodules, where we have $\VV(\CT_s)=\CR\ten_{\CR^s}\CR$. Note that $\CR^{s=-1}$, as an $\CR^{s}$-bimodule,  is isomorphic to the regular bimodule. Let us fix an element $x_{s}\in\CR^{s=-1}$ such that multiplication by $x_{s}$ gives an isomorphism $\CR^{s}\isom \CR^{s=-1}$ of $\CR^{s}$-bimodules. We get a splitting $\CR=\CR^s\oplus x_{s}\CR^s$ as $\CR^{s}$-bimodules. 

We then have a splitting $$\VV(\CT_s\star\CT_s)=\CR\ten_{\CR^s}\CR\ten_{\CR^s}\CR=\CR\ten_{\CR^s}(\CR^s\oplus x_{s}\CR^s)\ten_{\CR^s}\CR=$$ $$=\CR\ten_{\CR^s}\CR^s\ten_{\CR^s}\CR\oplus\CR\ten_{\CR^s}x_{s}\CR^s\ten_{\CR^s}\CR.$$ We put $c\colon\CR\ten_{\CR^s}\CR\ten_{\CR^s}\CR\to\CR$ for a composition of the projection onto the second summand $\CR\ten_{\CR^s}x_{s}\CR^s\ten_{\CR^s}\CR\simeq\CR\ten_{\CR^s}\CR$ and the multiplication map $\CR\ten_{\CR^s}\CR\to \CR$. We put $u\colon\CR\to\CR\ten_{\CR^s}\CR\ten_{\CR^s}\CR$ for the composition of  the map $\CR\to\CR\ten_{\CR^s}\CR$ given by $1\mapsto x_{s}\ten1+1\ten x_{s}$ and the inclusion of the first summand $\CR\ten_{\CR^s}\CR^s\ten_{\CR^s}\CR\simeq\CR\ten_{\CR^s}\CR.$ The verification of the adjunction identities is now straightforward.




\epr

\ssec{}{Generation}

The following generation property for the $\Tilt(\CH_{G})$-module $\TGR$ will play an important role later.

Recall there is a unique closed $G(\RR)$-orbit $O^{\RR}_{\l_{0}}$ in $X$ and its preimage $\wt O^{\RR}_{\l_{0}}:=\pi^{-1}(O^{\RR}_{\l})$ in $\wt X$.

\prop{gen}
The category $\TGR$ is generated under taking direct sums and summands and applying the convolution action of $\mathrm{Tilt}(\CH_G)$ by the free-monodromic local systems supported on $\wt O^{\RR}_{\l_{0}}$. 
\eprop

\prf



Let $\Tilt'\subset \Tilt(\CM_{G_{\BR}})$ be the full subcategory generated under taking direct sums and summands and the convolution action of $\mathrm{Tilt}(\CH_{G})$ by the free-monodromic local systems supported on $\wt O^{\RR}_{\l_{0}}$. Suppose for purpose of contradiction that $\Tilt'$ is not the whole $\Tilt(\CM_{G_{\BR}})$, let $(\l,\chi)\in \wt I$ be such that $\CT_{\l,\chi}\notin \Tilt'$ and that $\dim O^{\RR}_{\l}$ is minimal among such $(\l,\chi)$.


Let $B$ be a Borel corresponding to a point in $O^{\RR}_{\l}$ and let $\grb$ be its Lie algebra and $\grt\subset\grb$ its $\sig$-invariant subtorus. Let $\alp$ be a simple root of $\grt$ in $\grg$ positive with respect to $\grb$. 
\refl{tiltconv2} implies that if $\alp$ is noncompact imaginary or $\alp$ is complex and $\sig\alp$ is negative, there is $(\mu,\psi)\in \wt I$ with $\mu<\l$ such that $\CT_{\l,\chi}$ is a direct summand in $\CT_{\mu,\psi}\star\CT_{s_\alp}$. This would contradict the minimality assumption.

From the above we conclude that any simple root $\a$ of $(\grb,\grt)$ either satisfies $\s\a>0$, or $\a$ is compact imaginary. By \refl{cl orb} below, in this situation $O^{\RR}_{\l}$ is closed, therefore $\cT_{\l,\chi}$ is already in $\Tilt'$. We arrived at a contradiction.
\epr

\lem{cl orb} Let $\l\in I$. The orbit $O^{\RR}_{\l}$ is closed if and only if any simple root $\a\in \Phi_{\l}$ either satisfies $\s\a>0$, or is compact imaginary. 
\elem
\prf
The ``only if'' part follows from \refl{afbr} as the intersections with the $\a$-lines has to be closed subvarieties of $\BP^1$.

Let us proof the ``if'' part. Under the Matsuki correspondence, the closed $G(\RR)$-orbit corresponds to the unique open $K$-orbit in $X$.  Consider now the corresponding $K$-orbit $O^{K}_{\l}$ under the Matsuki correspondence. Let $\grk$ be the Lie algebra of $K$ and let $\tet$ be the corresponding Cartan involution. We choose $B\in C_{\l}=O^{K}_{\l}\cap O^{\RR}_{\l}$. We would like to prove that $\grb+\grk=\grg$, which then implies that $O^{K}_{\l}$ is open and $O^{\RR}_{\l}$ is closed.

Translating the constraints on the roots in terms of $\tet$, we see that for each simple root $\a$ of $(\grb,\grt)$, either $\tet\a<0$, or $\a$ is compact imaginary (i.e., $\tet\a=\a$ and $\grg_{\pm\a}\subset \grk$). 

We want to show that for each root $\a$ we have $\grg_{\alp}\subset\grk+\grb$. We proceed by downward induction on the height of $\a$. If $ht(\a)>0$ then $\a>0$ and $\frg_{\a}\subset \frb$. If $ht(\a)<0$ and suppose $\frg_{\b}\subset\grk+\frb$ for all $ht(\b)>ht(\a)$. Write $\a$ as a sum of simple roots $\a_{i}$, and each $\a_{i}$ either satisfies $\tet\a_{i}<0$ or $\tet\a_{i}=\a_{i}$. Therefore $ht(\tet\a)\le ht(\a)$. 

If $ht(\tet\a)<ht(\a)$ then by inductive hypothesis we have $\frg_{\tet\a}\subset \grk+\grb$. On the other hand, $(\frg_{\a}\op\frg_{\tet\a})^{\tet}\subset \grk$. Therefore $\frg_{\a}\subset \frg_{\tet\a}+(\frg_{\a}\op\frg_{\tet\a})^{\tet}\subset \grb+\grk$.

If $ht(\tet\a)=ht(\a)$, then this can happen only when $\a$ is compact imaginary. In this case, $\frg_{\a}\subset\grk$.  This completes the induction. 
\epr

We define a measure of complexity of an indecomposable \fmo tilting sheaf $\cT_{\l,\chi}$.

\defe{length}
Let $(\l,\chi)\in \wt I$. Let $\ell(\l)$ be the minimal non-negative integer $n$ such that there exists $\psi\in \LS_{\l_{0}}$ and $w\in \bW$ such that $\ell(w)=n$ and $\cT_{\l,\chi}$ is a direct summand of $\cT_{\l_{0},\psi}\star \cT_{w}$.
\edefe

For example, $\ell(\l_{0},\chi)=0$ for any $\chi\in\LS_{\l_{0}}$.

\lem{length codim} For any $(\l,\chi)\in \wt I$, we have
\begin{equation*}
\ell(\l,\chi)=\codim_{\CC}O^{K}_{\l}. \quad \mbox{(codimension in $X$)}
\end{equation*}
In particular, $\ell(\l,\chi)$ depends only on $\l\in I$, and we denote it by $\ell(\l)$.
\elem
\prf
For $\l\in I$, let $q_{\l}=(d_{\l}-n_{\l})/2$. By \eqref{d-n}, we see $q_{\l}-q_{\l_{0}}=\codim_{\CC}O^{K}_{\l}$ (where $\l_{0}$ indexes  the closed $G_{\RR}$-orbit). Therefore we only need to show that $\ell(\l,\chi)=q_{\l}-q_{\l_{0}}$.

We first show $\ell(\l,\chi)\ge q_{\l}-q_{\l_{0}}$. For $\mu<\l$, we have $q_{\mu}<q_{\l}$ because $q_{\mu}-q_{\l}=\dim_{\CC} O^{K}_{\mu}-\dim_{\CC} O^{K}_{\l}$. For an indecomposable tilting sheaf $\cT_{\mu,\psi}$ and a simple reflection $s\in \bW$, the support of $\cT_{\mu,\psi}\star \cT_{s}$ is $\pi_{s}^{-1}\pi_{s}(\ol{O^{\RR}_{\mu}})$. Suppose $O^{\RR}_{\mu'}\subset \pi_{s}^{-1}\pi_{s}(\ol{O^{\RR}_{\mu}})$, then either $\mu'\le \mu$, in which case $q_{\mu'}\le q_{\mu}$, or $O^{\RR}_{\mu'}$ is open in $\pi_{s}^{-1}\pi_{s}(O^{\RR}_{\mu'})$, which contains a smaller orbit $\mu''\le \mu$. By examining each case in \refl{afbr}, we see that $q_{\mu'}-q_{\mu''}=1$. Therefore $q_{\mu'}\le q_{\mu}+1$. In other words, if $\cT_{\mu',\psi'}$ is a direct summand of $\cT_{\mu,\psi}\star \cT_{s}$, then $q_{\mu'}\le q_{\mu}+1$. Applying this repeatedly, we see that if $\cT_{\l,\chi}$ appears as a direct summand of $\cT_{\l_{0}, \psi}\star \cT_{w}$, then $q_{\l}-q_{\l_{0}}\le \ell(w)$.  This implies $\ell(\l,\chi)\ge q_{\l}-q_{\l_{0}}$.

We now show $\ell(\l,\chi)\le q_{\l}-q_{\l_{0}}$. We prove this by induction on $q_{\l}$. If $q_{\l}=q_{\l_{0}}$, then $\l$ is the closed orbit,  hence $\ell(\l,\chi)=0$ so the equality holds. Now suppose $\ell(\l',\chi')\le q_{\l'}-q_{\l_{0}}$ holds for all $q_{\l'}<q_{\l}$. Since $\l$ is not the closed orbit, by \refl{cl orb}, there is a simple root $\a_{s}\in \Phi_{\l}$ that is either complex and $\s\a_{s}<0$, or non-compact imaginary. In either case there is a smaller orbit $\l'<\l$ inside $\pi_{s}^{-1}\pi_{s}(O^{\RR}_{\l})$ with $q_{\l'}=q_{\l}-1$, by checking the cases listed in \refl{afbr}. By  \refl{tiltconv2}, there exists $\chi'\in \LS_{\l'}$ such that $\cT_{\l,\chi}$ is a direct summand of $\cT_{\l',\chi'}\star\cT_{s}$. This implies $\ell(\l, \chi)\le \ell(\l',\chi')+1$. By inductive hypothesis, we conclude that $\ell(\l, \chi)\le q_{\l'}-q_{\l_{0}}+1=q_{\l}-q_{\l_{0}}$. This proves the inequality $\ell(\l,\chi)\le q_{\l}-q_{\l_{0}}$. 

Combining both inequalities we conclude that $\ell(\l,\chi)= q_{\l}-q_{\l_{0}}=\codim_{\CC}O^{K}_{\l}$.
\epr

\rem{} Using the multiplicity one statement in \refl{tiltconv2}, the above argument shows the following stronger statement: for any $(\l,\chi)\in \wt I$, and any $w\in \bW$ of length equal to $\ell(\l)$ such that $\cT_{\l,\chi}$ is a direct summand of $\cT_{\l_{0},\psi}\star\cT_{w}$ for some $\psi\in\LS_{\l_{0}}$, $\cT_{\l,\chi}$  appears in $\cT_{\l_{0},\psi}\star\cT_{w}$ with multiplicity one.
\erem
\sec{loc}{Localization} 

This section will play a technical role in the proof of the main results in \refs{str nice}. The localization procedure here can be viewed as the counterpart of equivariant localization under Koszul duality.

\ssec{}{Preliminaries}

Let $V=\Spec \CR$.  Let $\bT^{\vee}=\Hom(\XX_{*}(\bT), \GG_{m,\bfk})=\Hom(\pi_{1}(\bT^{c}), \GG_{m,\bfk})$ be the dual torus of $\bT$ over $\bfk$. Then the formal version $\Spf \CR$ of $V$ can be canonically identified with the formal completion of $\bT^{\vee}$ at the identity element. We have a natural $\bW$-action on $V$.

For any $\l\in I$ let $\ol\CR_{\l}$ be the completion of the group ring $\bfk[\pi_{1}(\ol \bT^{c}_{\l})]$ at the augmentation ideal. Then   $V_{\l}:=\Spec \ol\CR_{\l}\subset V$ is a closed subscheme. By \refl{TRT}(1), the abstract Cartan $\bT$ carries a real structure $\s_{\l}$. Let $\theta_{\l}$ be the Cartan involution on $\bT$ corresponding to the real structure $\s_{\l}$ (so $\theta_{\l}$ is the composition of $\s_{\l}$ with the compact real form $\s_{c}$ of $\bT$). The Cartan involution $\theta_{\l}$ acts on $\bT^{\vee}$ hence on $V$. Write $-\theta_{\l}$ to be $\theta_{\l}$ composed with inversion. Then $V_{\l}$ is the fixed points  subscheme $V^{-\theta_{\l}}$ of $V$.  Again the formal version $\Spf \ol\CR_{\l}$ of $V_{\l}$ can be identified with the formal completion of $(\bT^{\vee})^{-\tet_{\l}}$ at the identity element. The group $\bW^{\tet_{\l}}=\bW^{\s_{\l}}$ acts on $V_{\l}$. By \refl{StabWlam},  we have $\bW_{\l}\subset \bW^{\s_{\l}}$, which then acts on $V_{\l}$.

Recall $\l_{0}\in I$ indexes the closed $G(\RR)$-orbit.
Let $\CS=\ol \CR_{\l_{0}}$.  We have $V_{\l_{0}}=\Spec\CS\subset V$. Let $T_{0}$ be a maximally split $\s$-stable Cartan. By \refl{StabWlam}, $\bW_{\l_{0}}$ can be identified with the real Weyl group $W(G(\RR), T(\RR))$ for a maximally split $\s$-stable torus $T$, which by \refl{realWeyl} is also $W^{\s}=W(\RR)$ for the induced real structure on $W=W(G,T)$. We have a natural map $\CR^\bW\to\CS^{\bW_{\l_{0}}}=\CS^{W(\RR)}$. Define
\begin{equation}
(\CS^{\bW_{\l_{0}}})':=\textup{Im}(\CR^\bW\to\CS^{\bW_{\l_{0}}}).
\end{equation}
Then $(\CS^{\bW_{\l_{0}}})'$ is a subring of $\CS^{\bW_{\l_{0}}})$ that has the same fraction field with $\CS^{\bW_{\l_{0}}})$. It is shown in \cite{H} that $(\CS^{\bW_{\l_{0}}})'=\CS^{\bW_{\l_{0}}}$ outside $4$ exceptional cases in type $E$. We note that the only quasi-split simple group for which $(\CS^{\bW_{\l_{0}}})'\ne\CS^{\bW_{\l_{0}}}$ is the non-split quasi-split form of $E_{6}$.

We consider the projection map
\begin{equation*}
\pr_{V}: V_{\l_{0}}\times_{V\sslash \bW}V=\Spec (\cS\ot_{\cR^{\bW}}\cR)\to V=\Spec \cR.
\end{equation*}
Let $V_{r}$ be the scheme-theoretic image of $\pr_{V}$; this is a $\bW$-stable closed subscheme of $V$. It is easy to see that the reduced structure of $V_{r}$ is the union
\begin{equation*}
V^{\red}_{r}=\bigcup_{w\in \bW}w V_{\l_{0}}\subset V.
\end{equation*}
Equivalently, $V^{\red}_{r}$ is the union of $V_{\l}$ for those $\l$ attached to a fixed maximally split $\s$-stable torus $T_{0}$.  


The category $\TGR$ is linear over $\CR$, i.e., $\CR$ acts on the identity functor $\id_{\TGR}$.
 
\lem{R supp} The action of $\CR$ on $\id_{\TGR}$ factors through the quotient $\cO(V_{r})$ (regular functions on $V_{r}$). The action of $\CR^{\bW}$ on $\id_{\TGR}$ factors through $(\CS^{\bW_{\l_{0}}})'$.
\elem
\prf By definition, $\cO(V_{r})$ is the image of the natural map $\cR\to \cS\ot_{\cR^{\bW}}\cR$ sending $x\mapsto 1\ot x$. Taking $\bW$-invariants, $\cO(V_{r})^{\bW}$ is the image of the natural map $\cR^{\bW}\to \cS$ which lands in $\cS^{\bW_{\l_{0}}}$. Hence $\cO(V_{r})^{\bW}=(\CS^{\bW_{\l_{0}}})'$. From this we see that the second assertion follows from the first.

We prove the first assertion. By \refp{gen}, it suffices to check that the action of $\CR$ on $\CT\star\CT'$ (via right monodromy on $\CT'$) factors through $\cO(V_{r})$, for $\CT\in\TGR$ supported on $\wt O^{\RR}_{\l_{0}}$, and $\CT'\in \THG$. The action of $\CR\ot\CR$ on $\CT'$ by the left and right monodromy factors through $\CR\ot_{\CR^{\bW}}\CR$ by \refp{VHk}, and the first copy of $\cR$-action is the same as the right monodromy on $\CT$. For $\CT\in\TGR$ supported on the closed orbit, $\CR$ acts on $\CT$ through the quotient $\CS$, hence the $\CR\ot_{\CR^{\bW}}\CR$-action on $\CT\star\CT'$ factors through $\CS\ot_{\CR^{\bW}}\CR$. Therefore the second copy of $\CR$-action factors through the image of $\CR\to \CS\ot_{\CR^{\bW}}\CR$ ($r\mapsto 1\ot r$), which is $\cO(V_{r})$ by definition.
\epr

\ssec{locJ}{Localization}


We fix a Borel $B_{0}\in O^{\RR}_{\l_{0}}$ and a $\s$-stable maximal torus $T_{0}\subset B_{0}$. Let $P_{0}\supset B_{0}$ be the minimal $\s$-stable parabolic subgroup containing $B_{0}$. Let $A_{0}\subset T_{0}$ be the subtorus corresponding to the split part of $T_{0,\RR}$. Via the isomorphism $\io_{B_{0}}: T_{0}\subset B_{0}\surj \bT$, $A_{0}$ gets identified with the subtorus $\bA\subset \bT$, which is the neutral component of $\bT^{-\theta_{\l_{0}}}$. We denote by $\bA^{\vee}$ the $\bfk$-torus dual to $\bA$. Then $\Spf \CS$ (the formal version of $V_{\l_{0}}$) is canonically identified with the completion of $\bA^{\vee}$ at the identity.

Consider the restricted root system $\Phi(G,A_{0})$, with basis given by $B_{0}$. Via the isomorphism $\io_{B_{0}}: A_{0}\isom \bA$, we view $\a\in \Phi(G,A_{0})$ as characters on $\bA$, and the corresponding coroot $\a^{\vee}$ as characters on $\bA^{\vee} $. Let $J$ be a subset of simple roots in $\Phi(G,A_{0})$. Let $\bA_{J}\subset \bA$  and $A_{J}\subset A_{0}$ be the neutral component of $\cap_{j\in J}\ker(\a_{j})$. Similarly, let $\bA^{\vee}_{J}\subset \bA^{\vee}$ be the neutral component of $\cap_{j\in J}\ker(\a^{\vee}_{j})$. Let $V_{J}\subset V_{\l_0}$ be a closed subscheme provided by the inclusion $\bA^{\vee}_{J}\subset \bA^{\vee}$. It is identified with the completion of the torus $\bA^{\vee}_J$ at the identity again considered as a scheme and not a formal scheme.

Let $\CK_{J}$ be the localization of $V$ at the generic point of $V_{J}$. The category $\TGR$ is linear over $\CR$. We define the localization of $\TGR$ at the generic point of $V_{J}$, denoted  $\TGR\ot_{\CR}\CK_{J}$, as the idempotent completion of the $\CK_{J}$-linear additive category whose objects are the same as those in $\TGR$, and the morphisms are defined as
$$\Hom_{\TGR\ot_{\CR}\CK_{J}}(\CT_1,\CT_2):=\Hom_{\TGR}(\CT_1,\CT_2)\ten_{\CR} \CK_{J}.$$

\ssec{Levi}{Passing to a Levi}
We continue with the above notations. Let $L:=L_{J}=C_{G}(A_{J})\subset G$ and $P:=P_{J}\supset P_{0}$ be the unique parabolic subgroup of $G$ containing $P_{0}$ with $L$ as a Levi subgroup. Then $L$ and $P$ are also $\s$-stable hence defined over $\RR$.

Let $X_{L}$ be the flag variety of $L_{J}$ and let $\wt X_{L}$ be the $\bT^{c}$-torsor over $X_{L}$. We have the monodromic category $\CM_{L_{\RR}}$ and the subcategory of \fmo tilting sheaves $\Tilt(\CM_{L_{\RR}})$. Applying \refl{R supp} to $L_{\RR}$ we see that the action of $\CR$ on $\id_{\Tilt(\CM_{L_{\RR}})}$ also factors through $\cO(V_{r})$ (in fact we can replace $V_{r}$ by a further subscheme, which is the image of $V_{\l_{0}}\times_{V\sslash W_{L}}V\to V$). Therefore it makes sense to define the localization $\Tilt(\CM_{L_{\RR}})\ot_{\CR}\CK_{J}$.

Let $p_L\colon P\to L$ be the projection, whose kernel is the unipotent radical $U_{P}$ of $P$. We have a closed embedding
\begin{equation*}
i_{P}: X_{L}\incl X
\end{equation*}
sending $B'\in X_{L}$ to $p_{L}^{-1}(B')\in X$. The image of $i_{P}$ is the set of Borel subgroups of $G$ contained in $P$. Similarly we have a closed embedding of enhanced flag varieties
\begin{equation*}
\wt i_{P}: \wt X_{L}\incl \wt X
\end{equation*}
covering $i_{P}$.  Consider the diagram of real algebraic stacks
\begin{equation}\label{corr L G}
\xymatrix{L_{\RR}\bs \wt X_{L} &\ar[l]_-{\wt \pi_{P}} P_{\RR}\bs \wt X_{L} \ar[r]^{\wt \io_{P}} & G_{\RR} \bs \wt X}
\end{equation}
where $\wt\pi_{P}$ is induced by the projection $p_{L,\RR}:P_{\RR}\to L_{\RR}$ (so $\wt \pi$ is a $U_{P,\RR}$-gerbe) and $\wt\io_{P}$ is induced by $\wt i_{P}$. Similarly, we have a diagram
\begin{equation*}
\xymatrix{L_{\RR}\bs X_{L} &\ar[l]_-{\pi_{P}} P_{\RR}\bs \wt X_{L} \ar[r]^{\io_{P}} & G_{\RR} \bs X}
\end{equation*}

\lem{closedembeddingofstacks}
The maps of real algebraic stacks $\io_{P}: P_\BR\backslash X_L\to G_\BR\backslash X$ and $\wt \io_{P}: P_\BR\backslash \wt X_L\to G_\BR\backslash \wt X$ are closed embeddings. 
\elem
\prf
The embedding $\wt i_P$ induce isomorphisms $G\x^P\wt X_L\iso \wt X$. We have a closed embedding $G_\BR\x^{P_\BR}\wt X_L\hookrightarrow G\x^P\wt X_L$. Composing the two maps we get a closed embedding $G_\BR\x^{P_\BR}\wt X_L\incl \wt X$. Quotienting by $G_\BR$ we get the desired closed embedding $\wt\io_{P}$. Since $\io_{P}$ is $\bT^{c}$-equivariant, quotienting by $\bT^{c}$ shows that $\io_{P}$ is also a closed embedding.
\epr

Recall $d_{\l_{0}}$ (resp. $d_{\mu_{0}}$) is the real dimensions of the closed $G(\RR)$-orbit in $X$ (resp. closed $L(\RR)$-orbit on $X_{L}$). Note that $n_{\l_{0}}=n_{\mu_{0}}$. Let
\begin{equation*}
\d^{G}_{L}=\lfloor\frac{d_{\l_{0}}+n_{\l_{0}}}{2}\rfloor-\lfloor\frac{d_{\mu_{0}}+n_{\mu_{0}}}{2}\rfloor.
\end{equation*}
Note that $d_{\l_{0}}-d_{\mu_{0}}=\dim_{\CC}(G/P)$. When $\dim_{\CC}(G/P)$ is even, $\d^{G}_{L}=\frac{1}{2}\dim_{\CC}(G/P)$; when $\dim_{\CC}(G/P)$ is odd, $\d^{G}_{L}$ is one of the integers closest to $\frac{1}{2}\dim_{\CC}(G/P)$. 

\prop{blockandlevi} 
Consider the functor
\begin{equation*}
\g:=\wt\io_{P*}\wt\pi^{*}_{P}[\d^{G}_{L}]: \CM_{L_{\RR}}\to \CM_{G_{\RR}}.
\end{equation*}
\begin{enumerate}
\item $\g$ is an $\cH_{L}$-equivariant full embedding.
\item $\g$ restricts to a $\Tilt(\cH_{L})$-equivariant full embedding of tilting sheaves
\begin{equation*}
\g_{\Tilt}: \Tilt(\CM_{L_{\RR}})\to \Tilt(\CM_{G_{\RR}}).
\end{equation*}

\item $\g_{\Tilt}$ induces an equivalence after localization
\begin{equation*}
\Tilt(\CM_{L_{\RR}})\ot_{\CR}\CK_{J}\isom \Tilt(\CM_{G_{\RR}})\ot_{\CR}\CK_{J}.
\end{equation*}
\end{enumerate}
\eprop


The proof will be given after some preparations.

\lem{intersectingxl} Recall $J$ is a subset of simple roots of $\Phi(G,A_{0})$ that cut out $A_{J}, \bA_{J}, \bA^{\vee}_{J}$ and $V_{J}$.  For a fixed $\l\in I$ we have $V_J\subset V_\l$ if and only if the intersection $O_\l^\BR\cap i_P(X_L)$ is nonempty, i.e. if and only if there exists a Borel subgroup $B'\subset P\subset G$ contained in $O_\l^\BR$ as a point of $X$.
\elem

\prf
Note that the following are equivalent:
\begin{equation}\label{VJ incl}
V_{J}\subset V_{\l} \iff \bA_{J}\subset \bT^{-\theta_{\l}}.
\end{equation}


Suppose $B\subset P$ is a Borel subgroup and $B_{L}=B\cap L$.  Let $T\subset B_{L}$ be a $\s$-stable torus. Note that $A_{J}$ is the split center of $L$, hence $A_{J}\subset T$. Consider the image of $A_{J}$ under $\io_{B}: T\subset B\surj \bT$. On the one hand, $\io_{B}$ and $\io_{B_{0}}$ restricts to the same map $A_{J}\to \bT$ (because they differ by $\bW_{L}$), hence $\io_{B}(A_{J})=\bA_{J}$. On the other hand,  $A_{J}$ is contained in complexification of the split part of $T_{\RR}$,  hence $\io_{B}(A_{J})\subset \bT^{-\theta_{\l}}$. Therefore $\bA_{J}\subset \bT^{-\theta_{\l}}$, and $V_{J}\subset V_{\l} $ by \eqref{VJ incl}.

Conversely, suppose $V_{J}\subset V_{\l}$, hence $\bA_{J}\subset \bT^{-\theta_{\l}}$ by \eqref{VJ incl}. Let $B\in O^{\RR}_{\l}$ and $T\subset B$ be a $\s$-stable maximal torus. Let $A_{1}\subset T$ be the complexification of the split part of $T_{\RR}$. Changing $(T,B)$ by $G(\RR)$-conjugacy, we may assume $A_{1}\subset A_{0}$. Under $\io_{B}: T\subset B\surj \bT$, we have $\io_{B}(A_{1})=\bT^{-\theta_{\l},\c}\supset \bA_{J}$. Let $A'_{J}=\io^{-1}(\bA_{J})\subset A_{1}$.

Now we have two subtori $A_{J},A'_{J}$ of $A_{0}$, which is in turn in $T_{0}$. Via $\io_{B_{0}}$ and $\io_{B}$ respectively, they map isomorphically to $\bA_{J}$. Let $a'_{J}\in A'_{J}$ be a generic element, and let $a_{J}\in A_{J}$ such that $\io_{B}(a'_{J})=\io_{B_{0}}(a_{J})$. Both $a'_{J}$ and $a_{J}$ are in $T_{0}$ and they are in the same conjugacy class of $G$, there exists $w\in W(G,T_{0})$ such that $w(a'_{J})=a_{J}$. Since $a'_{J}$ is generic in $A'_{J}$,  $w$ restricts to the isomorphism $A'_{J}\xr{\io_{B}} \bA_{J}\xr{\io_{B_{0}}^{-1}} A_{J}$.  Since $a_{J}, a'_{J}\in A_{0}$ are the same $W(G,T_{0})$-orbit, they are also in the same $W(G,A_{0})$-orbit.   Let  $u\in W(G,A_{0})$ be such that $u(a'_{J})=a_{J}$, then $u|_{A'_{J}}=w|_{A'_{J}}$. Since $W(G,A_{0})=W(G(\RR), T_{0,\RR})$, we can lift $u$ to $\dot u\in G(\RR)$ normalizing $T_{0}$.  We have a commutative diagram
\begin{equation*}
\xymatrix{ A_{J}\ar@/^2pc/@{=}[rr]\ar[dr]_{\io_{\Ad(\dot u)B}}  & A'_{J} \ar[l]_{u}\ar[r]^{w}\ar[d]^{\io_{B}} & A_{J}\ar[dl]^{\io_{B_{0}}} \\
& \bT}
\end{equation*} 
By \refl{A fixed Borel} below, $B_{0}$ and $\Ad(\dot u)B$ are in the same  $L=C_{G}(A_{J})$-orbit of $X$. Now $B_{0}\in i_{P}(X_{L})$, which is the $L$-orbit through $B_{0}$, we have $\Ad(\dot u)B\in i_{P}(X_{L})\cap O^{\RR}_{\l}$.  
\epr

\lem{A fixed Borel}
Let $A\subset G$ be a torus. Consider the map $\ka: X^{A}\to \Hom(A,\bT)$ sending $B\in X^{A}$ (i.e., a Borel subgroup $B$ containing $A$) to the map $\io_{B}: A\subset B\surj \bT$. Then each non-empty fiber of $\ka$ is stable under the Levi subgroup $C_{G}(A)$ of $G$, and is $C_{G}(A)$-equivariantly isomorphic to the flag variety of $C_{G}(A)$.
\elem
\prf
The map $\ka$ is equivariant under $N_{G}(A)$, therefore each fiber has an action by $C_{G}(A)$.

Let $B_{1},B_{2}\in X^{A}$ with the same image under $\ka$. Let $T_{i}\subset B_{i}$ be a maximal torus containing $A$, $i=1,2$. Let $\io_{i}: T_{i}\subset B_{i}\surj \bT$ be the isomorphisms induced by $B_{i}$. The fact that $\ka(B_{1})=\ka(B_{2})$ implies $\io_{1}|_{A}=\io_{2}|_{A}$. Let $g\in G$ be such that $\Ad(B_{1})=B_{2}$ and $\Ad(T_{1})=T_{2}$.  Then $\io_{2}\c\Ad(g)=\io_{1}\in \Hom(T_{1}, \bT)$. Restricting to $A$ we get that $\io_{2}(\Ad(g)a)=\io_{1}(a)$, which is $\io_{2}(a)$ for all $a\in A$. Therefore $\Ad(g)a=a$ hence $g\in C_{G}(A)$. This shows that each non-empty fiber of $\ka$  is a homogeneous space for $C_{G}(A)$.

The stabilizers of $C_{G}(A)$ on $X^{A}$ are clearly Borel subgroups of $C_{G}(A)$. Hence each non-empty fiber of $\ka$  is isomorphically to the flag variety of $C_{G}(A)$.
\epr


\prf[Proof of \refp{blockandlevi}] 
(1) Since $\wt\pi_{P}$ is a gerbe for a unipotent group,  $\wt\pi_{P}^{*}:\CM_{L_{\RR}}\to \CM_{P_{\RR}}$ is an equivalence. Since $\wt\io_{P}$ is a closed embedding by \refl{closedembeddingofstacks}, $\wt\io_{P*}:\CM_{P_{\RR}}\to \CM_{G_{\RR}}$ is a full embedding. Therefore $\g$ is a full embedding. It is $\cH_{L}$-equivariant because both $\wt\pi_{P}^{*}$ and $\wt\io_{P*}$ are.

(2) 
Let $I_{L}$ and $\wt I_{L}$ be the counterparts of $I$ and $\wt I$ for $L_{\RR}$. For $\mu\in I_{L}$, let $O^{\RR}_{L,\mu}\subset X_{L}$ be the corresponding orbit. The diagram \eqref{corr L G} induces a map
$\ka: I_{L}\incl I$ sending $\mu$ to the unique $\l\in I$ such that $i_{P}(O^{\RR}_{L,\mu})\subset O^{\RR}_{\l}$.  Since $\io_{P}$ is a closed embedding by \refl{closedembeddingofstacks}, $\ka$ is injective. 

For any $\mu\in I_{L}$, we have a canonical isomorphism $\bT^{c}_{\mu}\cong \bT^{c}_{\ka(\mu)}$, therefore the injection $\ka$ lift canonically to an injection $\wt\ka: \wt I_{L}\incl \wt I$ such that the following diagram is Cartesian
\begin{equation*}
\xymatrix{ \wt I_{L} \ar[d]\ar[r]^-{\wt \ka}& \wt I\ar[d]\\
 I_{L}\ar[r]^-{\ka} & I}
\end{equation*}

For any $(\mu,\psi)\in \wt I_{L}$, with $\wt\ka(\mu,\psi)=(\l,\chi)\in \wt I$. Restricting the diagram \eqref{corr L G} to $P(\RR)\bs\wt O^{\RR}_{L,\mu}$ we get maps
\begin{equation*}
\xymatrix{L(\RR)\bs\wt O^{\RR}_{L,\mu} & \ar[l]_-{\wt\pi_{P,\mu}} P(\RR)\bs\wt O^{\RR}_{L,\mu}\ar[r]^-{\wt\io_{P,\mu}} & G(\RR)\bs \wt O^{\RR}_{\l}
}
\end{equation*}
where $\wt\pi_{P,\mu}$ is a $\bT^{c}$-equivariant $U_{P,\RR}$-gerbe and $\wt\io_{P,\mu}$ is a $\bT^{c}$-equivariant isomorphism by \refl{closedembeddingofstacks}. This implies that $\g$ sends $\wt\D_{\mu,\psi}$ to a shift of $\wt\D_{\l, \chi}$. The perversity functions for $X_{L}$ and $X$ are related by
\begin{equation*}
p_{\l}-p_{\mu}=\lfloor\frac{d_{\l}+n_{\l}}{2}\rfloor-\lfloor\frac{d_{\mu}+n_{\mu}}{2}\rfloor=\lfloor\frac{d_{\l_{0}}+n_{\l_{0}}}{2}\rfloor-\lfloor\frac{d_{\mu_{0}}+n_{\mu_{0}}}{2}\rfloor=\d^{G}_{L}
\end{equation*}
because $d_{\l}-d_{\mu}=d_{\l_{0}}-d_{\mu_{0}}=\dim_{\RR}G_{\RR}/P_{\RR}$ (which follows from the fact that $\wt\io_{P,\mu}$ is an isomorphism), $n_{\l}=n_{\mu}$, $d_{\l}+n_{\l}\equiv d_{\l_{0}}+n_{\l_{0}}\mod 2$ and $d_{\mu}+n_{\mu}\equiv d_{\mu_{0}}+n_{\mu_{0}}\mod 2$. Therefore, $\g$ sends $\wt\D_{\mu,\psi}$ to $\wt\D_{\l, \chi}$. 

For the same reason, $\g$ sends $\tilna_{\mu,\psi}$ to $\tilna_{\l,\chi}$. Therefore $\g$ sends $\Tilt(\CM_{L_{\RR}})$ to $\Tilt(\CM_{G_{\RR}})$.

(3) By (2) we see that $\g_{\tilt}\ot \cK_{J}$ is fully faithful. Let us show it is essentially surjective.

Let $X'\subset X$ be the union of $G(\RR)$-orbits that intersect $i_{P}(X_{L})$, and let $\wt X'\subset \wt X$ be its preimage in $\wt X$. By \refl{closedembeddingofstacks}, $X'\cong G_{\RR}\times^{P_{\RR}}X_{L}$ is closed in $X$. The essential image of $\g_{\Tilt}$, hence also the essential image of $\g_{\Tilt}\ot\cK_{J}$, consists of tilting sheaves supported on $\wt X'$. 

On the other hand, by \refl{intersectingxl}, after the localization $-\ten_\CR\CK_J$, objects in $\Tilt(\CM_{G_{\RR}})\ot_\CR\CK_J$ are exactly the images of tilting sheaves supported on $\wt X'$, which is the image of $\g_{\Tilt}\ot \cK_{J}$ by the above paragraph. This proves the essential surjectivity of $\g_{\Tilt}\ot \cK_{J}$. 

\epr


\ssec{locHk}{Localized Hecke action} 
We keep working in the above setting. Consider the second projection $$\pr_{J,V}\colon V_J\x_{V\sslash \bW}V\to V$$ and let $V_{J,r}$ be its scheme-theoretic image. Let $\CR_J$ be the localization of $V$ at the generic points of the irreducible components of $V_{J,r}$. It is isomorphic to the product of the copies of $\CK_J$ indexed by the $\bW$-orbit of $V_J$ as a subspace of $V$. 

Recall that the Hecke category is a $\CR\ten_{\CR^{\bW}}\CR$-linear category. We define the localization of the Hecke category $\CR_J\ten_\CR\Tilt(\CH_G)\ten_\CR\CR_J$ to be the idempotent completion of the $\CR_J\ten_{\CR^{\bW}}\CR_J$-linear category  whose objects are the same as the objects of $\Tilt(\CH_G)$ and the morphisms are given by 
$$\Hom_{\CR_J\ten_\CR\Tilt(\CH_G)\ten_\CR\CR_J}(\CT_1,\CT_2):=\CR_J\ten_\CR\Hom_{\Tilt(\CH_G)}(\CT_1,\CT_2)\ten_\CR\CR_J.$$ 
Note that $\CR_J\ten_\CR\Tilt(\CH_G)\ten_\CR\CR_J$ splits into the direct product of categories numbered by a pair of elements in the $\bW$-orbit of $V_J$. 
The $\star$-action of $\Tilt(\CH_G)$ on $\TGR$ passes to a functor 
\eq{locact}
\star\colon (\TGR\ten_\CR\CR_{J})\times(\CR_{J}\ten_\CR\Tilt(\CH_G)\ten_\CR\CR_{J})\to\TGR\ten_\CR\CR_{J}.\eeq
The action is compatible with the direct product decompositions.

\ssec{loc0}{Localization at codimension $0$}
We continue with the notations of \refss{locJ}.
Let $\CK$ be the field of fractions of $\CS$. 
We also put $\CQ:=\CK^{\bW_{\l_{0}}}$ for the field of fractions of $\CS^{\bW_{\l_{0}}}$, which is also the field of fractions of $(\CS^{\bW_{\l_{0}}})'=\textup{Im}(\CR^\bW\to\CS^{\bW_{\l_{0}}})$. Consider the natural composition $\CR^\bW\to\CS^{\bW_{\l_{0}}}\to\CQ$.  Then $\CS\ten_{\CR^\bW}\CQ\cong\CK$.



Recall $T_{0}$ is a maximally split real torus and $A_{0}\subset T_{0}$ is the split subtorus. Let $I_{0}\subset I$ be the set of orbits attached to $T_{0}$ in the sense of \refd{att}, and $\wt I_{0}$ be the preimage of $I_{0}$ in $\wt I$. Note that for $\l\in I_{0}$ we have $n_\l=\dim A_{0}$; for $\l\notin I_{0}$ we have $n_\l<\dim A_{0}$. For $\l\in I_{0}$, $V_{\l}=\Spec \ol\CR_{\l}$ is a $w$-translate of $V_{\l_{0}}=\Spec \CS\subset \Spec\CR$ for some $w\in \bW$.

For $\l\in I_{0}$,  let $\CK_{\l}=\Frac(\ol\CR_{\l})$. We have $\ol\CR_{\l}\ot_{\CR^{\bW}}\CQ=\CK_{\l}$. We have an isomorphism  $\CK\cong\CK_{\l}$ unique up to precomposing with the action of $W(\RR)$. Then $\cR\ot_{\cR^{\bW}}\cQ\cong \prod_{\l\in I_{0}}\cK_{\l}$.

Consider the localization $\CM_{G_{\RR},\cQ}$ of the category $\CM_{G_{\RR}}$, obtained as the idempotent completion of the additive category whose objects are the same as the objects of $\CM_{G_\BR}$ and the morphisms are given by $$\Hom_{\CM_{G_\BR,\CQ}}(\CF_1,\CF_2):=\Hom_{\CM_{G_\BR}}(\CF_1,\CF_2)\ten_{\CR^\bW} \CQ.$$ 


\prop{loc}
\begin{enumerate}
\item If $(\l,\chi)\in \wt I -  \wt I_{0}$, then $\wt\D_{\l,\chi}$ and $\wt \Na_{\l,\chi}$ are zero in $\CM_{G_\BR,\CQ}$.
\item For $(\l,\chi)\in \wt I_{0}$, we have $\End_{\CM_{G_\BR,\CQ}}(\wt\D_{\l,\chi},\wt\D_{\l,\chi})\cong \CK_{\l}$.
\item  The functor
\begin{equation*}
\bigoplus_{(\l,\chi)\in \wt I_{0}}D^{b}(\CK_{\l}\lmod)\to \CM_{G_\BR,\CQ}
\end{equation*}
sending $(M_{\l,\chi})_{(\l,\chi)\in \wt I_{0}}$ to $\bigoplus_{(\l,\chi)\in \wt I_{0}} M_{\l,\chi}\ot _{\CK_{\l}}\wt\D_{\l,\chi}$ is an equivalence. 
\end{enumerate}
\eprop
\prf
(1) If $\l\notin I_{0}$, the action of $\CR$ on $\wt\D_{\l,\chi}$ and $\wt \Na_{\l,\chi}$ factor through $\CR_{\l}$ but $\Spec\ol\cR_{\l}$ has dimension $n_{\l}$ which is smaller than the dimension of $\Spec\CS^{W(\RR)}$. Therefore the actions of $\CR^{\bW}$ on $\wt\D_{\l,\chi}$ and $\wt \Na_{\l,\chi}$ also factor through a quotient with smaller dimension than $\Spec\CS^{W(\RR)}$,  hence  localizing to the generic point of $\Spec\CS^{W(\RR)}$ kills  $\wt\D_{\l,\chi}$ and $\wt \Na_{\l,\chi}$.


(2) If $(\l,\chi)$ and $(\l,\psi)\in \wt I_{0}$, then by \refl{comp DA}, 
$$\RHom_{\CM_{G_\BR}}(\tilDel_{\l,\chi}, \tilDel_{\l,\psi})=\begin{cases}\ol\CR_{\l}, & \chi=\psi,\\
0, &\chi\ne\psi. \end{cases}$$
Tensoring with $\CQ$ we get
$$\RHom_{\CM_{G_\BR,\CQ}}(\tilDel_{\l,\chi}, \tilDel_{\l,\psi})=\begin{cases}\CK_{\l}, & \chi=\psi,\\
0, &\chi\ne\psi. \end{cases}$$

(3) Since the $\{\wt\D_{\l,\chi}\}_{(\l,\chi)\in\wt I}$ generate $\CM_{G_\BR,\CQ}$, in view of part (1) and (2), it remains to show that $\RHom_{\CM_{G_\BR,\CQ}}(\tilDel_{\l,\chi}, \tilDel_{\mu,\psi})=0$ for distinct orbits $\l,\mu\in I_{0}$. Note that the support of $\RHom_{\CM_{G_\BR}}(\tilDel_{\l,\chi}, \tilDel_{\mu,\psi})$ as an $\CR$-module is contained in $V_{\l}\cap V_{\mu}$. By \refl{realWeyl} and \cite[Proposition 12.9 and 12.14]{AdC},  that $(\bT^c_\l)^{\c}\ne (\bT^c_\mu)^{\c}$ as subtori of $\bT^{c}$, hence the intersection $V_{\l}\cap V_{\mu}$ has smaller dimension than $V_{\l_{0}}$. The same holds for the support of $\RHom_{\CM_{G_\BR}}(\tilDel_{\l,\chi}, \tilDel_{\mu,\psi})$ as an $\CR^\bW$-module. Therefore tensoring with $\CQ$ kills $\RHom_{\CM_{G_\BR}}(\tilDel_{\l,\chi}, \tilDel_{\mu,\psi})$.   


\epr



By \refp{loc}, indecomposable objects in the localized category $\Tilt(\CM_{G_{\RR}})_{\CQ}$ (idempotent completion of the image of $\Tilt(\CM_{G_{\RR}})$ in $\CM_{G_{\RR}, \CQ}$) are of the form $\wt\D_{\l,\chi}$ where $(\l,\chi)\in \wt I_{0}$ (these are objects in the localized tilting category because they are direct summands of $\cT_{\l,\chi}$ after localization).  
On the other hand,  we can define the localized Hecke category $\cH_{G,\cQ}$ and tilting sheaves $\Tilt(\cH_{G})_{\cQ}$ inside it. For $w\in \bW$, $\wt \D_{w}$ is a direct summand of $\CT_{w}$ in $\Tilt(\cH_{G})_{\cQ}$, hence $\wt\D_{w}\in \Tilt(\cH_{G})_{\cQ}$. The assignment $w\mapsto \wt\D_{w}$ gives a monoidal functor from $\bW$ (viewed as a category with objects $\bW$ and only identity morphisms) to $\Tilt(\cH_{G})_{\cQ}$. In particular,  the  $\Tilt(\cH_{G})_{\cQ}$-action on $\Tilt(\CM_{G_\BR})_{\CQ}$ induces a  right $\bW$-action on the set of isomorphisms classes of indecomposable objects in $\Tilt(\CM_{G_\BR})_{\CQ}$, and hence induces a right action of $\bW$ on $\wt I_{0}$.

\lem{same as cross} Assume that $G_{\RR}$ is  quasi-split. The action of $\bW$ on $\wt I_{0}$ defined above is the same as the restriction of the cross action. In other words, in $\CM_{G_\BR,\CQ}$ we have an isomorphism $\wt\D_{\l,\chi}\star\wt\D_{w}\cong \wt\D_{(\l,\chi)\cdot w}$ for $(\l,\chi)\in \wt I_{0}$ and $w\in \bW$.
\elem
\prf When $G_{\RR}$ is quasi-split and $\l\in I_{0}$, all simple roots in $\Phi_{\l}$ are in case (1) or (3) in \refl{stdconv}.  For $(\l,\chi)\in \wt I_{0}$, by \refl{stdconv} we have $\wt\D_{\l,\chi}\star\wt\D_{s}\cong \wt\D_{(\l,\chi)\cdot s}$ in $\CM_{G_\BR,\CQ}$ (the contribution of orbits $\mu$ or $\mu^{\pm}$ become zero after localization). Writing any $w\in \bW$ as a product of simple reflections,  we see that $\wt\D_{\l,\chi}\star\wt\D_{w}$ and $\wt\D_{(\l,\chi)\cdot w}$ become isomorphic in $\CM_{G_\BR,\CQ}$. This implies the lemma.
\epr

\sec{realV}{Real Soergel functor} 

In this section we extend Soergel's functor $\VV$ from complex groups to quasi-split real groups, and study the behavior of tilting sheaves under the real Soergel functor.

From this section on, we assume that $G_\BR$ is quasi-split. 

\ssec{}{Regular covector} Let $\cN^{*}\subset \grg^{*}$ be the nilcone. Also we have a decomposition $\grg^{*}=\grg_{\RR}^{*}\op i\grg^{*}_{\RR}$. Let 
\begin{equation*}
i\cN^{*}(\RR):=\cN^{*}\cap i\grg^{*}_{\RR}.
\end{equation*}
Let $\cO_{\reg}\subset \cN^{*}$ be the regular nilpotent orbit.

\lem{reg real orbit} 
\begin{enumerate}
\item We have $i\cN^{*}(\RR)\cap \cO_{\reg}$ if and only if $G(\RR)$ is quasi-split.
\item Suppose $\xi\in i\cN^{*}(\RR)\cap \cO_{\reg}$, then the Springer fiber $\cB_{\xi}$ is a single point, and is contained in the closed $G(\RR)$-orbit of $X$.
\end{enumerate}

\elem
\prf Suppose $\xi\in i\cN^{*}(\RR)\cap \cO_{\reg}$, then the Springer fiber $\cB_{\xi}\subset X$ is a single point, hence a real point of the flag variety $X$ since $\xi$ is pure imaginary. In particular, the point $\cB_{\xi}$ gives a Borel subgroup of $G$ defined over $\RR$. Since the closed $G(\RR)-$-orbit in $X$ parametrizes Borel subgroups that are defined over $\RR$, $\cB_{\xi}$ is contained in the closed $G(\RR)$-orbit of $X$.

Conversely, assume $G_{\RR}$ is quasi-split and $B_{\RR}\subset G_{\RR}$ is a Borel subgroup defined over $\RR$. Let $\grn_{\RR}$ be the nilpotent radical of $\Lie B_{\RR}$. Then a generic element $\xi\in i\grn_{\RR}$ (i.e., its projection to each simple root space is nonzero), viewed as an element in $i\grg^{*}_{\RR}$ using the Killing form, is regular.
\epr

\ssec{GVC}{Generic vanishing cycles}
In the rest of the section we assume $G_{\RR}$ is quasi-split.  In this case, the closed $G(\RR)$ orbit $O_{\l_{0}}^{\RR}\subset X$ is the set of real points of $X$.  

Consider the moment map of $\wt X=G/U\bT^{>0}$ for the left action of $G$:
\begin{equation*}
\mu: T^{*}\wt X\to \grg^{*}.
\end{equation*}
For $\bT^{c}$-monodromic sheaves $\cF\in D^{b}(\wt X)_{\bT^{c}\mon}$, its singular support $SS(\cF)$ is contained in the image of the pullback $T^{*}X\times_{X}\wt X\to T^{*}\wt X$, which is equal to $\mu^{-1}(\cN^{*})$. On the other hand, if $\cF\in D^{b}_{G(\RR)}(\wt X)$, then $SS(\cF)\subset \mu^{-1}(i\grg^{*}_{\RR})$. Let 
\begin{equation*}
\L_{\RR}=\mu^{-1}(i\cN^{*}(\RR))\subset T^{*}\wt X.
\end{equation*}
Then the above discussion shows that for $\cF\in D^{b}_{G(\RR)}(\wt X)_{\bT^{c}\mon}$, $SS(\cF)\subset \L_{\RR}$. Also,  since $G(\RR)\times \bT^{c}$ has finitely many orbits on $\wt X$, $\L_{\RR}$ is the union of conormals of the orbits $\{\wt O^{\RR}_{\l}\}$:
\begin{equation*}
\L_{\RR}=\bigcup_{\l\in I} T^{*}_{\wt O^{\RR}_{\l}}\wt X.
\end{equation*}

Let $\xi\in i\cN^{*}(\RR)\cap \cO_{\reg}$ and let
\begin{equation*}
\Xi:=\mu^{-1}(\xi)\subset T^{*}\wt X.
\end{equation*}
Then by \refl{reg real orbit}, the  Springer fiber $\cB_{\xi}$ is a single point $x$ contained in the closed $G(\RR)$-orbit $O^{\RR}_{\l_0}\subset X$ that is a real form of $X$.  Hence projection to $\wt X$ is an isomorphism $\Xi\isom \pi^{-1}(x)\subset \wt X$ (a single $\bT^{c}$-orbit).  Moreover, $\Xi\subset T^{*}_{\wt O^{\RR}_{\l_0}}\wt X$ but is disjoint from the conormals of other $\wt O^{\RR}_{\l}$.  

For a manifold $M$ and a submanifold $N\subset M$, and $\cF\in D^{b}(M)$, we use $\mu_{N}\cF\in D^{b}(T^{*}_{N}M)$ to denote the microlocalization of $\cF$ along $N$.   See \cite[Definition 4.3.1]{KS}.

For $\cF\in D^{b}_{G(\RR)}(\wt X)_{\bT^{c}\mon}$, denote the microlocalization $\mu_{\wt O^{\RR}_{\l_0}}(\cF)\in D^{b}(T^{*}_{\wt O^{\RR}_{\l_0}}\wt X)_{\bT^{c}\mon}$ along $\wt O^{\RR}_{\l_0}$ simply by $\mu_{\l_{0}(\cF)}$;  it is  locally constant on  the generic part of $T^{*}_{\wt O^{\RR}_{\l_0}}$. Restricting to $\Xi$ gives a functor
\begin{equation*}
 D^{b}_{G(\RR)}(\wt X)_{\bT^{c}\mon}\xr{\mu_{\l_0}} D^{b}(T^{*}_{\wt O^{\RR}_{\l_0}}\wt X)_{\bT^{c}\mon} \to  D^{b}(\Xi)_{\bT^{c}\mon} \cong D^{b}(\pi^{-1}(x))_{\bT^{c}\mon} 
\end{equation*}
Passing to completions and shifting so that a free-monodromic local system on the closed orbit in the heart of the $t$-structure is sent to an object concentrated in degree $0$, i.e. just its stalk shifted to degree $0$, we get a functor
\begin{equation*}
\VV_{\RR, \xi}: \CM_{G_\BR}=\wh D^{b}_{G(\RR)}(\wt X)_{\bT^{c}\mon}\to \wh D^{b}(\Xi)_{\bT^{c}\mon}
\end{equation*}

If we choose a point $\wt x \in \pi^{-1}(x)$ and let $\wt\xi=(\wt x,\xi)\in \Xi$, the base point $\wt\xi$ trivializes  $\Xi$ as a $\bT^{c}$-torsor, and gives an equivalence $\wh D^{b}(\Xi)_{\bT^{c}\mon}\cong D^{b}(\rmod\cR)$. Then $\VV_{\RR,\xi}$ induces a functor
\begin{equation}\label{V real}
\VV_{\RR, \wt \xi}: \CM_{G_\BR}\to D^{b}(\rmod\cR)
\end{equation}
that depends on the choice of $\wt \xi\in T^{*}\wt X$ over the regular $\xi$. When $\wt \xi$ is fixed, we also write the functor as $\VV_{\RR}$.


\prop{exact} Let $(\l,\chi)\in \wt I$.
\begin{enumerate}
\item $\VV_{\RR}(\Na_{\l,\chi})$ is concentrated in degree $0$.
\item $\VV_{\RR}(\D_{\l,\chi})$ is concentrated in degree $n_{\l_{0}}-n_{\l}$. 
\end{enumerate}
\eprop

\prf
(1) 
We argue by induction on $d_{\l}=\dim_{\RR} O^{\RR}_{\l}$. If $\l=\l_{0}$ corresponds to the closed orbit, then  $\VV_\BR(\nabla_{\l,\chi})$ is the stalk of $\CL_{\l,\chi}$ along $\wt O^{\RR}_{\l_{0}}$ (up to a shift), and it is normalized to be in degree $0$. Otherwise, choose a point $x\in O^{\RR}_{\l}$ (corresponding to a Borel $x$) such that $B$ contains a $\s$-stable maximal torus $T$, and we can talk about the based root system $\Phi(G,T)$ with positive roots defined by $B$.  Since $O^{\RR}_{\l}$ is not closed, there is a simple root $\alp\in \Phi(G,T)$ and $\mu<\l$, such that $\pi_\alp(O_\lambda^\BR)=\pi_\alp(O_\mu^\BR)$ for the projection $\pi_\alp\colon X\to X_{\a}=G/P_\alp$. Since $O_{\l}^{\RR}$ is not closed, we have the following cases according to \refl{cl orb}:
\begin{itemize}
\item $\a$ is complex and $\s\a<0$. In this case, we have $p_{\l}-p_{\mu}=1$, and a distinguished triangle $\pi_\alp^*\pi_{\a*}\nabla_{\lambda, \chi}\to\nabla_{\lambda, \chi}\to \nabla_{\mu,\psi}\to$ for some $\psi: \pi_{0}(\bT^{c}_{\mu})\to \bfk^{\times}$.
\item $\a$ is noncompact imaginary and $\pi_{\a}^{-1}\pi_{\a}(O^{\RR}_{\l})=O^{\RR}_{\l}\cup O^{\RR}_{\mu}$. We have $p_{\l}=p_{\mu}$, and a distinguished triangle $\pi_\alp^*\pi_{\a*}\nabla_{\lambda, \chi}\to\nabla_{\lambda, \chi}\to \nabla_{\mu,\psi}\to$ for some $\psi:\pi_{0}(\bT^{c}_{\mu})\to \bfk^{\times}$.
\item $\a$ is noncompact imaginary and $\pi_{\a}^{-1}\pi_{\a}(O^{\RR}_{\l})=O^{\RR}_{\l}\cup O^{\RR}_{\l'}\cup O^{\RR}_{\mu}$, with $\mu<\l$ and $\mu<\l'$. Then  $p_{\l}=p_{\l'}=p_{\mu}$, and we have a distinguished triangle $\pi_\alp^*\pi_{\a*}\nabla_{\lambda, \chi}\to\Na_{\lambda, \chi}\op\Na_{\l',\chi'}\to \Na_{\mu,\psi}\to$ for some $\chi':\pi_{0}(\bT^{c}_{\l'})\to \bfk^{\times}$ and $\psi:\pi_{0}(\bT^{c}_{\mu})\to \bfk^{\times}$.
\end{itemize}
We have $\VV_\BR(\pi^{*}_\alp(\CG))=0$ for any $\CG\in D^{b}_{G(\RR)}(X_{\a})$, as the covector $\xi$ does not lie in the image of the pullback of cotangent bundles  $d\pi_{\a}: (T^{*}X_{\a})\times_{X_{\a}}X\to T^{*}X$. From the distinguished triangles above we see that $\VV_{\RR}(\Na_{\l,\chi})$ is a direct summand of $\VV_{\RR}(\Na_{\mu,\psi})$. Since $d_{\mu}<d_{\l}$, by inductive hypothesis $\VV_{\RR}(\Na_{\mu,\psi})$ is concentrated in degree $0$, therefore the same is true for $\VV_{\RR}(\Na_{\l,\chi})$.


(2) The argument is similar to the costandard case. In the induction step, we have the following cases
\begin{itemize}
\item $\a$ is complex and $\s\a<0$. In this case, we have $p_{\l}-p_{\mu}=1$, and a distinguished triangle $\D_{\mu,\psi}\to\D_{\lambda, \chi}\to \pi_{\a}^{!}\pi_{\alp!}\D_{\lambda, \chi}\to$. We conclude that $\VV_{\RR}(\D_{\lambda, \chi})\cong \VV_{\RR}(\D_{\mu,\psi})$. Note that $n_{\mu}=n_{\l}$ in this case.
\item $\a$ is noncompact imaginary and $\pi_{\a}^{-1}\pi_{\a}(O^{\RR}_{\l})=O^{\RR}_{\l}\cup O^{\RR}_{\mu}$. We have $p_{\l}=p_{\mu}$, and a distinguished triangle $\D_{\lambda, \chi}\to\pi_\alp^!\pi_{\a!}\D_{\l,\chi}\to \D_{\mu,\psi}\to$. We conclude that $\VV_{\RR}(\D_{\lambda, \chi})\cong \VV_{\RR}(\D_{\mu,\psi})[-1]$. Note that $n_{\mu}-n_{\l}=1$ in this case.
\item $\a$ is noncompact imaginary and $\pi_{\a}^{-1}\pi_{\a}(O^{\RR}_{\l})=O^{\RR}_{\l}\cup O^{\RR}_{\l'}\cup O^{\RR}_{\mu}$, with $\mu<\l$ and $\mu<\l'$. Then  $p_{\l}=p_{\l'}=p_{\mu}$, and we have a distinguished triangle $\D_{\lambda, \chi}\op\D_{\l',\chi'}\to \pi_\alp^!\pi_{\a!}\D_{\l,\chi}\to\D_{\mu,\psi}\to$. We conclude that $\VV_{\RR}(\D_{\lambda, \chi})$ is a summand of $\VV_{\RR}(\D_{\mu,\psi})[-1]$. Again $n_{\mu}-n_{\l}=1$ in this case.
\end{itemize}




\epr

\cor{VR exact tilting} For any \fmo tilting sheaf $\CT\in \TGR$, $\VV_{\RR}(\CT)$ is concentrated in degree $0$.
\ecor

\ssec{V Hk}{Soergel functor for the Hecke category}
Consider the Hecke category $\CH_{G}=\wh D^{b}_{G}(\wt X\times \wt X)_{\bT^{c}\times \bT^{c}\mon}$. The construction of the Soergel functor can be applied to the complex group $G$ viewed as a real group $R_{\CC/\RR}G$ and giving a Soergel functor for $\CH_{G}$. We spell this out.

Consider the doubled moment map $\mu^{(2)}: T^{*}\wt X\times T^{*}\wt X\to \frg^{*}\op \frg^{*}$. Let $\D^{-}(\cN^{*})\subset \D^{-}(\frg^{*})\subset \frg^{*}\op \frg^{*}$ be the anti-diagonally embedded nilcone. Let
\begin{equation*}
\L:=\mu^{(2),-1}(\D^{-}(\cN^{*})).
\end{equation*}
It is well-known that 
\begin{equation*}
\L=\bigcup_{w\in W}T^{*}_{\wt X^{2}_{w}}(\wt X^{2}).
\end{equation*}

Let $\xi\in \cN^{*}\cap \cO_{\reg}$ and consider  $(\xi,-\xi)\in \D^{-}(\cN^{*})$. Let $\Xi=\mu^{-1}(\xi), \Xi^{-}=\mu^{-1}(-\xi)$ and consider 
\begin{equation*}
\mu^{(2),-1}(-\xi,\xi)=\Xi^{-}\times \Xi\subset T^{*}(\wt X^{2}).
\end{equation*}
Since the Springer fiber $\cB_{\xi}$ is a single point $x\in X$, $\mu^{(2),-1}(-\xi,\xi)$ projects isomorphically onto $\pi^{-1}(x)^{2}\subset \wt X^{2}$, which is a $\bT^{c}\times \bT^{c}$-torsor.  In particular, $\Xi^{-}\times \Xi$ is contained in the conormal bundle of $\wt X^{2}_{e}\subset \wt X^{2}$ and not in the closure of the conormals of $\wt X^{2}_{w}$ for $e\ne w\in W$ (i.e., it is contained in the generic part of the $T^{*}_{\wt X^{2}_{e}}(\wt X^{2})$).

For $\cK\in \CH_{G}$, its microlocalization along $\wt X^{2}_{e}$ is locally constant on the generic part of $T^{*}_{\wt X^{2}_{e}}(\wt X^{2})$ and $\bT^{c}\times \bT^{c}$-unipotently monodromic. Restricting to $\Xi^{-}\times \Xi$ gives a functor 
\begin{equation*}
\VV_{(-\xi,\xi)}: \CH_G\to \wh D^{b}(\Xi^{-}\times \Xi)_{\bT^{c}\times \bT^{c}\mon}
\end{equation*}

If we choose $\wt x\in \pi^{-1}(x)$ hence $\wt\xi=(\wt x,\xi)\in \Xi$ and $-\wt\xi=(\wt x,-\xi)\in \Xi^{-}$,  we can then identify $\wh D^{b}(\Xi^{-}\times \Xi)_{\bT^{c}\times \bT^{c}\mon}$ with $\wh D^{b}(\bT^{c}\times \bT^{c})_{\bT^{c}\times \bT^{c}\mon}\cong D^{b}(\rmod\CR\ot\CR)$. Then $\VV_{(-\xi,\xi)}$ induces a functor
\begin{equation}\label{VHkpre} 
\VV_{(-\wt \xi,\wt\xi)}: \CH_G\to 
\wh D^{b}(\bT^{c}\times \bT^{c})_{\bT^{c}\times \bT^{c}\mon} \cong D^{b}(\rmod\CR\ot\CR).
\end{equation}

It will be more convenient to turn right $\cR\ot \cR$-modules into $\cR$-bimodules.  Let $\io:\cR\to \cR$ be the involution given by the inversion on $\pi_{1}(\bT^{c})$. We consider the equivalence
\begin{equation}\label{rmod to bimod}
\t: \rmod\cR\ot\cR\isom \cR\bimod\cR
\end{equation}
given by sending $M\in \rmod\cR\ot\cR$ to the same underlying vector space $M$ equipped with the left and right actions of $\cR$ defined by
\begin{equation*}
r_{1}\cdot m\cdot r_{2}=m\cdot (\iota(r_{1})\ot r_{2}), \quad r_{1},r_{2}\in \cR, m\in M.
\end{equation*}
Composing \eqref{VHkpre} with the equivalence \eqref{rmod to bimod}, we get a functor
\begin{equation}\label{V Hk}
{}_{-\wt \xi}\VV_{\wt \xi}: \CH_G\to  D^{b}(\CR\bimod\CR).
\end{equation}
When $\wt\xi$ is understood from the context we simply denote ${}_{-\wt \xi}\VV_{\wt \xi}$ by $\VV$.

Recall in \cite[\S4.5]{BY} (in the $\ell$-adic setting), a similar functor $\VV$ was defined using the action of $\cH_{G}$ on a certain Whittaker category.  A topological analogue has been constructed in \cite[\S11.1]{BR}.


We summarize the main properties of $\VV$ in the following proposition. Most of the assertions are proved in the literature with an a priori different definition of $\VV$. 

\prop{VHk}
\begin{enumerate}
\item Let $\wt \cP_{e}\in \CH^{\hs}_{G}$ \footnote{$\CH^{\hs}_{G}$ denotes the heart of the $t$-structure defined by $p$ for the complex group, which is essentially the middle perversity. } be a projective cover of the constant sheaf on the preimage of $\D(X)\subset X\times X$ in $\wt X\times \wt X$. Then $\VV\cong \RHom(\wt\cP_{e},-)$.
\item\label{Hom Tw0} $\wt\cP_{e}$ is isomorphic to the \fmo indecomposable tilting sheaf $\cT_{w_{0}}$ with full support. Therefore the functor $\VV\cong\RHom(\cT_{w_{0}},-)$.
\item\label{Endo P0} We have an isomorphism of $\cR$-bimodules $\End_{\CH_{G}}(\CT_{w_{0}})\cong \cR\ot_{\cR^{\bW}}\cR$. In particular, if we let $(\CR\bimod\CR)_{\CR^{\bW}}$ be the category of $\CR$-bimodules where the actions of $\CR^{\bW}$ through the left and right copies of $\CR$ coincide, then $\VV$ takes values in $D^{b}((\CR\bimod\CR)_{\CR^{\bW}})$.
\item\label{VHk ff} $\VV$ is fully faithful on \fmo tilting sheaves.
\item For $\CT'\in \Tilt(\CH_{G})$, $\VV(\CT')$ is a Soergel bimodule (see, for example, \cite[Theorem 11.8(1)]{BR} for the definition). Let $\SBim$ be the category of Soergel $\cR$-bimodules.  Then $\VV|_{\Tilt}$ (shorthand for the restriction of $\VV$ to $\Tilt(\CH_{G})$) upgrades to a monoidal functor
\begin{equation}\label{Vsh}
\VV^{\sh}: \Tilt(\CH_{G})\to \SBim.
\end{equation}
\end{enumerate}
\eprop
\prf
(1)We only need to check that $\VV(\IC_{w})=0$ for $w\ne e$ and $\VV(\IC_{e})\cong \bfk$. If $w\ne e$, then $\IC_{w}$ is pulled back from  either $X\times G/P$ or $G/P\times X$ (where $P$ is a parabolic subgroup that is not a Borel). In this case, the singular support of $\IC_{w}$ along the diagonal lies in the pullback of the conormal to the diagonal of  $X\times G/P$ or $G/P\times X$, hence the generic vanishing cycle along the diagonal vanishes, i.e., $\VV(\IC_{w})=0$. The isomorphism $\VV(\IC_{e})\cong \bfk$ is clear since $\IC_{e}$ is the constant sheaf on the diagonal.

(2) See \cite[Lemma 6.9(2)]{BR}.

(3) See \cite[Theorem 9.1]{BR}.

(4) See \cite[Proposition 11.2]{BR}.

(5) See \cite[Theorem 11.8(1)]{BR}.
\epr

\prop{H monoidal} The functor $\VV$ carries a canonical monoidal structure with respect to the convolution product on $\cH_{G}$ and the tensor product $(-)\ot^{\LL}_{\cR}(-)$ on $D^{b}(\CR\bimod\CR)$.
\eprop
\prf First we construct a functorial isomorphism
\begin{equation*}
c_{\cK,\cK'}: \VV(\cK\star \cK')\cong \VV(\cK)\ot_{\cR}^{\LL}\VV(\cK')
\end{equation*}
for $\cK,\cK'\in \CH_{G}$.  Recall $\cK\star \cK'=\pr_{13*}(\pr_{12}^{*}\cK\ot \pr_{23}^{*}\cK')$. Let $\cG=\pr_{12}^{*}\cK\ot \pr_{23}^{*}\cK'$. Let $\wt\D_{13}=\pr_{13}^{-1}(\wt X_{e}^{2})\subset \wt X^{3}$. Let $\pi_{13}: T^{*}_{\wt\D_{13}}(\wt X^{3})\to T^{*}_{\wt X^{2}_{e}}(\wt X^{2})$ be the natural projection. By \refp{KS}(1), we have
\begin{equation*}
\pi_{13*}(\mu_{\wt\D_{13}}\cG)\cong \mu_{\wt X^{2}_{e}}\pr_{13*}\cG.
\end{equation*}
Recall that $\L\subset T^{*}(\wt X^{2})$ is the union of conormals of $\wt X^{2}_{w}$. Now observe that $SS(\cG)\subset (\L\times 0_{\wt X})+(0_{\wt X}\times \L)$ (here we apply \cite[Proposition 5.4.14(i)]{KS}, using that $(\L\times 0_{\wt X})\cap(0_{\wt X}\times \L)$ is contained in the zero section). This implies that $\pi_{13}^{-1}(\Xi^{-}\times\Xi)\cap SS(\cG)\subset \Xi^{-}\times 0_{\pi^{-1}(x)} \times \Xi\subset T^{*}_{\wt\D}(\wt X^{3})$, where $\wt \D\subset \wt X^{3}$ is the preimage of the small diagonal $\D(X)\incl X^{3}$.  Therefore
\begin{equation}\label{push G}
\pi_{13,\Xi*}(\mu_{\wt\D}(\cG)|_{\Xi^{-}\times 0_{\pi^{-1}(x)} \times \Xi})\cong (\mu_{\wt X^{2}_{e}}\pr_{13*}\cG)|_{\Xi^{-}\times \Xi}=\VV_{(-\xi,\xi)}(\cK\star\cK').
\end{equation}
Here $\pi_{13,\Xi}: \Xi^{-}\times 0_{\pi^{-1}(x)} \times \Xi\to \Xi^{-}\times \Xi$ is the projection.

Let $\d_{23}: \wt X^{3}\incl \wt X^{4}$ be the diagonal embedding of the middle two factors. Let $\ol\d_{23}$ be the composition $\wt X^{3}\xr{\d_{23}} \wt X^{4}\to \wt X^{2}\times(G\bs \wt X^{2})$. Then $\cG=\ol\d^{*}_{23}(\cK\bt\cK')$, where we view $\cK'\in D^{b}_{G}(\wt X^{2})$. Now we are ready to apply \refp{KS}(2) (together with \refr{KS stack}) to the smooth map $\ol\d_{23}$, the submanifold $\wt X^{2}_{e}\times(G\bs \wt X^{2}_{e})\subset \wt X^{2}\times (G\bs \wt X^{2})$, and its preimage $\wt \D\subset \wt X^{3}$ under $\ol\d_{23}$.  
There is a canonical embedding induced from $\ol\d_{23}$ (which is an isomorphism in conormal fibers)
\begin{equation*}
\ol\d_{23}^{\na}: T^{*}_{\wt\D}(\wt X^{3})\to T^{*}_{\wt X^{2}_{e}\times(G\bs\wt X^{2}_{e})}(\wt X^{2}\times(G\bs \wt X^{2})).
\end{equation*}
By \refp{KS}(2), we have
\begin{equation}\label{23pull}
\ol\d_{23}^{\na*}(\mu_{\wt X^{2}_{e}}\cK\bt\mu_{G\bs \wt X^{2}_{e}}\cK')=\d_{23}^{\na*}\mu_{\wt X^{2}_{e}\times (G\bs \wt X^{2}_{e})}(\cK\bt\cK')\isom \mu_{\wt\D}(\cG).
\end{equation}
Since $\ol\d^{\na}_{23}$ factors as the composition
\begin{equation*}
\ol\d_{23}^{\na}: T^{*}_{\wt\D}(\wt X^{3})\xr{\d^{\na}_{23}}T^{*}_{\wt X^{2}_{e}\times\wt X^{2}_{e})}(\wt X^{2}\times \wt X^{2}) \to T^{*}_{\wt X^{2}_{e}\times(G\bs\wt X^{2}_{e})}(\wt X^{2}\times(G\bs \wt X^{2}))
\end{equation*}
where the second map is the quotient by $G$, we can rewrite \eqref{23pull} as an isomorphism (viewing $\cK'\in D^{b}(\wt X^{2})$)
\begin{equation*}
\d_{23}^{\na*}(\mu_{\wt X^{2}_{e}}\cK\bt\mu_{\wt X^{2}_{e}}\cK')\cong \mu_{\wt\D}(\cG).
\end{equation*}
Restricting both sides to $\Xi^{-}\times 0_{\pi^{-1}(x)}\times \Xi$ we get
\begin{eqnarray*}
\d_{23,\Xi}^{\na*}(\VV_{(-\xi,\xi)}(\cK)\bt\VV_{(-\xi,\xi)}(\cK'))
&=&\d_{23,\Xi}^{\na*}(\mu_{\wt X^{2}_{e}}\cK|_{\Xi^{-}\times \Xi}\bt\mu_{\wt X^{2}_{e}}\cK'|_{_{\Xi^{-}\times \Xi}})\\
&\isom &\mu_{\wt\D}(\cG)|_{\Xi^{-}\times 0_{\pi^{-1}(x)}\times\Xi}.
\end{eqnarray*}
where $\d^{\na}_{23,\Xi}: \Xi^{-}\times 0_{\pi^{-1}(x)}\times \Xi\incl \Xi^{-}\times \Xi\times \Xi^{-}\times \Xi$ is the restriction of $\d^{\na}_{23}$.  Combined with \eqref{push G} we get a canonical isomorphism
\begin{equation}\label{Vxixi}
\VV_{(-\xi,\xi)}(\cK\star\cK')\cong \pi_{13,\Xi*}\d_{23,\Xi}^{\na*}(\VV_{(-\xi,\xi)}(\cK)\bt\VV_{(-\xi,\xi)}(\cK')).
\end{equation}

Identifying $\Xi^{-}$, $\Xi$ and $\pi^{-1}(x)$ with $\bT^{c}$ using the base points $\pm\wt\xi$ and $\wt x$,  \eqref{Vxixi} becomes
\begin{equation}\label{Vwtxi}
\VV_{(-\wt\xi,\wt\xi)}(\cK\star\cK')\cong \dot\pr_{13*}\dot\d_{23}^{*}(\VV_{(-\wt\xi,\wt\xi)}(\cK)\bt\VV_{(-\wt\xi,\wt\xi)}(\cK')).
\end{equation}
where $\dot\d_{23}: (\bT^{c})^{3}\incl (\bT^{c})^{4}$ is the diagonal embedding of the middle factors and $\dot\pr_{13}: (\bT^{c})^{3}\to (\bT^{c})^{2}$ the projection. If we write $M=\VV_{(-\wt\xi,\wt\xi)}(\cK)$ and $M'=\VV_{(-\wt\xi,\wt\xi)}(\cK')$ viewed as objects in $D^{b}(\rmod\cR\ot\cR)$, then the right side of \eqref{Vwtxi} becomes $(M\ot M')\ot^{\LL}_{\cR_{23}}\bfk$, where the notation $\ot^{\LL}_{\cR_{23}}\bfk$ means $\ot^{\LL}_{\cR}\bfk$ taken using the $\cR$-action on $M\ot M'$ given by $(m,m')\cdot r=(m,m')\cdot (1\ot \d(r)\ot1)$, where $\d: \cR\to \cR\ot \cR$ is the comultiplication induced by the diagonal map $\pi_{1}(\bT^{c})\to \pi_{1}(\bT^{c})\times \pi_{1}(\bT^{c})$. Finally note canonical isomorphism of right $\cR\ot \cR$-modules (recall $\t$ from \eqref{rmod to bimod})
\begin{equation*}
(M\ot M')\ot^{\LL}_{\cR_{23}}\bfk\cong M\ot_{\cR}^{\LL} \t M'.
\end{equation*}
which induces a canonical isomorphism of right $\cR$-bimodules
\begin{equation*}
\t ((M\ot M')\ot^{\LL}_{\cR_{23}}\bfk)\cong \t M\ot_{\cR}^{\LL} \t M'.
\end{equation*}
Plugging into \eqref{Vwtxi} we get the desired isomorphism $c_{\cK,\cK'}$. We omit the verification that $c_{\cK,\cK'}$ satisfies the axioms of a monoidal structure on $\CH_{G}$.
\epr

\ssec{}{Module structure on $\VV_\BR$}
We now establish the relation between $\VV, \VV_\BR$ and the $\star$-action.


Choose $\xi\in i\cN^{*}(\RR)\cap \cO_{\reg}$, and let $x\in O^{\RR}_{\l_0}$ be the unique point in the Springer fiber $\cB_{\xi}$. Choose a lifting $\wt x\in \pi^{-1}(x)$ and let $\wt\xi=(\wt x,\xi)\in T^{*}\wt X$.  We use $\wt \xi$ to define the functor $\VV_{\RR}:=\VV_{\RR,\wt \xi}$ as in \eqref{V real}. On the other hand, we use $(-\wt\xi,\wt\xi)\in T^{*}(\wt X^{2})$ to defined the functor $\VV:={}_{-\wt\xi}\VV_{\wt\xi}$ as in \eqref{V Hk}.  


%
%


\prop{monoidal} 
Under the above notations,  the functor $\VV_\BR$ intertwines the right $\CH_G$-action on $\CM_{G_\BR}$ by $\star$ and the right action of $D^{b}(\cR\bimod\cR)$ on $D^{b}(\rmod\cR)$ given by $(M, N)\mapsto M\ot^{\LL}_{\cR}N$ (for $M\in D^{b}(\rmod\cR)$ and $N\in D^{b}(\cR\bimod\cR)$). I.e., there is a functorial isomorphism 
\begin{equation}\label{aFK}
\a_{\cF,\cK}:\VV_{\RR}(\cF)\ot^{\LL}_{\cR}\VV(\cK)\to \VV_{\RR}(\cF\star\cK).
\end{equation}
for $\cF\in \CM_{G_\BR}$ and $\cK\in \CH_{G}$, such that the following diagram is commutative for any $\cF\in \CM_{G_\BR}$ and $\cK_{1},\cK_{2}\in \CH_{G}$
\begin{equation}\label{comm FK1K2}
\xymatrix{\VV_{\RR}(\cF)\ot^{\LL}_{\cR}\VV(\cK_{1})\ot^{\LL}_{\cR}\VV(\cK_{2})\ar[rr]^-{a_{\cF,\cK_{1}}\ot\id} && \VV_{\RR}(\cF\star\cK_{1})\ot^{\LL}_{\cR}\VV(\cK_{2})\ar[d]^{a_{\cF\star\cK_{1},\cK_{2}}}\\
\VV_{\RR}(\cF)\ot^{\LL}_{\cR}\VV(\cK_{1}\star\cK_{2})\ar[u]^{\id\ot c_{\cK_{1},\cK_{2}}}\ar[rr]^-{a_{\cF,\cK_{1}\star\cK_{2}}} && \VV_{\RR}(\cF\star\cK_{1}\star\cK_{2})
}
\end{equation}
\eprop
\prf
We first construct a natural isomorphism \eqref{aFK} for $\cF\in \CM_{G_\BR}$ and $\cK\in \CH_{G}$.


Let $\cG=\pr_{1}^{*}\cF\ot\cK\in \wh D^{b}(\wt X\times \wt X)_{\bT^{c}\times \bT^{c}\mon}$. We know that $SS(\cF)\subset \L_{\RR}$ (hence $SS(\pr_{1}^{*}\cF)\subset \L_{\RR}\times 0_{\wt X}\subset T^{*}(\wt X^{2})$), and $SS(\cK)\subset \L$. One checks that $(\L_{\RR}\times 0_{\wt X})\cap \L$ is contained in the zero section of $T^{*}(\wt X^{2})$. By \cite[Proposition 5.4.14(i)]{KS}, $SS(\cG)=SS(\pr_{1}^{*}\cF\ot \cK)$ is contained in the pointwise addition $\L':=(\L_{\RR}\times 0_{\wt X})+\L\subset T^{*}(\wt X^{2})$.  The cone $\L'$ consists of $((\wt x_{1},v-w), (\wt x_{2}, w))\in T^{*}(\wt X)^{2}$ (where $\wt x_{1}, \wt x_{2}\in \wt X$ with image $x_{1},x_{2}\in X$,  $v,w\in \frg^{*}$) such that $w\in {}^{x_{1}}\frb^{\bot}\cap {}^{x_{2}}\frb^{\bot}$ and $v\in i\cN^{*}(\RR)\cap {}^{x_{1}}\frb^{\bot}$. 

By definition, $\VV_{\RR}(\cF\star\cK)$ is the restriction of the microlocalization $\mu_{\l_0}(\pr_{2*}\cG)$ to $\Xi=\mu^{-1}(\xi)\subset T^{*}_{\wt O^{\RR}_{\l_0}}\wt X$. By \refp{KS}(1),  we consider 
\begin{equation*}
\xymatrix{T^{*}_{\wt X\times \wt O^{\RR}_{\l_0}}(\wt X^{2}) & 0_{\wt X}\times T^{*}_{\wt O^{\RR}_{\l_0}}(\wt X)\ar[l]_-{\sim}\ar[r]^-{\pi_{2}} & T^{*}_{\wt O^{\RR}_{\l_0}}(\wt X)
}
\end{equation*}
Let $\pi_{2,\Xi}: 0_{\wt X}\times \Xi\to \Xi$ be the second projection, then we have
\begin{equation}\label{VRFK1}
\VV_{\RR,\xi}(\cF\star\cK)\cong (\pi_{2*}(\mu_{\wt X\times \wt O^{\RR}_{\l_0}}\cG))|_{\Xi}\cong \pi_{2,\Xi*}((\mu_{\wt X\times \wt O^{\RR}_{\l_0}}\cG)|_{0_{\wt X}\times \Xi}).
\end{equation}
Now $SS(\cG)\subset \L'$. We claim that
\begin{equation}\label{Xi'}
\Xi':=\L'\cap (0_{\wt X}\times \Xi)=\{(\wt x_{1}, 0), (\wt x_{2}, \xi))|\wt x_{1}, \wt x_{2}\in \pi^{-1}(x)\}.
\end{equation}
Indeed, a point in $\L'$ takes the form $((\wt x_{1}, v-w), (\wt x_{2}, w))$. Intersecting with $0_{\wt X}\times \Xi$ means imposing $w=\xi$ and $v=w$, which also forces $x_{1}=x_{2}=x$ since $w=\xi\in {}^{x_{1}}\frb^{\bot}\cap {}^{x_{2}}\frb^{\bot}$. The projection to $\wt X^{2}$ gives an isomorphism $\Xi'\cong \pi^{-1}(x)^{2}$.

Now $(\mu_{\wt X\times \wt O^{\RR}_{\l_0}}\cG)|_{0_{\wt X}\times \Xi}$ is supported in $\Xi'$. Let $\wt \D^{\RR}_{\l_{0}}\subset \wt X^{2}$ be the preimage of the diagonal $\D(O^{\RR}_{\l_0})\subset X^{2}$ under the projection $\pi^{2}: \wt X^{2}\to X^{2}$. By \eqref{Xi'} we have $\Xi'\subset T^{*}_{\wt \D^{\RR}_{\l_{0}}}(\wt X^{2})$. Therefore
\begin{equation*}
(\mu_{\wt X\times \wt O^{\RR}_{\l_0}}\cG)|_{\Xi'}\cong (\mu_{\wt\D^{\RR}_{\l_{0}}}\cG)|_{\Xi'}
\end{equation*}
and \eqref{VRFK1} gives an isomorphism
\begin{equation}\label{VRFK2}
\VV_{\RR,\xi}(\cF\star\cK)\cong \pr_{2,\Xi*}((\mu_{\wt\D^{\RR}_{\l_{0}}}\cG)|_{\Xi'}
)
\end{equation}
where $\pr_{2,\Xi}: \Xi'\to \Xi$ is the projection.

To compute $(\mu_{\wt\D^{\RR}_{\l_{0}}}\cG)|_{\Xi'}$, we consider the diagonal embedding  $\d_{12}: \wt X^{2}\incl \wt X^{3}$ given by $(\wt x_{1}, \wt x_{2})\mapsto (\wt x_{1}, \wt x_{1}, \wt x_{2})$. Let $\ol\d_{12}: \wt X^{2}\to \wt X^{3}\to \wt X\times (G\bs \wt X^{2})$, where the second map is the quotient map on the second and third factors. Then $\cG=\ol\d_{12}^{*}(\cF\bt\cK)$, where we view $\cK\in D^{b}_{G}(\wt X^{2})$. We are ready to apply \refp{KS}(2) (together with \refr{KS stack}) to the smooth map $\ol\d_{12}$, the submanifold $\wt O^{\RR}_{\l_0}\times (G\bs \wt X^{2}_{e})\subset \wt X\times (G\bs \wt X^{2})$, and its preimage under $\ol\d_{12}$ which is $\wt\D^{\RR}_{\l_{0}}\subset \wt X^{2}$.
We have a canonical map induced by $\ol\d_{12}$ which is an isomorphism on conormal fibers:
\begin{equation*}
T^{*}_{\wt\D^{\RR}_{\l_{0}}}(\wt X^{2})\xr{\ol\d_{12}^{\na}}T^{*}_{\wt O^{\RR}_{\l_0}\times (G\bs \wt X^{2}_{e})}(\wt X\times (G\bs \wt X^{2})).
\end{equation*}
By \refp{KS}(2), we have a canonical isomorphism
\begin{equation}\label{D0}
\ol\d^{\na*}_{12}(\mu_{\l_0}\cF\bt\mu_{G\bs \wt X^{2}_{e}}\cK)\cong\d_{12}^{\na*}\mu_{\wt O^{\RR}_{\l_0}\times (G\bs\wt X^{2}_{e})}(\cF\bt\cK)\isom \mu_{\wt \D^{\RR}_{\l_{0}}}\cG.
\end{equation}
Since $\ol\d_{12}^{\na}$ factors as the composition
\begin{equation*}
T^{*}_{\wt\D^{\RR}_{\l_{0}}}(\wt X^{2})\xr{\d_{12}^{\na}}T^{*}_{\wt O^{\RR}_{\l_0}\times  \wt X^{2}_{e}}(\wt X^{3})\to T^{*}_{\wt O^{\RR}_{\l_0}\times (G\bs \wt X^{2}_{e})}(\wt X\times (G\bs \wt X^{2}))
\end{equation*}
where the second map is the quotient map by $G$, we can rewrite \eqref{D0} as an isomorphism
\begin{equation}\label{D1}
\d^{\na*}_{12}(\mu_{\l_0}\cF\bt\mu_{\wt X^{2}_{e}}\cK)\cong\mu_{\wt \D^{\RR}_{\l_{0}}}\cG.
\end{equation}
Here, when writing $\mu_{\wt X^{2}_{e}}\cK$, we are viewing $\cK$ as an object in $D^{b}(\wt X^{2})$. Recall $\Xi^{-}=\mu^{-1}(-\xi)$. Then $\d_{12}^{\na}(\Xi')=(\Xi\times_{\wt X}\Xi^{-})\times \Xi$, and let $\d^{\na}_{12,\Xi}: \Xi'\incl \Xi\times \Xi^{-}\times\Xi$ be the induced embedding.  Restricting \eqref{D1} to $\Xi'$ we get
\begin{equation*}
\d^{\na*}_{12,\Xi}((\mu_{\l_0}\cF)|_{\Xi}\bt(\mu_{\wt X^{2}_{e}}\cK)|_{\Xi^{-}\times \Xi})\cong (\mu_{\wt \D^{\RR}_{\l_{0}}}\cG)|_{\Xi'}.
\end{equation*}
Combined with \eqref{VRFK2} we get a canonical isomorphism in $\wh D^{b}(\Xi)_{\bT^{c}\mon}$
\begin{eqnarray}\label{VRFK3}
\VV_{\RR,\xi}(\cF\star\cK)\cong\pr_{2, \Xi*}\d^{\na*}_{12,\Xi}((\mu_{\l_0}\cF)|_{\Xi}\bt(\mu_{\wt X^{2}_{e}}\cK)|_{\Xi^{-}\times \Xi})\\
\notag
=\pr_{2, \Xi*}\d^{\na*}_{12,\Xi}(\VV_{\RR,\xi}(\cF)\bt\VV_{(-\xi,\xi)}(\cK)).\end{eqnarray}
If we identify $\Xi$ and $\Xi^{-}$ with $\bT^{c}$ using the liftings $\wt\xi=(\wt x,\xi)\in \Xi$ and $-\wt\xi=(\wt x,-\xi)\in \Xi^{-}$, then $\d_{12}^{\na}$ becomes the diagonal embedding $\dot\d_{12}: \bT^{c}\times \bT^{c}\incl \bT^{c}\times \bT^{c}\times \bT^{c}$ into the first two factors, and $\pr_{2,\Xi}$ becomes the second projection $\dot\pr_{2}: \bT^{c}\times \bT^{c}\to \bT^{c}$. Then \eqref{VRFK3} becomes a canonical isomorphism in $\wh D^{b}(\bT^{c})_{\bT^{c}\mon}\cong D^{b}(\rmod\cR)$
\begin{equation}\label{VRwtxi}
\VV_{\RR,\wt\xi}(\cF\star\cK)\cong \dot\pr_{2*}\dot\d_{12}^{*}(\VV_{\RR,\wt \xi}(\cF)\bt\VV_{(-\wt\xi,\wt\xi)}(\cK))\cong \dot\pr_{2*}(\dot\pr_{1}^{*}\VV_{\RR,\wt\xi}(\cF)\ot \VV_{(-\wt\xi,\wt\xi)}(\cK)).
\end{equation}

Let $M=\VV_{\RR,\wt\xi}(\cF)\in D^{b}(\rmod\cR)$, $N\in \VV_{(-\wt\xi,\wt\xi)}(\cK)\in D^{b}(\rmod\cR\ot\cR)$, hence $\t N={}_{-\wt\xi}\VV_{\wt\xi}(\cK)\in D^{b}(\cR\bimod\cR)$ (see \eqref{rmod to bimod} for $\t$). Then $\dot\pr_{2*}(\dot\pr_{1}^{*}M\ot N')\cong (M\ot N')\ot^{\LL}_{\cR_{12}}\bfk$ where $\ot^{\LL}_{\cR_{12}}\bfk$ means the functor $\ot^{\LL}_{\cR}\bfk$ taken using the following right $\cR$-module structure on $M\ot N'$: $(m\ot n')\cdot r=(m\ot n)\cdot \d(r)$, where $\d: \cR\to \cR\ot \cR$ is the comultiplication induced by the diagonal map $\pi_{1}(\bT^{c})\to \pi_{1}(\bT^{c})\times \pi_{1}(\bT^{c})$. Here we are using the fact that the pushforward $\bT^{c}\to \pt$ of monodromic sheaves corresponds to the functor $(-)\ot^{\LL}_{\cR}\bfk: D^{b}(\rmod\CR)\to D^{b}(\rmod\bfk)$. It is easy to see there is a canonical isomorphism of right $\cR$-modules (using the second $\cR$-action on $N$ and the right $\cR$-action on $\t N$)
\begin{equation*}
(M\ot N)\ot^{\LL}_{\cR}\bfk\cong M\ot^{\LL}_{\CR} \t N.
\end{equation*}
Therefore the right side of \eqref{VRwtxi} is canonically isomorphic to $M\ot^{\LL}_{\CR} \t N=\VV_{\RR,\wt\xi}(\cF)\ot^{\LL}_{\CR} {}_{-\wt\xi}\VV_{\wt\xi}(\cK)$ as a right  $\cR$-module. This completes the construction of $\a_{\cF,\cK}$.

The argument for checking the commutativity of \eqref{comm FK1K2} is similar. We omit it.

\epr

We record here the functoriality properties of microlocalization from \cite{KS} that we used in the above proof. Let $f: Y\to X$ be a map of manifolds, $M\subset X$ a closed submanifold and $N=f^{-1}(M)$.  Then there is a natural correspondence between the conormals
\begin{equation*}
\xymatrix{ T^{*}_{N}Y &  N\times_{M}(T^{*}_{M}X) \ar[r]^-{f^{\na}_{N}}\ar[l]_-{df_{N}}& T^{*}_{M}X }
\end{equation*}
We have the microlocalization functors $\mu_{M}: D^{b}(X)\to D^{b}(T^{*}_{M}X)$ and $\mu_{N}:  D^{b}(Y)\to D^{b}(T^{*}_{N}Y)$.

\prop{KS} 
\begin{enumerate}
\item (\cite[Proposition 4.3.4]{KS}) Assume that  $f$ is transversal to $M$ (i.e.  $df_{N}$ above is an isomorphism). Let $\cG\in D^{b}(Y)$ and suppose that $f|_{\supp(\cG)}$ is proper, then there is a canonical isomorphism
\begin{equation*}
\mu_{M}(f_{*}\cG)\isom f^{\na}_{N*}df^{*}_{N}\mu_{N}(\cG).
\end{equation*}
 
\item (\cite[Proposition 4.3.5]{KS}) Assume $f$ is smooth (which again implies that $df_{N}$ above is an isomorphism). Let $\cF\in D^{b}(X)$. Then there is a canonical isomorphism
\begin{equation*}
df_{N*}f^{\na,*}_{N}\mu_{M}(\cF)\isom \mu_{N}(f^{*}\cF).
\end{equation*}

\end{enumerate}

\eprop

\rem{KS stack} In the arguments for \refp{H monoidal} and \refp{monoidal}, we applied \refp{KS}(2) in the stacky situation $X=H\bs \wt X$ for a smooth manifold $\wt X$ with a Lie group $H$ acting, and a smooth map $f:Y\to X$. Let $\wt M\subset \wt X$ be an $H$-stable submanifold and $M=H\bs \wt M$, $N=f^{-1}(M)$. Then $\mu_{M}: D^{b}(X)=D^{b}_{H}(\wt X)\to D^{b}(T^{*}_{M}X)=D^{b}_{H}(T^{*}_{\wt M}\wt X)$ is defined, and \refp{KS}(2) holds as stated. To deduce this stacky version, we simply apply \refp{KS}(2) to the map $\wt f: \wt Y\to \wt X$ obtained from $f$ by base change along $\pi_{X}: \wt X\to X$, the submanifold $\wt M$, $\wt N=\wt f^{-1}(\wt M)$ and the sheaf $\pi^{*}_{X}\cF\in D^{b}(\wt X)$.
\erem

\sec{str nice}{Structure theorem for nice quasi-split groups}
In this section we assume $G_{\RR}$ is quasi-split. In this section we will prove our main result (\reft{main1}, \reft{main2} and \refc{maincor}). It is a generalization of Soergel's Struktursatz to quasi-split real groups satisfying a technical condition called {\em nice} (which in particular includes quasi-split groups of adjoint type).


\ssec{}{The algebra $\CA$} We choose $\xi\in i\CN^{*}(\RR)\cap \cO_{\reg}$ and $\wt\xi\in T^{*}\wt X$ over $\xi$, and define the functor $\VV_{\BR}=\VV_{\BR,\wt\xi}$. 

We define $\CA:=\End(\VV_\BR|_{\Tilt})^{opp}$ for the opposite ring of the endomorphism ring of the functor $\VV_{\RR}|_{\Tilt}$ (shorthand for the restriction of $\VV_{\BR}$ to $\mathrm{Tilt}({\CM_{G_\BR}})$).  For an element $a\in \CA$ and a tilting sheaf $\CT$ we put $a_\CT\in\End(\VV_{\RR}(\CT))$ for the (right) action of $a$ on $\VV_\BR(\CT)$. Then $\VV_{\BR}$ upgrades to an exact functor
\begin{equation}\label{VRsh}
\VV_{\RR}^{\sh}: \Tilt(\CM_{G_\BR})\to \rmod\CA.
\end{equation}


To state the extension of Soergel's structure theorem for real groups, we need to single out a class of quasi-split groups.

\defe{Nice quasi-split groups}
Let $G_{\RR}$ be a quasi-split connected reductive group over $\RR$. We say that $G_{\RR}$ is {\em nice}, if for every real parabolic subgroup $P_{\RR}\subset G_{\RR}$ whose Levi factor $L_{\RR}$ satisfies $L^{\ad}_{\RR}\cong \PGL_{2,\RR}$, the map on real points $L(\RR)\to L^{ad}(\RR)$ is surjective. 
\edefe

The following lemma ensures that in particular adjoint quasi-split groups are nice.

\lem{leviRpts} Assume $G_{\RR}$ is quasi-split, and $H^{1}(\CC/\RR, Z(G))=1$. Then for any real parabolic $P\subset G$ and its Levi factor $L$, the map $L(\RR)\to L^{ad}(\RR)$ is surjective. In particular, $G_{\RR}$ is nice.
\elem
\prf Let $B_{0}$ is a Borel subgroup defined over $\RR$, $T_{0}\subset B_{0}$ is a maximally split real Cartan, and $L$ is a standard Levi subgroup with respect to $B_{0}$, i.e., it is the Levi subgroup of a real parabolic subgroup containing $B_{0}$. To show $L(\RR)\to L^{ad}(\RR)$ is surjective, it suffices to prove that the Galois cohomology $H^{1}(\CC/\RR, Z(L))$ vanishes. 

Let $S$ be the set of simple roots in $\Phi(G,T_{0})$ with respect to the real Borel $B_{0}$. Then $\sig$ acts on $S$ by involution. By our assumption, $Z(L)\subset T_{0}$ is the vanishing locus of a $\sig$-stable subset $J'\subset S$ of simple roots, hence $Z(L)$ fits into an exact sequence
\begin{equation*}
1\to Z(G)\to Z(L)\to (\Gm)^{S-J'}\to 1.
\end{equation*}
Since $\sig$ permutes $S$, the real structure on $(\Gm)^{S-J'}$ is a product of  $R_{\CC/\RR}\Gm$ (one for each nontrivial orbit of $\sig$ on $S-J'$), and $\Gm$ (one for each fixed point of $\sigma$ on $S-J'$). Since $H^{1}(\CC/\RR, R_{\CC/\RR}\Gm)=1$ and $H^{1}(\CC/\RR, \Gm)=1$, and $H^{1}(\CC/\RR, Z(G))$ by assumption, we conclude that $H^{1}(\CC/\RR, Z(L))=1$.
\epr

\ex{} Suppose $G_{\RR}$ is of the form $\SU(p,q)/\G$ where $|p-q|\ge1$, and $\G\subset \mu_{p+q}$ is a subgroup. If $p+q$ is odd, then all such $G_{\RR}$ are nice. If $p+q$ is even, then $G_{\RR}$ is nice if and only if $\G$ contains $-1$.
\eex

\th{main1} Suppose $G_{\RR}$ is quasi-split and nice.  Then the functor $\VV_{\RR}^{\sh}$ is  fully faithful.
\eth

The proof will be completed in \refss{pf main1}.

\ssec{}{The algebra $\cA_{0}$}
Recall $\CR=\widehat{\bfk[\pi_1(\bT^c)]}$ is the group algebra of the fundamental group of $\bT^{c}$ completed at the augmentation ideal. 
The monodromy action along the fibers of $\pi$ induces a ring map
\begin{equation}\label{RtoA}
\ph_{\cR}: \CR\to Z(\CA)
\end{equation}
to the center of $\CA$. 

Let $\CA_0\subset\CA$ be the subalgebra of endomorphisms of $\VV_\BR|_{\Tilt}$ commuting with the Hecke action. More precisely, $a\in \cA_{0}$ if for any $\CT\in \Tilt(\CM_{G_\BR})$ and $\CT'\in \Tilt(\CH_{G})$, the endomorphism $a_{\CT\star\CT'}$ of $\VV_{\BR}(\CT\star\CT')\cong \VV_{\BR}(\CT)\ot_{\CR}\VV(\CT')$ is equal to $a_{\CT}\ot\id_{\VV(\CT')}$. 

\lem{RWA0}
\begin{enumerate}
\item The image of $\CR^{\bW}$ under the map $\ph_{\CR}$ lies in $\CA_{0}$.
\item\label{comm Tw0} Recall the big tilting object $\CT_{w_{0}}\in \THG$. Let $a\in \CA$. Then $a\in \CA_{0}$ if and only if for any $\CT\in \Tilt(\CM_{G_\BR})$, the endomorphism $a_{\CT\star\CT_{w_{0}}}$ of $\VV_{\BR}(\CT\star\CT_{w_{0}})\cong \VV_{\BR}(\CT)\ot_{\CR}\VV(\CT_{w_{0}})$ is equal to $a_{\CT}\ot\id_{\VV(\CT_{w_{0}})}$. 
\end{enumerate}
\elem
\prf
(1) Let  $b\in \CR$ and $a=\ph_{\CR}(b)\in Z(\CA)$. Let $\CT\in \Tilt(\CM_{G_\BR})$ and $\CT'\in \Tilt(\CH_{G})$. Then $a_{\CT\star\CT'}$ is induced from the action of $b$ on $\CT\star\CT'$ by right $\bT^{c}$-monodromy on $\CT'$. We know that the left and right monodromy actions on $\CT'\in \Tilt(\CH_{G})$ factor through $\CR\ot_{\CR^{\bW}}\CR$ (as it is so for $\VV(\CT')$ (\refp{VHk}\eqref{Endo P0}) and $\VV|_{\Tilt}$ is fully faithful (\refp{VHk}\eqref{VHk ff})). Therefore,  if $b\in \CR^{\bW}$, then the right monodromy action of $b$ on $\CT'$ is the same as left monodromy, and the induced action on $\CT\star\CT'$ is the same as the action of $b$ on $\CT$ by (right) $\bT^{c}$-monodromy. Therefore, both $a_{\CT\star\CT'}$ and $a_{\CT}\ot\id_{\VV(\CT')}$ on $\VV_{\RR}(\CT\star\CT')\cong \VV_{\RR}(\CT)\ot_{\CR}\VV(\CT')$ are given by the action of $b\in \CR^{\bW}$ on the first factor $\VV_{\RR}(\CT)$. This shows $a\in \CA_{0}$.

(2) Suppose $a\in \CA$ satisfies $a_{\CT\star\CT_{w_{0}}}=a_{\CT}\ot\id_{\VV(\CT_{w_{0}})}$ for all $\CT\in\Tilt(\CM_{G_\BR})$. Let $\CT'\in\Tilt(\CH_{G})$, we want to show that $a_{\CT\star\CT'}=a_{\CT}\ot\id_{\VV(\CT')}$ as endomorphisms of $\VV_{\BR}(\CT\star\CT')\cong \VV_{\BR}(\CT)\ot_{\CR}\VV(\CT')$. We have a canonical map
\begin{equation}\label{ev Hom from Tw0}
\e: \Hom(\CT_{w_{0}},\CT')\ot\VV(\CT_{w_{0}})\to \VV(\CT')
\end{equation}
sending $f\ot v$ to the image of $v\in \VV(\CT_{w_{0}})$ under the map $\VV(f): \VV(\CT_{w_{0}})\to \VV(\CT')$. By \refp{VHk}\eqref{Hom Tw0}, $\Hom(\CT_{w_{0}},\CT')\cong \VV(\CT')$ and $\VV(\CT_{w_{0}})\cong\End(\CT_{w_{0}})$. Take $\id_{\CT_{w_{0}}}\in \End(\CT_{w_{0}})\cong \VV(\CT_{w_{0}})$, we have $\e(f\ot \id_{\CT_{w_{0}}})=f$ for any $f\in \Hom(\CT_{w_{0}},\CT')\cong \VV(\CT')$. This shows that $\e$ is surjective. The similarly defined map 
$$\e_{\CT}: \Hom(\CT_{w_{0}},\CT')\ot\VV_{\RR}(\CT\star\CT_{w_{0}})\to \VV_{\RR}(\CT\star\CT')$$ 
can be identified with 
$$\e\ot\id_{\VV_{\RR}(\CT)}: \Hom(\CT_{w_{0}},\CT')\ot\VV_{\RR}(\CT)\ot_{\CR}\VV(\CT_{w_{0}})\to \VV_{\RR}(\CT)\ot_{\CR}\VV(\CT'),$$
hence is also surjective. By the definition of $\CA$, we have a commutative diagram
\begin{equation}\label{HV1}
\xymatrix{\Hom(\CT_{w_{0}},\CT')\ot\VV_{\RR}(\CT\star\CT_{w_{0}})\ar[r]^-{\e_{\CT}}\ar[d]^{\id\ot a_{\CT\star\CT_{w_{0}}}} & \VV_{\RR}(\CT\star\CT')\ar[d]^{a_{\CT\star\CT'}}\\
\Hom(\CT_{w_{0}},\CT')\ot\VV_{\RR}(\CT\star\CT_{w_{0}})\ar[r]^-{\e_{\CT}} & \VV_{\RR}(\CT\star\CT')}
\end{equation}
Rewriting $\VV_{\RR}(\CT\star\CT_{w_{0}})$ as $\VV_{\RR}(\CT)\ot_{\CR}\VV(\CT_{w_{0}})$ and $\VV_{\RR}(\CT\star\CT')$ as $\VV_{\RR}(\CT)\ot_{\CR}\VV(\CT')$, we similarly we have a commutative diagram
\begin{equation}\label{HV2}
\xymatrix{\Hom(\CT_{w_{0}},\CT')\ot\VV_{\RR}(\CT)\ot_{\CR}\VV(\CT_{w_{0}})\ar[r]^-{\e\ot \id_{\VV_{\RR}(\CT)}}\ar[d]^{\id\ot a_{\CT}\ot\id} & \VV_{\RR}(\CT)\ot_{\CR}\VV(\CT')\ar[d]^{a_{\CT}\ot\id}\\
\Hom(\CT_{w_{0}},\CT')\ot\VV_{\RR}(\CT)\ot_{\CR}\VV(\CT_{w_{0}})\ar[r]^-{\e\ot \id_{\VV_{\RR}(\CT)}} & \VV_{\RR}(\CT)\ot_{\CR}\VV(\CT')}
\end{equation}
Now compare \eqref{HV1} and \eqref{HV2}, the left vertical maps are equal in the two diagrams by assumption. Since the horizontal maps are equal and surjective, the right vertical maps are also equal in the two diagrams, i.e., $a_{\CT\star\CT'}=a_{\CT}\ot\id_{\VV(\CT')}$.
\epr

By \refl{RWA0}, we have a ring homomorphism
\begin{equation*}
\ph: \CA_{0}\ot_{\CR^{\bW}}\CR\to \CA
\end{equation*}
where the image of $\CR$ is central. 




\prop{A descent} The ring map $\ph$ is an isomorphism.
\eprop
\prf

We construct a map in the other direction as follows. Let $w_{0}\in W$ be the longest element, and $\CT_{w_0}\in\Tilt(\CH_{G})$ be the indecomposable free-monodromic tilting object with full support. It is known (Theorem 9.1 in \cite{BR}, Proposition 4.7.3 1) in \cite{BY}) that $\VV(\CT_{w_0})=\CR\ten_{\CR^{\bW}}\CR$. Consider the functor $\UU_{\BR}: \Tilt(\CM_{G_{\RR}})\to \CR\bimod\CR$ given by $\UU_{\BR}(\cT):=\VV_\BR(\cT\star\CT_{w_0})$, with the two $\CR$-actions given by the left and right monodromy actions on $\CT_{w_0}$. Using \refp{VHk}\eqref{Endo P0}
\begin{equation}\label{UR}
\UU_{\BR}(\CT)\cong \VV_\BR(\cT)\ot_{\CR}\VV(\CT_{w_{0}})\cong \VV_{\BR}(\CT)\ot_{\CR^{\bW}}\CR
\end{equation}
with the first copy of  $\CR$ acting on $\VV_{\BR}(\CT)$ and the second copy acting on the second tensor factor $\CR$ by multiplication. 
From \eqref{UR} we see that $\End(\UU_{\BR})^{opp}\cong\CA\ten_{\CR^{\bW}}\CR$. 

We have an action of $\CA$ on $\UU_{\BR}$: for $a\in \CA$ and $\CT\in \TGR$,  $a$ acts on $\UU_{\BR}(\CT)$ by $a_{\CT\star\CT_{w_{0}}}$. This gives a ring map
\begin{equation*}
\psi': \CA\to \End(\UU_{\BR})^{opp}\cong \CA\ten_{\CR^{\bW}}\CR.
\end{equation*}

\begin{claim} The image of $\psi'$ is contained in $\CA_{0}\ten_{\CR^{\bW}}\CR$.
\end{claim}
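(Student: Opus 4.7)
\prf
The goal is to show $\psi'(a) \in \CA_0 \otimes_{\CR^{\bW}} \CR$ for every $a \in \CA$. Using that $\CR$ is a free $\CR^{\bW}$-module of rank $|\bW|$ (Chevalley for $\bW$), fix an $\CR^{\bW}$-basis $\{r_i\}$ of $\CR$ and write $\psi'(a)=\sum_i b_i\otimes r_i$; it suffices to prove each $b_i\in\CA_0$. By \refl{RWA0}\eqref{comm Tw0} this is equivalent to showing, for every $\CT\in\TGR$, that
\[
(b_i)_{\CT\star\CT_{w_0}}\;=\;(b_i)_\CT\otimes\id_{\VV(\CT_{w_0})}
\]
as endomorphisms of $\VV_\BR(\CT\star\CT_{w_0})\cong\VV_\BR(\CT)\otimes_\CR\VV(\CT_{w_0})$, the isomorphism being the Hecke compatibility of \refp{monoidal}.

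The plan is to compute the operator $a_{\CT\star\CT_{w_0}\star\CT_{w_0}}$ on $\VV_\BR(\CT\star\CT_{w_0}\star\CT_{w_0})$ in two ways by using associativity of convolution. First, associating as $(\CT\star\CT_{w_0})\star\CT_{w_0}$ and noting $\CT\star\CT_{w_0}\in\TGR$ by \refp{tiltconv}, the operator equals $\psi'(a)_{\CT\star\CT_{w_0}}$; expanding with $\psi'(a)=\sum b_i\otimes r_i$ and, in turn, each $\psi'(b_i)=\sum_j c_{i,j}\otimes s_{i,j}$, one arrives on the common triple-tensor target $\VV_\BR(\CT)\otimes_{\CR^{\bW}}\CR\otimes_{\CR^{\bW}}\CR$ at
\[
\textstyle\sum_{i,j}(c_{i,j})_\CT\otimes s_{i,j}^{\mathrm{mid}}\otimes r_i^{\mathrm{out}},
\]
where $s_{i,j}^{\mathrm{mid}}$ and $r_i^{\mathrm{out}}$ denote right-monodromy actions along the inner and outer $\CT_{w_0}$ respectively.

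Second, associating as $\CT\star(\CT_{w_0}\star\CT_{w_0})$ and invoking both the monoidality of $\VV$ from \refp{H monoidal} (which supplies $\VV(\CT_{w_0}\star\CT_{w_0})\cong\VV(\CT_{w_0})\otimes_\CR\VV(\CT_{w_0})$) and the coherence diagram \eqref{comm FK1K2} in \refp{monoidal}, one transports $a_{\CT\star(\CT_{w_0}\star\CT_{w_0})}$ to the same target $\VV_\BR(\CT)\otimes_{\CR^{\bW}}\CR\otimes_{\CR^{\bW}}\CR$. Here the key point is that, when the inner $\CT_{w_0}$ is viewed as a factor in the Hecke argument rather than convolved into the real-equivariant side, $a$ acts through $\psi'(a)$ on the leftmost and outermost slots without disturbing the middle slot, yielding the alternative expression
\[
\textstyle\sum_i (b_i)_\CT\otimes\id^{\mathrm{mid}}\otimes r_i^{\mathrm{out}}.
\]
Equating the two formulas and using the $\CR^{\bW}$-linear independence of the $r_i$ (hence the outermost factorization), one deduces term by term the identities $\psi'(b_i)=b_i\otimes 1$, which by \refl{RWA0}\eqref{comm Tw0} is precisely $b_i\in\CA_0$.

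The hard part will be making the second computation rigorous: at face value both associations compute the same operator, so the non-tautological input is to show that the action on $\VV_\BR(\CT\star\cK)$ for $\cK=\CT_{w_0}\star\CT_{w_0}$, transported by the coherence data of $\a_{\cF,\cK}$ and $c_{\cK_1,\cK_2}$, does not mix the two copies of $\VV(\CT_{w_0})$ beyond what $\psi'(a)$ already prescribes. This amounts to a careful diagram chase with \eqref{comm FK1K2}, exploiting that the leftmost $\CR$-factor of the middle $\VV(\CT_{w_0})$ is identified with the right-monodromy $\CR$-action on $\VV_\BR(\CT)$ and hence is absorbed into $(b_i)_\CT$.
\epr
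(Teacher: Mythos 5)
Your reduction is fine as far as it goes: expanding $\psi'(a)=\sum_i b_i\ot r_i$ in an $\CR^{\bW}$-basis of $\CR$ and invoking \refl{RWA0}\eqref{comm Tw0}, the claim becomes the identity $a_{\CT\star\CT'_{w_0}\star\CT''_{w_0}}=a_{\CT\star\CT''_{w_0}}\ot\id_{\VV(\CT'_{w_0})}$ for all $\CT\in\TGR$, which in your notation is exactly your ``second computation'' $\sum_i (b_i)_\CT\ot\id^{\mathrm{mid}}\ot r_i^{\mathrm{out}}$ (and your first expansion is just a definitional unwinding of $\psi'$ applied twice). But that second computation is asserted, not proved: the sentence ``$a$ acts through $\psi'(a)$ on the leftmost and outermost slots without disturbing the middle slot'' is precisely the statement to be established. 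The element $a$ is only a natural endomorphism of $\VV_\BR|_{\Tilt}$, and a general such endomorphism has no reason to act slot-wise on a Hecke factor --- that failure is exactly what distinguishes $\CA_0$ inside $\CA$. Moreover, the coherence diagram \eqref{comm FK1K2} cannot supply the missing step: it is a property of the module-structure isomorphisms $\a_{\cF,\cK}$ and $c_{\cK_1,\cK_2}$ alone and places no constraint on $a$, so the proposed ``careful diagram chase'' is chasing diagrams in which $a$ never appears. As it stands the argument is circular.

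The ingredient you are missing is a morphism in $\TGR$ against which to test the naturality of $a$. The paper manufactures one on the Hecke side: since $\VV$ is fully faithful on $\Tilt(\CH_G)$ and $\VV(\CT_{w_0})\cong\CR\ot_{\CR^{\bW}}\CR$, the bimodule map $x\ot y\mapsto x\ot 1\ot y$ lifts to a morphism $\b\colon\CT''_{w_0}\to\CT'_{w_0}\star\CT''_{w_0}$, and then naturality of $a$ with respect to $\id_\CT\star\b$ shows that $a_{\CT\star\CT'_{w_0}\star\CT''_{w_0}}$ agrees with $a_{\CT\star\CT''_{w_0}}\ot\id$ on the subspace $\VV_\BR(\CT)\ot 1\ot\CR$. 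Finally, both operators are linear for all three $\CR$-actions (in particular the middle one), so agreement on that subspace propagates to all of $\VV_\BR(\CT)\ot_{\CR^{\bW}}\CR\ot_{\CR^{\bW}}\CR$. Without this ``insert the unit in the middle slot'' morphism (or an equivalent device) your two-way computation does not close; with it, the basis bookkeeping with the $r_i$ becomes unnecessary, since the identity can be proved directly.
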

\begin{proof}[Proof of Claim] Using the characterization of $\CA_{0}$ given in \refl{RWA0}\eqref{comm Tw0}, $\CA_{0}\ten_{\CR^{\bW}}\CR$ consists exactly of those $b\in \End(\UU_{\BR})$ such that $b_{\CT\star \CT_{w_{0}}}=b_{\CT}\ot \id_{\VV(\CT_{w_{0}})}$. Therefore, to show $\psi'(a)\in \CA_{0}\ot_{\CR^{\bW}}\CR$ for $a\in \CA$, we need to check for any $\CT\in\TGR$,
\begin{equation}\label{aa}
a_{\CT\star \CT'_{w_{0}}\star\CT''_{w_{0}}}=a_{\CT\star\CT''_{w_{0}}}\ot\id_{\VV(\CT'_{w_{0}})}\in\End(\VV_{\RR}(\CT\star\CT'_{w_{0}}\star\CT''_{w_{0}})).
\end{equation}
Here, to distinguish two copies of $\CT_{w_{0}}$ we denote them by $\CT'_{w_{0}}$ and $\CT''_{w_{0}}$. Consider the map $\b: \cT''_{w_{0}}\to \CT_{w_{0}}'\star\CT''_{w_{0}}$ such that $\VV(\b)$ is given by
\begin{eqnarray*}
\cR\ot_{\cR^{\bW}} \cR &\to & \cR\ot_{\cR^{\bW}} \cR \ot_{\cR^{\bW}} \cR\\
a\ot b &\mapsto & a\ot 1\ot b.
\end{eqnarray*}
Then $\b$ induces $\id_{\CT}\star\b:  \CT\star\cT''_{w_{0}}\to \CT\star\CT_{w_{0}}'\star\CT''_{w_{0}}$, hence a commutative diagram 
\begin{equation*}
\xymatrix{\VV_{\RR}(\CT\star\cT''_{w_{0}})\ar[d]^{a_{\CT\star\cT''_{w_{0}}}}\ar[rr]^-{\VV(\id_{\CT}\star\b)} && \VV_{\RR}(\CT\star\CT_{w_{0}}'\star\CT''_{w_{0}})\ar[d]^{a_{\CT\star\CT'_{w_{0}}\star\cT''_{w_{0}}}}\\
\VV_{\RR}(\CT\star\cT''_{w_{0}})\ar[rr]^-{\VV(\id_{\CT}\star\b)} && \VV_{\RR}(\CT\star\CT_{w_{0}}'\star\CT''_{w_{0}})}
\end{equation*}
Using the description of $\VV(\b)$, we see that the above diagram can be written as
\begin{equation*}
\xymatrix{\VV_{\RR}(\CT)\ot_{\CR^{\bW}}\CR\ar[d]^{a_{\CT\star\cT''_{w_{0}}}}\ar[rr]^-{\id\ot 1\ot \id} && \VV_{\RR}(\CT)\ot_{\CR^{\bW}}\CR\ot_{\CR^{\bW}}\CR\ar[d]^{a_{\CT\star\CT'_{w_{0}}\star\cT''_{w_{0}}}}\\
\VV_{\RR}(\CT)\ot_{\CR^{\bW}}\CR\ar[rr]^-{\id\ot 1\ot \id} && \VV_{\RR}(\CT)\ot_{\CR^{\bW}}\CR\ot_{\CR^{\bW}}\CR
}
\end{equation*}
This shows that \eqref{aa} holds on the subspace $\VV_{\RR}(\CT)\ot1\ot\CR\subset\VV_{\RR}(\CT)\ot_{\CR^{\bW}}\CR\ot_{\CR^{\bW}}\CR=\VV_{\RR}(\CT\star\CT_{w_{0}}'\star\CT''_{w_{0}})$. Since both $a_{\CT\star \CT'_{w_{0}}\star\CT''_{w_{0}}}$ and $a_{\CT\star\CT''_{w_{0}}}\ot\id_{\VV(\CT'_{w_{0}})}$ are linear with respect to the three $\CR$-actions on $\VV_{\RR}(\CT\star\CT_{w_{0}}'\star\CT''_{w_{0}})$, we conclude that \eqref{aa} holds.
\end{proof}

By the Claim, we have a map 
\begin{equation*}
\psi: \cA\to \CA_{0}\ot_{\CR^{\bW}} \CR.
\end{equation*}
Note that this map is $\CR$-linear. We check that $\ph$ and $\psi$ are inverse to each other. If $a\in \CA_{0}$, then $\psi\ph(a)$ acts on $\UU_{\RR}(\cT)=\VV_{\RR}(\CT\star\CT_{w_{0}})$ by $a_{\CT\star\CT_{w_{0}}}$. Since $a\in \CA_{0}$, $a_{\CT\star\CT_{w_{0}}}=a_{\CT}\ot\id_{\VV(\CT_{w_{0}})}$, which implies $\psi\ph(a)=a\ot 1\in \CA_{0}\ot_{\CR^{\bW}} \CR$. By $\CR$-linearity, this implies $\psi\ph=\id$.

On the other hand, to check $\ph\psi=\id_{\CA}$, it suffices to show that the composition
\begin{equation*}
\CA\xr{\psi'}\CA\ot_{\CR^{\bW}}\CR\xr{m}\CA
\end{equation*}
is the identity, where $m$ is the multiplication map. We have a canonical map $\e: \CT_{w_0}\to\tilde{\delta}$ that  induces the multiplication map $\VV(\CT_{w_0})=\CR\ten_{\CR^{\bW}}\CR\to\CR=\VV(\tilde{\delta})$.  It induces a natural transformation of functors $\g: \UU_{\RR}\to \VV_{\RR}$ ($\CR$-linear with respect to the second $\CR$-action on $\UU_{\RR}$).  Under the isomorphism $\UU_{\RR}\cong \VV_{\RR}\ot_{\CR^{\bW}}\CR$, $\g$ corresponds to the multiplication map $ \VV_{\RR}\ot_{\CR^{\bW}}\CR\to \VV_{\RR}$. Therefore for any $b\in \End(\UU_{\RR})^{opp}\cong \cA\ot_{\CR^{\bW}}\cR$, we have a commutative diagram for any $\CT\in \TGR$
\begin{equation}\label{UV}
\xymatrix{\UU_{\RR}(\CT) \ar[d]^{b_{\CT}}\ar[r]^-{\g} & \VV_{\RR}(\CT)\ar[d]^{m(b)_{\CT}} \\
\UU_{\RR}(\CT)\ar[r]^-{\g} & \VV_{\RR}(\CT)}
\end{equation}
On the other hand, by the definition of $\CA$ we have a commutative diagram 
\begin{equation}\label{UV2}
\xymatrix{\VV_{\RR}(\CT\star\CT_{w_{0}}) \ar[d]^{a_{\CT\star\CT_{w_{0}}}}\ar[rr]^-{\VV(\id_{\CT}\star\e)} && \VV_{\RR}(\CT)\ar[d]^{a_{\CT}} \\
\UU_{\RR}(\CT)\ar[rr]^-{\VV(\id_{\CT}\star\e)} && \VV_{\RR}(\CT)}
\end{equation}
Now taking $b=\psi'(a)$ in \eqref{UV}, it becomes the same diagram as \eqref{UV2}, from which we conclude that $m(\psi'(a))=m(b)=a$. This implies $m\psi'=\id_{\CA}$ hence $\ph\psi=\id_{\CA}$ and finishes the proof.
\epr

Now we define an action of Soergel bimodules $\SBim$ on $\rmod\CA$ as follows: $M\in \rmod\CA$ and $N\in \SBim$, the action of $N$ on $M$ is the tensor product $M\ot_{\CR}N$. Now $M\ot_{\CR}N$ is naturally a $\CA_{0}\ot_{\CR^{\bW}}\CR$-module using the (right) $\CA_{0}$-action on $M$ and the right $\CR$-action on $N$. Since $\CA_{0}\ot_{\CR^{\bW}}\CR\isom \CA$ by \refp{A descent}, we may view $M\ot_{\CR}N$ as a right $\CA$-module.

The following is an immediate consequence of \refp{monoidal}, and the action defined above.

\cor{act} The functors $\VV^{\sh}_\BR$ in \eqref{VRsh}  and $\VV^{\sh}$ in \eqref{Vsh} intertwine the convolution action of $\mathrm{Tilt}({\CH_G})$ on $\mathrm{Tilt}({\CM_{G_\BR}})$ and the action of $\SBim$ on $\rmod\CA$ defined above.
\ecor

The following is parallel to \refp{adj}.
\lem{SB adj} Let $\CB_{s}=\CR\ot_{\CR^{s}}\CR\in \SBim$ for each simple reflection $s\in \bW$. Then the action of $\CB_{s}$ on $\rmod\CA$ is self-adjoint: there is an isomorphism functorial in $M_{1},M_{2}\in \rmod\CA$
\begin{equation*}
\Hom_{\CA}(M_{1}\ot_{\CR}\CB_{s}, M_{2})\cong \Hom_{\CA}(M_{1}, M_{2}\ot_{\CR}\CB_{s})
\end{equation*}
\elem
\begin{proof}
As in the proof of \refp{adj}, it suffices to give unit and counit maps $u:\CR\to \CB_{s}\ot_{\CR}\CB_{s}$ and $\CB_{s}\ot_{\CR}\CB_{s}\to \CR$ in $\SBim$ satisfying identities analogous to \eqref{cu id}. The maps $u$ and $c$ are given in the proof of \refp{adj} because $\VV(\CT_{s})\cong \CB_{s}$.
\end{proof}

\ssec{}{Real Soergel functor on localized categories}
We are in the setup of \refss{locJ}. In particular, we fix $B_{0}, T_{0}\supset A_{0}$, and a subset $J$ of simple roots in $\Phi(G,A_{0})$.   The real Soergel functor $\VV_\BR$ is $\CR$-linear and so we can define 
$$\VV_\BR\ten_\CR\CK_J\colon \mathrm{Tilt}({\CM_{G_\BR}})\ten_\CR\CK_J \to \rmod\CK_J.$$
Its endomorphism algebra is opposite to $\CA\ten_\CR\CK_J$ and we also get the functor
$$\VV^{\sh}_\BR\ten_\CR\CK_J\colon \mathrm{Tilt}({\CM_{G_\BR}})\ten_\CR\CK_J \to \rmod\CA\ten_\CR\CK_J.$$

They enjoy the following properties.

\lem{blockandlevi2} 
The equivalence of \refp{blockandlevi} intertwines the localized real Soergel functors on both sides.
\elem

\prf

We put $\grp\supset\grp_\BR$ and $\grl\supset\grl_\BR$ for the relevant Lie algebras. Let $x\in X_L$ be a point in the closed orbit and let a regular nilpotent $\xi\in\CN\cap i\grg_\BR^*$ be the generic conormal to the closed orbit at $x$ inside $X$. Since $x$ lies in $X_L$, the corresponding Borel is contained in $P$ and $\xi\in\CN\cap\grp$ we conclude that $\xi$ is orthogonal to the nilpotent radical $\grp_P$ of $\grp$ and, thus, $p_L(\xi)$ lands in $i\grl^*_\BR$. Moreover, $p_L(\xi)$ is regular inside $i\grl_\BR^*$. Indeed, for a nonregular element $e_0\in\CN_L$ the elements in $e_0+\grn_P$ are also not regular. By construction we can also match the fibers $\pi^{-1}(x)$ and $\pi_L^{-1}(x)$ together with the compact part of the stabilizers of $x$ inside $G$ and $L$. We conclude that for a sheaf $\CF$ we have an isomorphism
\begin{equation*}
\mu_{(x,\xi)}(\mathrm{Av}_{G(\RR)}\circ \wt i_{P,*}(\CF))\cong\mu_{(x,p_L(\xi))}(\CF)
\end{equation*}
of free-monodromic local systems on $\bT^c$.

\epr

Recall the ring $\cR_{J}$ from \refss{locHk}.
\prop{blocktoall}
Assume that the localized Soergel functor $\VV^\sh_\BR\ten_\CR\CK_J$ is fully faithful on $\TGR\ten_\CR\CK_J$. Then $\VV^\sh_\BR\ten_\CR\CR_{J}$ is fully faithful on $\TGR\ten_\CR \CR_{J}$.
\eprop

\prf
By \refp{gen} and the fact that $V_J\subset V_{\l_0}$, the subcategory $\TGR\ten_\CR\CK_J$ generates $\TGR\ten_\CR \CR_{J}$ under the localized Hecke action \refe{locact}. 
Therefore, by \refp{adj} it is sufficient to check that 
the map
$$
\Hom_{\CM_{G_\BR}}(\CT_1,\CT_2)\ot_{\CR}\CR_{J}\to\Hom_{\CA\ot_{\CR}\CR_{J}}(\VV_\BR(\CT_1)\ot_{\CR}\CR_{J},\VV_\BR(\CT_2)\ot_{\CR}\CR_{J})
$$
is an isomorphism for $\CT_2\in\TGR\ten_\CR\CK_J$ and $\CT_1\in\TGR\ten_\CR w(\CK_J)$, where $w(\CK_J)\ne \CK_J$ is the localization of $V$ at the generic point of $w(V_J)\ne V_J$. As an $\CR$-module $\Hom_{\CM_{G_\BR}}(\CT_1,\CT_2)$ is supported on $V_J\cap w(V_J)$ and, therefore, vanishes after applying $-\ot_{\CR}\CR_{J}$. Since $\CM_{G_\BR}\ten_\CR\CR_{J}$ decomposes into the direct sum, so does $\CA\ot_{\CR}\CR_{J}$. Since $\CT_1$ and $\CT_2$ belong to the different summands of $\CM_{G_\BR}\ten_\CR\CR_{J}$, the algebra $\CA\ot_{\CR}\CR_{J}$ acts on $\VV_\BR(\CT_1)$ and $\VV_\BR(\CT_2)$ through different direct summands. It follows that $\Hom_{\CA\ot_{\CR}\CR_{J}}(\VV_\BR(\CT_1)\ot_{\CR}\CR_{J},\VV_\BR(\CT_2)\ot_{\CR}\CR_{J})=0$ and we are done.
\epr



In the codimension $0$ case recall that $\cR_{\cQ}=\cR\ot_{\cR^{\bW}}\cQ\cong \prod_{\l\in I_{0}}\cK_{\l}$ and consider the localized  functors
\begin{equation*}
\VV_{\RR,\cQ}: \Tilt(\CM_{G_\BR})_{\CQ}\to \rmod\cR_{\cQ}
\end{equation*}
and
\begin{equation*}
\VV^\sh_{\RR,\cQ}: \Tilt(\CM_{G_\BR})_{\CQ}\to \rmod\CA\ot_{\cR^{\bW}}\cQ.
\end{equation*}
We observe that 

\lem{VRQ} For any $(\l,\chi)\in \wt I_{0}$, $\VV_{\RR,\cQ}(\wt\D_{\l,\chi})\cong \cK_{\l}$ as an $\cR_{\cQ}$-module.
\elem
\prf The statement is clear for $\l=\l_{0}$. For general $\l\in I_{0}$ we have $(\l,\chi)=(\l_{0},\chi')\cdot w$ for some $w\in \bW$. By \refl{same as cross}, $\VV_{\RR,\cQ}(\wt\D_{\l,\chi})\cong \VV_{\RR,\cQ}(\wt\D_{\l_{0},\chi'}\star \wt \D_{w})$ is the translation under $w$ of $\VV_{\RR,\cQ}(\wt\D_{\l_{0},\chi'})\cong \cK$, which is $\cK_{\l}$ as an  $\cR_{\cQ}$-module.
\epr

We can now check the version of \reft{main1} for the localized categories.

\lem{loc}
The functor $\VV^\sh_{\RR,\cQ}$ is fully-faithful.
\elem 
\prf
We need to check that for $\CF_{1},\CF_{2}\in \CM_{G_\BR}$, the map
\begin{equation}\label{F1F2}
\RHom_{\CM_{G_\BR}}(\CF_1,\CF_2)\ot_{\CR^{\bW}}\CQ\to\RHom_{\CA\ot_{\CR^{\bW}}\CQ}(\VV_\BR(\CF_1)\ot_{\CR^{\bW}}\CQ,\VV_\BR(\CF_2)\ot_{\CR^{\bW}}\CQ)
\end{equation}
is an isomorphism.  Since the image of $\TGR\to \CM_{G_\BR, \CQ}$ generates $\CM_{G_\BR,\CQ}$ by taking direct summands (for $\wt \D_{\l,\chi}$ is a direct summand of $\CT_{\l,\chi}$ after localization), it suffices to check the case $\CF_{1},\CF_{2}\in \TGR$. By \refp{gen}, \refp{adj} and \refc{act} we can reduce to the case where $\CF_{2}$ is supported on  $\wt O^{\RR}_{\l_{0}}$, hence we may assume $\CF_{2}=\wt\D_{\l_{0},\psi}$.


Now instead of assuming $\CF_{1}$ is a \fmo tilting sheaf, by \refp{loc} it suffices to treat the case $\CF_{1}=\wt\D_{\l,\chi}$ for some $\l\in I_{0}$. If $\l\ne\l_{0}$, then the left side of \eqref{F1F2} is zero by \refp{loc}(2), and the right side vanishes for the same reason: the action of $\CR$ on $\VV_\BR(\CF_1)$ and $\VV_\BR(\CF_2)$ has support contained in $\Spec \ol\CR_{\l}\cap \Spec \ol\CR_{\l_{0}}$, which has dimension less than $\dim \CS^{\bW_{\l_{0}}}$. If $\l=\l_{0}$ then $\VV_{\RR}(\CF_{i})$ is just the stalk of $\CF_{i}$ at a point in $\wt O^{\RR}_{\l_{0}}$. If $\chi=\psi$, then both sides of \eqref{F1F2} are isomorphically equal to $\CK$. If $\chi\ne\psi$, then the left side is zero by \refp{loc}. By \refl{VRQ} we have $\VV_{\RR,\cQ}(\wt\D_{\l_0,\chi})\cong \VV_{\RR,\cQ}(\wt\D_{\l_0,\psi})\cong\cK_{\l_0}$. Since by \refp{loc} the objects $\wt\D_{\l_0,\chi}$ and $\wt\D_{\l_0,\psi}$ are two different simple objects of the semisimple category $\Tilt(\CM_{G_\BR})_{\CQ}$ we have an element $a\in\CA\ot_{\CR^{\bW}}\CQ$ acting on $\VV_{\RR,\cQ}(\wt\D_{\l_0,\chi})$ and $\VV_{\RR,\cQ}(\wt\D_{\l_0,\psi})$ by different elements of $\CK$. This implies that the right hand side also vanishes. The statement now follows.

%


\epr

The following lemma will allow us to transfer the results from the localized category to the original category. 

\lem{tors-free}
For a \fmo tilting object $\CT\in \Tilt(\CM_{G_{\BR}})$, the $\CR^{\bW}$-action on $\VV_\BR(\CT)$ factors through the quotient $(\CS^{\bW_{\l_{0}}})'$ and $\VV_\BR(\CT)$ is torsion free as an $(\CS^{\bW_{\l_{0}}})'$-module.
\elem

\prf
By \refl{R supp}, the action of $\CR^{\bW}$ on $\CT$ factors through $(\CS^{\bW_{\l_{0}}})'$, therefore so does its action on $\VV_{\RR}(\CT)$.
  
If $\CL$ is a \fmo local system supported on the closed orbit $\wt O^{\RR}_{\l_0}$ and extended by zero to $\wt X$, then $\VV_\BR(\CL)$ is the same as the stalk of $\CL$ along $\wt O^{\RR}_{\l_0}$, hence a free $\CS$-module. The $(\CS^{\bW_{\l_{0}}})'$-action on $\VV_\BR(\CL)$ comes from the inclusion $(\CS^{\bW_{\l_{0}}})'\subset \CS$, hence $\VV_\BR(\CL)$ is torsion free over $(\CS^{\bW_{\l_{0}}})'$. 

Now for any $\CT'\in \Tilt(\CH_{G})$, $\VV_{\RR}(\CL\star\CT')\cong \VV_{\RR}(\CL)\ot_{\CR}\VV(\CT')$. Since $\VV(\CT')$ is a Soergel bimodule, it is free as a left $\CR$-module. Therefore, as an $(\CS^{\bW_{\l_{0}}})'$-module, $\VV_{\RR}(\CL)\ot_{\CR}\VV(\CT')$ is a direct sum of $\VV_{\RR}(\cL)$, which is again torsion free. Finally, by \refp{gen} each \fmo tilting object is the direct summand of one of $\CL\star\CT'$,  the statement holds for all \fmo tilting objects.
\epr

\ssec{pf main1}{Proofs of \reft{main1}} In this subsection we assume $G$ is nice.

By \refp{gen}, \refp{adj} and \refl{SB adj}, to show the full faithfulness of $\VV_{\RR}^{\sh}$, it is sufficient to check that $\VV_\BR$ induces and isomorphism $$\Hom_{\CM_{G_\BR}}(\CT_1,\CT_2)\cong\Hom_{\CA}(\VV_\BR(\CT_1),\VV_\BR(\CT_2))$$ for $\CT_2$ supported on the preimage $\wt O^{\RR}_{\l_{0}}$ of the closed orbit. Let $\wt i: \wt O^{\RR}_{\l_{0}}\incl \wt X$ be the inclusion. We may assume $\CT_{2}=\tilna_{\l_{0},\chi}\cong \wt\D_{\l_{0},\chi}$ for some character $\chi$ of $\pi_{0}(\bT^{c}_{\l_{0}})$.

By adjunction we have $$\Hom_{\CM_{G_\BR}}(\CT_1,\tilna_{\l_{0},\chi})=\Hom_{\CM_{G_\BR}}(\wt i_*\wt i^*\CT_1,\tilna_{\l_{0},\chi})=\Hom_{\CM_{G_\BR}}(\wt i_*(\wt i^*\CT_1)_{\chi},\tilna_{\l_{0},\chi}).$$
Here $(\wt i^*\CT_1)_{\chi}$ is the direct summand of $\wt i^{*}\CT_{1}$ where $\pi_{0}(\bT^{c}_{\l_{0}})$ acts by $\chi$. 

We claim that the natural map
$$\Hom_{\CM_{G_\BR}}(\wt i_*(\wt i^*\CT_1)_{\chi}, \tilna_{\l_{0},\chi})\to\Hom_{\CA}(\VV_\BR(\wt i_*(\wt i^*\CT_1)_{\chi}),\VV_\BR(\tilna_{\l_{0},\chi}))$$ 
is an isomorphism. Indeed, it suffices to replace $\wt i_*(\wt i^*\CT_1)_{\chi}$ by $\tilna_{\l_{0},\chi}$ and show $\End(\tilna_{\l_{0},\chi})\cong \Hom_{\CA}(\VV_\BR(\tilna_{\l_{0},\chi}))$. Now the left side is $\CS$.  As $\VV_\BR$ is the stalk functor when restricted to local systems on $\wt O^{\RR}_{\l_{0}}$, the right side is $\End_{\CA}(\CS)$ where $\CS$ is viewed as an $\CA$-module via the $\CR$-algebra homomorphism 
\begin{equation*}
\ev_{\chi}: \CA\to \ol\CR_{\l_{0}}=\CS.
\end{equation*}
given by the action of $\CA$ on $\VV_{\BR}(\tilna_{\l_{0},\chi})\cong \CS$. Since $\CR\surj \CS$, $\ev_{\chi}$ is surjective, hence $\End_{\CA}(\CS)=\CS$, which coincides with the left side.


We denote $\VV_{\BR}(\tilna_{\l_{0},\chi})$ by $\CS_{\chi}$ to emphasize that it is isomorphic to $\CS$ as an $\CR$-module, and $\CA$ acts on it via $\ev_{\chi}$. Therefore, it remains to check that the map 
\begin{equation}\label{Vii}
\Hom_{\CA}(\VV_\BR(\wt i_*(\wt i^*\CT_1)_{\chi}),\CS_{\chi})\to\Hom_{\CA}(\VV_\BR(\CT_1),\CS_{\chi})
\end{equation}
induced by the unit map $u: \CT_{1}\to \wt i_*\wt i^*\CT_1\to \wt i_*(\wt i^*\CT_1)_{\chi}$ is an isomorphism. The left side of \eqref{Vii} is 
\begin{equation*}
\Hom_{\CS}(\VV_\BR(\wt i_*(\wt i^*\CT_1)_{\chi}),\CS)
\end{equation*}
since the action of $\CA$ on $\VV_\BR(\wt i_*(\wt i^*\CT_1)_{\chi})$ factors through $\CS$ via $\ev_{\chi}$. The right side of \eqref{Vii} can be identified with
\begin{equation*}
\Hom_{\CS}(\VV_\BR(\CT_{1})\ot_{\CA,\ev_{\chi}}\CS,\CS).
\end{equation*}
Therefore \eqref{Vii} comes from the map of $\CS$-modules
\begin{equation*}
\r: \VV_\BR(\CT_{1})\ot_{\CA,\ev_{\chi}}\CS\to \VV_\BR(\wt i_*(\wt i^*\CT_1)_{\chi})
\end{equation*}
by taking the $\CS$-linear dual.  Since $\CS$ is regular, to show \eqref{Vii} is an isomorphism, it suffices to show
\begin{eqnarray}
\label{ker r}&&\mbox{$\ker(\r)$ is a torsion $\CS$-module;}\\
\label{coker r}&&\mbox{$\coker(\r)$ has support of codimension $\ge2$ in $\Spec\CS=V_{\l_{0}}$.} 
\end{eqnarray}

To check \eqref{ker r}, we localize $\CM_{G_{\RR}}$ at the generic point of $V_{\l_{0}}=\Spec\CS\subset\Spec \CR=V$ as in \refss{loc0} and use \refl{loc}. We conclude that both $\ker(\r)$ and $\coker(\r)$ are torsion $\CS$-modules.

To check \eqref{coker r}, we first observe that the cokernel of $\r$ is supported on $\cup_{\l\in I_{1}}  (V_{\l}\cap V_{\l_{0}})$ as an $\CS$-module, where
\begin{equation*}
I_{1}=\{\l\in I|n_{\l}=n_{\l_{0}}-1\}.
\end{equation*}
Indeed, let $\CK\in \CM_{G_{\BR}}$ fit into the distinguished triangle $\CK\to \CT_{1}\to \wt i_*(\wt i^*\CT_1)_{\chi}\to \CK[1]$. Then $\CK$ is a successive extension of $\wt\D_{\l,\psi}$ where $(\l,\psi)\ne (\l_{0},\chi)$. Taking $\VV_{\BR}$, using that $\VV_{\BR}(\CT_{1})$ is concentrated in degree $0$,  we get an exact sequence
\begin{equation*}
0\to H^{0}\VV_{\BR}(\CK)\to \VV_{\BR}(\CT_{1})\to \VV_{\BR}(\wt i_*(\wt i^*\CT_1)_{\chi}) \to H^{1}\VV_{\BR}(\CK)\to 0.
\end{equation*}
We have $\coker(\r)=H^{1}\VV_{\BR}(\CK)$. As an $\CR$-module, the support of $H^{1}\VV_{\BR}(\CK)$  is contained in the union of supports of $H^{1}\VV_{\BR}(\wt\D_{\l,\psi})$, which can be nonzero only when $\l\in I_{1}$ by \refp{exact}. Therefore, $\supp_{\CR}(H^{1}\VV_{\BR}(\CK))\subset \cup_{\l\in I_{1}}  V_{\l}$. Since $H^{1}\VV_{\BR}(\CK)$ is also a quotient of $\VV_{\BR}(\wt i_*(\wt i^*\CT_1)_{\chi})$ which is supported on $V_{\l_{0}}$ as an $\CR$-module, we conclude that $\coker(\r)=H^{1}\VV_{\BR}(\CK)$ is supported on $\cup_{\l\in I_{1}}  (V_{\l}\cap V_{\l_{0}})$ as an $\CS$-module.

Therefore, to show \eqref{coker r}, it suffices to show that for any $\l\in I_{1}$ such that $V_{\l}\subset V_{\l_{0}}$, letting $\y_{\l}$ be the generic point of $V_{\l}\subset V_{\l_{0}}$ and $\wh\CS_{\y_{\l}}$ be the completed local ring of $\CS$ at $\y_{\l}$, the map
\begin{equation}\label{local comp r}
\wh\r_{\y_{\l}}=\r\ot_{\CS}\id_{\wh\CS_{\y_{\l}}}: \VV_\BR(\CT_{1})\ot_{\CA,\ev_{\chi}}\wh\CS_{\y_{\l}}\to \VV_\BR(\wt i_*(\wt i^*\CT_1)_{\chi})\ot_{\CS}\wh\CS_{\y_{\l}}
\end{equation}
is surjective. 

\refp{blockandlevi}, \refl{blockandlevi2} and \refp{blocktoall} allow to reduce the localized statement to the case of Levi subgroup $L=L_J$ with $J$ a subset of simple roots cutting out $V_\l\subset V_{\l_0}$. Such Levi is quasi-split and $L^{ad}$ has split rank $1$. Our general strategy for dealing with $L$ is to relate it to the case of the adjoint group $L^{ad}$, making use of the niceness assumption of $G_{\RR}$, and then deal with quasi-split adjoint groups of split rank $1$ case by case.

Let $T_{L}=L/L^{der}$ so that $L$ fits into an exact sequence
\begin{equation*}
1\to \G_{L}\to L\to L^{ad}\x T_{L}\to1.
\end{equation*}
Here $\G_{L}=Z(L^{der})$. We are going to use the discussions in \refss{tw Hk}, \refss{cent} and \refss{nonadj}. Let $\omega: \G_{L}(\RR)\to \bfk^{\x}$ and extend it to $\wt\omega: \G_{L}\to \bfk^{\x}$, which corresponds to a rank one local system $\cL\bt\cL'$ on $(\bT^{ad})^{c}\times T_{L}^{c}$. By \refe{GtoGad} we get
\begin{equation*}
\CM^{\omega}_{L_{\RR}}\cong (\CM_{L^{ad}_{\RR}\times T_{L,\RR}, \cL\bt\cL'})^{\grS_{L}}
\end{equation*}
where $\grS_{L}$ is dual to the cokernel $(L^{ad}(\RR)\times T_{L}(\RR))\mathrm{Im}(L(\RR))$. The irreducible $T_{L}(\RR)$-equivariant local systems on $T_{L}^{c}$ that appear in $\CM_{T_{L,\RR},\cL'}$ are either empty, in which case there is nothing to prove, or are indexed by a $\pi_{0}(T_{L}(\RR))^{*}$-torsor $\Xi_{\cL'}$.  We have 
\begin{equation}\label{TLdecomp}
\CM_{L^{ad}_{\RR}\times T_{L,\RR}, \cL\bt\cL'}\simeq \bigoplus_{\chi\in\Xi_{\cL'}}\CM_{L^{ad}_{\RR},\cL}\wh\ot \ol \cR'
\end{equation}
where $\ol\cR'$ is the completed group ring of $\pi_{1}(T_{L}^{c}/\mathrm{Im}(T_{L}(\RR)))$.

By our assumption of niceness, $L(\RR)\to L^{ad}(\RR)$ is surjective (although the niceness assumption only involves the case $L^{ad}_{\RR}\cong \PGL_{2,\RR}$, the surjectivity is automatic in all other cases as $L^{ad}(\RR)$ will be connected). Therefore we have a surjection $\pi_{0}(T_{L}(\RR))\surj \grS^{*}$, and dually an injection $\grS\incl \pi_{0}(T_{L}(\RR))^{*}$.  The $\grS$-action on $\CM_{L^{ad}_{\RR}\times T_{L,\RR}, \cL\bt\cL'}$ corresponds to the permutation of summands on the right side of \eqref{TLdecomp} via the injective map $\grS\incl \pi_{0}(T_{L}(\RR))^{*}$ and the $\pi_{0}(T_{L}(\RR))^{*}$-action on $\Xi_{\cL'}$. Therefore, taking $\grS$-invariants we get
\begin{equation}\label{MLRdecomp}
\CM^{\omega}_{L_{\RR}}\cong\bigoplus_{\chi\in\Xi_{\cL'}/\grS}\CM_{L^{ad}_{\RR},\cL}\wh\ot \ol \cR'.
\end{equation}
This allows us to reduce the proof of the surjectivity of $\r$ for $L$ to the same statement in each summand of \eqref{MLRdecomp}, which is clearly equivalent to the surjectivity of $\r$ in the case of $L^{ad}_{\RR}$.  

Thus we reduce to showing the surjectivity of $\r$ for indecomposable tilting sheaves $\cT_{1}\in \CM_{G_{\RR}, \cL}$, $G_{\RR}$ is adjoint and quasi-split of split rank one (so $G_{\RR}\cong \PU(2,1), R_{\CC/\RR}\PGL_{2}$ or $\PGL_{2,\RR}$),  and $\cL$ is a rank one local system on $\bT^{c}$ extendable to $G$.

\lem{surjrk1}
Under the above assumptions on $G_{\RR}$, the map $\rho$ is surjective.
\elem

\prf

We should go over the cases of \refss{rank1}. In each case, we need to show that for any indecomposable tilting $\cT_{1}$, the restriction map to the closed orbit induces a surjection
\begin{equation*}
\wt\r: \VV_{\BR}(\cT_{1})\to \VV_{\BR}(\wt i_{*}(\wt i^*\cT_{1})_{\chi})
\end{equation*}
for any character $\chi$ of $\pi_{0}(\bT^{c}_{\l_{0}})$.

If $G_\BR=R_{\CC/\RR}\PGL_2$, we have $G=\PGL_{2}\times \PGL_{2}$. The $\cL$ that makes $\CM_{G_{\RR},\cL}\ne0$ is either trivial or nontrivial on both factors of $G$.  In both cases, $\CM_{G_{\RR},\cL}$ is  equivalent to the Hecke category $\CH_{\PGL_{2}}$, and the surjectivity of $\rho$ is well-known.
 
If $G_\BR=\PGL_{2,\BR}$ and $\cL$ is nontrivial, all objects in $\CM_{G_{\RR}, \cL}$ are supported on the closed orbit, in which case there is nothing to prove. Below we assume $\cL$ is trivial and use notations from \refss{rank1}. The functor $\VV_\BR$ is equal to $\ker(s_{\mathrm{triv}}+ s_{\mathrm{sgn}})$. We should consider the case $\CT_{1}=\CT_h$. We have $\VV_\BR(\CT_{0,\chi})=\ker(\bfk[[x]]\oplus\bfk[[x]]\to\bfk)$. On the other hand, $\wt i_{0,*}(\wt i^*_0\CT_h)\simeq\CT_{0,\triv}\oplus\CT_{0,\sgn}$. For $\chi=\triv,\sgn$ we have $\VV_\BR(\CT_{0,\chi})=\bfk[[x]]$. The map $\wt\r: \ker(\bfk[[x]]\oplus\bfk[[x]]\to\bfk)\to \bfk[[x]]$ is induced by projection on one of the summands. We conclude that it is indeed surjective.

Consider the case $G_\BR=\PU(2,1)$. If $\cL$ is nontrivial, by looking at the stabilizers of orbits we see that $\CM_{G_{\RR}, \cL}=0$. Below we consider the case $\cL$ is trivial. When $\cT_{1}$ is not supported on the open orbit, the calculation is the same as in the case of $\PGL_{2}(\CC)$, which we omit. Therefore we assume that the support of $\cT_{1}$ is open, i.e., $\cT_{1}=\cT_{\l}$ for $\l\in \{-|+-, +|++, +|+-\}$.  Using notation $\cC_{\l}=\ker(\cT_{\l}\to \wt i_{*}\wt i^{*}\cT_{\l})$, we reduce to show that $H^{>0}\VV_{\RR}(\cC_{\l})=0$. Short exact sequences \refe{su(2,1)-1},  \refe{su(2,1)-2},  \refe{su(2,1)-3} after applying $\VV_\BR$ yield long exact sequences of cohomology. They imply in particular $H^{>1}\VV_{\RR}(\cC_{\l})=0$ and a surjection $H^{1}\VV_{\RR}(\tilDel_{\l})\surj H^{1}\VV_{\RR}(\cC_{\l})$. For $\l=-|+-$,  using the local model of a transversal slice to the closed orbit,  we see that $SS(\tilDel_{\l})|_{\cO^{\RR}_{\l_{0}}}$ is fiberwise one-dimensional (in the conormal direction of the real hypersurface $\ol\cO^{\RR}_{0|+-}$). Therefore $\VV_{\RR}(\tilDel_{\l})=0$. This implies $H^{1}\VV_{\RR}(\cC_{\l})=0$. Similarly argument works for $\l=+|++$. 


In case of $\l=+|+-$, using \refe{su(2,1)-3} and \refp{exact} we get the exact sequence:
$$H^0\VV_\BR(\tilDel_{+|+0}\oplus\tilDel_{0|+-})\to H^1\VV_\BR(\tilDel_{+|+-})\to H^1\VV_\BR(\cC_{+|+-})\to0.$$ It is therefore sufficient to verify the surjectivity of the first map. Since $\tilDel_{+|+-}$ is the pullback of $\D_{+|+-}$ on $X$, we have $\VV_\BR(\tilDel_{+|+-})\cong \VV_\BR(\D_{+|+-})$. Hence it suffices to show the surjectivity of the map $$H^0\VV_\BR(\Del_{+|+0}\oplus\Del_{0|+-})\to H^1\VV_\BR(\Del_{+|+-}),$$ where the standard sheaves are now on $X$. The latter map coincides with the boundary homomorphism obtained by applying $\VV_\BR$ to the standard filtration of $\nabla_{+|+-}$, which we know to be surjective as $H^1\VV_\BR(\nabla_{+|+-})=0$ by \refp{exact}.

\epr

This finishes the proof of \eqref{coker r}, and completes the argument for \reft{main1}.

\ssec{}{The algebras $\CB$ and $\CB_{0}$}
In this subsection we assume $G_{\RR}$ is quasi-split but not necessarily nice.

Let $\LS_{\l_0}=\{\chi: \pi_{0}(\bT^{c}_{\l_{0}})\to \bfk^{\times}\}$,  the set of rank one $G(\RR)$-equivariant  local systems on the closed orbit $O^{\RR}_{\l_{0}}$. We have an action of $\bW_{\l_0}$ on $\LS_{\l_0}$ by the cross action. The group $\bW_{\l_0}$ acts on $\LS_{\l_0}$ via the cross action. It also acts on $\CS$ compatibly with the action of $\bW$ on $\cR$. Define an action of $\bW_{\l_0}$ on $\op_{\chi\in\LS_{\l_0}}\cS$ as follows:  for $f\in \cS$ put in summand $\chi$, denoted $f_{\chi}$, $w(f_{\chi}):=(wf)_{\chi\cdot w^{-1}}$ for $w\in \bW_{\l_0}$.  Let
\begin{equation*}
\CB_{0}=(\bigoplus_{\chi\in\LS_{\l_0}}\cS)^{\bW_{\l_{0}}}.
\end{equation*}


By definition, $\CB_{0}$ is an $\CS^{\bW_{\l_{0}}}$-algebra.  View it as an $\cR^{\bW}$-algebra via the natural map $\cR^{\bW}\to \CS^{\bW_{\l_{0}}}$. Define another algebra
\begin{equation*}
\CB:=\CB_{0}\ot_{\cR^{\bW}}\CR.
\end{equation*}

\rem{BV}
The algebra $\CB$ is related to the block variety of \cite{BV} in the following way. The orbits of $\LS_{\l_0}$ under the cross action of $\bW_{\l_0}$ are called blocks. This coincides with the notion of blocks for $(\frg,K)$-modules with a fixed regular integral infinitesimal character, see \cite[Claim 2.2]{BV}. Then $\CB$ defined above is the direct product of completions of the algebras of functions on the block varieties $\mathfrak{B}_{mon}$ from \cite[Remark 2.3]{BV} for all blocks.

\erem

The action of $\CA$ on $\VV_{\RR}(\cL_{\l_{0}, \chi})$ for $\chi\in \LS_{\l_0}$ gives a homomorphism
\begin{equation*}
\act_{\l_{0}}: \CA\to \bigoplus_{\chi\in\LS_{\l_0}}\End_{\cR}(\VV_{\RR}(\cL_{\l_{0}, \chi}))^{opp}=\bigoplus_{\chi\in\LS_{\l_0}} \cS.
\end{equation*}
Here we use that $\End_{\cR}(\VV_{\RR}(\cL_{\l_{0}, \chi}))^{opp}\cong \End_{\cR}(\cS)^{opp}=\cS$.

Recall from \refp{A descent} that we have $\CA=\CA_0\ten_{\CR^{\bW}}\CR$, where the subalgebra $\CA_0\subset\CA$ is exactly the endomorphisms commuting with the Hecke action. 


\th{main2} Let $G_{\RR}$ be any quasi-split real group. The map $\act_{\l_{0}}$ restricts to an $\cS^{\bW_{\l_{0}}}$-algebra isomorphism $\CA_0\iso\CB_{0}$. In particular, we have an isomorphism of $\cR$-algebras $\cA\isom \cB$.
\eth

\prf 
By \refp{gen} the map $\act_{\l_{0}}$ restricted to $\cA_{0}$ is injective: if $a\in \CA_{0}$ acts by zero on $\VV_{\RR}(\cL_{\l_{0},\chi})$ for all $\chi\in \LS_{\l_0}$, it will act by zero on all $\VV_{\RR}(\cL_{\l_{0},\chi}\star\cT_{w})$ for all $w\in \bW$, which contain $\VV_{\RR}(\cT_{\l,\psi})$ as a direct summand for all $(\l,\psi)\in \wt I$, hence $a=0$. In particular, $\CA_{0}$ is torsion-free as an $\cS^{\bW_{\l_{0}}}$-module.

Let us prove that $\act_{\l_{0}}$ sends $\CA_{0}$ to the $\bW_{\l_{0}}$-invariants of $\op_{\chi\in \LS_{\l_0}}\cS$. Since $\CA_{0}$ is $\cS^{\bW_{\l_{0}}}$-torsion free, it suffices to show the statement after tensoring with $\cQ$. In fact we will show that  $\act_{\l_{0}}$ induces an isomorphism
\begin{equation}\label{A0B0Q}
\act_{\l_{0},\cQ}: \cA_{0,\cQ}:=\cA_{0}\ot_{\cR^{\bW}} \cQ\to (\op_{\chi\in\LS_{\l_0}}\cQ)^{\bW_{\l_{0}}}=\cB_{0,\cQ}.
\end{equation}

Consider the localized category $\Tilt(\CM_{G_{\RR}})_{\cQ}$ under the action of the base-changed Hecke category $\Tilt(\CH_{G})_{\cQ}$. Then $\cA_{\cQ}:=\cA\ot_{\cS^{\bW_{\l_{0}}}}\cQ$ is the endomorphism ring of the functor $\VV_{\RR,\cQ}: \Tilt(\CM_{G_{\RR}})_{\cQ}\to \rmod\cR_{\cQ}$, and $\cA_{0,\cQ}$ is the subalgebra of $\cA_{\cQ}$ commuting with the Hecke action.  By \refl{VRQ} we can compute $\CA_{\cQ}$ explicitly as 
\begin{equation}\label{AQ}
\CA_{\cQ}\isom \bigoplus_{(\l,\chi)\in \wt I_{0}}\cK_{\l},
\end{equation}
where the $(\l,\chi)$-factor is the action of $\cA_{\cQ}$ on $\VV_{\RR,\cQ}(\wt\D_{\l,\chi})\cong \cK_{\l}$. By \refl{same as cross}, the part of $\cA$ that commutes with the Hecke action corresponds to the $\bW$-invariants of the right side of \eqref{AQ} under the cross action. Since $\bW$ acts transitively on $I_{0}$, we may rewrite the $\bW$-invariants of the right side as $(\op_{\chi\in \LS_{\l_{0}}}\cK)^{\bW_{\l_{0}}}=\cB_{0,\cQ}$. This proves \eqref{A0B0Q}. In particular, $\act_{\l_{0}}$ restricts to a ring homomorphism
\begin{equation*}
\act'_{\l_{0}}: \cA_{0}\to \cB_{0}
\end{equation*}
that is injective and becomes an isomorphism after tensoring with $\cQ$. 



%

It remains to show that $\act'_{\l_{0}}$ is surjective. Given a collection $b=(b_{\chi})_{\chi\in\LS_{\l_{0}}}\in \cB_{0}$, we would like to construct $a\in \cA_{0}$ that acts on $\VV_{\RR}(\CT_{\l_{0},\chi})=\VV_{\RR}(\wt\D_{\l_{0},\chi})$ by $b_{\chi}$. For $\chi\in\LS_{\l_{0}}$ and $w\in \bW$, define an endomorphism $a_{\chi,w}$ of $\VV_{\RR}(\CT_{\l_{0},\chi}\star \CT_{w})\cong \VV_{\RR}(\CT_{\l_{0},\chi})\ot_{\cR}\VV(\cT_{w})$ by $b_{\chi}\ot\id_{\VV(\cT_{w})}$. For any morphism $\ph: \CT_{\l_{0},\chi}\star \CT_{w}\to \CT_{\l_{0},\chi'}\star \CT_{w'}$ in $\Tilt(\CM_{G_{\RR}})$, we claim that the following diagram is commutative
\begin{equation}\label{achiw}
\xymatrix{ \VV_{\RR}(\CT_{\l_{0},\chi}\star \CT_{w}) \ar[d]^{\VV_{\RR}(\ph)}\ar[r]^{a_{\chi,w}}& \VV_{\RR}(\CT_{\l_{0},\chi}\star \CT_{w})\ar[d]^{\VV_{\RR}(\ph)}\\
\VV_{\RR}(\CT_{\l_{0},\chi'}\star \CT_{w'}) \ar[r]^{a_{\chi',w'}} & \VV_{\RR}(\CT_{\l_{0},\chi'}\star \CT_{w'})
}
\end{equation}
Indeed, after tensoring with $\CQ$ the above diagram is commutative since $\cA_{0,\cQ}\isom \cB_{0,\cQ}$. Since $\VV_{\RR}(\CT_{\l_{0},\chi'}\star \CT_{w'})$ is torsion-free as an $(\cS^{\bW_{\l_{0}}})'$-module by \refl{tors-free}, the diagram is commutative. Let $\Tilt'\subset \Tilt(\CM_{G_{\RR}})$ be the full subcategory whose objects are finite direct sums of $\CT_{\l_{0},\chi}\star \CT_{w}$ for various $\chi\in\LS_{\l_{0}}$ and $w\in \bW$. By the commutativity of \eqref{achiw}, the collection $\{a_{\chi,w}\}_{\chi\in\LS_{\l_{0}}, w\in \bW}$ gives an endomorphism of $\VV_{\RR}|_{\Tilt'}$. By \refp{gen}, all objects in $\Tilt(\CM_{G_{\RR}})$ are direct summands of objects in $\Tilt'$, therefore the $\{a_{\chi,w}\}$ defines an endomorphism of $\VV_{\RR}$, i.e., an element $a\in \CA$. By construction, $a$ acts on $\VV_{\RR}(\CT_{\l_{0},\chi})$ by $b_{\chi}$, and $a$ commutes with the Hecke action. Therefore $a\in\CA_{0}$ satisfies $\act_{\l_{0}}(a)=b$.

%


\epr

Combining \reft{main1} and \reft{main2}, we get:
\cor{maincor} Suppose $G_{\RR}$ is quasi-split and nice. The enhanced real Soergel functor gives a fully faithful embedding
\begin{equation*}
\VV_{\RR}^{\sh}: \Tilt(\CM_{G_{\RR}})\to \rmod\cB.
\end{equation*}

\ecor

\sec{nonadj}{Structure theorem for general quasi-split groups}

In this section we state and prove the Soergel structure theorem for general quasi-split groups without assuming they are nice. To treat the general case we first need to consider variants of the previous setup: allowing non-unipotent monodromy along $\bT^{c}$ and fixing a central character.  

We put ourselves in the situation of \refss{Matsuki setup}.

\ssec{tw Hk}{Twisted version of $\CM_{G_{\RR}}$} We recall the notion of sheaves on $\wt X$ equivariant under $\bT$ with respect to a local system, following \cite[\S2.1, 2.5]{LY}. 

Let $\CL$ be a rank one local system on $\bT^{c}$ with finite monodromy (in $\bfk$-coefficients). 
Such local systems all come from the following construction: take a finite isogeny $\nu\colon \wt \bT^{c} \to \bT^{c}$ and a character $\kappa\colon\ker(\nu)\to\bfk^{\x}$, then take $\CL$ to be the rank one direct summand of $\nu_*\underline{\bfk}$ where $\ker(\nu)$ acts via $\kappa$. 

Consider the equivariant category $D^b_{G(\RR)}(\wt X/\wt \bT^{c})$ with $\wt \bT^{c}$ acting on $\wt X$ through $\nu$. Since the finite abelian group $\ker(\nu)$ acts trivially on $X$, it acts on the identity functor of $D^b_{G(\RR)}(\wt X/\wt \bT^{c})$. This allows us to decompose $D^b_{G(\RR)}(\wt X/\wt \bT^{c})$ into a direct sum of full triangulated subcategories according to
characters of $\ker(\nu)$. Let $D^b_{G(\RR)}(X)_{\CL}$ be the direct summand of $D^b_{G(\RR)}(\wt X/\wt \bT^{c})$ corresponding to $\kappa$. One can check that $D^b_{G(\RR)}(X)_{\CL}$ depends only on the local system $\cL$ on $\bT^{c}$ and not on the choices of the isogeny $\nu:\wt \bT^{c}\to \bT^{c}$ and $\kappa$. 

Let $\LS_{\l, \cL}$ be the set of isomorphism classes of irreducible $G(\RR)$-equivariant local systems on $O^{\RR}_{\l}$ that appear in $D^{b}_{G(\RR)}(X)_{\CL}$. 
We describe $\LS_{\l, \cL}$ more explicitly. Choose a finite isogeny $\nu\colon \wt \bT^{c} \to \bT^{c}$ such that $\CL$ corresponds to a character $\kappa\colon\ker(\nu)\to\bfk^{\x}$. Let $\wt\bT^{c}_{\l}=\nu^{-1}(\bT^{c}_{\l})$. Then there is a natural bijection
\begin{equation}\label{LS orbit tw}
\LS_{\l,\cL}\cong\{\wt\chi: \pi_{0}(\wt\bT^{c}_{\l})\to \bfk^{\times}; \wt\chi|_{\ker(\nu)}=\kappa\}.
\end{equation}
In particular, $\LS_{\l, \cL}\ne\varnothing$ if and only if the restriction of $\kappa$ to the kernel of  the map $\ker(\nu)\subset \wt\bT^{c}_{\l}\surj \pi_{0}(\wt\bT^{c}_{\l})$ is trivial; and when this happens, $\LS_{\l, \cL}$ is a torsor under $\pi_{0}(\bT^{c}_{\l})^{*}$.

We define the monodromic category $D^b_{G(\RR)}(\wt X)_{(\bT^{c},\CL)-\mathrm{mon}}$ as the full subcategory of $D^b_{G(\RR)}(\wt X)$ generated by the image of $D^b_{G(\RR)}(X)_{\CL}$ via pullback.  Finally, we can define the completed version
$$\CM_{G_{\RR}, \cL}:=\wh D^b_{G(\RR)}(\wt X)_{(\bT^{c},\CL)\mon}$$ 
as in \refs{compmoncat}.

We define the $\CL$-twisted version of the algebra $\cB$ as follows. Recall $\l_{0}\in I$ is the index of the closed $G(\RR)$-orbit. Let  
$$\cB_{0,\cL}=(\bigoplus_{\wt\chi\in \LS_{\l_{0}, \cL}}\cS)^{\bW_{\l_{0}}},$$
and let
$$\cB_{\cL}=\cB_{0,\cL}\ot_{\cR^{\bW}}\cR.$$

Applying the twisting construction to the setting of \refss{Hecke} we define $\CL$-twisted monoidal Hecke category $$\CH_{G,\CL}:=\wh D^b_G(\wt X\x \wt X)_{(\bfT^{c}\x \bfT^{c},\CL\boxtimes\CL^{-1})-\mathrm{mon}}.$$ This is equivalent to the one defined in \cite[\S2.7]{LY}.

In the following we are only interested in the case where $\CL$ extends to a rank one local system $\cL_{G}$ on $G$. This happens if and only if one can choose a finite isogeny $\nu': \wt G\to G$ inducing  the isogeny of abstract Cartan groups $\nu:\wt \bT^{c}\to \bT^{c}$ such that $\CL$ is attached to some character $\kappa$ of $\ker(\nu)$. In this case, tensoring with $\cL_{G}$ induces a monoidal equivalence
\begin{equation}\label{tw Hk equiv}
\CH_{G}\simeq\CH_{G,\CL}.
\end{equation} 

The following theorem generalizes \reft{main1} and \refc{maincor} with the same proof (where we use \eqref{tw Hk equiv}).
\th{twisted main} Let $G_{\RR}$ be quasi-split and nice.  Let $\cL$ be a rank one local system on $\bT$ that extends to $G$. Then the real Soergel functor naturally lifts to a fully faithful embedding
\begin{equation*}
\VV^{\sh}_{\RR}: \Tilt(\CM_{G_{\RR}, \cL})\to\rmod \cB_{\cL}. 
\end{equation*}
\eth

\ssec{cent}{Central characters}
Let $\G\subset ZG(\RR)$ be a finite subgroup. Let $\omega: \G\to \bfk^{\times}$ be a character. We define $D^{b}_{G(\RR)(\G, \omega)}(X)$ to be the full subcategory of $D^{b}_{G(\RR)\G}(X)$ consisting of objects $\cF$ on which $\G$ acts via the character $\omega$ (note that $\G$ acts trivially on $X$, hence acts on every $\cF\in D^{b}_{G(\RR)\G}(X)$). Similarly define $D^{b}_{G(\RR)(\G, \omega)}(O^{\RR}_{\l})$ for each $G(\RR)$-orbit $O^{\RR}_{\l}$.

For $\l\in I$,  the group $\bT^{c}_{\l}$ contains the compact part of $Z(G)$, hence contains $\G$. Therefore, the set of isomorphism classes of irreducible local systems in $D^{b}_{G(\RR)(\G, \omega)}(O^{\RR}_{\l})$ is in natural bijection with the set of characters
\begin{equation}\label{LS orbit cen}
\LS_{\l}^{\ome}:=\{\chi: \pi_{0}(\bT^{c}_{\l})\to \bfk^{\times};\chi|_{\G}=\omega\}.
\end{equation}
In particular, $\LS_{\l}^{\ome}\ne\varnothing$ if and only if $\omega$ is trivial on the kernel of the map $\G\subset \bT^{c}_{\l}\surj \pi_{0}(\bT^{c}_{\l})$; and when this happens, $\LS_{\l}^{\ome}$ is a torsor under the dual group $\coker(\G\to \pi_{0}(\bT^{c}_{\l}))^{*}$.

We define $D^{b}_{G(\RR)(\G, \omega)}(\wt X)_{\bT^{c}\mon}$ to be the full subcategory of $D^{b}_{G(\RR)\G}(\wt X)$ generated by the pullbacks of $D^{b}_{G(\RR)(\G, \omega)}(X)$. We can define the completed version as in \refs{compmoncat}:
$$\CM_{G_{\RR}}^{\omega}:=\wh D^{b}_{G(\RR)(\G, \omega)}(\wt X)_{\bT^{c}\mon}.$$ 

Note that $\LS_{\l}^{\ome}$ is stable under the cross action of $\bW_{\l}$. Define $\CB^{\ome}_{0}=(\op_{\chi\in \LS_{\l_{0}}^{\ome}}\CS)^{\bW_{\l_{0}}}$, and $\CB^{\ome}=\CB_{0}^{\ome}\ot_{\cR^{\bW}}\cR$. Let $\cA^{\ome}$ be opposite to the endomorphism ring of $\VV_{\RR}|_{\Tilt(\CM_{G_{\RR}}^{\omega})}$. Then \reft{main2} implies an isomorphism $\cA^{\ome}\cong \cB^{\ome}$. In particular, $\VV_{\RR}|_{\Tilt(\CM_{G_{\RR}}^{\omega})}$ is enhanced to a functor
\begin{equation}\label{VR ome}
\VV^{\sh,\ome}_{\RR}: \Tilt(\CM_{G_{\RR}}^{\omega})\to \rmod\CB^{\ome}.
\end{equation}
However, this functor is not fully faithful in general. To remedy, we will modify this functor by replacing the right side with an equivariant and twisted version of $\cB$ for the adjoint group of $G_{\RR}$.

\ssec{deeq}{De-equivariantization}
We recall the procedure of de-equivariantization, as explained in  \cite[21 and 22]{G}. Let $\G$ be a finite abelian group such that $|\G|$ is prime to $\ch(\bfk)$. Assume $\bfk$ contains enough roots of unity such that $\G^{*}=\Hom(\G,\bfk^{\times})$ has the same cardinality as $\G$.  Assume $\G$ acts on a $\bfk$-linear idempotent complete category $\cC$. Let $\cD=\cC^{\G}$ be the category of $\G$-equivariant objects in $\cC$: an object $d\in \cD$ is tuple $(c, \{\a_{\g}\}_{\g\in \G})$ where $X\in \cC$ and $\a_{\g}:\g(c)\isom c$ are isomorphisms indexed by $\g\in\G$ satisfying $\a_{1}=\id_{c}$ and $a_{\g_{1}\g_{2}}=\a_{\g_{1}}\c\g_{1}(\a_{\g_{2}})$. Then there is an action of the dual group $\G^{*}$ on $\cD$ as follows: for $\chi\in \G^{*}$ and $d=(c,\{\a_{\g}\})\in \cD$, define $\chi(d)=(c, \{\a'_{\g}\})$ where $\a'_{\g}$ is $\a_{\g}$ multiplied by $\j{\chi,\g}\in \bfk^{\times}$. Then we can recover $\cC$ as the category of $\G^{*}$-equivariant objects in $\cD$. We give the functors as follows.  There is a  functor $\av_{\G}: \cC\to \cD$ sending $c$ to the object $\av_{\G}(c)=\op_{\g\in \G} \g(c)$ with its obvious $\G$-equivariant structure. Then $\av_{\G}(c)\in\cD$ in fact carries a canonical $\G^{*}$-equivariant structure and hence lifts to an object $\av_{\G}(c)^{\sh}\in\cD^{\G^{*}}$. The assignment $c\mapsto \av_{\G}(c)^{\sh}$ gives a functor $\cC\to \cD^{\G^{*}}$. On the other hand, let $(d,\{\a_{\chi}\}_{\chi\in\G^{*}})\in \cD^{\G^{*}}$, with $d=(c, \{\a_{\g}\}_{\g\in\G})\in \cD$. The data of $\a_{\chi}$ means automorphisms $\b_{\chi}\in \Aut(c)$ that form a $\G^{*}$-action on $c$ compatible with $\{\a_{\g}\}_{\g\in \G}$. In particular, we can extract the direct summand $c_{1}=c^{\G^{*}}\subset c$ corresponding to the trivial character of $\G^{*}$. The assignment $(d,\{\a_{\chi}\}_{\chi\in\G^{*}})\mapsto c$ gives a functor $\cD^{\G^{*}}\to \cC$. These two functors are inverse to each other and give an equivalence $\cC\cong \cD^{\G^{*}}$.

\ssec{isogen}{Monodromic categories for isogenous groups} We first treat a more general situation and then see how we can apply it to the non-adjoint group case. 

Consider a short exact sequence of real algebraic groups 
$$1\xrightarrow{} \Gam\xrightarrow{} G_1\xrightarrow{p} G_2\xrightarrow{}1$$ 
with $\Gam$ being finite. Fix a character $\omega\in \Gam(\BR)^*$ and choose an extension $\wt\omega$ to $\Gam$. Let $\pi_i\colon \wt X_i\to X_i$ be the respective flag varieties and let $\bT_i$ be the respective abstract Cartan groups. Finally, put $\grS$ for the dual group of the cokernel of the map $p(\BR)\colon G_1(\BR)\to G_2(\BR)$. 

We have the canonical equivalence of categories given by the forgetful functor from right to left
\begin{equation}\label{wt om}
\CM^{\omega}_{G_{\RR}}=\wh D^{b}_{G_1(\BR)(\G(\RR),\omega)}(\wt X_1)_{\bT^c_1\mon} \simeq \wh D^{b}_{G_1(\BR)(\Gam,\wt\omega)}(\wt X_1)_{\bT^c_1\mon}.
\end{equation}
Here the centrally twisted categories are defined in \refss{cent}.

Let $\cL=\cL_{\wt\omega}$ be the rank $1$ local system on $\bT^c_2$ defined by the finite isogeny $p|_{\bT_{1}^{c}}:\bT^{c}_{1}\to \bT_{2}^{c}$ and the character $\wt\omega$ on $\G=\ker(p|_{\bT_{1}^{c}})$. Note that $\cL$ becomes trivial after pulling back to $\bT^c_1$. 

\lem{} Pullback along $p$ induces an equivalence
\begin{equation}\label{pG1}
\wh D^{b}_{p(G_1(\BR))}(\wt X_2)_{(\bT^c_2,\cL)\mon}\simeq\wh D^{b}_{G_1(\BR)(\Gam,\wt\omega)}(\wt X_1)_{\bT^c_1\mon}.
\end{equation}
\elem
\prf
It suffices to check the same statement before completion. By definition, $D^{b}_{G_1(\BR)(\Gam,\wt\omega)}(\wt X_1)_{\bT^c_1\mon}\subset D^{b}_{G_{1}(\RR)\G}(\wt X_{1})$ is the full subcategory generated by pullbacks of objects in $D^{b}_{G_1(\BR)(\Gam,\wt\omega)}(X_1)$. On the other hand, pullback along $p$ identifies $D^{b}_{p(G_1(\BR))}(\wt X_2)_{(\bT^c_2,\cL)\mon}$ with the full subcategory of $D^{b}_{G_{1}(\RR)\G}(\wt X_{1})$ generated by pullbacks from $D^{b}_{p(G_{1}(\RR))}(X_{2})_{\cL}$. It therefore suffices to identify simple perverse sheaves in $D^{b}_{p(G_{1}(\RR))}(X_{2})_{\cL}$ and in $D^{b}_{G_1(\BR)(\Gam,\wt\omega)}(X_1)$. 

Note that $X_{1}=X_{2}$. A $G_{1}(\RR)$-orbit $O^{\RR}_{\l}\subset X_{1}$ is the same as a $p(G_{1}(\RR))$-orbit on $X_{2}$. For such an orbit, the irreducible local systems in both $D^{b}_{G_1(\BR)(\Gam,\wt\omega)}(O^{\RR}_{\l})$ and in $D^{b}_{p(G_{1}(\RR))}(O^{\RR}_{\l})_{\cL}$ are parametrized by characters of $\pi_{0}(\bT_{1,\l}^{c})$ whose pullback to $\G$ is $\wt\omega$ (compare \eqref{LS orbit tw} and \eqref{LS orbit cen}). This finishes the proof.
\epr

Note that $G_2(\BR)/p(G_1(\BR))\cong\grS^*$ and, hence, there is an action of $\grS^*$ on $\wh D^{b}_{p(G_1(\BR))}(\wt X_2)_{(\bT^c_2,\cL)\mon}$, such that 
$$
(\wh D^{b}_{p(G_1(\BR))}(\wt X_2)_{(\bT^c_2,\cL)\mon})^{\grS^*}\simeq\wh D^{b}_{G_2(\BR)}(\wt X_2)_{(\bT^c_2,\cL)\mon}=\CM_{G_{2,\RR},\cL}.
$$
Applying the observations of \refss{deeq} to this action, we have an action of $\grS$ on $\CM_{G_{2,\RR}, \cL}$ and a canonical equivalence
\begin{equation}\label{grSeq}
\wh D^{b}_{p(G_1(\BR))}(\wt X_2)_{(\bT^c_2,\cL)\mon}\simeq(\CM_{G_{2,\RR}, \cL})^\grS.
\end{equation}

Combining \eqref{wt om}, \eqref{pG1} and \eqref{grSeq}, we have verified that
\begin{equation}\label{G1G2S}
\CM^{\omega}_{G_{1,\RR}}\simeq(\CM_{G_{2,\RR},\cL})^\grS.
\end{equation}
We remark that this equivalence depends on the choice of the extension $\wt\omega$ of $\omega$ to $\G$ (which determines the local system $\cL$ on $\bT^{c}_{2}$).

 \ssec{nonadj}{Structure theorem for general quasi-split groups}

Now let $G$ be a connected reductive algebraic group over $\CC$ with real form $G_{\RR}$. Let $X, \wt X$ be defined in terms of $G$. Let $G^{ad}$ be the adjoint form of $G$, which carries a real form $G_{\RR}^{ad}$ compatible with $G_{\RR}$. Let $X^{ad}, \wt X^{ad}$ be defined as in \refss{set} in terms of $G^{ad}$. In particular, $\pi^{ad}: \wt X^{ad}\to X^{ad}$ is a $(\bT^{ad})^{c}$-torsor.  

Let $T'=G/G^{der}$ be the abelianization of $G$. We apply the result of \refss{isogen} to the short exact sequence $$1\xrightarrow{} \Gam\xrightarrow{} G\xrightarrow{p}  G^{ad}\x T'\xrightarrow{}1,$$ 
with $\Gam=Z(G^{der)}$.  Since
$$\CM_{G_{\RR}}=\bigoplus_{\omega: \G(\RR)\to \bfk^{\times}}\CM^{\omega}_{G_{\RR}},$$
we will consider one summand $\CM^{\omega}_{G_{\RR}}$ at a time. So we fix a character $\omega: \G(\RR)\to \bfk^{\times}$, and choose an extension of $\omega$ to $\wt\omega:\G\to \bfk^{\x}$. The map $p$ induces a short exact sequence $1\to \G\to \bT\to \bT^{\ad}\times T'\to 1$.  Using this and $\wt\omega$ we get a rank one local system $\cL\bt\cL'$ on $(\bT^{\ad})^{c}\times T'^{c}$ as in \refss{tw Hk}. By \eqref{G1G2S} we get
an equivalence 
\eq{GtoGad}
\CM_{G_{\RR}}^{\omega} \simeq(\CM_{G^{ad}_{\RR}\times T'_{\RR}, \cL\bt\cL'})^\grS,
\eeq
where $\grS:=(G^{ad}(\RR)\x T'(\RR)/\im(G(\RR)))^*$.

The category on the right side is the completed tensor product of $\CM_{G^{ad}_{\RR}, \cL}$ and $\CM_{T'_{\RR}, \cL'}=\wh D^{b}_{T'(\BR)}(T'^{c})_{(T'^{c},\cL')\mon}$. The latter category is zero if $\cL'|_{T'(\RR)}$ is a nontrivial local system. If this happens then $\wh D^{b}_{G(\RR)(\G(\RR),\omega)}(\wt X)_{\bT^c\mon}$ and there is nothing to prove. If $\cL'|_{T'(\RR)}$ is a trivial local system, then irreducible local systems that appear in $\CM_{T'_{\RR}, \cL'}$ are parametrized by a torsor $\Xi_{\cL'}$ of $\pi_{0}(T'(\RR))^{*}$. By \refc{comp orbit}, we get a decomposition
\begin{equation*}
\CM_{T'_{\RR}, \cL'}\simeq\bigoplus_{\chi\in \Xi_{\cL'}}D^{f}(\rmod\ol\cR')
\end{equation*}
where $\ol\cR'$ is the completed group ring of $\pi_{1}(\ol{T}'^{c})$, and $\ol{T}'^{c}:=T'^{c}/\mathrm{Im}(T'(\RR))$.

By definition, $\grS^{*}$ is a quotient of $\pi_{0}(G^{ad}(\RR))\times \pi_{0}(T'(\RR))$. In particular, we have a homomorphism $\pi_{0}(T'(\RR))\to \grS^{*}$, hence dually $\grS\to \pi_{0}(T'(\RR))^{*}$. Therefore $\grS$ acts on $\Xi_{\cL'}$ via the map to $\pi_{0}(T'(\RR))^{*}$. We thus get an $\grS$-equivariant equivalence
\begin{equation}\label{decomp MGadT'}
\CM_{G^{ad}_{\RR}\times T'_{\RR}, \cL\bt\cL'}\simeq\bigoplus_{\chi\in \Xi_{\cL'}}\CM_{G^{\ad}_{\RR}, \cL}\wh\ot_{\bfk}\ol\cR'
\end{equation}
where $\grS$ permutes the summands according to its action on $\Xi_{\cL'}$. 



By \refc{maincor} or rather its generalization \reft{twisted main}, the enhanced real Soergel functor
\begin{equation*}
\VV_{\RR}^{ad, \sh}: \Tilt(\CM_{G^{ad}_{\RR}, \cL})\to \rmod\CB^{ad}_{\cL}
\end{equation*}
is fully-faithful. Here $\cB^{ad}_{\cL}=\cB_{0,\cL}^{ad}\ot_{\cR^{ad, \bW}}\cR^{ad}$ is the algebra $\cB$ for the group $G^{ad}_{\RR}$ and the twisting $\cL$. For a point $x$ in the closed orbit $\cO^{\RR}_{\l_{0}}$ of $X^{ad}$ we have a map $\pi_{0}((\bT^{\ad}_{\l_{0}})^{c})\to \pi_{0}(G^{ad}(\RR))\to \grS^{*}$. Dually, $\grS$ acts on the set $\LS^{ad}_{\l_{0},\cL}$ of $G^{ad}(\RR)$-equivariant irreducible local systems on $\cO^{\RR}_{\l_{0}}$ that appear in $\CM_{G_{\RR}^{ad}, \cL}$. This yields a $\grS$-action on $\CB^{ad}_{0,\cL}$ and on $\CB^{ad}_{\cL}$, such that the functor $\VV_\BR^{ad,\sh}$ respects the $\grS$ action on $\CM_{G^{ad}_{\RR}}$ and on $\rmod\CB^{ad}_{\cL}$. 

We conclude that:


\th{mainnonadj} Consider the functor \footnote{The notation emphasizes the dependence on the extension $\wt\ome$ of $\ome$, which defines $\cL$.}
$$\VV^{\sh,\wt\ome}_\BR\colon\Tilt(\CM^{\omega}_{G_{\RR}})\xrightarrow{}\left(\bigoplus_{\chi\in\Xi_{\cL'}}\rmod\CB^{ad}_{\cL}\wh\ten\ol\CR'\right)^{\grS},$$ obtained by composing the equivalences \refe{GtoGad}, \eqref{decomp MGadT'} and taking $\VV^{ad,\sh}_\BR\ten\id_{\ol\CR'}$ on each summand $\CM_{G^{ad}_{\RR}, \cL}\wh\ot\ol\cR'$. Here $\grS$ acts simultaneously on $\CB^{ad}_{\cL}$ as explained above and permuting the summands via its action on $\Xi_{\cL'}$.

Then $\VV^{\sh,\wt\ome}_\BR$ is fully-faithful. 

In particular, if $G$ is semisimple (so that $\G=Z(G)$ and $T'=1$), then the functor 
\begin{equation}\label{VR ss}
\VV^{\sh,\wt\ome}_\BR\colon\Tilt(\CM^{\omega}_{G_{\RR}})\to(\rmod\CB^{ad}_{\cL})^{\grS}
\end{equation}
is fully faithful. 
\eth

\ssec{}{Relation between $\VV^{\sh,\ome}_{\RR}$ and $\VV^{ad,\sh}_{\RR}$} In this subsection we assume $G_{\RR}$ is quasi-split and semisimple. In this case we want to describe $\VV^\sh_\BR$ more directly in terms of the vanishing cycles functors for the conormals to the closed $G(\RR)$-orbit on $X$. Recall a choice of the generic conormal $\xi$ to the open orbit and its lift $\wt\xi$ made in \refss{GVC} in order to define $\VV^{ad}_\BR=\VV^{ad}_{\BR,\wt\xi}$. For an element $\sig\in\grS^*$ pick its lift to $G^{ad}(\RR)$ and let $\sig(\wt\xi)$ be the image of $\wt\xi$ under the adjoint action of this lift. 
There is a natural $\grS^*$-action on the direct sum of functors $\bigoplus_{\sig\in\grS^*}\VV^{\sh,\ome}_{\BR,\sig(\wt\xi)}$ on $\Tilt(\CM^{\omega}_{G_{\RR}})$ and each of the summands lands in $\rmod\CB_\ome$ (see \eqref{VR ome}).

Recall the descriptions of $\LS^{ad}_{\l_{0},\cL}$ and $\LS^{\ome}_{\l_{0}}$ in \eqref{LS orbit tw} and \eqref{LS orbit cen}. By \refl{TRT}, $\bT$ carries a canonical real form $\bT^{\s_{\l_{0}}}$ such that $\bT_{\l_{0}}^{c}=\textup{Im}(\bT^{\s_{\l_{0}}}\to \bT^{c})$. We simply denote the real points of this real form by $\bT(\RR)$, keeping in mind that it is the real form attached to the closed orbit. Similarly we define $\bT^{ad}(\RR)$ from the real form of $\bT^{ad}$ attached to the closed orbit.

Let $\bT(\RR)'\subset \bT$ be the preimage of $\bT^{ad}(\RR)$ under the projection $\nu: \bT\to \bT^{ad}$. Then we have a short exact sequence 
$$1\to ZG\to \bT(\RR)'\to \bT^{ad}(\RR)\to 1.$$
Then $\LS^{ad}_{\l_{0},\cL}$ is the subset of $\pi_{0}(\bT(\RR)')^{*}$ of characters whose restriction to the image of $ZG$ is $\wt\ome$. On the other hand, $\LS^{\ome}_{\l_{0}}$ is the subset of $\pi_{0}(\bT(\RR))^{*}$ of characters whose restriction to the  image of $ZG(\RR)$ is $\ome$. We have a  short exact sequence 
\begin{equation}\label{TR'}
1\to \bT(\RR)\to \bT(\RR)'\to \grS^{*}\to 1,
\end{equation}
and $ZG\cap \bT(\RR)=ZG(\RR)$. Therefore we have a restriction map
\begin{equation}\label{LSadL to LSome}
\r_{\l_{0}}: \LS^{ad}_{\l_{0},\cL}\to \LS^{\ome}_{\l_{0}}
\end{equation}
that is surjective. When $\LS^{\ome}_{\l_{0}}\ne\varnothing$, $\LS^{ad}_{\l_{0},\cL}$ is a torsor under $\pi_{0}(\bT^{ad}(\RR))^{*}$ while $\LS^{\ome}_{\l_{0}}$ is a torsor under $\pi_{0}(\bT(\RR)/ZG(\RR))^{*}$, and the map $\r_{\l_{0}}$ is compatible with their torsor structures via the surjection $\pi_{0}(\bT^{ad}(\RR))^{*}\to \pi_{0}(\bT(\RR)/ZG(\RR))^{*}$ whose kernel is $\grS$ by \eqref{TR'}. This means the map $\r_{\l_{0}}$ is a $\grS$-torsor. Comparing the definitions of $\cB^{ad}_{\cL}$ and $\cB^{\ome}$, we conclude that
$$(\CB^{ad}_\CL)^\grS\cong\CB^{\ome}.$$
Using this isomorphism we get a functor
\begin{equation}\label{BadBom}
(\rmod\CB^{ad}_\CL)^{\grS}\to \bigoplus_{\sig\in\grS^*}\rmod\CB^{\ome}
\end{equation}
sending a $\grS$-equivariant $\CB^{ad}_{\cL}$-module $M$ to $\op_{\s\in \grS^{*}}M(\s)$, where $M(\s)$ is the $\s$-isotypical summand of $M$ which is a module over $(\CB^{ad}_\CL)^{\grS}\cong \CB^{\ome}$.
 
\prop{}
The following diagram is commutative
\begin{equation*}
\xymatrix{ \Tilt(\CM^{\omega}_{G_{\RR}}) \ar[rrr]^-{\bigoplus_{\sig\in\grS^*}\VV^{\sh,\ome}_{\BR,{\sig(\wt\xi)}}} &&&  \bigoplus_{\sig\in\grS^*}\rmod\CB^{\ome} \\
\Tilt(\CM_{G^{ad}_{\RR},\CL})^\grS\ar[rrr]^-{\VV^{ad,\sh}_\BR}\ar[u]^{\sim}_{\refe{GtoGad}} &&&  (\rmod\CB^{ad}_\CL)^{\grS},\ar[u]^{\eqref{BadBom}}}
\end{equation*}
\eprop

\prf
The diagram is compatible with the $\grS^*$-actions, therefore it is sufficient to verify the commutativity after passing to the $\grS^*$-equivariant objects. It then becomes the diagram
\[
\xymatrix{ \Tilt(\CM^{\omega}_{G_{\RR}})^{\grS^*} \ar[rrr]^-{\VV^{\sh,\ome}_{\BR}} &&&  \rmod\CB^{\ome}\\
\Tilt(\CM_{G^{ad}_{\RR},\CL})\ar[rrr]^-{\VV^{ad,\sh}_\BR}\ar[u]^{\sim} &&&  \rmod\CB^{ad}_\CL,\ar[u]}
\]
which is commutative tautologically, as these are the same vanishing cycles functors.




\epr

\sec{KD}{Koszul duality for quasi-split groups}
In this section we reprove the main result of \cite{BV}, Koszul duality for quasi-split real groups, using a more geometric argument based on our study of tilting sheaves on $G_{\RR}$-orbits.

\ssec{}{Vogan duality} Recall from \cite[Theorem 8.5]{V2} that the blocks of $G(\BR)$-representations are in bijection with $\bW_{\lambda_0}$-orbits on $\LS_{\lambda_0}$ under the cross action. Let $a\in \LS_{\lambda_0}/\bW_{\l_{0}}$. Then the corresponding block (direct summand) $\CM_{G_\BR}^a\subset \CM_{G_{\RR}}$ is generated (as a triangulated category) by summands of $\wt\D_{\l_0,\chi}\star \cK$ for $\chi\in a$ and $\cK\in \CH_{G}$.  Note that all objects in $\CM_{G_\BR}^a$ have the same central character, say $\ome: ZG(\RR)\to \bfk^{\times}$, therefore $\CM_{G_\BR}^a$ is a direct summand of $\CM_{G_\BR}^{\ome}$. 

Now we pass to the Langlands dual group $\dG$. Denote its flag variety by $\dX$. For a symmetric subgroup $\dK\subset \dG$, consider the equivariant derived category $\dcD_{\dK}:=D^{b}_{\dK}(\dX)$. We have the equivariant Hecke category 
\begin{equation*}
\dcH_{\dG}:=D^{b}_{\dG}(\dX\times \dX)
\end{equation*}
that acts on $\dcD_{\dK}$ by convolution on the right. 

Let $O_{\dl_{0}}^{\dK}\subset \dX$ be the open $\dK$-orbit. There is also the notion of blocks for $\dcD_{\dK}$, and the blocks are parametrized by $\bW_{\dl_{0}}$-orbits on the set $\LS_{\dl_{0}}$ of isomorphism classes of $\dK$-equivariant irreducible local systems on $O^{\dK}_{\dl_{0}}$ under the cross action.  For $\dua\in \LS_{\dl_{0}}/\bW_{\dl_{0}}$, denote the corresponding block (direct summand) by $\dcD^{\dua}_{\dK}\subset \dcD_{\dK}$; it is generated by direct summands of $\IC(\cL_{\dl_{0},\chi})\star \cK$ for $\chi\in\dua$ and $\cK\in\dcH_{\dG}$.

To the pair $(G_{\RR},a)$, where $G_{\RR}$ is a real form of $G$ and $a\in \LS_{\l_{0}}/\bW_{\l_{0}}$, Vogan's duality of \cite{V2} associates a pair $(\dK,\dua)$ consisting of a symmetric subgroup $\dK\subset\dG$ and $\dua\in \LS_{\dl_{0}}/\bW_{\dl_{0}}$. We say the corresponding blocks $\CM^{a}_{G_{\RR}}$ and $\dcD^{\dua}_{\dK}$ are in {\em Vogan duality}. 

When $G_\BR$ is quasi-split, the block $\dcD^{\dua}_{\dK}$ is what is called {\itshape principal}, i.e., $\dua$ contains the trivial local system, or equivalently, $\dcD^{\dua}_{\dK}$ contains the constant sheaf on $\dX$. In this case, it is proved in \cite{V2} and \cite[Proposition 3.6]{BV} that $\dcD^{\dua}_{\dK}$ is generated as a triangulated category by $\IC_{\dl}$ for {\em closed} orbits $O^{\dK}_{\dl}\subset \dX$ under the $\dcH_{\dG}$-action and taking direct summands.

\ssec{}{Koszul duality--additive category version} We continue with the notations above. Fix a block $\CM^{a}_{G_{\RR}}$ for $\CM_{G_{\RR}}$ and its Vogan dual (principal) block $\dcD^{\dua}_{\dK}$ for $\dcD_{\dK}$. 

Let $\dT$ be the abstract Cartan for $\dG$. Let 
$$\cR^{\bu}=H^{*}_{\check{\bT}}(\pt,\bfk)\cong \Sym(\XX_{*}(\dT)\ot\bfk[-2]),$$ 
viewed as a formal dg algebra in even non-negative degrees. 
We have the ring homomorphism
\begin{equation*}
\cR^{\bu}=\Sym(\XX_{*}(\bT)\ot \bfk)\to \cR=\wh{\bfk[\XX_{*}(\bT)]}
\end{equation*}
sending $x\in \XX_{*}(\bT)$ to $\log(x)=\sum_{n\ge1}(-1)^{n-1}\frac{(x-1)^{n}}{n}\in \cR$. This identifies $\cR$ with the completion of $\cR^{\bu}$ along the ideal $\cR^{>0}$, i.e., $\cR\cong\prod_{n\ge0}\cR^{n}$. 

The category $\dcD_{\dK}$ is enriched over (right) graded $\cR^{\bu}$-modules in the following way: for $\cF_{1},\cF_{2}\in \dcD_{\dK}$, the Ext group
\begin{equation*}
\Ext^{\bu}(\cF_{1},\cF_{2}):=\bigoplus_{n\in \ZZ}\Ext^{n}_{\dK\bs \dX}(\cF_{1},\cF_{2})
\end{equation*}
is equipped with a graded action of $H^{*}_{\dK}(\dX,\bfk)$, hence a graded module over $\cR^{\bu}$ via the map $\cR^{\bu}\to H^{*}_{\dK}(\dX,\bfk)$ obtained as the pullback along $\dK\bs \dX\to \pt/\dB\to \pt/\check{\bT}$. 

Let $\SSC(\dcD^{\dua}_{\dK})$ be the full subcategory of $\dcD^{\dua}_{\dK}$ consisting of semisimple complexes, i.e., direct sums of shifts of simple perverse sheaves. This is an additive category equipped with the cohomological shift endo-functor $[1]$. We denote this shift functor by $\{1\}$, and call it the {\em internal} degree shift functor on $\SSC(\dcD^{\dua}_{\dK})$. For $\cF_{1},\cF_{2}\in\SSC(\dcD^{\dua}_{\dK})$, we use the notation
\begin{equation*}
\Hom^{\bu}(\cF_{1},\cF_{2}):=\op_{m\in\Z}\Hom(\cF_{1},\cF_{2}\{m\})
\end{equation*}
instead of $\Ext^{\bu}(\cF_{1},\cF_{2})$; it is a graded $\cR^{\bu}$-module.

The following can be deduced from the equivariant-monodromic equivalence \cite[Theorem 5.2.1]{BY}. There is a canonical monoidal functor
\begin{equation}\label{KD Hk}
\Phi_{G}: \SSC(\dcH_{\dG})\to \Tilt(\cH_{G})
\end{equation}
that sends the simple perverse sheaf $\IC_{w}$ to the \fmo tilting sheaf $\cT_{w}$ for any $w\in W$. Indeed, let $\SBim^{gr}$ be the category of graded Soergel $\cR^{\bu}$-bimodules. Then Soergel's classical result gives a monoidal equivalence $\SSC(\dcH_{\dG})\cong \SBim^{gr}$ (as categories enriched in $\cR^{\bu}\bimod$). Now $\Tilt(\cH_{G})\cong \SBim$, then functor $\Phi_{G}$ corresponds to the monoidal functor
\begin{equation*}
(-)\ot_{\cR^{\bu}}\cR\cong \cR\ot_{\cR^{\bu}}(-): \SBim^{gr}\to \SBim.
\end{equation*}




The following is a reformulation of the main result of \cite{BV}. 
It essentially states that $\SSC(\dcD^{\dua}_{\dK})$ can be viewed as a graded version of $\Tilt(\CM^{a}_{G_\BR})$. We will give a geometric proof using our results on $\TGR$ and the results on $\dcD_{\dK}$ proved in \cite{BV}. 

\th{KD}(cf. \cite{BV})
Fix a regular covector $\xi\in i\cN^{*}(\RR)\cap\cO_{\reg}$ and its lift $\wt\xi$ as in \refss{GVC}, and fix an extension $\wt\ome$ of $\ome$ (the central character of the block $a$) to $ZG$. With these choices, there is a canonical functor
\begin{equation*}
\ka: \SSC(\dcD^{\dua}_{\dK})\to \Tilt(\CM^{a}_{G_\BR})\end{equation*}
satisfying the following properties:
\begin{enumerate}
\item\label{KD shift inv} $\ka$ is invariant under the internal shift $\{1\}$ on $\SSC(\dcD^{\dua}_{\dK})$.

\item\label{KD Hom} For $\cF_{1},\cF_{2}\in \SSC(\dcD^{\dua}_{\dK})$, the natural maps $\Hom(\cF_{1},\cF_{2}\{n\})\to \Hom(\ka(\cF_{1}), \ka(\cF_{2}))$ induce an isomorphism of $\cR$-modules
\begin{equation*}
\Hom^{\bu}(\cF_{1},\cF_{2})\ot_{\cR^{\bu}}\cR\isom \Hom(\ka(\cF_{1}), \ka(\cF_{2})).
\end{equation*}
\item\label{KD Hk eq} $\ka$ intertwines the actions of $\SSC(\dcH_{\dG})$ and $\Tilt(\cH_{G})$ on both sides under the equivariant-monodromic functor \eqref{KD Hk}.
\item\label{KD IC to tilting} $\ka$ is essentially surjective, and induces a bijection
\begin{eqnarray*}
|\ka|&:&\{\mbox{simple perverse sheaves in $\dcD^{\dua}_{\dK}$}\}/\\
&\bij &\{\mbox{indecomp. free mono. tilting sheaves in $\CM^{a}_{G_{\RR}}$}\}/\cong.
\end{eqnarray*}
\end{enumerate}
\eth
\prf

Let $\ome:ZG(\RR)\to \bfk^{\times}$ be the restriction of any $\chi\in a$ to the real center $ZG(\RR)$. Let $\wt\ome:ZG(\CC)\to \bfk^{\times}$ be an extension of $\ome$. Recall the $\bW_{\l_{0}}$-equivariant map $\r_{\l_{0}}: \LS^{ad}_{\l_{0},\cL}\to \LS^{\ome}_{\l_{0}}$ that is a $\grS$-torsor when both are non-empty. Let $\wt a=\r^{-1}_{\l_{0}}(a)$, with the action of $\grS$. 

Let $\cB^{ad,a}_{0,\cL}=(\prod_{\chi\in \wt a}\CS)^{\bW_{\l_{0}}}$ and $\cB^{ad,a}_{\cL}=\cB^{ad, a}_{0,\cL}\ot_{\cR^{\bW}}\cR$. This is a direct factor of $\cB^{ad}_{\cL}$ that is stable under the action of $\grS$. The $\SBim$ action on $\rmod\cB^{ad}_{\cL}$ restricts to an action on $\rmod\cB^{ad,a}_{\cL}$ commuting with the $\grS$-action. The fully faithful functor \eqref{VR ss} restricts to full embedding
\begin{equation}\label{VR wta}
\VV^{\sh, a}_\BR\colon\Tilt(\CM^{a}_{G_{\RR}})\to(\rmod\CB^{ad,  a}_{\cL})^{\grS}.
\end{equation}
For $\chi\in a$, denote by $\CS_\chi=\CS$ the $\CB^{ad,a}_{\cL}$-module on which $\CB^{ad,a}_{0,\cL}$ acts via the $\chi$-factor $\CS$ and $\CR$ acts via the projection to $\CS$. Denote by $\BB^{ad,a}_{\cL}$ be the full subcategory of $\rmod\CB^{ad,a}_{\cL}$ generated by $\{\CS_\chi\}_{\chi\in \wt a}$ under the action of Soergel bimodules $\SBim$ and taking direct summands. Then $\grS$ acts on $\BB^{ad,a}_{\cL}$. 
The full embedding \eqref{VR wta} induces an equivalence of $\cR$-linear additive categories
\begin{equation}\label{VR a}
\VV^{a}_{\cL}: \Tilt(\CM^{a}_{G_{\RR}})\simeq (\BB^{ad,a}_{\cL})^{\grS}.
\end{equation}
As indicated in the notation, the functor $\VV^{a}_{\cL}$ depends on the choice of $\wt\ome$ (same datum as $\cL$) extending $\ome$.

Now consider the dual side.  Our category $\SSC(\dcD^{\dua}_{\dK})$ is what's denoted by $\CS^{gr}$ in \cite[3.1]{BV}. In \cite{BV}, the authors introduced the counterpart $(\dcB^{sc, \dua,\bu}, \dgS)$ of $(\cB^{ad, a}, \grS)$ on the equivariant side. Here $\dcB^{sc,\dua,\bu}=H^{*}_{\dK^{\c}}(\dX,\bfk)$ (where $\dK^{\c}$ is the neutral component of $\dK$) as graded $\cR^{\bu}$-algebras, and $\dgS=\pi_{0}(\dK)$. The pair $(\dcB^{sc, \dua,\bu}, \dgS)$ is denoted by $(\grB,\grS)$ in \cite[3.9]{BV}. Note that
\begin{equation*}
\dcB^{sc, \dua,\bu}\cong \dcB^{sc,\dua,\bu}_{0}\ot_{(\cR^{\bu})^{\bW}}\cR^{\bu}, \quad \mbox{ where }\dcB^{sc,\dua,\bu}_{0}=H^{*}_{\dK^{\c}}(\pt,\bfk).
\end{equation*}

Let $(\dK^{\c}\bs \dX)_{cl}$ be the set of closed $\dK^{\c}$-orbits on $\dX$. For $\dl\in (\dK^{\c}\bs \dX)_{cl}$, denote the corresponding $\dK^{\c}$-orbit by $O^{\dK^{\c}}_{\dl}$, then we have
\begin{equation*}
\cS^{\bu}_{\dl}:=H^{*}_{\dK^{\c}}(O^{\dK^{\c}}_{\dl}, \bfk)\in \grrmod\dcB^{sc,\dua,\bu}.
\end{equation*}
Let $\dBB^{sc,\dua,gr}$ be the full subcategory of graded right $\dcB^{sc,\dua,\bu}$-modules generated by $\{\CS^{\bu}_{\dl}\}$ ($\dl\in (\dK^{\c}\bs \dX)_{cl}$) under the action of $\SBim^{gr}$ and taking direct summands. Note that $\dgS$ acts on $\dBB^{sc,\dua,gr}$. Then \cite[Theorem 3.12]{BV} gives an equivalence of $\grrmod\cR^{\bu}$-linear categories
\begin{equation*}
\HH^{\dua}: \SSC(\dcD^{\dua}_{\dK})\simeq (\dBB^{sc,\dua,gr})^{\dgS}.
\end{equation*}

By \cite[Theorem 2.6]{BV}, there is a canonical isomorphism $\dgS\cong \grS$ and an $\cR$-algebra isomorphism
\begin{equation}\label{cBgr}
\dcB^{sc,\dua}\ot_{\cR^{\bu}}\cR\isom \cB^{ad,a}_{\cL}
\end{equation}
equivariant under the respective actions of $\dgS$ and $\grS$. It induces a functor
\begin{equation*}
c':\grrmod\dcB^{sc,\dua}\to \rmod\cB^{ad,a}_{\cL}
\end{equation*}
sending a graded $\dcB^{sc,\dua}$-module $M=\op_{n} M^{n}$ (where $M^{n}=0$ for $n\ll0$) to its completion $\wh M=\prod_{n}M^{n}$, which is a $\cB^{ad,a}_{\cL}$-module via \eqref{cBgr}. 

We claim that $c'$ sends $\dBB^{sc,\dua,gr}$ to $\BB^{ad,a}_{\cL}$. To check this we may assume $G$ is adjoint. Then as mentioned in the proof of \cite[Theorem 2.4]{BV}, there is a canonical bijection
\begin{equation}\label{bij cl a}
(\dK\bs\dX)_{cl}\bij a.
\end{equation}
Moreover, if $\dl\in (\dK\bs\dX)_{cl}$ corresponds to $\chi\in a$, then $\CS^{\bu}_{\dl}\ot_{\cR^{\bu}}\cR\cong \CS_{\chi}$ as $\cB^{a}$-modules. Since $\dBB^{sc,\dua,gr}$ (resp. $\BB^{ad,a}_{\cL}$) is generated by $\CS^{\bu}_{\dl}$ (resp. $\CS_{\chi}$) under the $\SBim^{gr}$ (resp. $\SBim$) action and taking direct summands, we conclude that $c'$ sends $\dBB^{sc,\dua,gr}$ to $\BB^{ad,a}_{\cL}$.

It is easy to see that $c'$ is equivariant under $\dgS\cong\grS$. 
Therefore, $c'$ induces a functor
\begin{equation*}
c: (\dBB^{sc,\dua,gr})^{\dgS}\to (\BB^{ad,a}_{\cL})^{\grS}.
\end{equation*}
We then define $\ka$ to be the composition
\begin{equation*}
\xymatrix{\SSC(\dcD^{\dua}_{\dK})\ar[r]^-{\HH^{\dua}}_-{\sim} & (\dBB^{sc,\dua,gr})^{\dgS} \ar[r]^-{c} & (\BB^{ad,a}_{\cL})^{\grS}\ar[r]^-{(\VV^{a}_{\cL})^{-1}}_-{\sim} & \Tilt(\CM^{a}_{G_{\RR}}).
}
\end{equation*}
 
Now we check the required properties of $\ka$. Properties \eqref{KD shift inv} and \eqref{KD Hom} follows easily from the similar properties of the completion functor $c$. Property \eqref{KD Hk eq} follows from the fact that $c$ intertwines the $\SBim^{gr}$-action on $(\dBB^{sc,\dua, gr})^{\dgS}$ with the $\SBim$-action on $(\BB^{ad,a}_{\cL})^{\grS}$ via the completion functor $\SBim^{gr}\to \SBim$.

For \eqref{KD IC to tilting}, we first show that if $\cF\in \dcD^{\dua}_{\dK}$ is a simple perverse sheaf, then $\ka(\cF)$ is indecomposable. Indeed, $\Hom^{\bu}(\cF,\cF)$ is concentrated in non-negative degrees with degree zero part being $\bfk$. Therefore, by \eqref{KD Hom}, $\End(\ka(\cF))\cong \prod_{n\ge0}\Ext^{n}(\cF,\cF)$, hence is a local ring with residue field $\bfk$. This implies $\ka(\cF)$ is indecomposable.  Therefore the map $|\ka|$ in property  \eqref{KD IC to tilting} is defined.

We show $|\ka|$ is injective. If $\cF_{1},\cF_{2}\in \dcD^{\dua}_{\dK}$ are simple perverse sheaves and $\a: \ka(\cF_{1})\isom \ka(\cF_{2})$ is an isomorphism with inverse $\b$. Using that $\Hom(\ka(\cF_{1}), \ka(\cF_{2}))\cong \prod_{n\ge0}\Ext^{n}(\cF_{1},\cF_{2})$, let $\a_{0}$ and $\b_{0}$ be the degree zero projections of $\a$ and $\b$. Then $\a_{0}$ and $\b_{0}$ define inverse isomorphisms between $\cF_{1}$ and $\cF_{2}$. This proves that $|\ka|$ is injective.

To show $|\ka|$ is surjective, it suffices to consider the case $G$ is adjoint hence $\dG$ is simply-connected; the case of general semisimple $G$ follows by quotienting by $\dgS\cong\grS$ on both sides of $|\ka|$. Now assume $G$ is adjoint. Let $\cT$ be an indecomposable \fmo tilting sheaf in $\CM^{a}_{G_{\RR}}$. By \refp{gen}, $\cT$ is a direct summand of $\CS_{\chi}\star \cK$ for some $\chi\in a$ and $\cK\in \Tilt(\CH_{G})$. Let $\dl\in (\dK\bs\dX)_{cl}$ correspond to $\chi\in a$ under \eqref{bij cl a}. Let $\dcK\in \SSC(\dcH_{\dG})\cong \SBim^{gr}$ be an object that maps to $\cK\in \Tilt(\CH_{G})\cong \SBim$ under $\Phi_{G}$.  Let $\CS^{\bu}_{\dl}\star \dcK=\op_{i=1}^{N}\cF_{i}$ be a decomposition into shifted simple perverse sheaves. We have proved that each $\ka(\cF_{i})$ is an indecomposable \fmo tilting sheaf. Since $\op_{i=1}^{N}\ka(\cF_{i})\cong \ka(\CS^{\bu}_{\dl}\star \dcK)\cong \CS_{\chi}\star \cK$, and the latter contains $\cT$ as an indecomposable summand, hence by \refc{tilt GR}(3), $\cT\cong \ka(\cF_{i})$ for some $i$. This proves that $|\ka|$ is surjective.  We have finished checking property \eqref{KD IC to tilting}.


%
%
%
%
%

\epr

Recall the function $\ell: I\to \Z_{\ge0}$ from \refd{length}. For $n\in\Z_{\ge0}$, let $X_{\le n}$ be the union of $O^{\RR}_{\l}$ such that $\ell(\l)\le n$. Since the similar union $\cup_{\ell(\l)\le n}O^{K}_{\l}\subset X$ is open by \refl{length codim}, $X_{\le n}$ is closed in $X$. Let $\CM^{a}_{G_{\RR}, \le n}\subset \CM^{a}_{G_{\RR}}$ be the full subcategory of complexes supported on $X_{\le n}$.

On the dual side, for the principal block $\dcD_{\dK}^{\dua}$, let $\wt{\dI^{\dua}}$ be the set of isomorphisms classes of simple perverse sheaves in $\dcD_{\dK}^{\dua}$. Let $\dI$ be the set of $\dK$-orbits on $\dX$. Then we have the map $\wt{\dI^{\dua}}\to\dI$ sending $\cF$ to the unique dense orbit in $\Supp(\cF)$. We define a function $\dell: \wt{\dI^{\dua}}\to \Z_{\ge0}$ in a similar way as $\ell$: for $\cF\in \wt{\dI^{\dua}}$, let $\dell(\cF)$ be the minimal non-negative integer $n$ such that a shift of $\cF$ appears as a direct summand of $\IC_{\dl}\star\IC_{w}$ for some $\dl\in (\dK\bs \dX)_{cl}$ and $w\in \bW$ with $\ell(w)=n$. It is shown in the argument of \cite[Proposition 3.6]{BV} that 
\begin{equation}\label{dual length}
\dell(\cF)=\dim_{\CC}\Supp(\cF)-\check{d}_{\min}
\end{equation}
where $\check{d}_{\min}$ is the complex dimension of any closed $\dK$-orbit on $\dX$. In particular, $\dell$ factors through $\dI$, which we still denote by $\dell: \dT\to \Z_{\ge0}$. For $n\in\Z_{\ge0}$, let $\dX_{\le n}$ be the union of $\dK$-orbits $O^{\dK}_{\dl}$ such that $\dell(\dl)\le n$. Then $\dX_{\le n}$ is closed in $\dX$. Let $\dcD^{\dua}_{\dK, \le n}\subset \dcD^{\dua}_{\dK}$ be the full subcategory of complexes supported on $\dX_{\le n}$.

\cor{pres supp}
\begin{enumerate}
\item The bijection $|\ka|$ preserves the length functions on both sides: if $\cF$ has support $\ol O^{\dK}_{\dl}$ for some $\dl\in \dI$, and it corresponds to $\cT_{\l,\chi}$ under $|\ka|$, then $\dell(\dl)=\ell(\l)$.
\item The functor $\ka$ in \reft{KD} sends $\SSC(\dcD^{\dua}_{\dK, \le n})$ surjectively to $\Tilt(\CM^{a}_{G_{\RR},\le n})$ for any $n\in \Z_{\ge0}$. 
\end{enumerate}
 \ecor
\prf 
(1) Comparing the definitions of $\dell$ and $\ell$, we see $\dell(\cF)=\ell(\l)$ because for a closed $\dK$-orbit $O^{\dK}_{\dmu}$, $\ka(\IC_{\dmu})\cong \cT_{\l_{0},\chi}$ for some $\chi\in\LS_{\l_{0}}$, and $\ka$ is equivariant under Hecke actions by \reft{KD}\eqref{KD Hk eq}. 

(2) By \refl{length codim}, objects in $\Tilt(\CM^{a}_{G_{\RR},\le n})$ are direct sums of $\cT_{\l,\chi}$ with $\ell(\l)\le n$. By \eqref{dual length}, objects in $\SSC(\dcD^{\dua}_{\dK,\le n})$ are direct sums of shifts of simple perverse sheaves supported on $O^{\dK}_{\dl}$ with $\dell(\dl)\le n$. Therefore (2) follows from (1).
\epr

\ssec{}{Koszul duality--triangulated category version}
To extend the Koszul duality functor in \reft{KD} to triangulated categories,  we recall how $\CM^{a}_{G_\BR}$ and $\dcD^{\dua}_{\dK}$ are recovered from the additive subcategories $\Tilt(\CM^{a}_{G_\BR})$ and $\SSC(\dcD^{\dua}_{\dK})$ respectively.

First consider the monodromic side. The following constructions and results make sense in the general setting of \refss{set}, i.e. in the case of a Lie group $H$ acting on a real analytic space $X$ satisfying the assumptions in \refss{assump}. 


Recall from \refl{mapTT}(1) that $\Ext^{>0}_{\CM_{H,X}}(\CT_1,\CT_2)=0$. Denote $$\CT_\oplus:=\bigoplus_{(\l,\chi)\in \wt I} \CT_{\l,\chi}$$ for the sum of all indecomposable \fmo tilting sheaves in $\CM_{H,X}$ and let 
$$E=\End(\CT_\oplus)^{opp}.$$ 
Also, put $K^b(\mathrm{Tilt}(\CM_{H,X}))$ for the homotopy category of bounded complexes in $\mathrm{Tilt}(\CM_{H,X})$. 

\prop{dgmod}
\begin{enumerate}
\item Let $\Proj(E\lmod)$ be the category of finitely generated projective $E$-modules. Then the functor
$$\Hom(\CT_{\op}, -): \Tilt(\CM_{H,X})\to \Proj(E\lmod)$$ 
is an equivalence of categories.
\item The natural functor $K^b(\mathrm{Tilt}(\CM_{H,X}))\to\CM_{H,X}$ is an equivalence of triangulated categories. 
\item Combining (1) and (2),  there is a canonical equivalence of triangulated categories
\begin{equation*}
\CM_{H,X}\cong \Perf(E\lmod):=K^{b}(\Proj(E\lmod))
\end{equation*}
under which indecomposable \fmo tilting sheaves correspond to indecomposable projective $E$-modules.
\end{enumerate}

\eprop

\prf
(1) The functor lands in projective $E$-modules because all objects in $\Tilt(\CM_{H,X})$ are direct summands of $\CT_{\op}^{n}$ for some $n$ by \refp{tilt}. The left adjoint of the functor is given by $M\mapsto \CT_{\op}\ten_{E}M$. One checks that these functors are inverse to each other. 

(2) Follows from \refp{tilt} and \refl{mapTT}(1) as in \cite[1.5]{BBM} (see also  \cite[Proposition B.1.7]{BY}).
\epr

On the dual side, we introduce the notation
\begin{equation*}
\dcD^{\dua,gr}_{\dK}:=K^{b}(\SSC(\dcD^{\dua}_{\dK})).
\end{equation*}
This can be viewed as a mixed (in the sense of mixed sheaves or mixed Hodge modules) version of $\dcD^{\dua}_{\dK}$. Similarly we define the graded version of the Hecke category
\begin{equation*}
\dcH^{gr}_{\dG}:=K^{b}(\SSC(\dcH_{\dG}))\cong K^{b}(\SBim^{gr}).
\end{equation*}
that acts on $\dcD^{\dua,gr}_{\dK}$.

The internal degree shift $\{1\}$ on $\SSC(\dcD^{\dua}_{\dK})$ induces an endo-functor on $\dcD^{\dua,gr}_{\dK}$ which we still denote by $\{1\}$. We denote the cohomological shift in $K^{b}(\SSC(\dcD^{\dua}_{\dK}))$ by $[1]$. For $\cF\in \SSC(\dcD^{\dua}_{\dK})$ viewed as an object in $\dcD^{\dua,gr}_{\dK}$,  we write it as $\cF\{0\}$.
  
For $\cF_{1},\cF_{2}\in \dcD^{\dua,gr}_{\dK}$, define a bigraded $\cR^{\bu}$-module 
\begin{equation*}
\HOM(\cF_{1},\cF_{2}):=\op_{m,n\in\Z}\Hom_{\dcD^{\dua,gr}_{\dK}}(\cF_{1},\cF_{2}\{m\}[n]).
\end{equation*}

For simple perverse sheaf $\cF\in \dcD^{\dua}_{\dK}$ with support $O^{\dK}_{\dl}$, let $\D_{\cF}=i_{\dl,!}i_{\dl}^{*}\cF$ and $\Na_{\cF}=i_{\dl,*}i_{\dl}^{*}\cF$. 

The following proposition makes precise the meaning that $\dcD^{\dua,gr}_{\dK}$ is a graded version of $\dcD^{\dua}_{\dK}$.

\prop{gr eq} There is a canonical triangulated functor
\begin{equation*}
\ph: \dcD^{\dua,gr}_{\dK}=K^{b}(\SSC(\dcD^{\dua}_{\dK}))\to \dcD^{\dua}_{\dK}
\end{equation*}
extending the natural embedding $\SSC(\dcD^{\dua}_{\dK})\incl \dcD^{\dua}_{\dK}$ and satisfying the following properties:
\begin{enumerate}
\item $\ph\c\{1\}\cong [1]\c\ph$.
\item For $\cF_{1},\cF_{2}\in \dcD^{\dua,gr}_{\dK}$, natural maps induce a graded $\cR^{\bu}$-linear isomorphism
\begin{eqnarray*}
\HOM(\cF_{1},\cF_{2})\isom \Ext^{\bu}(\ph(\cF_{1}), \ph(\cF_{2}))
\end{eqnarray*}
that maps $\Hom(\cF_{1},\cF_{2}\{m\}[n])$ to $\Ext^{m+n}(\ph(\cF_{1}),\ph(\cF_{2}))$, for $m,n\in\Z$.
\item The functor $\ph$ intertwines the actions of $\dcH^{gr}_{\dG}$ and $\dcH_{\dG}$ via the similarly defined monoidal functor $\ph_{\dG}: \dcH^{gr}_{\dG}\to \dcH_{\dG}$ in the case of complex groups.

\item\label{gr st cst}  For a simple perverse sheaf $\cF\in \dcD^{\dua}_{\dK}$, there exists $\D^{gr}_{\cF}\in \dcD^{\dua,gr}_{\dK}$ and a map $\imath^{gr}_{\cF}: \D^{gr}_{\cF}\to \cF\{0\}$ whose image under $\ph$ is the canonical map $\imath_{\cF}: \D_{\cF}\to\cF$; such a pair $(\D^{gr}_{\cF}, \imath_{\cF}^{gr})$ is unique up to a unique isomorphism. Similarly, there exists $\Na^{gr}_{\cF}\in \dcD^{\dua,gr}_{\dK}$ and a map $\jmath^{gr}_{\cF}: \cF\to\Na^{gr}_{\cF}$ whose image under $\ph$ is the canonical map $\jmath_{\cF}: \cF\to\Na_{\cF}$; such a pair is unique up to a unique isomorphism. 
\end{enumerate}
\eprop

\prf The functor $\ph$ is constructed in \cite[5.1]{BV}; the essential point being that for $\dL=\op_{\wt{\dI^{\dua}}}\cF$, the dg algebra $\RHom(\dL,\dL)$ is formal, which is proved by a standard purity argument. Properties (1)-(3) for $\ph$ are easy to see. 

Now we prove (4). We only give the argument for $\D^{gr}_{\cF}$, and the case of $\Na^{gr}_{\cF}$ can be proved similarly by replacing all $\D$'s by $\Na$'s. 

We first construct $\D^{gr}_{\cF}$. Note that graded liftings $\D^{gr}_{w}\in \dcH_{\dG}^{gr}$ of $\D_{w}\in \dcH_{\dG}$ exist for any $w\in \bW$ (in the Soergel bimodule setup, they are the Rouquier complexes). We construct $\D^{gr}_{\cF}$ by induction on $\dell(\cF)$. For $\dell(\cF)=0$, $\D_{\cF}\isom \cF$, hence we can take $\D^{gr}_{\cF}$ to be $\cF\{0\}$. Now suppose $\cF$ is not supported on a closed orbit, and suppose $\D^{gr}_{\cF'}$ together with the map $\imath_{\cF'}:\D^{gr}_{\cF'}\to \cF'$ has been constructed for all simple perverse sheaves $\cF'\in \dcD^{\dua}_{\dK}$ with $\dell(\cF')<\dell(\cF)$ (i.e., $\dim\Supp(\cF')<\dim\Supp(\cF)$). Let $\cF=\IC(O^{\dK}_{\dl}, \uk_{\psi})$ for some $\dK$-equivariant local system $\uk_{\psi}$ on $O^{\dK}_{\dl}$. Since $O^{\dK}_{\dl}$ is not a closed $\dK$-orbit, there exists a simple root $\check\a_{s}$ in the root system $\Phi_{\dl}$ such that $O^{\dK}_{\dl}$ is open in (but not equal to) $\pi_{s}^{-1}\pi_{s}(O^{\dK}_{\dl})$. We have the following cases: \footnote{The calculations of convolutions $\star\D_{s}$ that appear in these cases are analogs of \refl{stdconv}, which can be found in \cite[Lemma 3.5]{LV}, or can be deduced from \refl{stdconv} by applying the Matsuki equivalence.}
\begin{itemize}
\item If $\check\a_{s}$ is a complex root, then $\pi_{s}^{-1}\pi_{s}(O^{\dK}_{\dl})-O^{\dK}_{\dl}$ is a unique orbit $O^{\dK}_{\dmu}$, and the local system $\uk_{\dl,\psi}$ extends to $\pi_{s}^{-1}\pi_{s}(O^{\dK}_{\dl})$, whose restriction to $O^{\dK}_{\dmu}$ we still denote by $\uk_{\dmu,\psi}$. We have $\D_{\cF}=\D_{\dl,\psi}\cong \D_{\dmu, \psi}\star\D_{s}$. By inductive hypothesis, $\D^{gr}_{\dmu,\psi}$ has already been constructed. We define $\D^{gr}_{\cF}:=\D^{gr}_{\dmu, \psi}\star\D^{gr}_{s}$.

\item If $\check\a_{s}$ is a real root, $\uk_{\dl,\psi}$ extends to a local system $\wt{\uk_{\dl,\psi}}$ on $\pi_{s}^{-1}\pi_{s}(O^{\dK}_{\dl})$, and $\pi_{s}^{-1}\pi_{s}(O^{\dK}_{\dl})-O^{\dK}_{\dl}$ is a single orbit $O^{\dK}_{\dmu}$. Denote the restriction of $\wt{\uk_{\dl,\psi}}$ to $O^{\dK}_{\dmu}$ again by $\uk_{\dmu,\psi}$. Then there is a distinguished triangle $\D_{\dl,\psi}\op\D_{\dl,\psi'}\to \D_{\dmu,\psi}\star\D_{s}\to\D_{\dmu,\psi}[1]$ (where $\psi'\ne \psi$ is another local system on $O^{\dK}_{\dl}$). Note that $\D^{gr}_{\dmu,\psi}$ has already been constructed by inductive hypothesis. Since $\Hom(\D_{\dmu,\psi}\star\D_{s},\D_{\dmu,\psi}[1])$ is one-dimensional over $\bfk$, by \refp{gr eq}(2), there is a unique $m\in \Z$ and a nonzero map $f: \D^{gr}_{\dmu,\psi}\star\D^{gr}_{s}\to\D^{gr}_{\dmu,\psi}\{m\}[1-m]$ (unique up to scalar). Let $C=Cone(f)[-1]\in \dcD^{\dua,gr}_{\dK}$, then $\ph(C)\cong \D_{\dl,\psi}\op\D_{\dl,\psi'}$. By \refp{gr eq}(2), idempotents of $\End(\ph(C))$ lift to idempotents of $\End(C)$, therefore $C$ decomposes into two summands $C_{1}\op C_{2}$ whose images under $\ph$ are $\D_{\dl,\psi}$ and $\D_{\dl,\psi'}$ respectively. We define $\D^{gr}_{\cF}:=C_{1}$.

\item If $\check\a_{s}$ is a real root, $\uk_{\dl,\psi}$ extends to a local system $\wt{\uk_{\dl,\psi}}$ on $\pi_{s}^{-1}\pi_{s}(O^{\dK}_{\dl})$, and $\pi_{s}^{-1}\pi_{s}(O^{\dK}_{\dl})-O^{\dK}_{\dl}=O^{\dK}_{\dmu^{+}}\sqcup O^{\dK}_{\dmu^{-}}$. Let $\uk_{\dmu^{+}, \psi^{+}}$ and $\uk_{\dmu^{-}, \psi^{-}}$ be the restrictions of $\wt{\uk_{\dl,\psi}}$ to $O^{\dK}_{\dmu^{+}}$ and $O^{\dK}_{\dmu^{-}}$ respectively. Then we have a distinguished triangle $\D_{\dl,\psi}\to \D_{\dmu^{+}, \psi^{+}}\star\D_{s}\to \D_{\dmu^{-}, \psi^{-}}[1]$. As in the previous case, there is a unique $m\in\Z$ such that there is a nonzero map $f: \D^{gr}_{\dmu^{+}, \psi^{+}}\star\D^{gr}_{s}\to \D^{gr}_{\dmu^{-}, \psi^{-}}\{m\}[1-m]$. We define $\D^{gr}_{\cF}:=Cone(f)[-1]$.

\item If $\check\a_{s}$ is a real root and $\psi$ does not extend to $\pi_{s}^{-1}\pi_{s}(O^{\dK}_{\dl})$. If this happens, then there must exist another simple root $\check\a_{t}$ that falls into the previous cases. This is proved in the argument of \cite[Proposition 3.6]{BV}. Therefore there is no need to consider this case.
\end{itemize}

In each of the cases,  $\D^{gr}_{\cF}$ is defined to be equipped with a map to $\D^{gr}_{\cF'}\star \D_{s}$ for some simple perverse sheaf $\cF'$ with $\dell(\cF')=n-1$ and simple reflection $s\in \bW$. We then define $\imath_{\cF}$ to be the composition
\begin{equation*}
\D^{gr}_{\cF}\to \D^{gr}_{\cF'}\star \D_{s}\xr{\imath_{\cF'}\star\imath_{\IC_{s}}} \cF'\{0\}\star \IC_{s}\{0\}\to \cF\{0\}
\end{equation*}
where the last map is the projection of $\cF'\star \IC_{s}$ to the unique summand isomorphic to $\cF$. It is easy to see that $\imath_{\cF}$ is sent to the canonical map $\D_{\cF}\to \cF$ under $\ph$.

Finally we prove the uniqueness of the pair $(\D^{gr}_{\cF}, \imath_{\cF})$. Let $(\cE, i:\cE\to \cF\{0\})$ and $(\cE',i':\cE'\to \cF\{0\})$ be two such pairs. Let $\e: \ph(\cE)\isom\ph(\cE')$ be the unique isomorphism compatible with $\ph(i)$ and $\ph(i')$. By \refp{gr eq}(2), for a unique $m\in\Z$, $\e$ lifts to a map $\e^{gr}: \cE\to \cE'\{m\}[-m]$. Now both $i: \cE\to \cF$ and $i'\c\e^{gr}: \cE\to \cE'\{m\}[-m]\to \cF\{m\}[-m]$ map to the canonical map $\D_{\cF}\to \cF$ under $\ph$. By \refp{gr eq}(2) again, we conclude that $m=0$ and $i=i'\c\e^{gr}$. This proves the uniqueness of the pair up to unique isomorphism.
\epr

\th{KD derived} Under the same notations and choices of $\xi,\wt\xi$ and $\wt\ome$ as in \reft{KD}, there is a canonical triangulated functor
\begin{equation*}
\wt\ka:  \dcD^{\dua,gr}_{\dK}\to \CM^{a}_{G_{\RR}} \end{equation*}
extending the functor $\ka$ and satisfying the following properties:
\begin{enumerate}
\item $\wt\ka\c\{1\}\cong \wt\ka$.
\item For $\cF_{1},\cF_{2}\in \dcD^{\dua,gr}_{\dK}$, natural maps induce a graded $\cR$-linear  isomorphism
\begin{eqnarray*}
\HOM(\cF_{1},\cF_{2})\ot_{\cR^{\bu}}\cR\isom \Ext^{\bu}(\wt\ka(\cF_{1}), \wt\ka(\cF_{2}))
\end{eqnarray*}
that sends $\Hom(\cF_{1},\cF_{2}\{m\}[n])$ to  $\Ext^{n}(\wt\ka(\cF_{1}), \wt\ka(\cF_{2}))$, for $m,n\in\Z$.

\item The functor $\wt\ka$ intertwines the actions of $\CH_{G}$ and $\dcH^{gr}_{\dG}$ via the monoidal functor $\dcH^{gr}_{\dG}\xr{\ph_{\dG}}\dcH_{\dG}\xr{\wt\Phi}\cH_{G}$. Here $\wt\Phi$ is defined similarly as $\wt\ka$ in the case of complex groups. 


\item\label{KD st cst} Suppose $|\ka|$ sends a simple perverse sheaf $\cF\in \dcD^{\dua}_{\dK}$ to $\cT_{\l,\chi}\in \Tilt(\CM^{a}_{G_{\RR}})$ as in \reft{KD}\eqref{KD IC to tilting}. Then $\wt\ka$ sends the graded lift of the standard object $\D^{gr}_{\cF}$ (resp. the costandard object $\Na^{gr}_{\cF}$, see \refp{gr eq}\eqref{gr st cst}) to the \fmo standard object $\wt\D_{\l,\chi}$ (resp. \fmo costandard object $\wt\Na_{\l,\chi}$).
\end{enumerate}
\eth

\prf 
By \refp{dgmod}(2), $\CM^{a}_{G_{\RR}}\cong K^{b}(\Tilt(\CM^{a}_{G_{\RR}}))$. We define the functor $\wt\ka$  to be $K^{b}(\ka)$. Properties (1)-(3) follow from the similar properties for $\ka$ proved in \reft{KD}. 

It remains to check \eqref{KD st cst}; we give the argument for $\wt\ka(\D^{gr}_{\cF})\cong \wt\D_{\l,\chi}$, and the other isomorphism $\wt\Na(\D^{gr}_{\cF})\cong \wt\Na_{\l,\chi}$ is proved similarly. Let $n=\dell(\cF)$. Then the object $\D^{gr}_{\cF}$ together with the canonical map $\imath_{\cF}: \D^{gr}_{\cF}\to \cF\{0\}$ constructed in \refp{gr eq}\eqref{gr st cst} can be characterized as the unique such pair $(\cE, i:\cE\to \cF)$ such that $\cE\in \dcD^{\dua,gr}_{\dK}$ such that $\Hom(\cE, \dcD^{\dua,gr}_{\dK,<n})=0$ and $i$ induces an isomorphism in the Verdier quotient $\dcD^{\dua,gr}_{\dK}/\dcD^{\dua,gr}_{\dK,<n}$ (this follows from the construction of $\D^{gr}_{\cF}$). Similarly, $\wt\D_{\l,\chi}$ together with the canonical map $\wt\D_{\l,\chi}\to \cT_{\l,\chi}$ can be characterized as the unique pair $(\cG, i:\cG\to \cT_{\l,\chi})$ such that $\Hom(\cG, \CM^{a}_{G_{\RR},<n})=0$ and $i$ induces an isomorphism in the Verdier quotient $\CM^{a}_{G_{\RR}}/\CM^{a}_{G_{\RR},<n}$. This proves $\wt\ka(\D^{gr}_{\cF})\cong \wt\D_{\l,\chi}$.
\epr

\rem{} The above theorem shows that $\dcD^{\dua,gr}_{\dK}$ is a graded version of $\CM^{a}_{G_{\RR}}$. It would be interesting to define a graded version of $\CM_{G_{\RR}}$ intrinsically using only the geometry of the $G_{\RR}$-action on $\wt X$. If we use the Matsuki equivalence to identify $\CM_{G_{\RR}}$ with $\wh D_{K}(G/U)_{\bT\mon}$. Such a graded version may be obtained by considering mixed Hodge modules on $G/U$. 

In particular, the stalks and costalks of an indecomposable tilting object in $\TGR$ should carry gradings reflecting a mysterious weight structure, canonical up to an overall shift. We refer to \cite{Y} where the weight structure on stalks of tilting sheaves in the complex group case is determined.
\erem

\ssec{}{Numerical consequences of Koszul duality}

For a $G_{\RR}$-orbit $O^{\RR}_{\l}\subset X$, recall from \refl{TRT} that $\bT$ carries a canonical real form $\s_{\l}$ depending only on $\l$. Let $\theta_{\l}: \bT\to \bT$ be the corresponding Cartan involution.

For a $\dK$-orbit $O^{\dK}_{\dl}\subset \dX$, choosing a Borel subgroup $\dB\in O^{\dK}_{\dl}$, and let $\dbT_{\dl}\subset \dbT$ be the image of the projection $\dK\cap \dB\to \dbT$, which is independent of the choice of $\dB$. As in \refl{TRT}, by choosing a $\dth$-stable maximal torus $\dT\subset \dB$, $\dbT$ is identified with $\dT$ and $\dbT_{\dl}$ is identified with $\dT^{\dth}$. This induces an involution $\dth_{\dl}$ on $\dbT$ that depends only on $\dl$, such that $\dbT_{\dl}=\dbT^{\dth_{\dl}}$.

The following is a known property of Vogan duality \cite{V2}; we give a geometric argument using Koszul duality.

\cor{} Let  $\cF\in \dcD^{\dua}_{\dK}$ be a simple perverse sheaf supported on the $\dK$-orbit $O^{\dK}_{\dl}$ for some $\dl\in \dI$. Let $\cT_{\l,\chi}=\ka(\cF)$. Then under the canonical identification
\begin{equation}\label{XXT}
\XX_{*}(\bT)\cong \XX^{*}(\dbT),
\end{equation}
the involution $\theta_{\l}$ corresponds to the involution $-\dth_{\dl}$.
\ecor{}
\prf Combining the two isomorphisms in \refp{gr eq}(2) and \reft{KD derived}(2), both applied to $\cF_{1}=\cF_{2}=\D^{gr}_{\cF}$, and using \reft{KD derived}\eqref{KD st cst}, we get an isomorphism of $\cR$-modules
\begin{equation}\label{self ext st}
\Ext^{\bu}(\D_{\cF}, \D_{\cF})\ot_{\cR^{\bu}}\cR\cong \Ext^{\bu}(\wt\D_{\l,\chi}, \wt\D_{\l,\chi}).
\end{equation}
Note that $\Ext^{\bu}(\D_{\cF}, \D_{\cF})=H^{*}_{\dbT_{\dl}}(\pt,\bfk)\cong \Sym(\XX^{*}(\dbT_{\dl})\ot_{\Z}\bfk[-2])$ as an $\cR^{\bu}$-module. On the other hand, by \refl{comp DA}, $\Ext^{\bu}(\wt\D_{\l,\chi}, \wt\D_{\l,\chi})\cong \ol\cR_{\l}$,  which is the completion of $\bfk[\XX_{*}(\bT/\bT_{\l})]$ at the augmentation ideal. The isomorphism \eqref{self ext st} as $\cR$-modules imply that $\XX^{*}(\dbT_{\dl})_{\QQ}=\XX_{*}(\bT/\bT_{\l})_{\QQ}$ as quotients of $\XX^{*}(\dbT)=\XX_{*}(\dbT)_{\QQ}$. This implies that under \eqref{XXT}, the $-1$ eigenspace of $\dth_{\dl}$ on $\XX^{*}(\dbT)_{\QQ}$ coincides with the $+1$ eigenspace of $\theta_{\l}$ on $\XX_{*}(\bT)_{\QQ}$. Since both $\theta_{\l}$ and $\dth_{\dl}$ preserve the Killing forms, which are intertwined under \eqref{XXT}, we conclude that $-\dth_{\dl}$ is intertwined with $\theta_{\l}$ under  \eqref{XXT}.
\epr

Recall the Lusztig-Vogan polynomials for the stalks of $IC$ sheaves on $\dK\bs \dX$, see \cite[Theorems 1.11, 1.12]{LV}. For simple perverse sheaves $\cF,\cF'\in \dcD_{\dK}$, let
\begin{equation*}
P_{\cF',\cF}(q)=\sum_{i\ge0}\dim_{\bfk}q^{i}\Hom(\cF, \Na_{\cF'}[2i]).
\end{equation*}

\cor{LV poly} For an indecomposable \fmo tilting sheaf $\cT_{\l,\chi}$  in $\CM^{a}_{G_{\RR}}$, denote by $\cF_{\l,\chi}\in \dcD^{\dua}_{\dK}$ the  simple perverse sheaf that  corresponds to $\cT_{\l,\chi}$ under the bijection $|\ka|$ in \reft{KD}\eqref{KD IC to tilting}. Then for any $\l'\in I$, we have an isomorphism
\begin{equation*}
\wt i_{\l'}^{*}\cT_{\l,\chi}\cong \bigoplus_{(\l',\chi')\in \wt I^{a}}\cL_{\l',\chi'}^{\op P_{\cF_{\l',\chi'},\cF_{\l,\chi}}(1)}[p_{\l'}].
\end{equation*}
\ecor
\prf Let $\cF'=\cF_{\l',\chi'}$. Let $m_{\l',\chi'}$ be the multiplicity of $\cL_{\l',\chi'}[p_{\l'}]$ in $\wt i_{\l'}^{*}\cT_{\l,\chi}$. Combining the two isomorphisms in \refp{gr eq}(2) and \reft{KD derived}(2), both applied to $\cF_{1}=\cF\{0\}$ and $\cF_{2}=\Na^{gr}_{\cF'}$, and using \reft{KD derived}\eqref{KD st cst}, we get an isomorphism of $\cR$-modules
\begin{equation*}
\Ext^{\bu}(\cF,\Na_{\cF'})\ot_{\cR^{\bu}}\cR\cong \Ext^{\bu}(\cT_{\l,\chi}, \wt\Na_{\l',\chi'}).
\end{equation*}
We know the right side is a direct sum of $m_{\l',\chi'}$ copies of $\Hom(\wt\D_{\l',\chi'}, \wt\Na_{\l',\chi'})$, which is a free $\ol\cR_{\l'}$-module of rank $m_{\l',\chi'}$. The left side is a direct sum of $P_{\cF',\cF}(1)$ copies of $\Ext^{\bu}(\D_{\cF'}, \Na_{\cF'})\ot_{\cR^{\bu}}\cR$ (using that the stalks of $\cF$ along $O^{\dK}_{\dl'}$ are concentrated in degrees of the same parity), which is a free $H^{*}_{\dbT_{\dl'}}(\pt,\bfk)\ot_{\cR^{\bu}}\cR$-module of rank $P_{\cF',\cF}(1)$ (where $\dl'$ indexes the $\dK$-orbit whose closure is the support of $\cF'$). Comparing the dimension of the reductions to $\bfk$-modules on both sides, we conclude that $m_{\l',\chi'}=P_{\cF',\cF}(1)$.
\epr

\ssec{bigtilting}{Corepresentability of the real Soergel functor}

Recall from \refp{VHk} that the big tilting object $\cT_{w_{0}}\in \CH_{G}$ corepresents the classical Soergel functor $\VV$. The next statement is its analogue for quasi-split real groups.

Let $G_{\RR}$ be quasi-split and nice. For a $\bW_{\l_{0}}$-orbit $a\subset \LS_{\l_{0}}$, let $\cB^{a}=(\prod_{\chi\in a}\CS)^{\bW_{\l_{0}}}\ot_{\cR^{\bW}}\cR$ be the direct factor of $\cB$ corresponding to the block $a$. 


\th{bigtilting} Assume $G_{\RR}$ is quasi-split and nice.
\begin{enumerate}
\item Let $\chi\in\LS_{\l_{0}}$ and let $a$ be the $\bW_{\l_{0}}$-orbit of $\chi$. Then $\cT_{\l_{0},\chi}\star \cT_{w_{0}}$ contains a direct summand $\cT^{a}$ satisfying $\VV^{\sh}_{\RR}(\cT^{a})\cong \cB^{a}$ as $\cB$-modules. The isomorphism class of $\cT^{a}$ only depends on the block $a$.
\item We have an $\cR$-algebra isomorphism $\End(\cT^{a})\cong \cB^{a}$.
\item For any choice of a regular covector $\xi\in i\cN^{*}(\RR)\cap \cO_{\reg}$ and its lifting $\wt\xi$, there is an isomorphism of functors 
$$\VV_{\RR,\wt\xi}\cong \RHom(\op_{a}\cT^{a}, -): \CM_{G_{\RR}}\to D^{b}(\rmod\cR)$$
where the sum is over $\bW_{\l_{0}}$-orbits $a\subset \LS_{\l_{0}}$.   
In particular, the functor $\VV_{\RR,\wt\xi}$ does not depend on the choice of $\wt\xi$, up to isomorphism.

\item Under the functor $\ka$ in \reft{KD}, $\cT^{a}$ is the image of the constant perverse sheaf $\uk[\dim \dX]$.
\end{enumerate}
\eth
\prf
We fix $\wt\xi$ and denote $\VV_{\RR,\wt\xi}$ by $\VV_{\RR}$.

(1) Under $\VV^{\sh}_{\RR}$, $\cT_{\l_{0},\chi}\star \cT_{w_{0}}$ becomes the $\cB^{a}$-module $\CS_{\chi}\ot_{\cR}(\cR\ot_{\cR^{\bW}}\cR)\cong \CS_{\chi}\ot_{\cR^{\bW}}\cR$. Note that $\cB^{a}=\CS^{\bW_{\l_{0},\chi}}\ot_{\cR^{\bW}}\cR$, where $\bW_{\l_{0},\chi}$ is the stabilizer of $\chi$ under the cross action of $\bW_{\l_{0}}$ on $\LS_{\l_{0}}$. Since $\CS^{\bW_{\l_{0},\chi}}$ is a summand of $\CS$ as a $\CS^{\bW_{\l_{0},\chi}}$-module, $\VV^{\sh}_{\RR}(\cT_{\l_{0},\chi}\star \cT_{w_{0}})\cong \CS\ot_{\cR^{\bW}}\cR$ contains a free rank one $\cB^{a}$-module as a  direct summand. Since $\VV^{\sh}_{\RR}$ is fully faithful by \refc{maincor}, we conclude that $\cT_{\l_{0},\chi}\star \cT_{w_{0}}$ contains a summand $\cT^{a}$ with $\VV^{\sh}_{\RR}(\cT^{a})\cong\cB^{a}$ as $\cB$-modules.


For another $\chi'\in a$, the same argument would give another $\cT'$ with the same property $\VV^{\sh}_{\RR}(\cT')\cong \cB^{a}$. Since $\VV^{\sh}_{\RR}$ is fully faithful, we conclude that  $\cT^{a}\cong \cT'$.

(2) By \refc{maincor}, $\End(\cT^{a})\cong \End_{\cB}(\VV^{\sh}_{\RR}(\cT^{a}))\cong\End_{\cB}(\cB^{a})=\cB^{a}$.

(3) Choose a free generator $v_{a}\in \VV_{\RR}(\cT^{a})$ as a right $\cB^{a}$-module. Then we get a map functorial in $\cF\in \CM^{a}_{G_{\RR}}$
\begin{equation*}
\a: \RHom(\op_{a}\cT^{a}, \cF)\xr{\VV_{\RR}(-)} \RHom_{\cR}(\op_{a}\VV_{\RR}(\cT^{a}), \VV_{\RR}(\cF))\xr{\op\ev_{v_{a}}} \VV_{\RR}(\cF)
\end{equation*}
where the last map is evaluation at $v_{a}$. This defines a map between two triangulated functors $\CM_{G_{\RR}}\to D^{b}(\rmod\cR)$. To show it is an isomorphism, it suffices to check on generators. We show that $\a$ is an isomorphism for $\cF\in \TGR$. Indeed, in this case, both sides are concentrated in degree $0$. Using \refc{maincor}, $\a$ becomes
\begin{equation*}
\Hom(\op\cT^{a}, \cF)\cong\Hom_{\cB}(\op\VV^{\sh}_{\RR}(\cT^{a}), \VV^{\sh}_{\RR}(\cF))\xr{\ev_{\sum v_{a}}}\VV^{\sh}_{\RR}(\cF)=\VV_{\RR}(\cF)
\end{equation*}
where $\ev_{\sum v_{a}}$ is an isomorphism because $\sum v_{a}$ is a free generator of $\VV^{\sh}_{\RR}(\op_{a}\cT^{a})$ as a $\cB$-module. This proves (2).

(4) Let $C=\uk[\dim \dX]$. By construction, the global sections functor $\HH^{\dua}$ on $\SSC(\dcD^{\dua}_{\dK})$ is intertwined with $\VV^{\sh}_{\RR}$ on $\Tilt(\CM^{a}_{G_{\RR}})$ in the sense that if $\cF\in \SSC(\dcD^{\dua}_{\dK})$, then 
\begin{equation*}
\HH^{\dua}(\cF)\ot_{\cR^{\bu}}\cR=\Ext^{\bu}(C, \cF)\ot_{\cR^{\bu}}\cR\cong \VV^{\sh}_{\RR}(\ka(\cF))
\end{equation*}
as $\cB^{a}$-modules, where we use the isomorphism $\dcB^{\dua, \bu}\ot_{\cR^{\bu}}\cR=H^{*}(\dK\bs\dX, \cF)\ot_{\cR^{\bu}}\cR\cong \cB^{a}$ proved in \cite[Corollary 2.6]{BV}. From this we see that $\VV^{\sh}_{\RR}(\ka(C))\cong \HH^{\dua}(C)\ot_{\cR^{\bu}}\cR=\dcB^{\dua, \bu}\ot_{\cR^{\bu}}\cR\cong\cB^{a}$. Repeating the argument in (3) we see that $\ka(C)$ also corepresents the functor $\VV_{\RR}$, hence $\ka(C)\cong \cT^{a}$.

\epr

\end{document}